\newcounter{maincounter}
\numberwithin{maincounter}{section}
\numberwithin{equation}{section}
\newtheorem{lemma}[maincounter]{Lemma}
\newtheorem{proposition}[maincounter]{Proposition}
\newtheorem{corollary}[maincounter]{Corollary}
\newtheorem{remark}[maincounter]{Remark}
\newtheorem{theorem}[maincounter]{Theorem}
\newtheorem{definition}[maincounter]{Definition}
\def\AA{\mathbb{A}}
\def\NN{\mathbb{N}}
\def\RR{\mathbb{R}}
\def\TT{\mathbb{T}}
\def\CC{\mathbb{C}}
\def\ZZ{\mathbb{Z}}
\def\PP{\mathbb{P}}
\def\QQ{\mathbb{Q}}
\newcommand{\cal}{\mathcal}
\newcommand{\cA}{\cal{A}}
\newcommand{\sman}[1]{{#1}^{\mathrm{sm,an}}}
\newcommand{\ansm}[1]{{#1}^{\mathrm{an,sm}}}
\newcommand{\sm}[1]{{#1}^{\mathrm{sm}}}
\newcommand{\anE}{\mathrm{an}}
\newcommand{\an}[1]{{#1}^{\anE}}
\newcommand{\IP}{{\PP}}
\newcommand{\IC}{{\CC}}
\newcommand{\IR}{{\RR}}
\newcommand{\IT}{{\TT}}
\newcommand{\IQbar}{{\overline{\QQ}}}
\newcommand{\IZ}{{\ZZ}}
\newcommand{\IN}{{\NN}}
\newcommand{\IA}{{\AA}}
\newcommand{\IQ}{{\QQ}}
\newcommand{\cE}{{\mathcal E}}
\newcommand{\cF}{{\mathcal F}}
\newcommand{\cK}{{\mathcal K}}
\newcommand{\cL}{{\mathcal L}}
\newcommand{\cM}{{\mathcal M}}
\newcommand{\cO}{{\mathcal O}}
\newcommand{\ssm}{\setminus}
\renewcommand{\subset}{\subseteq} %%% Some people think \subset
\renewcommand{\supset}{\supseteq}
\newcommand{\jac}{\mathrm{Jac}}
\newcommand{\pullbackcorner}[1][dr]{\save*!/#1-1.7pc/#1:(-1.5,1.5)@^{|-}\restore}
\newcommand{\bestlevel}{3} %%% The minimal level structure that makes
\begin{document}
% \title{Towards uniform Mordell--Lang for curves}
\title{Uniformity in Mordell--Lang for curves}
%Alternative: \title{Uniform Mordell--Lang for curves and given rank}

\author{Vesselin Dimitrov, Ziyang Gao and Philipp Habegger}

\address{Department of Mathematics, University of Toronto, 40 St. George Street, Toronto, Ontario, Canada M5S 2E4}
\email{dimitrov@math.toronto.edu}
\address{CNRS, IMJ-PRG, 4 place Jussieu, 75005 Paris, France}
\email{ziyang.gao@imj-prg.fr}
\address{Department of Mathematics and Computer Science, University of Basel, Spiegelgasse 1, 4051 Basel, Switzerland}
\email{philipp.habegger@unibas.ch}

\subjclass[2000]{11G30, 11G50, 14G05, 14G25}

% 11G30 Number Theory: Arithmetic algebraic geometry: Curves of
% arbitrary genus or genus $\ne 1$ over global fields

% 11G50 Number Theory: Arithmetic algebraic geometry: Heights

% 14G05 Algebraic Geometry: Arithmetic problems. Diophantine geometry:
% Rational points

% 14G25 Algebraic Geometry: Arithmetic problems. Diophantine geometry:
% Global ground fields

\maketitle

\begin{abstract}
  Consider a smooth, geometrically irreducible, projective curve of genus
$g\ge 2$ defined over a number field of degree $d \ge 1$.
It has at most finitely many rational points by
the Mordell Conjecture, a theorem of 
Faltings. 
We show that the number of rational points is bounded only
in terms of $g$, $d$, and the Mordell--Weil rank of the curve's
Jacobian, thereby answering in the affirmative a question of Mazur. 
%when all objects are defined over an algebraic closure of $\IQ$. 
In
addition we obtain uniform bounds, in $g$ and $d$, for the number of
geometric torsion points of the Jacobian which lie in the image of an
Abel--Jacobi map. Both estimates generalize our previous work for $1$-parameter families. Our proof uses Vojta's approach to the Mordell Conjecture, and the key new ingredient is the generalization of a height inequality due to the second- and third-named
authors.% In addition we obtain uniform bounds for the size of torsion packets.
\end{abstract}

\tableofcontents

%% Section 1
\section{Introduction}
Let $F$ be a field. By a curve defined over $F$ we mean a geometrically
irreducible, projective variety of dimension $1$ defined over $F$. Let $C$ be
a smooth curve of genus at least $2$ defined over a number field $F$. As was
conjectured by Mordell and proved by Faltings~\cite{Faltings:ES},
$C(F)$, the set of $F$-rational points of $C$, is finite.

We let $\jac(C)$ denote the Jacobian of $C$. Recall
that $\jac(C)(F)$ is a finitely generated abelian group by the
Mordell--Weil Theorem. 

The aim of this paper is to bound $\# C(F)$ from above. Here is our
first result. 

\begin{theorem}\label{ThmBdRatIntro}
  Let $g \ge 2$ and $d \ge 1$ be integers. Then there exists a constant
  $c = c(g, d) \ge 1$ with the following property. If $C$ is a smooth curve
  of genus $g$  defined
  over a number field $F$  with $[F:\IQ] \le d$, then
  \begin{equation}
    \label{eq:CKexpinrank}
      \#C(F) \le c^{1+\rho}
  \end{equation}
  where $\rho$ is the rank of  $\mathrm{Jac}(C)(F)$.% Moreover $c(g,d)$ depends polynomially on $d$.
\end{theorem}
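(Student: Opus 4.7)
The plan is to follow the Vojta--Faltings--R\'emond strategy of counting $F$-rational points on $C$ after embedding into its Jacobian $J := \jac(C)$, combining the standard bound for points of large canonical height with a uniform bound for points of small canonical height obtained from the generalization of the Gao--Habegger height inequality announced in the abstract. We may assume $C(F) \neq \emptyset$, pick a base point, and use it to define an Abel--Jacobi embedding $\iota \colon C \hookrightarrow J$. Fix a symmetric ample line bundle on $J$ with N\'eron--Tate height $\hat h$, identify $\iota(C(F))$ with a subset of the rank-$\rho$ group $J(F)$, and view $J(F)\otimes_{\ZZ}\RR$ as a Euclidean space of dimension $\rho$ with norm $\hat h^{1/2}$. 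Pick a threshold $B = B(g,d) \ge 1$ to be determined.

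For the large points $\{P \in C(F) : \hat h(\iota(P)) \ge B\}$, a Mumford-style gap principle extracted from Vojta's inequality on $C \times C$ prohibits any two such points from having both nearly parallel images in $J(F) \otimes \RR$ and comparable heights. Covering the unit sphere of $J(F) \otimes \RR$ by $c_1(g)^{\rho}$ open cones of half-angle less than a suitable $\theta_0 = \theta_0(g) > 0$ and bounding the number of large points per cone by an absolute constant, one obtains a bound of shape $c_2(g)^{1+\rho}$ on the count of large points, which matches the form claimed.

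The small points $\{P \in C(F) : \hat h(\iota(P)) < B\}$ are the hard case: the classical Faltings proof handles them by Northcott fibrewise, producing a bound depending on $C$. The remedy is the generalized height inequality, applied to the image $\mathcal{S}$ of the universal Abel--Jacobi map inside the universal abelian variety $\pi \colon \mathcal{U} \to \cA_g$ over a suitable level cover of the Siegel moduli space. Away from a proper closed degeneracy locus $D \subsetneq \mathcal{S}$, the inequality takes the form
\begin{equation*}
  \hat h_{\mathcal{U}}(x) \;\ge\; c_3 \cdot h_{\cA_g}(\pi(x)) - c_4 \qquad \text{for all } x \in \mathcal{S} \setminus D.
\end{equation*}
Specializing to the fibre of $\mathcal{S}$ over the moduli point $[C]$ and combining with $\hat h(\iota(P)) < B$, one concludes that either $[C]$ has modular height bounded in terms of $g, d, B$ --- in which case Northcott together with a uniform count over the resulting finite set of moduli points bounds $\#C(F)$ uniformly for such $C$ --- or $[C]$ lies in the image of $D$ in $\cA_g$, a proper closed sublocus handled by Noetherian induction on moduli.

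The main obstacle is the generalized height inequality itself: extending the $1$-parameter version of Gao--Habegger to bases of arbitrary dimension, and verifying that the universal Abel--Jacobi image $\mathcal{S}$ is non-degenerate in the Betti-theoretic sense required by the inequality, away from an explicitly describable proper subvariety of $\cA_g$. This amounts to a mixed Ax--Schanuel and Betti-rank problem for the universal family of Jacobians, and its resolution is the technical heart of the paper; granted the inequality, the large/small split proceeds along essentially classical Vojta--R\'emond lines.
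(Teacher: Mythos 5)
Your overall architecture (Vojta--Mumford--R\'emond for points of large height, a generalized Gao--Habegger height inequality plus Northcott and Noetherian induction for the rest) is the right one, but the step you identify as the technical heart is set up on an object for which it provably fails. You propose to apply the height inequality to $\mathcal{S}$, the image of the universal Abel--Jacobi map, i.e.\ a single copy of the universal curve inside the universal abelian variety $\mathfrak{A}_g\rightarrow\mathbb{A}_g$. That subvariety has dimension $\dim\mathbb{M}_g+1=3g-2$, while the Betti map on $\mathfrak{A}_g$ takes values in $\mathbb{T}^{2g}$, so the rank of its differential restricted to $\mathcal{S}$ is at most $2g<2(3g-2)=2\dim\mathcal{S}$ for every $g\ge 2$. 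Non-degeneracy in the sense of Definition~\ref{DefinitionNonDegenerate} is a condition on the \emph{maximal} Betti rank, so $\mathcal{S}$ is degenerate everywhere; this cannot be repaired by deleting a proper closed locus $D\subsetneq\mathcal{S}$ or a proper sublocus of $\mathbb{A}_g$, and hence Theorem~\ref{ThmHtInequality} gives no inequality of the form $\hat h(x)\ge c_3 h(\pi(x))-c_4$ on $\mathcal{S}\setminus D$. The paper's way around this is the $M$-th Faltings--Zhang construction: one applies the height inequality to $X=\mathscr{D}_M(\mathfrak{C}_g^{[M+1]})\subseteq\mathfrak{A}_g^{[M]}$ with $M\ge 3g-2$, whose non-degeneracy is exactly Gao's theorem (Theorem~\ref{ThmNonDegForBd}). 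The price is that the resulting lower bound concerns sums $\hat h(Q_1-P)+\cdots+\hat h(Q_M-P)$ for $M$-tuples outside an exceptional locus, and turning this into a count requires the N\'eron--Tate distance dichotomy (Proposition~\ref{PropAlgPtFar}, with the auxiliary Lemmas~\ref{LemmaNAlon} and \ref{LemmaPacketsAlternative3} to control the exceptional fibres) followed by a ball-packing argument in $\Gamma\otimes\IR$; none of this is present in, or derivable from, your single-point specialization ``$\hat h(\iota(P))<B$ forces $h([C])$ bounded.''

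A secondary, related problem is your choice of cut-off. You split at a constant $B=B(g,d)$, but the Vojta--Mumford cone/gap count only controls points whose N\'eron--Tate height exceeds a quantity comparable to the height of the curve (in the paper: $c\max\{1,h(C),c_{\mathrm{NT}},h_1\}$, all of which grow linearly in the modular height of $\jac(C)$). With a constant threshold, points of intermediate height --- between $B$ and a multiple of $h([\jac(C)])$ on curves of large modular height --- are covered by neither of your two buckets. Had the height inequality been available on the one-copy family this could be patched (for large modular height every non-exceptional point would automatically clear the Vojta threshold), but since it is not, the correct split is the paper's: ``small'' means $\hat h\le c_0\max\{1,h([\jac(C)])\}$, and the small points are counted by covering a ball of radius $R\asymp h([\jac(C)])^{1/2}$ by $(1+2R/r)^{\rho}$ balls of radius $r\asymp h([\jac(C)])^{1/2}$ (so $R/r$ is bounded), each containing fewer than $c_4$ points by Proposition~\ref{PropAlgPtFar}. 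Finally, in the bounded-modular-height case, Northcott gives finitely many moduli points but the fields $F$ still vary, so one needs a twist-uniform, rank-exponential bound for each of the finitely many curves (R\'emond's explicit estimate, or Silverman's result on twists); your phrase ``a uniform count over the resulting finite set of moduli points'' should be made precise along these lines.
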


This theorem gives an affirmative answer to a question posed by
Mazur~\cite[Page~223]{Mazur:00}.
% when the objects in question are defined over the field of algebraic numbers.
See also~\cite[top of
page 234]{mazur1986arithmetic} for an earlier question. Before this,
 Lang formulated a related  conjecture~\cite[page
 140]{Lang:EllipticCurves78} on the number of integral
points of elliptic curves.

The method of our theorem builds up on the work of many others. At the
core we follow Vojta's proof~\cite{Vojta:siegelcompact} of the
Mordell Conjecture.
%which also provided a bound for $\#C(F)$. 
Vojta's proof was later simplified  by
Bombieri~\cite{Bombieri:Mordell} and further developed by
Faltings~\cite{Faltings:DAAV}. Silverman~\cite{Silverman:twists}
proved a bound of the quality (\ref{eq:CKexpinrank}) if $C$ ranges
over twists of a given smooth curve. 
The bound by de
Diego~\cite{deDiego:97} %for a family of smooth curves 
is of the form $c(g)
7^{\rho}$, where $c(g)>0$ depends only on $g$; the value $7$ had already
arisen in Bombieri's work. But she only counts points
whose height is large in terms of a height of $C$.
Work of David--Philippon \cite{DPvarabII} and R\'emond~\cite{Remond:Decompte}
led to explicit estimates. 
Recently, Alpoge \cite{AlpogeRatPt} \cite[Theorem~6.1.1]{AlpogeThesis} improved  $7$
to $1.872$ and, for $g$ large enough, even to $1.311$. 

% Our approach to Theorem~\ref{ThmBdRatIntro} follows the original
% blueprint towards Mordell's Conjecture laid out by Vojta. [PH]:
% removed because it repeats the above.
On combining the Vojta and Mumford Inequalities
one gets an upper bound for the number of \textit{large points} in
$C(F)$; these are points whose height is sufficiently large relative
to a suitable height of $C$. A lower bound for the N\'eron--Tate
height, such as proved by David--Philippon \cite{DPvarabII}, can be used to
count the number of remaining points which we sometimes call
\textit{small points}. 
Indeed, R\'emond~\cite{Remond:Decompte} made the Vojta and Mumford
Inequalities explicit and obtained 
explicit upper bounds for the number of rational points on curves
embedded in abelian varieties. 
The resulting cardinality bounds
depend on a suitable notion of height of $C$, an artifact of the lower
bounds for the N\'eron--Tate height. 
Later, David--Philippon~\cite{DaPh:07} proved stronger height lower
bounds
in a power of an
elliptic curve. They then 
 obtained uniform estimates
of the quality (\ref{eq:CKexpinrank}) for a curve in a power of
elliptic curves,  thus providing evidence that Mazur's Question had a
positive answer, see also
David--Nakamaye--Philippon's work~\cite{DaNaPh:07}.

We give an overview of the general method  in more detail in
$\mathsection$\ref{subsec:ntdist} below. 
% The main
% innovation of this paper is to get equally uniform bounds for the
% number of points on $C$ whose height is not sufficiently large.

The main innovation of this paper is to prove a lower bound for the
N\'eron--Tate height that is sufficiently strong to eliminate the
dependency on the height of $C$. This leads to a uniform estimate as
in Theorem \ref{ThmBdRatIntro}. In prior work~\cite{DGH1p} we
applied the earlier height lower bound~\cite{GaoHab}
to recover a variant of Theorem~\ref{ThmBdRatIntro} in
a one-parameter family of smooth curves.

We now explain some further  results that follow from the approach
described above. 
For an integer $g\ge 1$, let $\mathbb{A}_{g,1}$ denote the coarse moduli
space of principally polarized abelian varieties of dimension $g$.
This is an irreducible quasi-projective variety which we can take as
defined over $\IQbar$, the algebraic closure of $\IQ$ in $\IC$.
Suppose we are presented with an immersion
$\iota\colon \mathbb{A}_{g,1}\rightarrow \IP^m_\IQbar$ into projective
space.
Let $h\colon \IP^m_\IQbar(\IQbar)\rightarrow\IR$ denote the absolute
logarithmic Weil height,
\textit{cf.}~\cite[$\mathsection$1.5.1]{BG}.
For brevity, we sometimes call $h$ the Weil height.
If $C$ is a smooth curve of genus $g\ge 2$ defined over $\IQbar$ and if
$P_0\in C(\IQbar)$, then we can consider $C-P_0$ as a curve in
$\mathrm{Jac}(C)$ via the Abel--Jacobi map.
We use $[\mathrm{Jac}(C)]$ to denote the point in $ \mathbb{A}_{g,1}(\IQbar)$  parametrizing $\mathrm{Jac}(C)$.

% Let $A$ be an abelian variety defined over $\IQbar$
An abelian group $\Gamma$ is said to have finite rank if
$\Gamma\otimes\IQ$ is a finite dimensional $\IQ$-vector space. In this
case $\dim\Gamma\otimes\IQ$ is the rank of $\Gamma$. 
Consider an abelian variety $A$ defined over $\IC$ and let $\Gamma$ be
a finite rank subgroup of $A(\IC)$. Lang~\cite{Lang:Division}
conjectured that a
curve in $A$ intersects $\Gamma$ in a finite set
unless the curve is smooth of genus $1$. The Conjecture follows from
Faltings's Theorem~\cite{Faltings:ES} and work of
Raynaud~\cite{Raynaud:MMcurves}.

%What follows is a more precise version of Theorem~\ref{ThmBdRatIntro}.
% Theorem~\ref{ThmBdRatIntro} follows from the following theorem,
The following theorem is more in the spirit of \cite[top of page
234]{mazur1986arithmetic}.

\begin{theorem}
\label{ThmBdFinRank}
%% \Gcomment{Separated the two constants. This may help if later on we remove the ``large height'' hypothesis.}
%% PH: Good, I renamed them to fit our convention on constants
Let $g \ge 2$ and let $\iota$ be as above. Then there exist two
constants $c_1= c_1(g,\iota) \ge 0$ and $c_2 = c_2(g,\iota) \ge 1$ with the following property. Let $C$
be a  smooth curve of genus $g$  defined over $\IQbar$,
let $P_0 \in C(\overline \IQ)$, and let $\Gamma$ be a
subgroup of $\mathrm{Jac}(C)(\overline \IQ)$ of finite rank $\rho\ge
0$. If $h(\iota([\mathrm{Jac}(C)])) \ge c_1$, then
\[
\#(C(\overline \IQ)-P_0) \cap \Gamma  \le c_2^{1+\rho}.
\]
\end{theorem}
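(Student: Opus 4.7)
The plan is to adapt the Vojta--Mumford strategy for intersecting a curve with a finite rank subgroup, developed quantitatively by R\'emond and carried out for one-parameter families in the authors' earlier work~\cite{DGH1p}. Set $J=\jac(C)$ and use the Abel--Jacobi map $P\mapsto [P-P_0]$ to view $C$ inside $J$. One may first replace $\Gamma$ by its division hull
\[
\Gamma'=\{x\in J(\IQbar):\text{$nx\in\Gamma$ for some integer $n\ge 1$}\},
\]
which still has rank $\rho$ and contains $\Gamma$ together with all of $\tor{J}$, so it suffices to bound $\#\,(C-P_0)\cap\Gamma'$. Equip $J$ with a symmetric ample line bundle coming from the principal polarization, and let $\hat h$ be the associated N\'eron--Tate height. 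I would split $(C-P_0)\cap\Gamma'$ into a \emph{small} part, consisting of those $P$ with $\hat h([P-P_0])<\epsilon$ for a threshold $\epsilon=\epsilon(g,\iota)>0$ to be chosen, and a \emph{large} part.

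The large points are counted via the Vojta and Mumford inequalities. Together they yield, for any two distinct large points $P,Q\in(C-P_0)\cap\Gamma'$, a uniform upper bound on the cosine of their angle in the real quadratic space $(\Gamma'\otimes\IR,\hat h)$ modulo the null space of $\hat h$. A spherical covering argument in dimension at most $\rho$ then bounds their number by $c_0^{\rho}$ for an absolute constant $c_0$ --- the source of the classical value $7$ of de Diego and of its subsequent refinements by Alpoge. Crucially, by working uniformly over the universal Jacobian above $\mathbb{A}_{g,1}$, the implicit constants in the Vojta and Mumford inequalities can be arranged to depend only on $g$ and $\iota$, and not on any height of $C$ itself.

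The small points are where the main new input of the paper enters: a generalization of the height inequality of Gao--Habegger~\cite{GaoHab} to arbitrary families. I expect the generalization to supply an exceptional subset $Z=Z(C,P_0)\subset C(\IQbar)$ whose cardinality is bounded solely in terms of $g$ and $\iota$, together with positive constants $c_3(g,\iota)$ and $c_4(g,\iota)$, such that
\[
\hat h([P-P_0])\;\ge\;c_3\,h(\iota([\jac(C)]))-c_4 \qquad\text{for every }P\in C(\IQbar)\setminus Z.
\]
Choosing $c_1\ge (c_4+\epsilon)/c_3$ forces the right-hand side above $\epsilon$ as soon as $h(\iota([\jac(C)]))\ge c_1$; consequently every small point in $(C-P_0)\cap\Gamma'$ must arise from a point of $C$ lying in $Z$, and their number is absolutely bounded in terms of $g$ and $\iota$ alone.

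Adding the two contributions produces $\#(C(\IQbar)-P_0)\cap\Gamma\le c_2^{1+\rho}$ for a suitable $c_2=c_2(g,\iota)$. The principal obstacle is the height inequality itself: controlling the exceptional set $Z$ and the constants $c_3,c_4$ uniformly as $[\jac(C)]$ traverses the entire moduli space $\mathbb{A}_{g,1}$, rather than just a one-parameter family as in~\cite{DGH1p}, is the main technical undertaking and is where I expect the bulk of the paper's work to lie.
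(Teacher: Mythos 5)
There is a genuine gap, and it sits in your treatment of the \emph{large} points. You cut at an absolute threshold $\epsilon=\epsilon(g,\iota)$ and assert that the Vojta and Mumford inequalities count the points of $(C-P_0)\cap\Gamma$ with $\hat h\ge\epsilon$ by $c_0^{\rho}$, with constants ``not depending on any height of $C$ itself''. No known form of these inequalities does this: in R\'emond's quantitative versions (used here via Lemma~\ref{lem:vojtamumford}) the angle/gap conclusions only apply to points whose N\'eron--Tate height exceeds a quantity of the shape $c\max\{1,h(C),c_{\mathrm{NT}},h_1\}$, i.e.\ a threshold that grows with the projective height of the curve, the N\'eron--Tate/Weil comparison constant and the height of the addition--subtraction polynomials — all of which grow linearly with $h(\iota([\mathrm{Jac}(C)]))$. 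This is exactly why the paper places the large/small cut at $c_0\max\{1,h(\iota([\mathrm{Jac}(C)]))\}$ rather than at a constant, and why it must bound $h(C)$, $c_{\mathrm{NT}}$ and $h_1$ linearly in the moduli height (Lemma~\ref{lem:uniformdegreebound}, the Silverman--Tate theorem of Appendix~A, and the torsion-point argument for $h_1$). With your cut at $\epsilon$, the whole range $\epsilon\le\hat h\le c_0\max\{1,h\}$ is unaccounted for.

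The second, related issue is the shape of the statement you expect from the height inequality: a pointwise bound $\hat h(P-P_0)\ge c_3\,h(\iota([\mathrm{Jac}(C)]))-c_4$ outside a set $Z(C,P_0)$ of bounded cardinality. The paper's machinery does not deliver this (such a statement is a uniform Bogomolov-type result proved only later, by other means); because the exceptional locus of Theorem~\ref{ThmHtInequality} lives in the $M$-fold Faltings--Zhang image and cannot be pushed down to a bounded subset of $C$ for every base point, what one actually gets is the dichotomy of Proposition~\ref{PropAlgPtFar}: outside a set $\Xi_s$ of bounded cardinality, every $P$ has fewer than $c_4$ points $Q\in C(\IQbar)$ with $\hat h(Q-P)\le h/c_3$ (this is where Lemmas~\ref{LemmaNAlon} and~\ref{LemmaPacketsAlternative3} enter). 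The points below the Vojta--Mumford threshold are then counted not as an additive bounded contribution, as in your outline, but by a ball-packing argument in $\Gamma\otimes\IR$: one covers the ball of radius $R$ with $R^2\asymp\max\{1,h\}$ by $(1+2R/r)^{\rho}$ balls of radius $r$ with $r^2=h/c_3$, each containing at most $c_4$ points; the key uniformity is that the moduli height cancels in $R/r$. So the small points also contribute a factor exponential in $\rho$, and the two regimes must be glued in this way (see Proposition~\ref{prop:premazur}); your decomposition, as written, cannot be repaired merely by adjusting constants, since one cannot force the height-inequality constant $c_3$ to dominate the Vojta--Mumford constant $c_0$.
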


The following corollary follows from Theorem \ref{ThmBdFinRank} applied to 
$\Gamma = \mathrm{Jac}(C)(\IQbar)_{\mathrm{tors}}$, which has rank $0$.

\begin{corollary}
%\label{CorBdAlgTorIntro}
Let $g \ge 2$ and let $\iota$ be as above.
Then there exist two
constants $c_1= c_1(g,\iota) \ge 0$ and $c_2 = c_2(g,\iota) \ge 1$ with the following property. Let $C$
be a  smooth curve of genus $g$  defined over $\overline \IQ$ and  let $P_0 \in C(\overline \IQ)$.
If $h(\iota([\mathrm{Jac}(C)])) \ge c_1$, then
\[
\# (C(\overline \IQ)-P_0) \cap \mathrm{Jac}(C)(\IQbar)_{\mathrm{tors}}  \le c_2.
\]
\end{corollary}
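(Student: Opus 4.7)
The corollary is an immediate specialization of Theorem \ref{ThmBdFinRank}, and my plan is simply to identify the appropriate finite-rank subgroup and read off the bound. The key observation is that $\Gamma := \mathrm{Jac}(C)(\IQbar)_{\mathrm{tors}}$ is, by definition, a torsion abelian group, so every element is killed by some positive integer. Consequently $\Gamma \otimes_{\ZZ} \IQ = 0$, which means $\Gamma$ has finite rank in the sense defined in the excerpt, and moreover this rank equals $\rho = 0$.

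With this choice of $\Gamma$, I would invoke Theorem \ref{ThmBdFinRank} directly. Let $c_1 = c_1(g,\iota) \ge 0$ and $c_2 = c_2(g,\iota) \ge 1$ be the constants provided by that theorem (and use the very same constants to state the corollary). Then the hypothesis $h(\iota([\mathrm{Jac}(C)])) \ge c_1$ is shared between the two statements, so under this hypothesis Theorem \ref{ThmBdFinRank} yields
\[
  \# (C(\IQbar) - P_0) \cap \mathrm{Jac}(C)(\IQbar)_{\mathrm{tors}} \;=\; \#(C(\IQbar)-P_0) \cap \Gamma \;\le\; c_2^{1+\rho} \;=\; c_2^{1+0} \;=\; c_2,
\]
which is exactly the desired conclusion.

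There is essentially no obstacle here; the entire content of the corollary is the verification that the torsion subgroup of an abelian group has rank $0$ under the definition in use, followed by substitution. All difficulty in the result is absorbed into Theorem \ref{ThmBdFinRank}, and in particular into the uniform height lower bound on $C(\IQbar) \cap \Gamma$ together with the Vojta--Mumford counting of large points that feeds into it. No additional argument, for instance about the structure of torsion or about Raynaud's theorem on curves in abelian varieties, is needed: the finiteness is already built into Theorem \ref{ThmBdFinRank}, and what the corollary extracts is the quantitative shape of the bound at $\rho = 0$.
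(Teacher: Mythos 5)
Your proposal is correct and coincides with the paper's own (one-line) derivation: the corollary is stated in the paper as an immediate consequence of Theorem~\ref{ThmBdFinRank} applied to $\Gamma = \mathrm{Jac}(C)(\IQbar)_{\mathrm{tors}}$, which has rank $0$. Your verification that $\Gamma\otimes\IQ=0$ and the substitution $\rho=0$ are exactly what is needed.
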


As in Theorem \ref{ThmBdRatIntro} we can drop the condition on the
height of the Jacobian by working over a number field of bounded
degree. 

\begin{theorem}\label{thm:BdTorIntroNF}
  Let $g \ge 2$ and $d \ge 1$ be integers. Then there exists a constant
  $c = c(g, d) \ge 1$ with the following property. Let $C$ be a 
  smooth curve of genus $g$ defined over a
  number field $F\subset\IQbar$ 
  with $[F:\IQ] \le d$ and let $P_0 \in C(\IQbar)$, then
  \begin{equation*}
    \# (C(\overline \IQ)-P_0) \cap \mathrm{Jac}(C)(\IQbar)_{\mathrm{tors}} \le c.
  \end{equation*}
\end{theorem}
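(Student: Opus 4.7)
The plan is to split on the modular height of the Jacobian. Fix the immersion $\iota\colon \mathbb{A}_{g,1}\to\IP^m_\IQbar$ and let $c_1,c_2$ be the constants furnished by Theorem~\ref{ThmBdFinRank} for this $\iota$. The corollary following Theorem~\ref{ThmBdFinRank}---the specialization of that theorem to $\Gamma=\mathrm{Jac}(C)(\IQbar)_{\mathrm{tors}}$ of rank $\rho=0$---immediately yields
\[
\#(C(\IQbar)-P_0)\cap \mathrm{Jac}(C)(\IQbar)_{\mathrm{tors}}\le c_2
\]
whenever $h(\iota([\mathrm{Jac}(C)]))\ge c_1$. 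So we may assume that $h(\iota([\mathrm{Jac}(C)]))<c_1$, and the remaining task is to handle this small-height case over number fields of bounded degree.

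Under this hypothesis, since $C$ is defined over $F$ with $[F:\IQ]\le d$, the principally polarized Jacobian $(\mathrm{Jac}(C),\Theta_C)$ is also defined over $F$, so the moduli point $[\mathrm{Jac}(C)]$ lies in $\mathbb{A}_{g,1}(F)$. Northcott's theorem, applied to $\mathbb{A}_{g,1}\hookrightarrow\IP^m$, asserts that the $\IQbar$-points of $\mathbb{A}_{g,1}$ of height below $c_1$ and of degree at most $d$ over $\IQ$ form a finite set. By the Torelli theorem, valid for $g\ge 2$, each such moduli point is attained by at most one $\IQbar$-isomorphism class of smooth curves of genus $g$. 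Consequently, only finitely many $\IQbar$-isomorphism classes of curves $C_1,\dots,C_N$---depending only on $g$, $d$, and $\iota$---arise in the small-height case.

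For each such $C_i\subset A_i=\mathrm{Jac}(C_i)$ and each $P_0\in C_i(\IQbar)$, the translate $C_i-P_0$ is a curve of genus $g\ge 2$, hence not a coset of an abelian subvariety of $A_i$, so Raynaud's Manin--Mumford theorem makes $(C_i-P_0)\cap (A_i)_{\mathrm{tors}}$ finite. The main obstacle is to render this bound uniform in $P_0$. Since translation preserves both the ambient variety $A_i$ and the degree of $C_i$ with respect to any fixed polarization, a quantitative Manin--Mumford theorem---such as those of David--Philippon or R\'emond---delivers a uniform bound on this intersection depending only on $(A_i, C_i)$; alternatively, one can invoke the relative Manin--Mumford estimate of \cite{DGH1p} applied to the isotrivial one-parameter family $\{C_i-P_0\}_{P_0\in C_i}$. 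Combining the bound $c_2$ from the large-height case with the maximum of these finitely many uniform bounds yields the required constant $c=c(g,d)$.
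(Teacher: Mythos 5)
Your proof is correct and follows essentially the same route as the paper: split on $h(\iota([\mathrm{Jac}(C)]))$, apply Theorem~\ref{ThmBdFinRank} with $\Gamma=\mathrm{Jac}(C)(\IQbar)_{\mathrm{tors}}$ in the large-height case, and use Northcott plus Torelli to reduce the small-height case to finitely many $\IQbar$-isomorphism classes handled by Manin--Mumford. You are in fact slightly more careful than the paper, which simply invokes ``Raynaud's result'' for each exceptional curve, whereas you correctly flag and resolve the need for a bound uniform in the base point $P_0$ via a quantitative Manin--Mumford estimate depending only on the ambient abelian variety and the translation-invariant degree of the curve.
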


Let us recall some previous results towards Mazur's Question for
rational points, \textit{i.e.}, towards Theorem~\ref{ThmBdRatIntro}.
Based on the method of Vojta, Alpoge~\cite{AlpogeRatPt} proved that
the average number of rational points on a curve of genus $2$ with a
marked Weierstrass point is bounded. Let $C$ be a smooth curve of
genus $g\ge 2$ defined
over a number field $F\subset\IQbar$. The Chabauty--Coleman
approach~\cite{Chabauty, Coleman:effCha} yields estimates under an additional
hypothesis on the rank of Mordell--Weil group. For example, if
$\jac(C)(F)$ has rank at most $g-3$, Stoll~\cite{Stoll:Uniform} showed
that $\#C(F)$ is bounded solely in terms of $[F:\IQ]$ and $g$ if $C$ is
hyperelliptic; Katz--Rabinoff--Zureick-Brown~\cite{KatzRabinoffZB}
later, under the same rank hypothesis, removed the hyperelliptic
hypothesis. Checcoli,
Veneziano, and Viada~\cite{CVV:17} obtain an effective height bound
under a restriction on 
 the Mordell--Weil rank.

As for algebraic torsion points, \textit{i.e.}, in the direction of  Theorem~\ref{thm:BdTorIntroNF}, DeMarco--Krieger--Ye \cite{DeMarcoKriegerYeUniManinMumford} proved a bound on the cardinality of torsion points on any genus $2$
curve that admits a degree-two map to an elliptic curve when the
Abel--Jacobi map is based at a Weierstrass point. Moreover, their bound is
independent of $[F:\IQ]$.

\subsection{N\'{e}ron--Tate distance of algebraic points on curves}
\label{subsec:ntdist}
Let $C$ be a smooth curve defined over $\overline \IQ$ of genus $g \ge
2$, let $P_0 \in C(\overline \IQ)$, and let $\Gamma$ be a subgroup of
$\jac(C)(\overline \IQ)$ of finite rank $\rho$. For simplicity we identify $C$ with its image under the Abel--Jacobi embedding $C \rightarrow \jac(C)$ via $P_0$. 

Our proof of Theorem~\ref{ThmBdFinRank} follows the spirit of the
method of Vojta, later generalized by  Faltings.
Let $\hat h\colon \jac(C)(\IQbar)\rightarrow \IR$ denote the
N\'eron--Tate height attached to a symmetric and ample line bundle on $\jac(C)$. 
We divide $C(\overline \IQ) \cap \Gamma$ into two parts:
\begin{itemize}
\item Small points $\left\{P \in C(\overline \IQ) \cap \Gamma
: \hat{h}(P) \le B(C)  \right\}$;
\item Large points $\left\{P \in C(\overline \IQ) \cap \Gamma : \hat{h}(P) > B(C) \right\}$
\end{itemize}
where $B(C)$ is allowed to depend on a suitable height of $C$. It
turns out that
we can take $B(C)=c_0 \max\{1,h(\iota([\jac(C)]))\}$
for some $c_0 = c_0(g,\iota) > 0$. The constant $c_0$ is chosen in a
way that accommodates  both the \textit{Mumford inequality} and
the \textit{Vojta inequality}. Combining these
two inequalities yields an upper bound on the number of large points
by $c_1(g)^{1+\rho}$, see for example Vojta's~\cite[Theorem
6.1]{Vojta:siegelcompact} in the important case where $\Gamma$ is the
group of points of $\jac(C)$ rational over
 a number field or more generally in the work of
 David--Philippon~\cite{DPvarabII,DaPh:07} and R\'emond~\cite{Remond:Decompte}.

Thus in order to prove Theorem~\ref{ThmBdFinRank}, it suffices to bound the number of small points.
In this paper we find such a  bound by studying the
N\'{e}ron--Tate distance of points in $C(\overline \IQ)$.

Roughly speaking,
we find positive constants $c_1,c_2,c_3,$ and $c_4$  that depend on
$g$ and $\iota$, but not 
on $C$, such that  if $h(\iota([\jac(C)]))\ge
c_1$ then for all $P\in C(\IQbar)$ we have the following  alternative.
\begin{itemize}
\label{eq:alternative}
\item Either $P$ lies in  a subset of $C(\overline \IQ)$ of cardinality at most $c_2$,
\item or $\left\{Q \in C(\overline \IQ) : \hat{h}(Q-P) \le h(\iota([\jac(C)]))/c_3 \right\} < c_4$.
\end{itemize}

This dichotomy is stated in Proposition~\ref{PropAlgPtFar}.
In this paper, we make the statement precise by referring to the universal family of genus $g$ smooth curves with suitable level structure, and the N\'eron--Tate height on each Jacobian attached to the tautological line bundle. 
The setup is done in $\mathsection$\ref{SectionSettingUp}.

This proposition can be seen as a relative version of the Bogomolov
conjecture for abelian varieties with large height. It has the
following upshot: If $h(\iota([\jac(C)])) \ge c_1$, then the small
points in $C(\overline \IQ) \cap \Gamma$ lie in a set of uniformly
bounded cardinality, or are contained in $(1+c_0c_3)^\rho$ balls in the
N\'eron--Tate metric, with each ball containing at most $c_4$ points.
This will yield the desired bound in Theorem~\ref{ThmBdFinRank}, as
executed in $\mathsection$\ref{SectionRatPt}.

\subsection{Height inequality and non-degeneracy}\label{subsec:hgtineqnondeg}
We follow the framework presented in our previous work \cite{DGH1p}.
In \textit{loc.cit.} we proved the result for $1$-parameter families,
as an application of the second- and third-named authors' height
inequality \cite[Theorem~1.4]{GaoHab}. Passing to general cases
requires generalizing this height inequality to higher dimensional
bases. The generalization has two parts:  generalizing the inequality
itself under the non-degeneracy condition and generalizing the
criterion of non-degenerate subvarieties.
We execute the first part in the current paper while the second part was 
done by the second-named author in \cite{GaoBettiRank}. Let us explain the setup.

Let $k$ be an algebraically closed subfield of $\IC$. Let $S$ be a
regular, irreducible, quasi-projective variety defined over $k$ that
is Zariski open in an irreducible projective variety
$\overline S \subset\IP_k^m$. Let $\pi \colon \cA \rightarrow S$ be an
abelian scheme of relative dimension $g\ge 1$. We suppose that we are
presented with a closed immersion $\cA\rightarrow\IP_k^n\times S$ over
$S$. On the generic fiber of $\pi$ we assume that this immersion comes
from a basis of the global sections of the $l$-th power of a symmetric
and ample line bundle with $l\ge 4$. If $k=\IQbar$ and as described in
$\mathsection\ref{SubsectionHeight}$ we obtain two height functions,
the restriction of the Weil height
$h\colon \overline{S}(\IQbar)\rightarrow \IR$ and the N\'eron--Tate
height $\hat h_{\cA} \colon \cA(\IQbar)\rightarrow \IR$.

Let  $\ell\ge \bestlevel$ be an integer. Throughout the whole paper, by \textit{level-$\ell$-structure} we always mean \textit{symplectic level-$\ell$-structure}. For the purpose of our main applications, including
Theorems~\ref{ThmBdRatIntro} and \ref{ThmBdFinRank}, it suffices to
work under the following hypothesis. 

\begin{center}
  \label{def:hyp}
  {\tt (Hyp):} $\cA\rightarrow S$ carries a principal polarization and has level-$\ell$-structure for some $\ell \ge 3$.
\end{center}
So in the main body of the paper, we will focus on the case {\tt
(Hyp)}. The general case where {\tt (Hyp)} is not assumed will be
handled in Appendix~\ref{AppHtIneqApp}.

The non-degenerate subvarieties of $\cA$ are defined using the
\textit{Betti map} which we briefly describe here; 
the precise definition will be given by
Proposition~\ref{PropBettiMapApp} and in Proposition~\ref{PropBettiMap} under {\tt (Hyp)}.

For any $s \in S(\IC)$, there exists an open neighborhood $\Delta
\subseteq S^{\mathrm{an}}$ of $s$ which we may assume is simply-connected. Then one can define a basis $\omega_1(s),\ldots,\omega_{2g}(s)$ of the period lattice of each fiber $s \in \Delta$ as holomorphic functions of $s$. Now each fiber $\cA_s = \pi^{-1}(s)$ can be identified with the complex torus $\IC^g/(\IZ \omega_1(s)\oplus \cdots \oplus \IZ\omega_{2g}(s))$, and each point $x \in \cA_s(\IC)$ can be expressed as the class of $\sum_{i=1}^{2g}b_i(x) \omega_i(s)$ for real numbers $b_1(x),\ldots,b_{2g}(x)$. Then $b_{\Delta}(x)$ is defined to be the class of the $2g$-tuple $(b_1(x),\ldots,b_{2g}(x)) \in \IR^{2g}$ modulo $\IZ^{2g}$.
We obtain with a real-analytic map
\[
b_{\Delta} \colon \cA_{\Delta} = \pi^{-1}(\Delta) \rightarrow \mathbb{T}^{2g},
\]
which is fiberwise a group isomorphism and
where $\mathbb{T}^{2g}$ is the real torus of dimension $2g$.

\begin{definition}\label{DefinitionNonDegenerate}
An irreducible subvariety $X$ of $\cA$ is said to be non-degenerate if
there exists an open non-empty subset $\Delta$ of $S^{\mathrm{an}}$,
with the Betti map
$b_{\Delta} \colon \cA_{\Delta}:=\pi^{-1}(\Delta) \rightarrow \mathbb{T}^{2g}$, 
such that
\begin{equation}\label{EqDefnNonDeg}
\max_{x \in \sman{X} \cap \cA_{\Delta}} \mathrm{rank}_{\IR} (\mathrm{d}b_{\Delta}|_{\sman{X}})_x = 2\dim X
\end{equation}
where $\mathrm{d}b_\Delta$ denotes the differential and $\sman{X}$ is
the analytification of the regular locus of $X$.
\end{definition}

As the inequality $\le$ in \eqref{EqDefnNonDeg} trivially holds true, \eqref{EqDefnNonDeg} is equivalent to: there exists $x \in \sman{X} \cap \cA_{\Delta}$ such that $\mathrm{rank}_{\IR} (\mathrm{d}b_{\Delta}|_{\sman{X}})_x = 2\dim X$.

In Proposition~\ref{PropBettiForm}(iii) we give another characterization of non-degenerate
subvarieties. We can now formulate the height inequality. 

\begin{theorem}\label{ThmHtInequality}
  Suppose that $\cA$ and $S$ are as above with $k=\IQbar$;
  in particular, $\cA$ satisfies {\tt (Hyp)}.
  Let $X$ be a closed irreducible subvariety of $\cA$
  defined over $\IQbar$ that dominates $S$.   Suppose $X$ is non-degenerate,
  as defined in Definition~\ref{DefinitionNonDegenerate}. Then there
  exist constants $c_1>0$ and $c_2\ge 0$ and a Zariski open dense subset
  $U$ of $X$ with 
  \begin{equation*}
    \hat{h}_{\cA}(P) \ge c_1 h( \pi(P) ) - c_2 \quad \text{for all}\quad P \in U(\IQbar).
  \end{equation*}
\end{theorem}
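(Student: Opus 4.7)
The plan is to extend the argument of \cite{GaoHab}, which established this inequality in the case $\dim S = 1$, to bases of arbitrary dimension. Under {\tt (Hyp)}, representability of the moduli problem gives a modular morphism $S \to \AA_{g,\ell}$ to the fine moduli space of principally polarized abelian varieties with symplectic level-$\ell$-structure, so that $\cA \to S$ is the pullback of the universal abelian scheme $\mathfrak{A}_{g,\ell} \to \AA_{g,\ell}$. After comparing the Weil height on $\overline{S}$ with a height on a projective compactification of $\AA_{g,\ell}$ through Mumford's formula for the Faltings height, and invoking functoriality of the N\'eron--Tate height, I would reduce to proving the inequality in the universal setting with $X$ replaced by its image in the universal family.

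The next step is to set up the Betti form globally. On each simply-connected chart $\Delta \subset S^{\mathrm{an}}$ the Betti coordinates are real-analytic, and the pullback through $b_\Delta$ of the flat volume form on $\TT^{2g}$ patches into a smooth, closed, semi-positive $(1,1)$-form $\omega$ on $\cA^{\mathrm{an}}$ which is independent of $\Delta$. The form $\omega$ represents, up to a contribution pulled back from $S$, the first Chern form of the canonical hermitian metric on the relatively ample symmetric line bundle $L_\cA$ defining $\hat{h}_\cA$. In these terms non-degeneracy of $X$ as in Definition~\ref{DefinitionNonDegenerate} is exactly the assertion that $\omega^{\dim X}$ does not vanish identically on $\sman{X}$, i.e.\ that $\omega|_X$ is \emph{big} as a semi-positive form.

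The heart of the proof is to convert this analytic bigness into an arithmetic bigness. Let $M$ be an ample line bundle on $\overline{S}$ whose associated height dominates $h$. I would argue that, for some large positive integer $N$, the restriction to a projective compactification of $X$ of the line bundle $L_\cA^{\otimes N}\otimes\pi^\ast M^{\otimes(-1)}$ is big. Indeed, the top self-intersection $(L_\cA|_X)^{\dim X}$ is proportional by Chern--Weil to $\int_{X^{\mathrm{an}}}\omega^{\dim X}>0$, while the mixed intersections $(L_\cA|_X)^{\dim X-1}\cdot\pi^\ast M$ are bounded; a Siu-type inequality then delivers bigness for $N$ large. Standard height theory then converts bigness into the sought-after comparison $\hat{h}_\cA(P)\ge N^{-1}h(\pi(P)) - O(1)$ on the complement $U$ of the augmented base locus of the bundle, giving the theorem with $c_1 = 1/N$.

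The hard part will be making the passage from analytic positivity of $\omega|_X$ to strict effectivity of a global line bundle class fully rigorous, given that $S$ is only quasi-projective and $X$ may be singular. A complete argument will need to compactify $\cA\to S$, e.g.\ via a smooth toroidal compactification of $\AA_{g,\ell}$, and carefully control the boundary contributions both to the arithmetic intersection numbers and to the extension of the Betti form; one must also pass to a desingularization of $X$. An attractive alternative that I would explore in parallel is an inductive slicing approach: since the non-degeneracy condition survives restriction to generic one-dimensional linear sections of $S$, the $1$-parameter result of \cite{GaoHab} applies slice by slice, and a uniform-in-families version of that theorem could then be patched into the required global inequality without invoking an arithmetic Siu inequality on the full total space.
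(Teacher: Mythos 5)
Your overall strategy --- identify non-degeneracy with $\omega|_{\sman{X}}^{\wedge d}\not\equiv 0$, turn this analytic positivity into positivity of intersection numbers, and invoke Siu's bigness criterion --- is indeed the engine of the paper's proof. But there is a genuine gap in your third paragraph, and it is precisely the point the paper's construction is designed to get around. You apply Siu to $L_{\cA}^{\otimes N}\otimes\pi^{*}M^{\otimes(-1)}$; granting the intersection estimates, this yields $h_{L_{\cA}}(P)\ge N^{-1}h(\pi(P))-O(1)$ with $c_1=1/N\to 0$. To replace the Weil-machine height $h_{L_{\cA}}$ by $\hat h_{\cA}$ you must use a Silverman--Tate comparison, whose error is $c_0\max\{1,h(\pi(P))\}$ with $c_0$ independent of $N$ but not small; the resulting bound $\hat h_{\cA}(P)\ge (N^{-1}-c_0)\,h(\pi(P))-O(1)$ is vacuous as soon as $N>1/c_0$. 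The paper works instead with the pullbacks $[N]^{*}$ rather than tensor powers: on the graph $\overline{X_N}$ of $[N]|_X$ it compares $\cF=\cO(0,1,1)|_{\overline{X_N}}$ with $\cM=\cO(0,0,1)|_{\overline{X_N}}$; the homogeneity $[N]^{*}\omega=N^{2}\omega$ gives $(\cF^{\cdot d})\ge\kappa N^{2d}$, while an explicit polynomial description of $[2^{l}]$ together with Fulton's positivity for multiprojective space gives $(\cM\cdot\cF^{\cdot(d-1)})\le cN^{2(d-1)}$. Siu then yields $h([N]P)\ge c_1N^{2}h(\pi(P))-c_2(N)$ with $c_1$ \emph{independent of} $N$; since $\hat h_{\cA}([N]P)=N^{2}\hat h_{\cA}(P)$ and $\pi([N]P)=\pi(P)$, dividing by $N^{2}$ shrinks the Silverman--Tate error to $c_0/N^{2}$, and one wins by taking $N^{2}\ge 2c_0/c_1$. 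Without this quadratic gain your scheme cannot close.

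Two further points. The identification of $(L_{\cA}|_{X})^{\cdot\dim X}$ with $\int_{\an{X}}\omega^{\wedge\dim X}$ ``by Chern--Weil'' is not available: $\omega$ is not the Chern form of an extension of $L_{\cA}$ to a compactification, and the integral need not even converge since $S$ is non-compact. What the paper actually proves is a one-sided inequality, $\int_{\sman{X}}(\theta\omega)^{\wedge d}\le C^{d}(\cF^{\cdot d})$, obtained by cutting off with a bump function $\theta$ supported over a relatively compact $\Delta\subset\an{S}$ and dominating $\omega$ there by a multiple of the Fubini--Study form representing $\cO(1,1)$; this suffices for the lower bound but is not a proportionality. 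As for the slicing alternative: even if non-degeneracy survives restriction to a generic curve in $S$, the constants and the exceptional Zariski-closed subset in the one-parameter theorem depend on the slice, and there is no mechanism to make them uniform or to ensure that the union of the exceptional sets stays inside a proper closed subset of $X$; so as stated it is not a proof.
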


Note that \cite[Theorem~1.4]{GaoHab} is, up to some minor reduction,
precisely Theorem~\ref{ThmHtInequality} for $\dim S = 1$ together with
the criterion for $X$ to be non-degenerate when $\dim S = 1$. In general, the degeneracy behavior of $X$ is fully studied in \cite{GaoBettiRank}. See \cite[Theorem~1.1]{GaoBettiRank} for the criterion. However in practice, we sometimes still want to understand the height comparison on some degenerate $X$. One way to achieve this is by applying \cite[Theorem~1.3]{GaoBettiRank}, which asserts the following statement: If $X$ satisfies some reasonable properties, then we can apply Theorem~\ref{ThmHtInequality} after doing some simple operations with $X$. 

For the purpose of proving Proposition~\ref{PropAlgPtFar} and
furthermore Theorem~\ref{ThmBdFinRank}, we work in the following
situation.

Let $\mathbb{A}_{g,\ell}$
denote the moduli space of principally polarized $g$-dimensional
abelian varieties with level-$\ell$-structure. It is
a classical fact that $\mathbb{A}_{g,\ell}$ is represented by an
 irreducible, regular, quasi-projective
variety defined over a number field,
see~\cite[Theorem~7.9 and below]{MFK:GIT94} or \cite[Theorem 1.9]{OortSteenbrink}, so it is a
fine moduli space.
% We will use the analytic description of $\mathbb{A}_{g,\ell}$ arising from a suitable  quotient of Siegel's upper half space.
Let $\mathbb{M}_{g,\ell}$ be the fine moduli space of smooth curves of genus
$g$ whose Jacobian is equipped with  level-$\ell$-structure;
see~\cite[(5.14)]{DM:irreducibility} or \cite[Theorem
1.8]{OortSteenbrink} for the existence.
Then $\mathbb{M}_{g,\ell}$ is an irreducible, regular, quasi-projective
variety defined over a number field. 
%for some integer $\ell\ge \bestlevel$, and
% let $\mathbb A_g$ be the fine moduli space of principally polarized abelian
% varieties of dimension $g$ with level-$\ell$-structure.

To avoid confusion on different notations in different references, we
make the following convention throughout  the paper. We will take
$\mathbb{A}_{g,\ell}$ and $\mathbb{M}_{g,\ell}$ as geometrically
irreducible varieties. 
%\Pcomment{I changed this to geometrically  irreducible as we sometimes treat them as defined over a number field.} 
 Some authors define
$\mathbb{A}_{g,\ell}$ over $\mathbb{Z}[1/\ell]$ (or over $\mathbb{Z}$)
and then consider it over $\IQbar$ by base change. The
$\IQbar$-variety thus obtained may not be irreducible, and each
irreducible component is defined over $\IQ(\zeta_{\ell})$ for some
 root of unity $\zeta_{\ell}$ of order $\ell$.
Choosing a geometrically irreducible component of $\mathbb{A}_{g,\ell}$
amounts to fixing a complex root of unity of order $\ell$.
We fix such a choice once and for all and consider
$\mathbb{A}_{g,\ell}$ as an irreducible variety defined over $\IQbar$.
% In our convention
% $\mathbb{A}_{g,\ell}$ will be an irreducible component of the
% $\IQbar$-variety thus obtained. Same
The same  holds
for $\mathbb{M}_{g,\ell}$.
We will usually fix $\ell$ and abbreviate $\mathbb{A}_{g,\ell}$ (resp.
$\mathbb{M}_{g,\ell}$) by $\mathbb{A}_g$ (resp. $\mathbb{M}_g$).
It is often convenient to consider $\mathbb{A}_g$ and $\mathbb{M}_g$
as over $\IQbar$, but sometimes we will recall that both arise from
varieties defined over the number field $\IQ(\zeta_\ell)$.
We denote the coarse moduli space of smooth curves of genus $g$
with $\mathbb{M}_{g,1}$.

Furthermore, let
$\mathfrak{C}_g \rightarrow \mathbb{M}_g$ be the universal curve and
$\mathfrak{A}_g \rightarrow \mathbb A_g$ be the universal abelian
variety. Taking the Jacobian of a smooth curve leads to the  Torelli morphism
$\mathbb{M}_g \rightarrow \mathbb A_g$ which is finite-to-$1$  (but
 not injective  as we
have level structure).
Moreover, for $M\ge 2$ let
$\mathscr{D}_M$ denote the $M$-th Faltings--Zhang morphism fiberwise defined by
sending
\begin{equation}
  \label{eq:faltingszhangintro}
  (P_0, P_1, \ldots, P_M) \mapsto (P_1-P_0, \ldots, P_M-P_0);
\end{equation}
we give a precise definition of this morphism
in $\mathsection$\ref{subsec:univfamily}. 
 % To ease notation we identify its image with $\mathbb{M}_g$.
Roughly speaking, we will  apply Theorem~\ref{ThmHtInequality} to
\[
  X := \mathscr{D}_M(\underbrace{\mathfrak{C}_g \times_{\mathbb{M}_g} \cdots \times_{\mathbb{M}_g} \mathfrak{C}_g}_{(M+1)\text{-copies}}) \subseteq \underbrace{\mathfrak{A}_g \times_{\mathbb M_g} \cdots \times_{\mathbb M_g} \mathfrak{A}_g}_{M\text{-copies}}
\]
for
% \Pcomment{Change right-hand side in display above
%   to fibered power of $\mathbb{M}_g$}
a suitable  $M$. To verify non-degeneracy we will refer to  
the second-named author's work \cite[Theorem~1.2']{GaoBettiRank} which
applies if $M$ is large in terms of $g$.  So we can apply Theorem~\ref{ThmHtInequality} to such $X$.  This
will eventually lead to Proposition~\ref{PropAlgPtFar}. %PropNTDistance}.

The morphism and its variants are powerful tools in diophantine
geometry, see~\cite[Lemma~4.1]{Faltings:DAAV}. It is closely connected
to problems involving small N\'eron--Tate height,
see~\cite[Lemma~3.1]{ZhangEquidist}% and\cite[Proposition~4.1]{DPvarabII}
. Stoll~\cite{Stoll:Uniform} used a
variant of \eqref{eq:faltingszhangintro} to show that a conjecture of
Pink~\cite{Pink} on unlikely intersections implies
Theorem~\ref{ThmBdFinRank} with the condition
$h(\iota([\mathrm{Jac}(C)])) \ge c_1$ removed and with $C$ allowed to
be defined over $\IC$.
 
At this stage it is worth outlining the main steps of the proof of \cite[Theorem~1.2']{GaoBettiRank}, or the more general \cite[Theorem~1.3]{GaoBettiRank}, due to its importance to the current paper. The major step is to establish a criterion, \textit{in simple geometric terms}, for an irreducible subvariety $X$ of the universal abelian variety $\mathfrak{A}_g$ to be degenerate. Roughly speaking, the proof of the desired criterion is divided into two steps. Step~1 transfers the degeneracy property to an \textit{unlikely intersection problem} in $\mathfrak{A}_g$ by invoking the \textit{mixed Ax--Schanuel theorem} for $\mathfrak{A}_g$ \cite[Theorem~1.1]{GaoMixedAS}. More precisely we show that $X$ is degenerate if and only if $X$ is the union of subvarieties satisfying an appropriate unlikely intersection property. Step~2 solves this unlikely intersection problem, and the key point is to use \cite[Theorem~1.4]{GaoMixedAS} to prove that the union mentioned above is a finite union. In this step the notion of \textit{weakly optimal subvarieties} introduced by the third-named author and Pila \cite{HabeggerPilaENS} is involved.

\subsection{General notation}
We collect here an overview of notation used throughout the text. 

Let $S$ be an irreducible, quasi-projective variety defined over an
algebraically closed field $k$. Then $\sm{S}$ denotes the regular
locus of $X$. If $\pi\colon \cA\rightarrow S$ is an abelian scheme
then $[N]\colon \cA\rightarrow\cA$ is the multiplication-by-$N$
morphism for all $N\in\IN=\{1,2,3,\ldots\}$, and if $s\in S(k)$, the fiber
$\cA_s=\pi^{-1}(s)$ is an abelian variety defined over $k$.
If $k\subset \IC$, then
$\an{S}$ denotes the analytification of $S$; it carries a natural
topology that is Hausdorff.

We write $\mathbb{T}$ for the circle group $\{z\in \IC : |z|=1\}$.

\subsection*{Acknowledgements} The authors would like to thank
Shou-wu Zhang for relevant discussions and Gabriel Dill for the
argument involving torsion points to bound $h_1$ on
page~\pageref{page:boundh1}. 
We would also like
to thank Lars K\"uhne and Ngaiming Mok  for discussions on Hermitian Geometry; our paper 
is much influenced by K\"uhne's approach towards bounded height
\cite{kuehne:semiabelianbhc}
and by Mok's approach to study the Mordell--Weil rank over function
fields \cite{Mok11Form}.
We thank Gabriel Dill, Lars K\"uhne, Fabien Pazuki, and Joseph H. Silverman for corrections and comments
on a draft of this paper. 
We also thank the referees for their careful reading and valuable comments. 
VD would like
to thank the NSF and the Giorgio and Elena Petronio Fellowship Fund II
for financial support for this work. VD has received funding from the European Union's Seventh Framework Programme (FP7/2007--2013) / ERC grant agreement n$^\circ$ 617129. ZG has received
funding from the French National Research Agency grant
ANR-19-ERC7-0004, and the European Research Council (ERC) under the
European Union's Horizon 2020 research and innovation programme (grant
agreement n$^\circ$ 945714). PH has received funding
from the Swiss National Science Foundation project n$^\circ$ 200020\_184623.
Both VD and ZG would like to
thank the Institute for Advanced Study and the special year ``Locally
Symmetric Spaces: Analytical and Topological Aspects'' for its
hospitality during this work.

%%% Local Variables:
%%% TeX-master: "main"
%%% End:

%% Section 2
\section{Betti map and Betti form}\label{SectionBettiRevisited}
The goals of this section are to revisit the Betti map, the Betti form and make a link between them. 
In this paper we construct the Betti map using the universal family of
principally polarized abelian varieties with level-$\ell$-structure and
bypass the ad-hoc construction found in \cite{GaoHab}.

 In this section we will make
the following assumptions.
All varieties  are defined over the field $\IC$.
Let $S$ be an  irreducible, regular, quasi-projective variety over $\IC$. 
Let $\pi\colon \cA\rightarrow S$ be an abelian scheme of  relative
dimension $g$, that carries a principal polarization, and such that
 $\cA$ is equipped with level-$\ell$-structure, for some
 $\ell\ge\bestlevel$, \textit{i.e.}, {\tt (Hyp)} is satisfied. 

\begin{proposition}\label{PropBettiMap}
Let $s_0 \in S(\IC)$. Then there exist an open neighborhood $\Delta$ of $s_0$ in $S^{\mathrm{an}}$, and a map $b_{\Delta} \colon \cA_{\Delta} := \pi^{-1}(\Delta) \rightarrow \mathbb T^{2g}$, called the Betti map, with the following properties.
\begin{enumerate}
\item[(i)] For each $s \in \Delta$ the restriction $b_{\Delta}|_{\cA_s(\IC)} \colon \cA_s(\IC) \rightarrow \mathbb T^{2g}$ is a group isomorphism.
\item[(ii)] For each $\xi \in \mathbb T^{2g}$ the preimage $b_{\Delta}^{-1}(\xi)$ is a complex analytic subset of $\cA_{\Delta}$.
\item[(iii)] The product
$(b_{\Delta},\pi) \colon \cA_{\Delta} \rightarrow \mathbb
T^{2g} \times \Delta$ is a real analytic isomorphism.% 
\end{enumerate}
\end{proposition}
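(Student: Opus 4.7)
The plan is to construct $b_\Delta$ directly from the local system of period lattices of $\pi\colon\cA\to S$, together with a holomorphic local frame of the Lie algebra bundle $\mathrm{Lie}(\cA/S)$; the construction can also be viewed as pulled back from a canonical Betti map on the universal family $\mathfrak{A}_g\to\AA_g$ through the modular morphism afforded by {\tt (Hyp)}, which makes it compatible with the usage elsewhere in the paper.

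First I would shrink $S^{\anE}$ to an open neighborhood $\Delta$ of $s_0$ that is simply connected and small enough that the holomorphic vector bundle $\mathrm{Lie}(\cA/S)$ admits a holomorphic frame on $\Delta$. This yields a holomorphic identification $\mathrm{Lie}(\cA_s)\cong\CC^g$ for $s\in\Delta$. The fiberwise exponential map identifies $\cA_s(\IC)=\CC^g/\Lambda_s$, where $\Lambda_s$ is the period lattice. By Ehresmann's fibration theorem (applied to $\pi^{\anE}$, which is a proper submersion) the family $\{\Lambda_s\}_{s\in\Delta}$ forms the fibers of the locally constant sheaf $R_1\pi_*\IZ$ of rank $2g$ on $S^{\anE}$; simple connectedness of $\Delta$ allows me to choose a global basis of sections of this local system, yielding holomorphic period maps $\omega_1,\ldots,\omega_{2g}\colon\Delta\to\CC^g$ with $\Lambda_s=\IZ\omega_1(s)\oplus\cdots\oplus\IZ\omega_{2g}(s)$ for every $s\in\Delta$.

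I would then define $b_\Delta$ by the recipe recalled in the excerpt: given $x\in\cA_s$ lift to $\tilde x\in\CC^g$, write $\tilde x=\sum_{i=1}^{2g}b_i(x)\omega_i(s)$ with $b_i(x)\in\IR$ (possible and unique because the $\omega_i(s)$ generate a lattice of full rank $2g$ and so form an $\IR$-basis of $\CC^g\cong\IR^{2g}$), and set $b_\Delta(x)=(b_1(x),\ldots,b_{2g}(x))\bmod\IZ^{2g}$. The ambiguity in $\tilde x$ is exactly $\Lambda_s=\bigoplus\IZ\omega_i(s)$, so $b_\Delta(x)\in\IT^{2g}$ is well defined. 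Property (i) is then tautological: on each fiber $b_\Delta$ is the reduction mod $\IZ^{2g}$ of the inverse of the $\IR$-linear isomorphism $\IR^{2g}\to\CC^g$ given by $(b_i)\mapsto\sum_i b_i\omega_i(s)$.

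For (ii), given $\xi\in\IT^{2g}$ with representatives $\xi_i\in\IR$, the preimage $b_\Delta^{-1}(\xi)$ is the image in $\cA_\Delta$ of the holomorphic section $s\mapsto\sum_i\xi_i\omega_i(s)$ of $\CC^g\times\Delta\to\Delta$, hence a complex analytic subset of $\cA_\Delta$ (in fact a complex submanifold of dimension $\dim\Delta$). For (iii), on the universal cover $\CC^g\times\Delta$ of $\cA_\Delta$ the map $\Psi\colon(\tilde x,s)\mapsto((b_i(\tilde x,s))_i,s)\in\IR^{2g}\times\Delta$ that extracts the $\omega_i(s)$-coordinates is a real analytic isomorphism, because the change-of-basis matrix from the standard $\IR$-basis of $\CC^g\cong\IR^{2g}$ to $(\omega_1(s),\ldots,\omega_{2g}(s))$ is invertible and depends holomorphically on $s$, so its inverse is real analytic in $s$. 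Passing to the quotient by the natural fiberwise $\IZ^{2g}$-action on both sides yields the asserted real analytic isomorphism $(b_\Delta,\pi)\colon\cA_\Delta\xrightarrow{\sim}\IT^{2g}\times\Delta$. I expect the only genuinely subtle ingredient to be the production of the holomorphic period frame $\omega_i(s)$; after that, everything reduces to linear algebra with holomorphic parameters.
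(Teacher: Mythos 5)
Your construction is correct, but it is not the route the paper takes: the paper deliberately bypasses the intrinsic period-lattice construction (which it attributes to \cite{GaoHab}) and instead builds the Betti map only once, for the universal family $\mathfrak{A}_g\rightarrow\mathbb{A}_g$, using the explicit uniformization $\IC^g\times\mathfrak{H}_g\rightarrow\cA_{\mathfrak{H}_g}$ and the inverse of $(a,b,Z)\mapsto(a+Zb,Z)$; it then obtains $b_\Delta$ for a general $\cA\rightarrow S$ satisfying {\tt (Hyp)} by pulling back along the classifying map $\iota\colon\cA\rightarrow\mathfrak{A}_g$ afforded by the fine moduli property, setting $b_\Delta=b_{\Delta_0}\circ\iota$. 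What the paper's approach buys is exactly what it needs afterwards: the Siegel coordinates $(w,Z)$ and $(a,b,Z)$ are the ones in which the Betti form $\hat\omega^{\mathrm{univ}}=2\tran{(\mathrm{d}a)}\wedge\mathrm{d}b$ is computed and descended, and the functoriality $b_\Delta=b_{\Delta_0}\circ\iota$ is used verbatim in the proof of Proposition~\ref{PropBettiForm}(iii) for general $\cA$ (see \eqref{EqBettiRankTrivialBound}). What your approach buys is generality and self-containedness: it needs neither {\tt (Hyp)} nor a fine moduli space, and is essentially the argument the paper has to redo in Appendix~\ref{app:silvermantate}'s companion, Proposition~\ref{PropBettiMapApp}, for arbitrary polarization type. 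Two small points you should make explicit: the holomorphy of the period frame $\omega_1,\ldots,\omega_{2g}$ comes from the relative exponential sequence exhibiting $R_1\pi_*\IZ$ as a subsheaf of the holomorphic bundle $\mathrm{Lie}(\cA/S)$ (so flat sections of the local system are holomorphic sections of the bundle), and your claimed compatibility with the universal Betti map is only asserted, not proved — though by the uniqueness statement following the proposition (any two maps satisfying (i) and (iii) on a connected $\Delta$ differ by an element of $\mathrm{GL}_{2g}(\IZ)$), this compatibility is automatic up to $\mathrm{GL}_{2g}(\IZ)$, which suffices for every later use of the Betti map in the paper.
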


The properties (i) -- (iii) do not uniquely determine $b_\Delta$.
Indeed,  composing $b_\Delta$ with an automorphism of the
topological group $\IT^{2g}$, \textit{i.e.}, an element of
$\mathrm{GL}_{2g}(\IZ)$, leads to a new Betti map satisfying (i) --
(iii). After shrinking   $\Delta$ we may assume that it is connected.
In this case, an application of the
Baire Category Theorem shows that $b_\Delta$ is
uniquely determined by (i) and (iii)
up to composition with a unique element of $\mathrm{GL}_{2g}(\IZ)$.

Andr\'{e}, Corvaja, and Zannier~\cite{ACZBetti} recently began the
study of the maximal rank of the Betti map, especially the submersivity, using a slightly different
definition. A full study of this maximal rank was realized in \cite{GaoBettiRank}. Closely related to the
Betti map is the Betti form, a semi-positive $(1,1)$-form on
$\an{\cA}$, which was first introduced in Mok \cite{Mok11Form}.

\begin{proposition}\label{PropBettiForm}
There exists a closed $(1,1)$-form $\omega$ on $\an{\cA}$, called the Betti form, such that the following properties hold.
\begin{enumerate}
\item[(i)] The $(1,1)$-form $\omega$ is semi-positive, \textit{i.e.},  at each point
the associated Hermitian form is positive semi-definite.
\item[(ii)] For all $N\in\IZ$ we have $[N]^*\omega = N^2 \omega$.
\item[(iii)] If $X$ is an irreducible subvariety of $\cA$ of dimension
$d$ and $\Delta\subset \an{S}$ is open with $\sman{X}\cap\cA_\Delta\not=\emptyset$, then
\[
\omega|_{\sman{X}}^{\wedge d} \not\equiv 0 \quad\text{if and only
if}\quad  \max_{x \in
\sman{X} \cap \cA_{\Delta}} \mathrm{rank}_{\IR}
(\mathrm{d}b_{\Delta}|_{X^{\mathrm{sm,an}}})_x = 2d. %\text{ for some non-empty open }\Delta\subseteq S^{\mathrm{an}}.
\]

\end{enumerate}
\end{proposition}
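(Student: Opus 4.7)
The approach is to build $\omega$ by pulling back a standard symplectic form on $\mathbb{T}^{2g}$ via the Betti maps of Proposition~\ref{PropBettiMap} and patching. On $\mathbb{T}^{2g}=\IR^{2g}/\IZ^{2g}$ with coordinates $(x_1,\ldots,x_{2g})$, fix the translation-invariant $2$-form $\omega_0 := \sum_{i=1}^{g} dx_i\wedge dx_{g+i}$ associated to the standard symplectic pairing $J$ on $\IZ^{2g}$. Cover $\an{S}$ by simply connected neighborhoods $\Delta_\alpha$, each carrying a Betti map $b_{\Delta_\alpha}\colon \cA_{\Delta_\alpha}\to \mathbb{T}^{2g}$ whose fiberwise restriction $b_{\Delta_\alpha}|_{\cA_s}$ is symplectic with respect to the principal polarization. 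Set $\omega_\alpha := b_{\Delta_\alpha}^*\omega_0$ on $\cA_{\Delta_\alpha}$. On an overlap, the two Betti maps differ fiberwise by a locally constant element of $\mathrm{Sp}_{2g}(\IZ)$ (this is where the principal polarization and the symplectic level-$\ell$-structure enter); since $\omega_0$ is $\mathrm{Sp}_{2g}(\IZ)$-invariant, the local forms agree on overlaps and glue to a closed real $2$-form $\omega$ on $\an{\cA}$. Property (ii) is then immediate: on Betti coordinates, $[N]$ acts by $b\mapsto Nb$, hence $[N]^*db_i = N\,db_i$ and $[N]^*\omega = N^2\omega$.

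To verify (i), work on the universal cover of $\cA_\Delta$, modeled as $\Delta\times\IC^g$, and write $z = \Pi(s)\,b$, where $\Pi(s)$ is the $g\times 2g$ period matrix whose columns form a symplectic basis of the period lattice. Inverting this $\IR$-affine change of variables expresses each $db_i$ as a $\IC$-linear combination of the $dz_j$ and the $d\bar z_j$. Expanding $\omega = \sum_{i,j}J_{ij}\,db_i\wedge db_j$ and decomposing into Hodge types, the vanishing of the $(2,0)$ and $(0,2)$ components is exactly the first Riemann bilinear relation $\Pi\,J^{-1}\,\Pi^T = 0$, while the semi-positivity of the associated Hermitian form is the second Riemann relation on $i\,\Pi\,J^{-1}\,\bar\Pi^T$. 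Both hold because $\cA\to S$ is a polarized abelian scheme.

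For (iii), the local identity $\omega|_{\cA_\Delta} = b_\Delta^*\omega_0$ together with the nondegeneracy of $\omega_0$ on $\mathbb{T}^{2g}$ implies $\ker\omega_x = \ker(\mathrm{d}b_\Delta)_x$, so the real rank of $\omega|_{T_x\sman{X}}$ equals the rank of $(\mathrm{d}b_\Delta|_{\sman{X}})_x$. A real $2$-form on a $2d$-dimensional vector space has nonvanishing $d$-th exterior power if and only if it has full rank $2d$; applied pointwise over $\sman{X}\cap\cA_\Delta$, this yields the stated equivalence.

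The main obstacle is the Hodge-theoretic content of (i): checking, via the Riemann bilinear relations, that the Betti form computed in complex trivializations of the fibers really has the correct $(1,1)$-type and semi-definiteness. The gluing, the scaling property (ii), and the rank criterion (iii) are then essentially formal consequences of Proposition~\ref{PropBettiMap}.
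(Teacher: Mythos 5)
Your construction of $\omega$ in (i)--(ii) is a legitimate variant of the paper's. The paper works on the universal covering $\IC^g\times\mathfrak{H}_g$ of $\an{\mathfrak{A}}_g$, exhibits an explicit $\partial\overline{\partial}$-potential, shows the form equals (twice) the standard form in the real Betti coordinates, descends it by invariance under $V_{2g}\rtimes\mathrm{Sp}_{2g}(\IR)$, and pulls back to $\cA$; you instead glue the local pullbacks $b_{\Delta}^*\omega_0$ and check type and semi-positivity via the Riemann bilinear relations. These give the same form up to normalization, and your gluing is fine provided you normalize each $b_\Delta$ by a symplectic basis of the period lattice: Proposition~\ref{PropBettiMap} only determines $b_\Delta$ up to $\mathrm{GL}_{2g}(\IZ)$, so the symplectic refinement (using the principal polarization) is genuinely needed for the $\mathrm{Sp}_{2g}(\IZ)$-invariance argument, as you indicate. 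Property (ii) is indeed immediate in Betti coordinates.

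The gap is in (iii). From $\ker\omega_x=\ker(\mathrm{d}b_\Delta)_x$ on the full tangent space of $\an{\cA}$ you infer that the real rank of $\omega|_{T_x\sman{X}}$ equals the rank of $(\mathrm{d}b_\Delta|_{\sman{X}})_x$; this inference is invalid for a general $2$-form and a general subspace. After restriction, the null space of $\omega|_{T_x\sman{X}}$ can be strictly larger than $T_x\sman{X}\cap\ker(\mathrm{d}b_\Delta)_x$: the restricted form has the same rank as $\omega_0$ restricted to the image $(\mathrm{d}b_\Delta)_x(T_x\sman{X})\subset\IR^{2g}$, and a priori that image could be partially isotropic (compare the restriction of a symplectic form to a Lagrangian plane, which is zero although the map has full rank). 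Nondegeneracy of $\omega_0$ alone does not exclude this, and ruling it out is exactly the hard implication of (iii). The pointwise rank equality you want is in fact true here, but its proof must use the two ingredients your argument never invokes at this step: that $\omega$ is a semi-positive $(1,1)$-form and that $T_x\sman{X}$ is a \emph{complex} subspace. Then the associated Hermitian form $H$ is positive semi-definite, Cauchy--Schwarz shows that the null space of $\omega|_{T_x\sman{X}}$ is $\{v\in T_x\sman{X} : H(v,v)=0\}=T_x\sman{X}\cap\ker(\mathrm{d}b_\Delta)_x$, and only then do the two ranks agree pointwise. (The paper avoids the pointwise statement altogether and argues through the vanishing locus, producing complex analytic curves inside Betti fibers via Corollary~\ref{CorBettiFormVanishingDirection}.) Two smaller loose ends: (iii) compares $\omega|_{\sman{X}}^{\wedge d}\not\equiv 0$ on all of $\sman{X}$ with the rank only on $\sman{X}\cap\cA_\Delta$ for the given $\Delta$, so you still need connectedness of $\sman{X}$ together with real-analyticity of $\omega|_{\sman{X}}^{\wedge d}$ (identity principle) to pass between the two; and the maximum of the Betti rank need not be attained at the same point where you test $\omega^{\wedge d}$, so the equivalence should be phrased through these generic/identity-principle considerations rather than literally ``pointwise''.
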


We will prove both propositions during the course of this section using the universal abelian variety. A dynamical approach can be found in \cite[$\mathsection$2]{CGHX:18}.

\subsection{Betti map for the universal abelian variety}\label{SubsectionBettiMap}
Our proof of Proposition~\ref{PropBettiMap} follows the construction in \cite[$\mathsection$3-$\mathsection$4]{GaoBettiRank}. We divide it into several steps.

We start to prove Proposition~\ref{PropBettiMap} for $S = \mathbb
A_g$, the moduli space of principally polarized abelian variety of
dimension $g$ with level-$\ell$-structure;  it is a fine moduli
space. Let $\pi^{\mathrm{univ}} \colon \mathfrak{A}_g \rightarrow \mathbb A_g$ be the universal abelian variety.

The universal covering $\mathfrak{H}_g \rightarrow \an{\mathbb A}_g$,
where $\mathfrak{H}_g$ is the Siegel upper half space, gives a
polarized family of abelian varieties
$\cA_{\mathfrak{H}_g} \rightarrow \mathfrak{H}_g$ fitting into the diagram
\[
\xymatrix{
\cA_{\mathfrak{H}_g} := \mathfrak{A}_g \times_{\an{\mathbb A}_g}\mathfrak{H}_g \ar[r]^-{u_B} \ar[d] & \an{\mathfrak{A}}_g \ar[d]^{\pi^{\mathrm{univ}}} \\
\mathfrak{H}_g \ar[r] & \an{\mathbb A}_g.
}
\]
For the universal covering $u \colon \IC^g \times \mathfrak{H}_g
\rightarrow \cA_{\mathfrak{H}_g}$ and for each $Z \in
\mathfrak{H}_g$, the kernel of $u|_{\mathbb{C}^g \times \{Z\}}$ is
$\mathbb{Z}^g + Z \mathbb{Z}^g$. Thus the map $\IC^g \times
\mathfrak{H}_g \rightarrow \IR^g \times \IR^g \times
\mathfrak{H}_g \rightarrow \IR^{2g}$, where the first map is the
inverse of $(a,b,Z) \mapsto (a + Z  b,Z)$ and the second map is the
natural projection, descends to a \textit{real} analytic map
\[
b^{\mathrm{univ}} \colon \cA_{\mathfrak{H}_g} \rightarrow \mathbb T^{2g}.
\]
 Now for each $s_0 \in \mathbb A_g(\IC)$, there exists a contractible,
 relatively compact, open neighborhood $\Delta$ of $s_0$ in $\mathbb A_g^{\mathrm{an}}$ such that $\mathfrak{A}_{g,\Delta}:=(\pi^{\mathrm{univ}})^{-1}(\Delta)$ can be identified with $\cA_{\mathfrak{H}_g,\Delta'}$ for some open subset $\Delta'$ of $\mathfrak{H}_g$. The composite $b_{\Delta} \colon \mathfrak{A}_{g,\Delta} \cong \cA_{\mathfrak{H}_g,\Delta'} \rightarrow \mathbb T^{2g}$ is  real analytic and satisfies the three properties listed in Proposition~\ref{PropBettiMap}. Thus $b_{\Delta}$ is the desired Betti map in this case. Note that for a fixed (small enough) $\Delta$, there are infinitely choices of $\Delta'$; but for $\Delta$ small enough, if $\Delta_1'$ and $\Delta_2'$ are two such choices, then $\Delta_2' = \alpha \cdot \Delta_1'$ for some $\alpha \in \mathrm{Sp}_{2g}(\IZ) \subset \mathrm{SL}_{2g}(\IZ)$. Thus we have proved Proposition~\ref{PropBettiMap} for $\mathfrak{A}_g \rightarrow \mathbb A_g$.

\subsection{Betti form for the universal abelian variety}

\newcommand{\X}{X}
\newcommand{\Y}{Y}
\newcommand{\rmd}{\mathrm{d}}
\newcommand{\tran}[1]{{{#1}}^{\!^{\intercal}}}

For the universal covering $\mathbf{u} = u_B\circ u \colon \IC^g
\times \mathfrak{H}_g \rightarrow \mathfrak{A}_g^{\mathrm{an}}$, we
will use $(w,Z)$ to denote the coordinates on $\IC^g \times
\mathfrak{H}_g$.
Below $\mathrm{Im}$ denotes imaginary part. 
%% We will decompose an element $Z\in \mathfrak{H}_g$  as $Z = \X
%% + \sqrt{-1}\Y$
%% with $\X$ the real part and $\Y$ the imaginary part of $Z$. 

\begin{lemma}\label{LemmaBettiOnUniversalCovering}
%Under the change of coordinates \eqref{EqChangeOfCoor}, the $2$-form $2\mathrm{d}a \wedge \mathrm{d} b$ becomes
Define
\[
\hat{\omega}^{\mathrm{univ}}:=\sqrt{-1}\partial \overline{\partial} \left( 2 (\mathrm{Im}w)^{\!^{\intercal}} (\mathrm{Im} Z)^{-1} (\mathrm{Im}w) \right).
\]
Then $\hat{\omega}^{\mathrm{univ}}$ is a closed  semi-positive
$(1,1)$-form on $\IC^g\times\mathfrak{H}_g$ satisfying %we have
\begin{equation}
  \label{eq:bettiformaltformula}
  \hat{\omega}^{\mathrm{univ}} = \sqrt{-1} \tran{\left( \rmd Z \Y^{-1}
      \mathrm{Im}(w) - \rmd w\right)} \wedge \Y^{-1} \left( \rmd \overline Z \Y^{-1}
    \mathrm{Im}(w) - \rmd \overline w\right)
\end{equation}
with $\Y = \mathrm{Im}(Z)$; here and below the symbol $\wedge$
is used as a combination of wedge product and matrix multiplication
when appropriate. %\Pcomment{New comment, referee .3}
Moreover, if $N\in\IZ$ and if we denote by $\widetilde{N} \colon \IC^g \times \mathfrak{H}_g \rightarrow \IC^g \times \mathfrak{H}_g$ the map $(w,Z) \mapsto (Nw,Z)$, then $\widetilde{N}^*\hat{\omega}^{\mathrm{univ}} = N^2\hat{\omega}^{\mathrm{univ}}$.
\end{lemma}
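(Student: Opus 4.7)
\medskip

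\textbf{Proof plan.} The whole lemma is a direct computation starting from the potential $\varphi := 2(\mathrm{Im}\, w)^{\!^{\intercal}}(\mathrm{Im}\, Z)^{-1}(\mathrm{Im}\, w)$. I will treat the four assertions (closedness, the alternative formula \eqref{eq:bettiformaltformula}, semi-positivity, and the action of $\widetilde{N}$) one after another, in the order below. Throughout I write $v=\mathrm{Im}(w)=(w-\bar w)/(2\sqrt{-1})$, $Y=\mathrm{Im}(Z)=(Z-\bar Z)/(2\sqrt{-1})$, and $u:=Y^{-1}v$.

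First, closedness is automatic: $\hat\omega^{\mathrm{univ}}=\sqrt{-1}\,\partial\bar\partial\varphi$ with $\varphi$ smooth (and real), so $d\hat\omega^{\mathrm{univ}}=(\partial+\bar\partial)\sqrt{-1}\,\partial\bar\partial\varphi=0$ using $\partial^2=\bar\partial^2=0$ and $\partial\bar\partial=-\bar\partial\partial$.

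The main computation is \eqref{eq:bettiformaltformula}. I would compute $\partial\varphi$ componentwise using matrix calculus: differentiating $\varphi=-\tfrac12(w-\bar w)^{\!^{\intercal}}Y^{-1}(w-\bar w)$ in $w$ gives $-2\sqrt{-1}\,u^{\!^{\intercal}}dw$; differentiating in $Z$, using $\partial_{Z_{mn}}Y^{-1}=-\tfrac{1}{2\sqrt{-1}}Y^{-1}E_{mn}Y^{-1}$, yields $\sqrt{-1}\,u^{\!^{\intercal}}dZ\,u$. Hence
\[
\partial\varphi=\sqrt{-1}\bigl(u^{\!^{\intercal}}dZ\,u-2\,u^{\!^{\intercal}}dw\bigr).
\]
Then I apply $\bar\partial$, using $\bar\partial u=\tfrac{\sqrt{-1}}{2}Y^{-1}(d\bar w-d\bar Z\,u)$ and the symmetry of $Z$ to combine the two terms that arise from differentiating $u^{\!^{\intercal}}dZ\,u$. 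A short bookkeeping (the $-2$ conveniently cancels against a factor $\tfrac12$ from $\bar\partial u$) shows
\[
\bar\partial\partial\varphi=(d\bar w-d\bar Z\,u)^{\!^{\intercal}}Y^{-1}\wedge(dw-dZ\,u).
\]
Multiplying by $-\sqrt{-1}$ and swapping the order of the row and column (which introduces a minus sign since the first is a tuple of $(0,1)$-forms and the second of $(1,0)$-forms) gives precisely \eqref{eq:bettiformaltformula}. The main obstacle of the proof is keeping the signs and conventions straight in this calculation; the Leibniz rule with the symmetric matrix $dZ$ must be handled carefully.

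Once \eqref{eq:bettiformaltformula} is established, semi-positivity is a clean consequence: the vector of $(1,0)$-forms $\alpha:=dZ\,Y^{-1}v-dw$ gives $\hat\omega^{\mathrm{univ}}=\sqrt{-1}\,\alpha^{\!^{\intercal}}\wedge Y^{-1}\bar\alpha$, and since $Z\in\mathfrak{H}_g$ has positive definite $Y=\mathrm{Im}(Z)$, the matrix $Y^{-1}$ is positive definite. Writing $Y^{-1}=M^{\!^{\intercal}}M$ for a real invertible $M$ and $\beta=M\alpha$, we obtain $\hat\omega^{\mathrm{univ}}=\sqrt{-1}\,\beta^{\!^{\intercal}}\wedge\bar\beta=\sqrt{-1}\sum_i\beta_i\wedge\bar\beta_i$, which is a sum of semi-positive $(1,1)$-forms, hence semi-positive.

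Finally, for the scaling: $\widetilde N$ sends $(w,Z)$ to $(Nw,Z)$, so $\widetilde N^*\,\mathrm{Im}(w)=N\,\mathrm{Im}(w)$ while $\widetilde N^*Y=Y$. Therefore $\widetilde N^*\varphi=N^2\varphi$, and applying the linear operator $\sqrt{-1}\,\partial\bar\partial$ yields $\widetilde N^*\hat\omega^{\mathrm{univ}}=N^2\hat\omega^{\mathrm{univ}}$, completing the proof.
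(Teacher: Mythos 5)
Your proposal is correct and follows essentially the same route as the paper: both compute $\partial\bar\partial$ of the potential via the formulas for $\partial(\Y^{-1})$, $\bar\partial(\Y^{-1})$, and $\partial\,\mathrm{Im}(w)$, $\bar\partial\,\mathrm{Im}(w)$, rearrange into the factored form \eqref{eq:bettiformaltformula}, and deduce semi-positivity from the positive definiteness of $\Y^{-1}$ (your factorization $\Y^{-1}=\tran{M}M$ is just a repackaging of the paper's direct verification that the associated Hermitian form is positive semi-definite). The scaling under $\widetilde N$ via $\widetilde N^*\varphi=N^2\varphi$ is the same observation the paper dismisses as clear.
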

\begin{proof}
The $(1,1)$-form $\hat{\omega}^{\mathrm{univ}}$ is  closed since $\mathrm{d} = \partial + \overline{\partial}$. We will prove the semi-positivity by direct computation.

% Using \Pinlinecomment{TODO: Referee .3 wants explanation for formulas
%   on right of next two lines.}
%   \footnote{It is worth explaining the formulae on the right. We hereby do it for $\partial \Y^{-1} = \frac{\sqrt{-1}}{2} \Y^{-1} \mathrm{d}  Z \Y^{-1}$ and the other one is similar. Taking partial derivatives on both sides of $\Y \Y^{-1} = I$, we get $(\partial \Y)\Y^{-1} + \Y \partial Y^{-1} = 0$. So $\partial \Y^{-1} = - \Y^{-1}(\partial \Y)\Y^{-1}$. But $\partial \Y = \partial \mathrm{Im}Z = - \frac{\sqrt{-1}}{2}\mathrm{d} Z$. Hence we get the desired formula for $\partial \Y^{-1}$.}
   
We have the following formulae for partial derivatives
  \begin{equation*}    
%    \label{eq:dwdZ}
    \begin{aligned}
      \overline\partial \mathrm{Im} w &=& \frac{\sqrt{-1}}{2} \mathrm{d}\overline w, \qquad     
      \overline\partial (\Y^{-1}) &=&& -\frac{\sqrt{-1}}{2} \Y^{-1} \mathrm{d} \overline Z
      \Y^{-1},\\
      \partial \mathrm{Im} w &=& -\frac{\sqrt{-1}}{2} \mathrm{d} w,\qquad
      \partial (\Y^{-1}) &=&& \frac{\sqrt{-1}}{2} \Y^{-1} \mathrm{d}  Z \Y^{-1}.
    \end{aligned}
  \end{equation*}
Let us prove the formulae on the right. We hereby do it for $\partial (\Y^{-1}) = \frac{\sqrt{-1}}{2} \Y^{-1} \mathrm{d}  Z \Y^{-1}$ and the other one is similar. Taking partial derivatives on both sides of $\Y \Y^{-1} = I$, we get $(\partial \Y)\Y^{-1} + \Y \partial (Y^{-1}) = 0$. So $\partial (\Y^{-1}) = - \Y^{-1}(\partial \Y)\Y^{-1}$. But $\partial \Y = \partial \mathrm{Im}Z = - \frac{\sqrt{-1}}{2}\mathrm{d} Z$. Hence we get the desired formula for $\partial (\Y^{-1})$.
  
Using these formulae  
and the Leibniz rule (note that $Z = Z^{\!^{\intercal}}$ and hence $\mathrm{d}Z = \mathrm{d}Z^{\!^{\intercal}}$), we get
\begin{align*}
    \hat\omega^{\mathrm{univ}} = \sqrt{-1} \big( &
 (\mathrm{d}w)^{\!^{\intercal}} \Y^{-1} \wedge  \mathrm{d}\overline{w} + (\mathrm{Im}w)^{\!^{\intercal}} \Y^{-1}  \mathrm{d}Z \wedge \Y^{-1}  \mathrm{d}\overline{Z}  \Y^{-1} (\mathrm{Im}w) \\
 & - (\mathrm{Im}w)^{\!^{\intercal}} \Y^{-1} \mathrm{d}Z
 \Y^{-1} \wedge  \mathrm{d}\overline{w} -
 (\mathrm{d}w)^{\!^{\intercal}}   \wedge \Y^{-1}  \mathrm{d}\overline{Z} \Y^{-1} (\mathrm{Im}w) \big). 
\end{align*}
  Rearranging yields the desired equality (\ref{eq:bettiformaltformula}). 
  The associated  form is 
  \begin{equation*}
    H:\bigl((\xi_w,\xi_Z),(\eta_w,\eta_Z)\bigr) \mapsto 
    \tran{\left( \xi_Z \Y^{-1}
        \mathrm{Im}(w) - \xi_w\right)}  \Y^{-1} \left( \overline{\eta_Z} \Y^{-1}
      \mathrm{Im}(w) -  \overline{\eta_w}\right),
  \end{equation*}
  for $\xi_w,\eta_w \in \IC^g$ and $\xi_Z,\eta_Z\in
  \mathrm{Mat}_g(\IC)$ symmetric, is Hermitian and so
  $\hat \omega^{\mathrm{univ}}$ is real. Moreover, 
    \begin{equation*}
    H\bigl((\xi_w,\xi_Z),(\xi_w,\xi_Z)\bigr) = \tran{v} \Y^{-1} \overline v
    \quad\text{with}\quad v =  \xi_Z \Y^{-1}
        \mathrm{Im}(w) - \xi_w.
  \end{equation*}
  But $\Y$ is positive definite as a real symmetric matrix and thus
  positive definite as a Hermitian matrix. We see that $H$
  is positive semi-definite and this implies that
  $\hat\omega^{\mathrm{univ}}$ is positive semi-definite.

  The ``moreover'' part of the lemma is clear.    
\end{proof}

Next we want to show that $\hat{\omega}^{\mathrm{univ}}$ descends to a $(1,1)$-form on $\an{\mathfrak{A}}_g$. To do this, we first show that $\hat{\omega}^{\mathrm{univ}}$ can be written in a simple form under an appropriate change of coordinates.

Define the complex space $\mathcal X_{2g,\mathrm{a}}$, which is the universal covering of $\an{\mathfrak{A}}_g$, as follows:
\begin{itemize}
\item As a real algebraic space, $\mathcal X_{2g,\mathrm{a}} := \mathbb R^{2g} \times \mathfrak{H}_g$.
\item The complex structure on $\mathcal X_{2g,\mathrm{a}}$ is given by
\begin{equation}\label{EqChangeOfCoor}
\mathbb R^{2g} \times \mathfrak{H}_g = \mathbb R^g \times \mathbb R^g \times \mathfrak{H}_g \cong \mathbb C^g \times \mathfrak{H}_g, \qquad (a,b,Z) \mapsto (a+Zb, Z).
\end{equation}
\end{itemize}

\begin{lemma}\label{LemmaBettiFormComplexReal}
Let $\hat{\omega}^{\mathrm{univ}}$ be as in
Lemma~\ref{LemmaBettiOnUniversalCovering}. Then under the change of
coordinates \eqref{EqChangeOfCoor}, we have 
$\hat{\omega}^{\mathrm{univ}}=2\tran{(\mathrm{d}a)} \wedge \mathrm{d}
b$.
\end{lemma}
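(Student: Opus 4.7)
The plan is to compute directly by substituting $w = a + Zb$ into the factored expression
\[
\hat\omega^{\mathrm{univ}} = \sqrt{-1}\,\tran{\left( \mathrm{d}Z\, \Y^{-1} \mathrm{Im}(w) - \mathrm{d}w \right)} \wedge \Y^{-1} \left( \mathrm{d}\overline Z\, \Y^{-1} \mathrm{Im}(w) - \mathrm{d}\overline w \right)
\]
from equation \eqref{eq:bettiformaltformula} of Lemma~\ref{LemmaBettiOnUniversalCovering}. Since $a, b$ are real and $\mathrm{Im}(Z) = \Y$, we have $\mathrm{Im}(w) = \Y b$, so $\Y^{-1}\mathrm{Im}(w) = b$. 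Moreover $\mathrm{d}w = \mathrm{d}a + (\mathrm{d}Z) b + Z\,\mathrm{d}b$, so the first inner factor simplifies dramatically:
\[
\mathrm{d}Z\, \Y^{-1} \mathrm{Im}(w) - \mathrm{d}w = (\mathrm{d}Z)b - \mathrm{d}a - (\mathrm{d}Z)b - Z\,\mathrm{d}b = -\mathrm{d}a - Z\,\mathrm{d}b.
\]
By the analogous computation (with $\overline Z$), the second inner factor becomes $-\mathrm{d}a - \overline Z\,\mathrm{d}b$. Thus
\[
\hat\omega^{\mathrm{univ}} = \sqrt{-1}\,\tran{(\mathrm{d}a + Z\,\mathrm{d}b)} \wedge \Y^{-1}(\mathrm{d}a + \overline Z\,\mathrm{d}b).
\]

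The next step is to expand this into four terms and show that the two diagonal terms vanish while the two cross terms combine. For any symmetric matrix $M$ of real or complex entries and any column $\alpha$ of $1$-forms, $\tran{\alpha} \wedge M\alpha = \sum_{i,j} M_{ij}\, \alpha_i \wedge \alpha_j = 0$ by antisymmetry of $\wedge$ and symmetry of $M$. Hence $\tran{(\mathrm{d}a)}\wedge \Y^{-1}\mathrm{d}a = 0$, and for the $\mathrm{d}b\mathrm{d}b$ term one observes that $\tran{Z}\Y^{-1}\overline Z = Z\Y^{-1}\overline Z = X\Y^{-1}X + \Y$ is real symmetric (using $Z = X + i\Y$, $\tran{Z} = Z$), so $\tran{(\mathrm{d}b)}\,Z\Y^{-1}\overline Z\,\mathrm{d}b = 0$ for the same reason.

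For the two cross terms I would use the identity $\tran{\beta} \wedge M\alpha = -\tran{\alpha} \wedge \tran{M}\beta$ (valid for columns of $1$-forms and any matrix $M$) to rewrite $\tran{(Z\,\mathrm{d}b)} \wedge \Y^{-1}\mathrm{d}a = -\tran{(\mathrm{d}a)} \wedge \Y^{-1} Z\,\mathrm{d}b$, where I use that $\Y^{-1}$ and $Z$ are symmetric. The sum of the two cross terms is therefore
\[
\tran{(\mathrm{d}a)} \wedge \Y^{-1}(\overline Z - Z)\,\mathrm{d}b = \tran{(\mathrm{d}a)} \wedge \Y^{-1}(-2\sqrt{-1}\,\Y)\,\mathrm{d}b = -2\sqrt{-1}\,\tran{(\mathrm{d}a)} \wedge \mathrm{d}b.
\]
Multiplying by the overall factor $\sqrt{-1}$ yields $\hat\omega^{\mathrm{univ}} = 2\tran{(\mathrm{d}a)} \wedge \mathrm{d}b$, as required.

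The computation is entirely mechanical; the only small pitfall is keeping careful track of the bookkeeping of wedge products with matrix-valued coefficients, in particular the sign flip in the cross-term identity and the symmetry argument that kills the diagonal terms. I do not foresee any real obstacle.
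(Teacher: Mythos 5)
Your computation is correct and follows essentially the same route as the paper: substitute $w=a+Zb$ into the factored formula \eqref{eq:bettiformaltformula}, expand, and kill the diagonal terms by symmetry while combining the cross terms via $\overline Z - Z = -2\sqrt{-1}\,\Y$. The only (cosmetic) difference is that you keep $Z$ and $\overline Z$ intact, yielding a four-term expansion, whereas the paper splits $Z=\X+\sqrt{-1}\Y$ immediately and handles seven terms; the identities used (e.g. $\tran{\beta}\wedge M\alpha=-\tran{\alpha}\wedge\tran{M}\beta$ and $Z\Y^{-1}\overline Z=\X\Y^{-1}\X+\Y$) are all verified correctly.
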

\begin{proof} For the moment we write $Z = \X + \sqrt{-1}\Y$ with $\X$ and $\Y$ the real and
imaginary part of $Z\in\mathfrak{H}_g$, respectively. 
Note that $w = a + Zb = (a + \X b) + \sqrt{-1}\Y b$. Hence
$\Y^{-1}(\mathrm{Im}w) = b$ and $\mathrm{d}w = \mathrm{d}a +
Z\mathrm{d}b + \mathrm{d}Z  b$. Using this and noting that $Z$
is symmetric, we have that \eqref{eq:bettiformaltformula} becomes
\begin{equation*}
%\label{EqFirstTerm11Form}
\begin{aligned}
    \hat \omega^{\mathrm{univ}} &=\sqrt{-1}\left( \sqrt{-1}\tran{(\rmd
        b)} \Y + \tran{(\rmd b)} \X + \tran{(\rmd a)}\right) \wedge \Y^{-1}
    \left(\rmd a + \X \rmd b - \sqrt{-1} \Y \rmd b\right) \\
    &=\sqrt{-1}\bigl(\sqrt{-1} \tran{(\rmd b)}\wedge\rmd a + \tran{(\rmd b)}\wedge
    \Y \rmd b + \tran{(\rmd b)} \X \wedge \Y^{-1} \rmd a+ \tran{(\rmd
      b)}\X\wedge 
    \Y^{-1}\X \rmd b + \\ 
    &\phantom{=\sqrt{-1}\bigl(}\tran{(\rmd a)} \wedge \Y^{-1} \rmd a 
    + \tran{(\rmd a)}\wedge \Y^{-1}\X\rmd b -
    \sqrt{-1} \tran{(\rmd a)}\wedge \rmd b\bigr).
\end{aligned}
\end{equation*}

 Many terms will vanish. Indeed, if $M$ is a
matrix, then $\tran{(\rmd b)} \wedge M \rmd a = - \tran{(\rmd
a)}\wedge \tran{M} \rmd b $. As $\tran{(\X\Y^{-1})} = \Y^{-1}\X$ and as
$\tran{(\rmd b)} \X \wedge \Y^{-1} \rmd a=\tran{(\rmd
b)} \wedge \X\Y^{-1} \rmd a$ we find
$\tran{(\rmd b)}\X \wedge \Y^{-1} \rmd a + \tran{(\rmd a)}\wedge
\Y^{-1}\X \rmd b=0$. Observe that $\Y$ is symmetric, and so $\tran{(\rmd
b)}\wedge \Y \rmd b = -\tran{(\rmd b)}\wedge \Y\rmd b$ vanishes. Arguing
along the same line and using that
$\Y^{-1}$ and $\X\Y^{-1}\X$ are  symmetric we find $\tran{(\rmd
a)}\wedge \Y^{-1} \rmd a = 0$ and $\tran{(\rmd b)}\X\wedge \Y^{-1}\X=\tran{(\rmd b)}\wedge \X\Y^{-1}\X \rmd
b=0$. We are left with
$    \hat \omega^{\mathrm{univ}} = 2\tran{(\rmd a)} \wedge \rmd b$. 
\end{proof}

\begin{corollary}\label{CorBettiFormVanishingDirection}
  Let $\hat{C}$ be an irreducible, $1$-dimensional, complex analytic
  subset of an open subset of $\mathcal X_{2g,\mathrm{a}}
  = \IR^{2g} \times \mathfrak{H}_g$ and $\sm{\hat {C}}$ its smooth
  locus. Then $\hat{\omega}^{\mathrm{univ}}$ restricted to $\sm{\hat C}$
  is trivial if and only if
  $\hat{C} \subseteq \{r\} \times \mathfrak{H}_g$ for some
  $r \in \mathbb R^{2g}$.
\end{corollary}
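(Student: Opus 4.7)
The plan is as follows. The ``if'' direction is immediate from Lemma~\ref{LemmaBettiFormComplexReal}: on a fiber $\{r\} \times \mathfrak{H}_g$ the real coordinates $a$ and $b$ are constant, so $\rmd a = \rmd b = 0$, and therefore the formula $\hat\omega^{\mathrm{univ}} = 2\, \tran{(\rmd a)} \wedge \rmd b$ forces the restriction $\hat\omega^{\mathrm{univ}}|_{\sm{\hat C}}$ to vanish whenever $\sm{\hat C}$ lies in such a fiber.

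For the converse, the strategy is to exploit the semi-positivity proved in Lemma~\ref{LemmaBettiOnUniversalCovering}. Because $\hat\omega^{\mathrm{univ}}$ is semi-positive of type $(1,1)$, its restriction to any $1$-dimensional complex submanifold is pointwise either zero or strictly positive; thus $\hat\omega^{\mathrm{univ}}|_{\sm{\hat C}} \equiv 0$ is equivalent to the associated Hermitian form annihilating every $(1,0)$-tangent vector along $\sm{\hat C}$. First I would rewrite (\ref{eq:bettiformaltformula}) in the compact shape
\[
\hat\omega^{\mathrm{univ}} = \sqrt{-1}\,\tran{\eta}\wedge \Y^{-1}\,\overline{\eta}, \qquad \eta_i = \rmd w_i - \sum_{j=1}^{g} b_j\,\rmd Z_{ij},
\]
where $b = \Y^{-1}\imS w$. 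A short check, using $a_i + \sum_j Z_{ij}b_j = w_i$ and the holomorphy of $w$, shows that $\eta_1,\ldots,\eta_g$ are $(1,0)$-forms; since $\Y^{-1}$ is positive definite, the Hermitian form vanishes on a $(1,0)$-vector $v$ if and only if $\eta(v) = 0$. For $v = (w'(\tau), Z'(\tau))$ coming from a local holomorphic parametrization $\tau \mapsto (w(\tau), Z(\tau))$ of $\sm{\hat C}$, this boils down to the single vector identity
\[
w'(\tau) = Z'(\tau)\,b(\tau), \qquad b(\tau) = \Y(\tau)^{-1}\imS w(\tau).
\]

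The crucial step is to apply $\partial_{\bar\tau}$ to $b(\tau) = \Y(\tau)^{-1}\imS w(\tau)$ and exploit reality of $b$. Using holomorphy of $w$ and $Z$ in $\tau$, the Leibniz rule for $\Y^{-1}$, and the conjugated identity $\overline{w'(\tau)} = \overline{Z'(\tau)}\,b(\tau)$ (valid since $b \in \IR^g$), a short calculation yields
\[
\partial_{\bar\tau} b = \frac{\sqrt{-1}}{2}\,\Y(\tau)^{-1}\,\overline{\bigl(w'(\tau) - Z'(\tau)\,b(\tau)\bigr)} = 0.
\]
Hence $b$ is holomorphic in $\tau$, and being $\IR^g$-valued it is locally constant. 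Since $\hat C$ is irreducible its smooth locus is connected, so $b \equiv b_0 \in \IR^g$ is globally constant on $\sm{\hat C}$. Then $a = w - Z b_0$ is a holomorphic, $\IR^g$-valued function on $\sm{\hat C}$, hence also a constant $a_0 \in \IR^g$, and $\sm{\hat C} \subset \{(a_0, b_0)\} \times \mathfrak{H}_g$. The inclusion extends to $\hat C$ by continuity. The main obstacle is the last derivation of $\partial_{\bar\tau} b = 0$: neither the kernel identity nor its conjugate alone is enough, and one must feed the definition $b = \Y^{-1}\imS w$ back through both simultaneously.
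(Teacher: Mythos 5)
Your proof is correct, and its first half coincides with the paper's: the ``if'' direction is read off from Lemma~\ref{LemmaBettiFormComplexReal}, and in the converse both arguments use positive definiteness of $\mathrm{Im}(Z)^{-1}$ to reduce the vanishing of $\hat{\omega}^{\mathrm{univ}}|_{\sm{\hat C}}$ to the vanishing of the $(1,0)$-form $\mathrm{d}w - (\mathrm{d}Z)\,b$ along $\sm{\hat C}$, where $b = \mathrm{Im}(Z)^{-1}\mathrm{Im}(w)$. You diverge in how you extract constancy of the Betti coordinates from this. The paper simply substitutes $w = a + Zb$ and $\mathrm{Im}(w) = \mathrm{Im}(Z)\,b$ into the identity $(\mathrm{d}Z)\,b = \mathrm{d}w$ to obtain $\mathrm{d}a + Z\,\mathrm{d}b = 0$ on $\sm{\hat C}$, whence $\mathrm{d}a = \mathrm{d}b = 0$ because $a$ and $b$ are real valued and $\mathrm{Im}(Z)$ is positive definite. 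You instead differentiate $b = \mathrm{Im}(Z)^{-1}\mathrm{Im}(w)$ in $\bar\tau$ along a holomorphic parametrization, use the kernel identity together with its complex conjugate (valid since $b$ is real) to get $\partial_{\bar\tau} b = 0$, and then conclude in two stages that the real-valued holomorphic functions $b$, and subsequently $a = w - Zb_0$, are locally constant, hence constant by connectedness of $\sm{\hat C}$. Both variants rest on exactly the same two inputs --- reality of $(a,b)$ and positivity of $\mathrm{Im}(Z)$ --- so the difference is one of bookkeeping: the paper's substitution is a one-line manipulation of forms, while your route makes the Cauchy--Riemann mechanism explicit at the cost of a $\bar\partial$-computation. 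Your formula for $\partial_{\bar\tau} b$ checks out.
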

\begin{proof}
  First, assume that the coordinates $(a,b)$ of $\IR^{2g}$ are constant on
  $\hat C$. Then $\hat{\omega}^{\mathrm{univ}}$, which is simply
  $2 \tran{(\rmd a)} \wedge \mathrm{d}b$ by
  Lemma~\ref{LemmaBettiFormComplexReal}, vanishes on $\sm{\hat C}$.

  Conversely, suppose that $\hat{\omega}^{\mathrm{univ}}$ vanishes
  identically on $\sm{\hat C}$. This time we use (\ref{eq:bettiformaltformula}) from 
  Lemma~\ref{LemmaBettiOnUniversalCovering}. As $\Y^{-1}$ is positive
  definite we find $\rmd Z \Y^{-1} \mathrm{Im}(w) = \rmd w$ on $\sm{\hat C}$.
  Using the change of coordinates $w = a+Zb$ we deduce $\mathrm{Im}(w) = \Y b$
  and $\rmd w = \rmd a
  + \rmd Z b +Z\rmd b$. So $\rmd Z b  =\rmd Z \Y^{-1} \mathrm{Im}(w)
  = \rmd w=  \rmd a
  + \rmd Z b + Z\rmd b$ on $\sm{\hat C}$. This equality simplifies to
  $\rmd a + Z \rmd b = 0$ on $\sm{\hat C}$. As $a$ and $b$ are real
  valued and as $Z\in \mathfrak{H}_g$ we conclude $\rmd a = \rmd b =0$
  on $\sm{\hat C}$. So $a$ and $b$ are constant on $\hat C$.
\end{proof}

\begin{lemma}\label{LemmaBettiFormUnivAb}
Let $\hat{\omega}^{\mathrm{univ}}$ be as in
Lemma~\ref{LemmaBettiOnUniversalCovering}. Then
$\hat{\omega}^{\mathrm{univ}}$ descends to a  semi-positive
$(1,1)$-form $\omega^{\mathrm{univ}}$ on $\mathfrak{A}_g$. Moreover,
for $N\in\IZ$  we have $[N]^*\omega^{\mathrm{univ}} = N^2 \omega^{\mathrm{univ}}$.
\end{lemma}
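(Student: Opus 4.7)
The plan is to descend $\hat\omega^{\mathrm{univ}}$ via the universal covering $\mathbf{u}\colon \IC^g\times\mathfrak{H}_g\to \an{\mathfrak{A}}_g$ by showing it is invariant under the deck transformation group $\Gamma$. Since $\mathbb{A}_g$ is the fine moduli space with level-$\ell$-structure, $\Gamma$ is generated by two types of elements: the lattice translations $(w,Z)\mapsto(w+m+Zn,Z)$ with $(m,n)\in \IZ^g\times\IZ^g$, coming from the fibers; and the elements of the principal congruence subgroup $\Gamma(\ell)\subset\sp{2g}{\IZ}$, acting by $\gamma\cdot(w,Z)=((CZ+D)^{-\intercal}w,(AZ+B)(CZ+D)^{-1})$ for $\gamma=\begin{pmatrix}A&B\\C&D\end{pmatrix}$.

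First I would verify $\Gamma$-invariance of $\hat\omega^{\mathrm{univ}}$ in the real coordinates $(a,b,Z)$ on $\mathcal X_{2g,\mathrm{a}}$, where Lemma~\ref{LemmaBettiFormComplexReal} gives the clean expression $\hat\omega^{\mathrm{univ}}=2\tran{(\mathrm{d}a)}\wedge\mathrm{d}b$. The translation part acts by $(a,b,Z)\mapsto(a+m,b+n,Z)$, under which $\mathrm{d}a$ and $\mathrm{d}b$ are obviously invariant. For the symplectic part, a direct computation using $w=a+Zb$ and the transformation law $w\mapsto(CZ+D)^{-\intercal}w$ shows that $(a,b)$ transforms linearly by a matrix built from $A,B,C,D$ that lies in $\sp{2g}{\IR}$. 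Since $2\tran{(\mathrm{d}a)}\wedge\mathrm{d}b$ is (up to scalar) the standard symplectic form on $\IR^{2g}$, this action preserves it. This is the step I expect to be the main technical obstacle, because one must carefully track the real and imaginary parts of the complex transformation and invoke the defining identities $\tran{A}D-\tran{C}B=I$, $\tran{A}C=\tran{C}A$, $\tran{B}D=\tran{D}B$ to make the invariance fall out.

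Given $\Gamma$-invariance, $\hat\omega^{\mathrm{univ}}$ descends to a well-defined real $(1,1)$-form $\omega^{\mathrm{univ}}$ on the quotient $\an{\mathfrak{A}}_g$: since $\mathbf{u}$ is a local biholomorphism, there is a unique $(1,1)$-form on $\an{\mathfrak{A}}_g$ whose pullback under $\mathbf{u}$ equals $\hat\omega^{\mathrm{univ}}$. Semi-positivity and closedness are local analytic conditions that are preserved by descent along a local biholomorphism, so both properties for $\omega^{\mathrm{univ}}$ follow immediately from the corresponding properties of $\hat\omega^{\mathrm{univ}}$ already established in Lemma~\ref{LemmaBettiOnUniversalCovering}.

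For the ``moreover'' part, I would observe that the map $\widetilde{N}\colon(w,Z)\mapsto(Nw,Z)$ on $\IC^g\times\mathfrak{H}_g$ is a lift of $[N]\colon\mathfrak{A}_g\to\mathfrak{A}_g$ across $\mathbf{u}$, i.e.\ $\mathbf{u}\circ\widetilde{N}=[N]^{\mathrm{an}}\circ\mathbf{u}$. Pullback along $\mathbf{u}$ is injective on $(1,1)$-forms, so the identity $\widetilde{N}^*\hat\omega^{\mathrm{univ}}=N^2\hat\omega^{\mathrm{univ}}$ from Lemma~\ref{LemmaBettiOnUniversalCovering} yields $\mathbf{u}^*([N]^*\omega^{\mathrm{univ}})=\mathbf{u}^*(N^2\omega^{\mathrm{univ}})$, whence $[N]^*\omega^{\mathrm{univ}}=N^2\omega^{\mathrm{univ}}$, as required.
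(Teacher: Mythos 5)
Your proposal is correct and follows essentially the same route as the paper: pass to the real coordinates $(a,b,Z)$ where Lemma~\ref{LemmaBettiFormComplexReal} gives $\hat\omega^{\mathrm{univ}}=2\tran{(\mathrm{d}a)}\wedge\mathrm{d}b$, check invariance under the lattice translations and the symplectic part of the deck group, descend along the local biholomorphism $\mathbf{u}$, and deduce the scaling law from $\widetilde{N}$ being a lift of $[N]$. The only difference is that the step you flag as the main technical obstacle (tracking how $\gamma\in\Gamma(\ell)$ acts on $(a,b)$ through the $(CZ+D)$-cocycle) is short-circuited in the paper by quoting the description of $\an{\mathfrak{A}}_g$ as a quotient of $\mathcal X_{2g,\mathrm{a}}=\IR^{2g}\times\mathfrak{H}_g$ by a congruence subgroup of $P_{2g,\mathrm{a}}(\IQ)=(V_{2g}\rtimes\mathrm{Sp}_{2g})(\IQ)$, for which the action on the fiber coordinates $(a,b)$ is by construction affine-symplectic, so invariance of the standard form $2\tran{(\mathrm{d}a)}\wedge\mathrm{d}b$ is immediate.
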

\begin{proof}
Let $\mathrm{Sp}_{2g}$ be the symplectic group defined over $\mathbb Q$, and let $V_{2g}$ be the vector group over $\mathbb Q$ of dimension $2g$. Then the natural action of $\mathrm{Sp}_{2g}$ on $V_{2g}$ defines a group $P_{2g,\mathrm{a}}:= V_{2g} \rtimes \mathrm{Sp}_{2g}$.

We use the classical action of $\mathrm{Sp}_{2g}(\mathbb R)$  on
$\mathfrak{H}_g$, it is transitive. The real coordinate on $\mathcal X_{2g,\mathrm{a}}$ on the left hand side of \eqref{EqChangeOfCoor} has the following advantage. The group $P_{2g,\mathrm{a}}(\mathbb R)$ acts transitively on $\mathcal X_{2g,\mathrm{a}}$ by the formula
\[
(v,h) \cdot (v',Z) := (v+hv',hZ)
\]
for $(v,h) \in P_{2g,\mathrm{a}}(\mathbb R)$ and $(v',Z) \in \mathbb R^{2g} \times \mathfrak{H}_g = \mathcal X_{2g,\mathrm{a}}$. The space $\mathfrak{A}_g^{\mathrm{an}}$ is then obtained as the quotient of $\mathcal X_{2g,\mathrm{a}}$ by a congruence subgroup of $P_{2g,\mathrm{a}}(\mathbb Q)$. We refer to \cite[10.5--10.9]{PinkThesis} or \cite[Construction~2.9 and Example~2.12]{Pink05} for these facts.

%We thus obtain the universal covering $\mathcal X_{2g,\mathrm{a}} \rightarrow \mathfrak{A}_g$.

It is clear that both $V_{2g}(\mathbb R)$ and
$\mathrm{Sp}_{2g}(\mathbb R)$ preserve $2\tran{(\rmd a)} \wedge \mathrm{d} b$. Thus this $2$-form is invariant under the action of $P_{2g,\mathrm{a}}(\mathbb R)$ on $\mathcal X_{2g,\mathrm{a}}$.

So by Lemma~\ref{LemmaBettiFormComplexReal}, the previous two paragraphs imply that $\hat{\omega}^{\mathrm{univ}}$ descends to a $(1,1)$-form $\omega^{\mathrm{univ}}$ on $\mathfrak{A}_g$. The semi-positivity of $\omega^{\mathrm{univ}}$ follows from Lemma~\ref{LemmaBettiOnUniversalCovering}.

The property $[N]^*\omega^{\mathrm{univ}} = N^2 \omega^{\mathrm{univ}}$ follows from the ``moreover'' part of Lemma~\ref{LemmaBettiOnUniversalCovering} and the following commutative diagram
\[
\begin{gathered}[b]
\xymatrix{
\IC^g \times \mathfrak{H}_g \ar[r]^-{\widetilde{N}} \ar[d] & \IC^g \times \mathfrak{H}_g \ar[d] \\
\an{\mathfrak{A}}_g \ar[r]^-{[N]} & \an{\mathfrak{A}}_g.
}\\[-\dp\strutbox]
\end{gathered}
\qedhere
\]
\end{proof}

%Thus Proposition~\ref{PropBettiForm} for $\mathfrak{A}_g \rightarrow \mathbb A_g$ follows from Lemma~\ref{LemmaBettiFormUnivAb} and Corollary~\ref{CorBettiFormVanishingDirection}.

This semi-positive $(1,1)$-form $\omega^{\mathrm{univ}}$ will be the Betti form for $\mathfrak{A}_g \rightarrow \mathbb A_g$, as desired in Proposition~\ref{PropBettiForm}. To show this, it suffices to establish property (iii) of Proposition~\ref{PropBettiForm}. Hence it suffices to prove the following proposition. %% Here we use the notations in $\mathsection$\ref{SubsectionBettiMap}: recall the map $b^{\mathrm{univ}} \colon \cA_{\mathfrak{H}_g} \rightarrow \mathbb{T}^{2g}$ in this case and the natural projection $u_B \colon \cA_{\mathfrak{H}_g} = \mathfrak{A}_g \times_{\mathbb A_g} \mathfrak{H}_g \rightarrow \mathfrak{A}_g$.

\begin{proposition}\label{PropBettiFormNonDegUnivAb} 
Assume $\cA \rightarrow S$ is $\mathfrak{A}_g \rightarrow \mathbb A_g$. Let $X$ be an irreducible subvariety of $\mathfrak{A}_g$ of dimension
$d$ and let $\Delta$ be an open subset of $\an{S}$ with $\sman{X}\cap\cA_\Delta\not=\emptyset$. Then
\begin{equation}
\label{prop:bettformvsrankuniv}
 \omega^{\mathrm{univ}}|_{\sman{X}}^{\wedge d} \not\equiv 0
\quad\text{if and only if}\quad
\max_{x \in \sman{X}\cap\cA_\Delta} \mathrm{rank}_{\IR} (\mathrm{d}b_\Delta|_{\sman{X}})_{x} = 2d.
\end{equation}
%where $\widetilde{X}$ is a complex analytic irreducible component of $u_B^{-1}(X^{\mathrm{sm}})$.
\end{proposition}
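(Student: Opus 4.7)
The plan is to prove the equivalence pointwise: for each $x\in\sman{X}\cap \cA_\Delta$,
$\omega^{\mathrm{univ}}|_{\sman{X}}^{\wedge d}(x)\ne 0$ if and only if
$\mathrm{rank}_\IR(\mathrm{d}b_\Delta|_{\sman{X}})_x = 2d$.
Since $\mathrm{rank}_\IR(\mathrm{d}b_\Delta|_{\sman{X}})_x \le 2d$ always (as $\dim_\IR T_x^\IR \sman{X} = 2d$), the global statement \eqref{prop:bettformvsrankuniv} then follows by passing to the maximum over $x$. I would carry out the calculation upstairs on the universal cover $\mathcal X_{2g,\mathrm{a}}$, lifting $X$ locally to an irreducible complex analytic set $\widetilde X$ of dimension $d$, and use the closed-form formulas available for $\hat\omega^{\mathrm{univ}}$ and for the Betti projection $(a,b,Z)\mapsto (a,b)$.

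The crux is to show that at every $(w,Z)\in \mathcal X_{2g,\mathrm{a}}$ the kernel of the Hermitian form $H$ attached to $\hat\omega^{\mathrm{univ}}$ coincides, as a complex subspace of the holomorphic tangent space, with the kernel of the real differential of the Betti projection. From the explicit expression in Lemma~\ref{LemmaBettiOnUniversalCovering} we have
\[
H(\xi,\xi) = \tran{v}\,\Y^{-1}\bar v,\qquad v = \xi_Z\,\Y^{-1}\mathrm{Im}(w) - \xi_w,
\]
and since $\Y^{-1}$ is positive definite the kernel of $H$ is the $\IC$-linear subspace $\{\xi_w = \xi_Z\,\Y^{-1}\mathrm{Im}(w)\}$, of complex codimension $g$. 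On the other hand, in the real coordinates $(a,b,Z)$ tied to $(w,Z)$ by $w = a+Zb$, the kernel of the differential of $b^{\mathrm{univ}}$ is $\{\xi_a = 0,\ \xi_b = 0\}$; substituting $\xi_w = \xi_a + Z\,\xi_b + \xi_Z\,b$ and using that the basepoint satisfies $b = \Y^{-1}\mathrm{Im}(w)$, this translates to the same equation $\xi_w = \xi_Z\,\Y^{-1}\mathrm{Im}(w)$. Consequently $\ker(\mathrm{d}b^{\mathrm{univ}})$ is $J$-invariant and agrees with $\ker H$ as a complex subspace of the holomorphic tangent space.

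With this coincidence in hand, the equivalence is immediate linear algebra. By Lemma~\ref{LemmaBettiFormUnivAb}, $\omega^{\mathrm{univ}}|_{\sman{X}}$ is semi-positive, so $\omega^{\mathrm{univ}}|_{\sman{X}}^{\wedge d}(x)\ne 0$ if and only if the restriction of $H$ to the holomorphic tangent space $T_{\tilde x}\widetilde X$ is positive definite, \textit{i.e.}, $T_{\tilde x}\widetilde X\cap \ker H = 0$. Similarly, $\mathrm{d}b_\Delta|_{\sman{X}}$ attains its maximal possible real rank $2d = \dim_\IR T_{\tilde x}^\IR\widetilde X$ at $x$ precisely when $T_{\tilde x}^\IR\widetilde X\cap \ker(\mathrm{d}b_\Delta) = 0$. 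Since $T_{\tilde x}\widetilde X$ is itself $J$-invariant and the two kernels agree as complex subspaces, the two triviality conditions are the same, which yields the pointwise equivalence and hence the proposition.

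The only real obstacle is the coordinate bookkeeping between the holomorphic $(w,Z)$ and the real $(a,b,Z)$ descriptions; once one observes that the same $\IC$-linear equation $\xi_w = \xi_Z\,\Y^{-1}\mathrm{Im}(w)$ cuts out both kernels, the remainder of the argument is formal and uses only the semi-positivity of $\omega^{\mathrm{univ}}$ already recorded in Lemma~\ref{LemmaBettiFormUnivAb}.
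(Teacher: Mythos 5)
Your argument is correct in substance but takes a genuinely different route from the paper's. The paper works with $1$-dimensional complex analytic sets: it reformulates Corollary~\ref{CorBettiFormVanishingDirection} as the statement \eqref{EqBettiFormVanishingDirection} that $\omega^{\mathrm{univ}}$ vanishes on a curve $C$ if and only if $b_\Delta(C)$ is a point, then in one direction produces a curve inside a Betti fibre through each point where the rank drops (using that the fibres $b_\Delta^{-1}(\xi)$ are complex analytic), and in the converse direction extracts from the pointwise degeneracy of the Hermitian form a curve $C_x$ on which $\omega^{\mathrm{univ}}|_{\sm{C}_x}\equiv 0$. You instead prove the sharper pointwise statement that at each point of the universal cover the kernel of the Hermitian form $H$ attached to $\hat\omega^{\mathrm{univ}}$ coincides with the ($J$-invariant) kernel of $\mathrm{d}b^{\mathrm{univ}}$, both cut out by $\xi_w=\xi_Z \Y^{-1}\mathrm{Im}(w)$; this is precisely the infinitesimal content of the computation in the proof of Corollary~\ref{CorBettiFormVanishingDirection}, and it reduces the equivalence to linear algebra: for the positive semi-definite $H$, the condition $\omega^{\mathrm{univ}}|_{\sman{X}}^{\wedge d}(x)\neq 0$ is $T\cap\ker H=0$, while maximal Betti rank at $x$ is $T\cap\ker(\mathrm{d}b_\Delta)=0$, and these kernels agree. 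Your route is arguably cleaner, since it avoids the somewhat delicate step in the paper's converse direction where a curve must be integrated out of a degenerate kernel distribution. The one point you should make explicit is the passage from the pointwise equivalence to \eqref{prop:bettformvsrankuniv}: the left-hand side concerns all of $\sman{X}$ whereas the maximum on the right is taken only over $\sman{X}\cap\cA_\Delta$, so for the implication ``left implies right'' you need that the non-vanishing locus of the real-analytic form $\omega^{\mathrm{univ}}|_{\sman{X}}^{\wedge d}$, if non-empty, is dense in the connected manifold $\sman{X}$ and therefore meets the non-empty open set $\sman{X}\cap\cA_\Delta$; this identity-theorem step is also left implicit in the paper's own proof.
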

\begin{proof}
%% Denote by $\widetilde{\omega}^{\mathrm{univ}} := u_B^*(\omega^{\mathrm{univ}})$. 
%% It is clear that
%% \[
%% \omega^{\mathrm{univ}}|_{X^{\mathrm{sm}}}^{\wedge d} \not\equiv 0 \Leftrightarrow \widetilde{\omega}^{\mathrm{univ}}|_{\widetilde{X}}^{\wedge d} \not\equiv 0.
%% \]
%% Thus it suffices to establish the following property. For each $\widetilde{x} \in \widetilde{X}$, we have $(\widetilde{\omega}^{\mathrm{univ}}|_{\widetilde{X}}^{\wedge d})_{\widetilde{x}} = 0$ if and only if
%% \begin{equation}\label{EqRankDegeneracy}
%% \mathrm{rank}_{\mathbb R}(\mathrm{d}b^{\mathrm{univ}}|_{\widetilde{X}})_{\widetilde{x}} < 2d.
%% \end{equation}
We begin by  reformulating
Corollary~\ref{CorBettiFormVanishingDirection}.
If ${C}$ is an irreducible, $1$-dimensional, complex analytic subset
of an open subset of ${\cA_\Delta}$, then
\begin{equation}\label{EqBettiFormVanishingDirection}
{\omega}^{\mathrm{univ}}|_{\sm{C}} = 0 \quad\text{if
and only if}\quad b_\Delta({C})\text{ is a point};
\end{equation}
indeed, this claim is local and it follows using the universal
covering
$\mathbf{u} \colon \IC^g\times\mathfrak{H}_g \rightarrow \mathfrak{A}_g$. 

We assume first that the right side of (\ref{prop:bettformvsrankuniv})
is false, \textit{i.e.}, the maximal rank is strictly less than
$2d=2\dim X$.
So  every $x\in \sman{X}\cap\cA_\Delta$  is a non-isolated point of  
$b_\Delta^{-1}(r) \cap \sman{X}$  where $r =
b_\Delta(x)$. Because 
$b_\Delta^{-1}(r)$ is a complex analytic subset of $\cA_\Delta$ (by Proposition~\ref{PropBettiMap}.(ii) for $\mathfrak{A}_g \rightarrow \mathbb{A}_g$) and $\sman{X}$ is complex
analytic in a neighborhood of $x$ in $\an{\cA}$, there exists  an irreducible
complex analytic curve ${C}$ in  $b_\Delta^{-1}(r) \cap \sman{X}$ passing
through $x$. In
particular, $b_\Delta({C})$ is a point
%and $\widetilde{x} \in \widetilde{C} \subseteq \widetilde{X}$,
and so ${\omega}^{\mathrm{univ}}|_{\sm{C}} \equiv 0$ by \eqref{EqBettiFormVanishingDirection}.

The upshot of the previous paragraph is that the Hermitian form
attached to the semi-positive $(1,1)$-form ${\omega}^{\mathrm{univ}}|_{\sman{X}}$
 vanishes along the tangent space of $\sm{C}$; it is degenerate.
We can complete a tangent vector of $\sm{C}$  to a basis of
the tangent space of $\sman{X}$. Considering holomorphic local
coordinates
we find ${\omega}^{\mathrm{univ}}|_{\sman{X}}^{\wedge d}=0$ at every
point of $\sm{C}$. By continuity it also vanishes at $x \in C$. Since
$x\in \sman{X}\cap\cA_\Delta$ was arbitrary, we conclude
${\omega}^{\mathrm{univ}}|_{\sman{X}}^{\wedge d}\equiv 0$. 

For the converse we assume
${\omega}^{\mathrm{univ}}|_{\sman{X}}^{\wedge d}\equiv 0$. So the
Hermitian form attached to this semi-positive $(1,1)$-form is
degenerate. Thus for each $x \in \sman{X}$, using holomorphic local coordinates we find an irreducible,  $1$-dimensional,  complex analytic subset $C_x$ which passes through $x$ and is contained in $\sman{X}$ such that $\omega^{\mathrm{univ}}|_{\sman{X}}$ vanishes along the tangent space of $\sm{C}_x$. So ${\omega}^{\mathrm{univ}}|_{\sm{C}_x}\equiv 0$, and hence 
%Using holomorphic local coordinates 
% any prescribed auxiliary point of $\sman{X}\cap\cA_\Delta$ lies in an 
%irreducible, $1$-dimensional,  complex analytic subset $C$ of an open
%subset of  $\sman{X}$ with
%${\omega}^{\mathrm{univ}}|_{\sm{C}}\equiv 0$. So
 $b_\Delta(C_x)$ is a point
by \eqref{EqBettiFormVanishingDirection}.
Letting the point $x$ run over $\sman{X}$, we conclude that the rank on the right
side of (\ref{prop:bettformvsrankuniv}) is strictly less than $2d$. 
\end{proof}

\subsection{General case}
We now prove Propositions~\ref{PropBettiMap}
and \ref{PropBettiForm} for
 $\pi \colon \cA \rightarrow S$ as near the beginning of this section.
 In particular, we assume \texttt{(Hyp)}. With the construction in $\mathsection$\ref{SubsectionBettiMap}, the rest of the proof of Proposition~\ref{PropBettiMap} follows the construction in \cite[$\mathsection$4]{GaoBettiRank}. 

%% By assumption, $\cA$  carries a principal polarization
%% and  level-$\ell$-structure.

As
$\IA_g$ is a fine moduli space there exists a Cartesian diagram
\begin{equation*}%\label{EqEmbedAbSchIntoUnivAbVar}
\xymatrix{
\cA \ar[r]^-{\iota} \ar[d]_{\pi} \pullbackcorner & \mathfrak{A}_g \ar[d] \\
S \ar[r]^-{\iota_S} & \mathbb{A}_g.
}
\end{equation*}

Now let $s_0 \in S(\IC)$. Applying Proposition~\ref{PropBettiMap} to the universal abelian variety $\mathfrak{A}_g \rightarrow \mathbb A_g$ and $\iota_S(s_0) \in \mathbb A_g(\IC)$, we obtain an open neighborhood $\Delta_0$ of $\iota_S(s_0)$ in $\mathbb A_g^{\mathrm{an}}$ and a map
\[
b_{\Delta_0} \colon \mathfrak{A}_g|_{\Delta_0} \rightarrow \mathbb T^{2g}
\]
satisfying the properties listed in Proposition~\ref{PropBettiMap}.

Now let $\Delta = \iota_S^{-1}(\Delta_0)$. Then $\Delta$ is an open neighborhood of $s$ in $S^{\mathrm{an}}$. Denote by $\cA_{\Delta} = \pi^{-1}(\Delta)$ and define
\[
b_{\Delta} = b_{\Delta_0} \circ \iota \colon \cA_{\Delta} \rightarrow \mathbb T^{2g}.
\]
Then $b_{\Delta}$ satisfies the properties listed in Proposition~\ref{PropBettiMap} for $\cA \rightarrow S$. Hence $b_{\Delta}$ is our desired Betti map.

Next let us turn to the Betti form. Let $\omega^{\mathrm{univ}}$ be the semi-positive $(1,1)$-form on $\mathfrak{A}_g$ as in Lemma~\ref{LemmaBettiFormUnivAb}. Define $\omega:= \iota^*\omega^{\mathrm{univ}}$. We will show that $\omega$ satisfies the properties listed in Proposition~\ref{PropBettiForm}.

The $(1,1)$-form $\omega$ is semi-positive as it is the pull-back of
the semi-positive form $\omega^{\mathrm{univ}}$. Moreover, it satisfies
$[N]^*\omega = N^2 \omega$ since $\omega^{\mathrm{univ}}$ has this
property. Hence we have established properties (i) and (ii) of
Proposition~\ref{PropBettiForm}.

Let us verify (iii) of Proposition~\ref{PropBettiForm}. Suppose $X$ is
an irreducible subvariety of $\cA$ of dimension $d$.
Let $\Delta$ be an open subset of $\an{S}$ with
$\sman{X}\cap\cA_\Delta\not=\emptyset$; we may shrink $\Delta$ subject
to this condition. 
Let $Z
= \overline{\iota(X)}$ and observe $\dim Z \le d$. 

Since $\omega = \iota^*\omega^{\mathrm{univ}}$, we have
\begin{equation}\label{EqOmegaAndOmegaUniv}
\omega|_{\sman{X}}^{\wedge d} \not\equiv 0 \quad\text{if and only if}\quad \omega^{\mathrm{univ}}|_{\sman{Z}}^{\wedge d} \not\equiv 0.
\end{equation}
Next by definition of $b_{\Delta}$, we have the following property:
 For suitable non-empty open subsets $\Delta$ of $S^{\mathrm{an}}$ and
 $\Delta_0$ of $\mathbb{A}_g^{\mathrm{an}}$
 %$\overline{\iota_S(S)}^{\mathrm{an}}$
 such that $\iota_S(\Delta) \subseteq \Delta_0$, we have
\begin{equation}\label{EqBettiRankTrivialBound}
\max_{x \in \sman{X} \cap \cA_{\Delta}} \mathrm{rank}_{\IR}
(\mathrm{d}b_{\Delta}|_{\sman{X}})_x
\le \max_{x \in \sman{Z} \cap \mathfrak{A}_{g,\Delta_0}} \mathrm{rank}_{\IR} (\mathrm{d}b_{\Delta_0}|_{Z^{\mathrm{sm,an}}})_x \le 2\dim Z \le 2d.
\end{equation}

Suppose first that $\omega|_{\sman{X}}^{\wedge d}\not\equiv 0$, then
\eqref{EqOmegaAndOmegaUniv} implies
$\omega^{\mathrm{univ}}|_{\sman{Z}}^{\wedge d}\not\equiv 0$ and in
particular $d=\dim Z$. 
We can apply 
Proposition~\ref{PropBettiForm}(iii)  to $Z$ and obtain
$ \max_{x \in \sman{Z} \cap \mathfrak{A}_{g,\Delta_0}} \mathrm{rank}_{\IR}
(\mathrm{d}b_{\Delta_0}|_{Z^{\mathrm{sm,an}}})_x =2d$. 
Now $\iota|_X\colon X\rightarrow Z$ is
generically finite as $\dim X = \dim Z$,
so the first inequality in \eqref{EqBettiRankTrivialBound} is an
equality. We conclude
\begin{equation}
\label{eq:maxrankXgeneralA}
\max_{x \in \sman{X} \cap \cA_\Delta} \mathrm{rank}_{\IR}
(\mathrm{d}b_{\Delta}|_{X^{\mathrm{sm,an}}})_x =2d.
\end{equation}

Conversely, assume \eqref{eq:maxrankXgeneralA} holds true.
Then we have equalities throughout in \eqref{EqBettiRankTrivialBound}.
By Proposition~\ref{PropBettiForm}(iii) applied to $Z$ and
by \eqref{EqOmegaAndOmegaUniv} we get $\omega|_{\sman{X}}^{\wedge
d}\not\equiv 0$.

%%% Local Variables:
%%% TeX-master: "main"
%%% End:

%% Section 3
\section{Setup and notation for the height inequality}\label{SectionSettingUpHtIneq}
%% Section 3: Setup and notations for the height inequality

In the next few sections we will prove Theorem~\ref{ThmHtInequality}.
Let us first fix  the setting.

All varieties are over an algebraically closed subfield $k$ of $\IC$.
The ambient data is given as above Theorem~\ref{ThmHtInequality}. We
repeat it here.
%The ambient data, with $\cL$ as in Raynaud's~\cite{LNM119}, is given as follows:
\begin{itemize}
  \item Let $S$ be a regular, irreducible, quasi-projective variety over  $k$
  that is Zariski open in an irreducible projective variety
  $\overline{S}\subset \IP_k^m$.
%  \Pinlinecomment{$\overline{S}$ used to be regular, is it safe to    assume that it may have horrible singularities?} \Gcomment{I don't think we need $\bar{S}$ to be regular in the proof. In fact we don't really work with the boundaries. And when applying the height inequality in Prop 7.1, we cannot assume $\bar{S}$ to be regular because $S$ is a subvariety of $\mathbb{M}_g$, and its Zariski closure in $\overline{\mathbb{M}_g}$ may not be regular even if $\overline{\mathbb{M}_g}$ is.}
  % \Pcomment{Question to self: Revert to $S$ quasi-projective?} \Gcomment{Affine changed to quasi-projective.}%\Gcomment{Seems better to me. The immersion in the second bullet point can be obtained from $\cL \otimes \pi^*\cM$ with $\cL$ as in Remark~\ref{rmk:projembedding} and some $\cM$ very ample on $S$; see Stack Project.}
  \item Let $\pi\colon \cA\rightarrow S$ be an abelian scheme
     presented by a closed immersion $\cA\rightarrow\IP_k^n\times S$ over
     $S$. %!!!FURTHER DOWN WE SOMETIMES HAVE: $S\times \IP_k^n$. !!!FIXME
  \item
    From the previous point, we get a closed immersion of the generic
    fiber $A$ of $\cA\rightarrow S$  into  $\IP_{k(S)}^n$.
    We assume that $A\rightarrow \IP_{k(S)}^n$  
    arises from a basis of the global sections of 
    the $l$-th power $L$ of a symmetric ample
    line bundle with $l\ge 4$. 
  \item
    Finally, we assume \texttt{(Hyp)} as on page \pageref{def:hyp}.
\end{itemize}
% of  relative dimension $g$, [PH]: relative dimension never used
%with $0$-section $\epsilon\colon S\rightarrow\cA$, and [PH]:
%$0$-section is never used
%% \item let $L$ be a line bundle on $\cA_\eta$ that is the fourth power
%% of an ample line bundle on $\cA_\eta$ and $[-1]^* L \cong L$. 
%% \item let $\cL$ be a line bundle on $\cA$ that is $S$-ample and such
%% that $[-1]^*\cL\cong \cL$. 
%such that  $\epsilon^*\cL$ is trivial, and such that
%$[-1]^*\cL \cong \cL$.

%If $S$ is a regular, irreducible, quasi-projective variety of $k$,
%then $\overline{S}$ as in the first bullet point always exists due to
%Hironaka's Theorem.\Pinlinecomment{Hironaka is not necessary if we
%  don't ask $\overline{S}$ to be regular. But this assumption may be important...}
From the third bullet power, we see that the image of $A$ is
projectively normal in $\IP_{k(S)}^n$, \textit{cf}.~\cite[Theorem 9]{Mumford:quadeq}. By the fourth bullet point,
Proposition~\ref{PropBettiForm} provides the Betti form $\omega$ on
$\an{\cA}$.

For $s\in S(k)$ we write $\cA_s$ for the abelian variety $\pi^{-1}(s)$. 

\begin{remark}
\label{rmk:projembedding}
Let $S$ be as in the first bullet point. %For simplicity we assume in addition that $S$ is affine, this suffices for Theorem~\ref{ThmHtInequality}. 
Let
$\pi \colon \cA \rightarrow S$ be an abelian scheme.
Suppose $L_0$ is a 
symmetric and ample line bundle on $A$, the generic fiber of $\pi$.
An immersion of $\cA$ as in the second bullet point can  be obtained
as follows. 
 By \cite[Th\'eor\`eme~XI~1.13]{LNM119}  there exists an
$S$-ample line bundle $\cL$ on $\cA$ whose restriction to the
generic fiber of $\cA\rightarrow S$ is isomorphic to $L_0^{\otimes l}$
for some integer $l\ge 4$. 
We may assume in addition that $\cL$
satisfies
$[-1]^* \cL \cong \cL$ and even that $\cL$ becomes trivial when pulled back under the
zero section $S\rightarrow \cA$, see \cite[Remarque~XI~1.3a]{LNM119}.
%We may even assume that $\cL$ 
After replacing $\cL$
by  a sufficiently high power, we may assume that $\cL$ is very ample over
$S$.
We fix a basis of global sections of $L_0^{\otimes l}$ and, as $l\ge
4$, thereby
realize the
 generic fiber of $\pi$ as a projectively normal subvariety of 
$\IP_{k(S)}^n$.
Now we can take $L$ in the third bullet point to be $L_0^{\otimes l}$,
which is the restriction of $\cL$ to $A$.
A closed immersion $\cA \rightarrow \IP^n_k \times S$ as in the second
bullet point %exists because $S$ is affine. 
 arises from 
$\cL \otimes \pi^*\cM$ for some very ample line bundle $\cM$ on $S$; see \cite[Proposition~4.4.10.(ii) and Proposition~4.1.4]{EGAII}.
On restricting to a fiber of $\cA\rightarrow S$ the induced closed immersion  $\cA_s\rightarrow\IP^n_k$ comes from the restriction $\cL|_{\cA_s}$. 
\end{remark}

Write $\overline \cA$ for the Zariski closure
of $\cA$ in $\IP_k^n\times \overline{S}$. Then $\overline \cA$ is irreducible
but not necessarily regular.
On any product of $r$ projective spaces and if $a_1,\ldots,a_r\in\IZ$,
we let  $\cO(a_1,\ldots,a_r)$ denote the tensor product over all $i\in
\{1,\ldots, r\}$ of the
pull-back under the $i$-th projection of $\cO(a_i)$. 
We write $\cL$ for the restriction of $\cO(1,1)$ to $\overline{\cA}$. 

\subsection{Height functions on $\cA$}\label{SubsectionHeight}
If $k=\IQbar$ we have several height functions on $\cA(\IQbar)$.

For any $n \in \IN$, we always consider the absolute logarithmic 
Weil height function $\IP_{\IQbar}^n(\IQbar) \rightarrow \IR$, or just 
Weil height, defined as in \cite[$\mathsection$1.5.1]{BG}.

Now say $P\in \cA(\IQbar)$, we write $P=(P',\pi(P))$ with
$P'\in \IP_{\IQbar}^n(\IQbar)$ and
$\pi(P)\in \IP_{\IQbar}^m(\IQbar)$. The sum of
Weil heights
\begin{equation}\label{EqHeightTotal}
h(P) = h(P') + h(\pi(P))
\end{equation}
defines our first height $\cA(\IQbar)\rightarrow [0,\infty)$ which we
call the \textit{naive height} on $\cA$. It depends on the fixed
immersion of $\cA$.

The line bundle $[-1]^*\cL|_{\cA} \otimes\cL|_{\cA}^{\otimes -1}$ of
${\cA}$ restricted to the generic fiber $A$ of $\cA\rightarrow S$
equals $[-1]^*L \otimes L^{\otimes -1}$. By the third bullet point above this line
bundle is trivial. So it equals $\pi^*\cK$ for some line bundle $\cK$
of $S$ by \cite[Corollaire~21.4.13 (pp.~361 of EGA IV-4, in Errata et Addenda, liste~3)]{EGAIV}. We conclude
$[-1]^*\cL|_{\cA_s}\cong \cL|_{\cA_s}$ for all $s\in S(\IQbar)$. So
the function (\ref{EqHeightTotal}) represents the height function,
defined up-to $O_s(1)$, given by the Height
Machine, \textit{cf.} \cite[Theorem 2.3.8]{BG}, applied to
$(\cA_s,\cL_s)$. As $\cL_s$ is symmetric, the \textit{fiberwise
N\'eron--Tate} or \textit{canonical height} $\hat
h_{\cA} \colon \cA(\IQbar)\rightarrow [0,\infty)$, defined by the
convergent limit
\begin{equation}
\label{eq:fiberwiseNT}
\hat h_{\cA}(P) = \lim_{N\rightarrow\infty} \frac{h([N](P))}{N^2},%\hat h_{\cA_{\pi(P)}} (P).
\end{equation}
is a quadratic form on $\cA_s(\IQbar)$. In the notation \cite[Chapter~9]{BG} the height
(\ref{eq:fiberwiseNT}) is $\hat h_{\cA_s,\cL_s}$ where $s=\pi(P)$.

\begin{remark}\label{RemarkHtcompatible}
  We use here the notation of Remark~\ref{rmk:projembedding}.
  So that the immersion $\cA\rightarrow \IP^n_S$ arises via
  $L_0^{\otimes l}$. To normalize,
  we divide (\ref{eq:fiberwiseNT}) by $l$ and obtain the 
  N\'eron--Tate height $\hat h_{\cA,L_0}\colon \cA(\IQbar)\rightarrow
  [0,\infty)$.

  Let us verify that $\hat h_{\cA,L_0}$ depends only on $L_0$. Suppose
  $\cL'$ is another line bundle on $\cA$ that restricts to
  $L_0^{\otimes l}$,
  then $\cL'\otimes\cL^{\otimes -1}$ is trivial on $A$. By \cite[Corollaire~21.4.13 (pp.~361 of EGA IV-4, in Errata et Addenda, liste~3)]{EGAIV}, this difference is the pull-back of some line
  bundle on $S$ under $\cA\rightarrow S$. So the restriction of
  $\cL'\otimes\cL^{\otimes -1}$ to $\cA_s$ for each $s\in S(\IC)$ is
  trivial. Thus $\cL|_{\cA_s}$ and $\cL'|_{\cA_s}$ induce the same N\'eron--Tate
  height on $\cA_s(\IQbar)$, see~\cite[$\mathsection$9.2]{BG}. 
\end{remark}

\subsection{Integration against the Betti form}\label{SubsectionSetUpHtIneqBetti}
Let $\cA$  and $S$ be as in the beginning of this section, so they are
defined over an algebrically closed subfield $k$ of $\IC$. 
Recall that $\omega$ is the Betti form on $\an{\cA}$ as provided by
Proposition~\ref{PropBettiForm}. In particular, it is a
semi-positive $(1,1)$-form on $\an{\cA}$ such that $[N]^*\omega =
N^2 \omega$ for all $N\in\IZ$. We discuss here a modification of  the
Betti form that has compact support.

Fix $X$ to be an irreducible closed subvariety of $\cA$ of dimension $d$, such that $\pi|_X \colon X \rightarrow S$ is dominant. 

We are not allowed to integrate $\omega^{\wedge d}$ over $\sman{X}$ as $\omega^{\wedge d}$ may not have compact support. So we modify $\omega$ in the following way.

Suppose we are provided with a base point $s_0 \in \an{S}$. Let
furthermore $\Delta$ be a relatively compact, contractible,
open neighborhood of $s_0$ in $\an{S}$. Denote by $\cA_{\Delta}$ the open
subset $\pi^{-1}(\Delta)$ of $\an{\cA}$. Fix a smooth bump function $\vartheta \colon \an{S}\rightarrow [0,1]$ with compact support $K \subset \Delta$ such that $\vartheta(s_0)=1$. Finally, we define $\theta = \vartheta\circ\pi \colon \an{\cA} \rightarrow [0,1]$.
Then $\theta\omega$ is a semi-positive smooth $(1,1)$-form on $\an{\cA}$;
unlike the Betti form, it may not be closed. By construction, the support
of $\theta\omega$ lies in $\pi^{-1}(K)$ which is compact as $\pi$ is proper and $K$ is compact. 

\begin{remark}
  \label{rmk:nondeg}
  Suppose $X$ is non-degenerate, namely $X$ satisfies one of the two
  equivalent conditions in property (iii) of
  Proposition~\ref{PropBettiForm}. Then $\an{X}$ contains a smooth
  point $P_0$ at which $\omega|_{\sman{X}}^{\wedge d}>0$. Then we will
  take $s_0=\pi(P_0)$.
\end{remark}

\subsection{The graph construction}
\label{ss:graph}

Let $N\in\IZ$. The multiplication-by-$N$ morphism $[N] \colon \cA\rightarrow \cA$ may not
extend to a morphism $\overline{\cA}\rightarrow \overline{\cA}$. We
overcome this by using the graph construction.

Recall that we have identified
$\cA\subset\overline{\cA}\subset \IP_k^n\times \overline{S}
\subset\IP_k^n\times\IP_k^m$. 

We write
$\rho_1,\rho_2\colon \IP_k^n\times\IP_k^n \times \IP_k^m \rightarrow
\IP_k^n\times \IP_k^m$ for the
 two projections $\rho_1(P,Q,s)=(P,s)$ and $\rho_2(P,Q,s)=(Q,s)$.

Consider $\Gamma_N$ the graph of $[N]$, determined by 
\begin{equation*}
  \Gamma_N = \{ (P,[N](P)) : P \in \cA(k) \}. 
\end{equation*}
We consider it as an irreducible closed subvariety of
$\cA\times_S\cA$. 

Let $X$ be an irreducible closed subvariety of $\cA$ of dimension $d$. 
The graph $X_N$ of $[N]$ restricted to $X$ is
an irreducible closed  subvariety of $\Gamma_N$ determined by 
\begin{equation*}
   \{(P,[N]P): P \in X(k) \}.
\end{equation*}

Observe that $\rho_1|_{\Gamma_N} \colon \Gamma_N\rightarrow \cA$ is an isomorphism; it maps $(P,[N](P))$ to $P$.  % In particular,  $\Gamma_N$ is a complex manifold.
So we can use $\rho_1|_{\Gamma_N}^{-1}$ to identify $X$ with $X_N$.

Moreover, $\rho_2|_{\Gamma_N}$ maps $(P,[N](P))$ to $[N](P)$. Therefore
\begin{equation}
\label{eq:relatetoN}
  \rho_2|_{\Gamma_N} \circ \rho_1|_{\Gamma_N}^{-1} = [N].
\end{equation}

Let $\overline{X_N}$ be the Zariski closure of $X_N$ in
$\overline{\cA}\times_{\overline{S}}\overline{\cA}\subset
\IP^n_{\overline{S}}\times_{\overline{S}}\IP^n_{\overline{S}} =
\IP_k^n\times\IP_k^n\times \overline{S}
\subset\IP_k^n\times\IP_k^n\times \IP_k^m$.  Then $\overline{X_N}$ is an irreducible projective variety (which is not necessarily regular) with $\dim \overline{X_N} = \dim X_N = \dim X$.

% Let $\rho_0 \colon  \IP_k^n\times\IP_k^n\times \IP_k^m  \rightarrow \IP_k^m$ be the projection to the final factor.
In the next section, we will use the following line bundles on $\overline{X_N}$. Define
\begin{equation}\label{def:cF}
\cF = \rho_2^*\cO(1,1)|_{\overline{X_N}} = \cO(0,1,1)|_{\overline{X_N}}
\end{equation}
and
\begin{equation}\label{def:cM}
  \cM %= \rho_0^*\cO(1)|_{\overline{X_N}}
  = \cO(0,0,1)|_{\overline{X_N}}.
\end{equation}

Let us close this subsection by relating the height functions defined
by $\cF$ and $\cM$ with the ones in
$\mathsection$\ref{SubsectionHeight}. Assume $k = \IQbar$. Let $P \in
X(\IQbar)$. Write $P = (P',\pi(P))$ with
 $P' \in \IP_\IQbar^n(\IQbar)$ and $\pi(P) \in\IP_\IQbar^m(\IQbar)$. %  for $\cA
% \subseteq  \IP_\IQbar^n\times S \subseteq \IP_\IQbar^n \times \IP_\IQbar^m$.
We have $[N](P) = ( P'_N,\pi(P))$ for some $P'_N \in \IP_\IQbar^n(\IQbar)$.

Under the immersion $\overline{X_N} \subseteq \IP_\IQbar^n \times
\IP_\IQbar^n \times \IP_\IQbar^m$, the point $(P,[N]P)$ in
$\overline{X_N}(\IQbar)$ becomes $P_N=(P',P'_N,\pi(P)) \in
(\IP_\IQbar^n \times \IP_\IQbar^n \times \IP_\IQbar^m)(\IQbar)$.
The function $P_N\mapsto h([N](P)) = h(P'_N)+h(\pi(P))$ defined in
\eqref{EqHeightTotal}
represents the height attached by the Height Machine to
$(\overline{X_N},\cF)$
and $P_N\mapsto h(\pi(P))$ represents the height attached to $(\overline{X_N},\cM)$.

%%% Local Variables:
%%% TeX-master: "main"
%%% End:

%% Section 4
\section{Intersection theory and height inequality on the total space}\label{SectionIntThAndHtIneq}
%% Section 4: Intersection theory and height inequality on the total space

We keep the notation of $\mathsection$\ref{SectionSettingUpHtIneq}. 
So we have a closed immersion $\cA\rightarrow \IP_k^n\times
S$ over $S$ satisfying the properties stated near the beginning 
of $\mathsection$\ref{SectionSettingUpHtIneq}. Moreover, $S$ is a
Zariski open subset of an irreducible projective variety $\overline
S\subset\IP_k^m$. 
We assume in addition $k = \IQbar$. Let $X$ be a closed irreducible subvariety of $\cA$ of dimension $d$ defined over $\IQbar$, such that $\pi|_X \colon X\rightarrow S$ is dominant. Let $\omega$ be the Betti form on $\cA$ as defined in Proposition~\ref{PropBettiForm}.
\begin{proposition}\label{PropAuxHtIneq}
We keep the notation from above and
suppose that $\an{X}$ contains a smooth point at which
$\omega|_{\sman{X}}^{\wedge d} >0$. Then there exists a constant $c_1
> 0$ satisfying the following property.
Let $N \in \IN$ be a power of $2$, there exist a Zariski open dense subset $U_N$ of $X$ defined over $\IQbar$ and a constant $c_2(N)$ such that
\[
h([N]P) \ge c_1 N^2 h(\pi(P)) - c_2(N)\qquad \text{for all }P \in U_N(\IQbar).
\]
\end{proposition}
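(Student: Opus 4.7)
The plan is to use the graph construction of $\mathsection$\ref{ss:graph} to transport the problem onto the irreducible projective variety $\overline{X_N}$, apply intersection theory and a bigness statement there, and then read off the height inequality via the height machine. First I would recall that, by the height-machine description at the end of $\mathsection$\ref{ss:graph}, the height attached to $(\overline{X_N}, \cF)$ at the point $P_N=(P,P'_N,\pi(P))$ coincides with $h([N]P)$, while the height attached to $(\overline{X_N},\cM)$ is $h(\pi(P))$. Hence it suffices to exhibit a constant $c_1>0$, independent of $N$, such that the $\IR$-line bundle class $\cF - c_1 N^2\,\cM$ is big on $\overline{X_N}$ for every sufficiently large power $N$ of $2$: bigness produces a non-zero global section of some tensor power, hence an effective divisor avoiding a non-empty open subset, and the height machine yields a uniform lower bound on $h_\cF - c_1 N^2 h_\cM$ away from the augmented base locus; taking $U_N$ to be the complement of this locus (pushed back through $\rho_1$) delivers the proposition.

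To establish bigness I would invoke Siu's numerical criterion: if $L_1, L_2$ are nef classes on an irreducible projective variety of dimension $d$, then $L_1 - L_2$ is big as soon as $(L_1^d) > d\,(L_1^{d-1}\cdot L_2)$. Both $\cF$ and $\cM$ are pulled back from nef classes on products of projective spaces, and hence nef. Applied to $L_1=\cF$ and $L_2=c_1 N^2\,\cM$, the problem reduces to comparing two intersection numbers on $\overline{X_N}$: the cross term $(\cF^{d-1}\cdot\cM)$ should be $O(N^{2(d-1)})$, while the top self-intersection $(\cF^d)$ should grow like a positive constant times $N^{2d}$. The first estimate is the easy direction: the factor $\cM$ is pulled back from $\IP_k^m$ and thus kills one base-direction variable, so the intersection reduces to the fiberwise $\cL$-degree of $[N]X$, which grows as $N^{2(d-1)}$ since $[N]^*\cL|_{\cA_s} \cong \cL^{\otimes N^2}|_{\cA_s}$ for symmetric $\cL$, up to a twist pulled back from $S$.

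The crucial lower bound on $(\cF^d)$ is where the Betti form enters, and this is the main obstacle. Identifying $X_N$ with $X$ via $\rho_1$, the class $\cF|_{X_N}$ corresponds to $[N]^*\cL|_X$. On each fiber $\cA_s$ the first Chern class of $\cL|_{\cA_s}$ is represented, up to a positive normalization absorbed into $c_1$, by the translation-invariant form $\omega|_{\cA_s}$, so combining the scaling relation $[N]^*\omega = N^2\omega$ from Proposition~\ref{PropBettiForm}(ii) with a Chern-number computation should yield
\begin{equation*}
(\cF^d) \ge N^{2d}\int_{\sman{X}} \omega^{\wedge d} + O(N^{2d-1}).
\end{equation*}
By the standing hypothesis on $X$ together with Proposition~\ref{PropBettiForm}(i), the semi-positive top form $\omega|_{\sman{X}}^{\wedge d}$ is strictly positive at some smooth point, so $\int_{\sman{X}} \omega^{\wedge d} > 0$. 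Combined with the bound on $(\cF^{d-1}\cdot\cM)$, Siu's criterion selects a $c_1>0$ independent of $N$, proving the proposition for all sufficiently large powers of $2$; the finitely many small ones are absorbed into $c_2(N)$.

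The hard part is making rigorous the comparison $(\cF^d) \ge N^{2d}\int_{\sman{X}} \omega^{\wedge d} + O(N^{2d-1})$. The intersection number is an algebraic invariant of the possibly singular projective variety $\overline{X_N}$, while $\omega$ is only analytic and a priori does not extend across the boundary $\overline{\cA}\setminus\cA$; moreover the base-direction factor in $\cL$ contributes cross terms that must be separated from the leading $N^{2d}$ piece. Making this precise will involve passing to a desingularization of $\overline{X_N}$ and using the compactly supported modification $\theta\omega$ from $\mathsection$\ref{SubsectionSetUpHtIneqBetti} together with a Chern--Weil or $L^2$-style analytic estimate relating fiberwise Chern numbers to the integrated Betti density. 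The restriction to $N$ a power of $2$ is most likely included to allow a clean recursive use of the doubling map $[2]$, whose pull-back rescales $\omega$ by the factor $4$, propagating the comparison uniformly as $N$ grows.
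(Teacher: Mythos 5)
Your overall architecture is the same as the paper's: transport the problem to $\overline{X_N}$ via the graph construction, reduce to bigness of $\cF^{\otimes q}\otimes\cM^{\otimes -pN^2}$ through Siu's numerical criterion, and supply a lower bound $(\cF^{\cdot d})\gg N^{2d}$ and an upper bound $(\cM\cdot\cF^{\cdot(d-1)})\ll N^{2(d-1)}$. But there are two genuine gaps. The more serious one is that you dismiss the cross-term bound $(\cM\cdot\cF^{\cdot(d-1)})=O(N^{2(d-1)})$ as "the easy direction," reducing it to the fiberwise degree of $[N]X$. That reduction is not available: the intersection number is computed on the compactification $\overline{X_N}\subset\IP_k^n\times\IP_k^n\times\IP_k^m$, and $\cF=\cO(0,1,1)|_{\overline{X_N}}$ mixes the fiber and base directions, so one must control how the Zariski closure of the graph of $[N]|_X$ degenerates over $\overline{S}\setminus S$ — i.e., bound the multidegrees of $\overline{X_N}$ in all three factors. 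The paper does this by representing $[2]$ by bihomogeneous polynomials of bidegree $(4,D')$ (projective normality of the generic fiber and Serre's result enter here), iterating them to represent $[2^l]$ with bidegree $(4^l,\tfrac{4^l-1}{3}D')$, cutting out $\overline{X_N}$ as a \emph{proper component} of an explicit intersection inside $\overline{X}\times\IP_k^n$, and invoking Fulton's positivity for multiprojective space to dominate $(\cM\cdot\cF^{\cdot(d-1)})$ by the intersection number of the full positive cycle. This is also where the restriction to $N=2^l$ is actually used — to keep the base-direction degree $D'_l$ growing no faster than $4^l$ under iteration — not, as you suggest, to propagate the rescaling $[N]^*\omega=N^2\omega$, which holds for every $N$.

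The second gap is in the lower bound, which you flag yourself but leave unresolved, and your proposed asymptotic $(\cF^{\cdot d})\ge N^{2d}\int_{\sman{X}}\omega^{\wedge d}+O(N^{2d-1})$ is not the right target: $\omega^{\wedge d}$ has no compact support on $\sman{X}$, so that integral need not converge, and no desingularization is required. The paper's route is softer: choose $C>0$ with $C\alpha-\omega\ge 0$ on $\cA_\Delta$ (with $\alpha$ the Fubini--Study form of $\cO(1,1)$), observe that $\beta=C[N]^*(\theta\alpha)-N^2\theta\omega$ is a semi-positive compactly supported form, expand $(\delta+\beta)^{\wedge d}$ binomially to get $\int_{\sman{X}}(C[N]^*(\theta\alpha))^{\wedge d}\ge N^{2d}\int_{\sman{X}}(\theta\omega)^{\wedge d}=N^{2d}\kappa'>0$, and then identify $\int_{\sman{X}}[N]^*(\theta\alpha)^{\wedge d}$ with $(\cF^{\cdot d})$ (up to discarding $\theta\le 1$) by transporting the integral to $\an{\overline{X_N}}$ through the graph isomorphism $\rho_1|_{\Gamma_N}$ and using that integration of a smooth ambient form over a possibly singular compact analytic set computes the intersection pairing (Lelong). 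So the constant you obtain is $\kappa'/C^d$, not $\int_{\sman{X}}\omega^{\wedge d}$; filling in your sketch along these lines is necessary before the Siu step can be run.
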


The goal of this section is to prove Proposition~\ref{PropAuxHtIneq}. 
The key idea is to apply a theorem of Siu \cite[Theorem~2.2.15]{PosAlgGeom}. Let us briefly explain the main points before moving on to the proof.

Let $X$ be as in Proposition~\ref{PropAuxHtIneq}, and let $P \in
X(\IQbar)$. For each $N \in \IN$, we work with $X_N \subseteq
\cA\times_S \cA$, the graph of $[N] \colon \cA \rightarrow \cA$, and
its Zariski closure $\overline{X_N}$ in
$\overline{\cA}\times_{\overline{S}}\overline{\cA} \subseteq \IP_k^n
\times \IP_k^n \times \IP_k^m$. The point $P$ gives rise to a point
$P_N \in \overline{X_N}$; see $\mathsection$\ref{ss:graph}. Consider
the line bundles $\cF = \cO(0,1,1)|_{\overline{X_N}}$ and $\cM =
\cO(0,0,1)|_{\overline{X_N}}$. Choosing representatives as in
last paragraph of
$\mathsection$\ref{ss:graph}  
 our height inequality in Proposition~\ref{PropAuxHtIneq} is equivalent to
\[
h_{\overline{X_N},\cF}(P_N)\ge c_1 N^2 h_{\overline{X_N},\cM}(P_N) - c_2'(N)
\]
for some $c_2'(N)$ independent of $P$ (which may be different from
$c_2(N)$). By the Height Machine  it suffices to find positive
integers $p$ and $q$, independent of $N$, such that $\cF^{\otimes q}
\otimes \cM^{\otimes -p N^2}$ is a big line bundle on
$\overline{X_N}$; we can then take $c_1=p/q$. 

Both $\cF$ and $\cM$ are nef line bundles. Thus a criterion of bigness
by Siu \cite[Theorem~2.2.15]{PosAlgGeom}, states that $\cF^{\otimes q}
\otimes \cM^{\otimes -p N^2}$ is big if $(\cF^{\cdot d}) > d c_1
(\cM^{\otimes N^2} \cdot \cF^{\cdot (d-1)})$.
Note that $(\cM^{\otimes N^2} \cdot \cF^{\cdot (d-1)}) = N^2 (\cM \cdot \cF^{\cdot (d-1)})$ by multi-linearity of intersection numbers. 
Thus our task 
becomes comparing two intersection numbers. Our application continues
to work if the numerical
factor $d = \dim X$ is replaced by any
positive  factor that depends only on the dimension.
So it remains to prove an appropriate lower bound for $(\cF^{\cdot d})$ and an appropriate upper bound for $(\cM \cdot \cF^{\cdot (d-1)})$.

The proof of Proposition~\ref{PropAuxHtIneq} will be organized as follows in this section. We first prove the appropriate lower bound for $(\cF^{\cdot d})$ in Proposition~\ref{prop:intersectionlb}. This is where we use the hypothesis that $\omega|_{\sman{X}}^{\wedge d} >0$ at some smooth point of $\an{X}$. Next we prove the appropriate lower bound for $(\cM \cdot \cF^{\cdot (d-1)})$ in Proposition~\ref{prop:intersectionub}. At this step the assumption of $N$ being a power of $2$ is used. Then we finish the proof of Proposition~\ref{PropAuxHtIneq} by applying Siu's theorem in $\mathsection$\ref{SubsectionProofOfAUxHtIneq}.

\subsection{Bounding an intersection number from below}
Let $X$ be as in Proposition~\ref{PropAuxHtIneq}. 
For each $N \in \IN$, let $\overline{X_N} \subseteq \IP_k^n \times
\IP_k^n \times \IP_k^m$ be as in $\mathsection$\ref{ss:graph}.
In particular, $\dim \overline{X_N} = d$.
%Without loss of generality we may assume $k=\IC$. 
 Let $\cF =
\cO(0,1,1)|_{\overline{X_N}}$ be as in \eqref{def:cF}. The top
self-intersection of $\cF$ on $\overline{X_N}$ is bounded from below
in the following proposition.
To prove it, we may replace $X$ by its base change to $\IC$. 

\begin{proposition}
  \label{prop:intersectionlb}
  Suppose $\an{X}$ contains a smooth point at which
  $\omega|_{\sman{X}}^{\wedge d} >0$. Then there exists a constant $\kappa
  > 0$, independent of $N$, such that $(\cF^{\cdot d}) \ge \kappa N^{2d}$ for all
  $N\in\IN$.   
\end{proposition}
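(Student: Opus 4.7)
The plan is to express $(\cF^{\cdot d})$ as an analytic integral over $\sman{X}$ using the isomorphism $\rho_{1}|_{X_{N}}\colon X_{N}\xrightarrow{\sim}X$, rewrite the integrand as the $d$-th wedge of $[N]^{*}\eta$ for a smooth Fubini--Study form $\eta$ on $\cA$, and exploit the functional equation $[N]^{*}\omega=N^{2}\omega$ of Proposition~\ref{PropBettiForm}(ii) to extract the leading asymptotic $N^{2d}$.

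First I take $\eta$ to be the $\rho_{2}$-pullback of a smooth Fubini--Study form in $c_{1}(\cO(1,1))$ on $\IP_k^{n}\times\IP_k^{m}$, giving a smooth semi-positive $(1,1)$-form on $\IP_k^{n}\times\IP_k^{n}\times\IP_k^{m}$ representing $c_{1}(\cO(0,1,1))$. Fix a resolution of singularities $\pi_{N}\colon\tilde X_{N}\to\overline{X_{N}}$. Since $\cF$ is nef with $\pi_{N}^{*}\eta$ a smooth semi-positive representative of $c_{1}(\pi_{N}^{*}\cF)$, $(\cF^{\cdot d})=\int_{\tilde X_{N}}(\pi_{N}^{*}\eta)^{\wedge d}$. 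Using $\rho_{2}|_{X_{N}}=[N]\circ\rho_{1}|_{X_{N}}$ from~\eqref{eq:relatetoN} and that the complement of $\pi_{N}^{-1}(\sman{X_{N}})$ in $\tilde X_{N}$ has Lebesgue measure zero, this gives
\[
(\cF^{\cdot d})=\int_{\sman{X}}([N]^{*}\eta)^{\wedge d}=N^{2d}\int_{\sman{X}}\omega_{N}^{\wedge d},\qquad \omega_{N}:=N^{-2}[N]^{*}\eta.
\]

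To bound $\int_{\sman{X}}\omega_{N}^{\wedge d}$ below uniformly in $N$, I will relate $\omega_{N}$ to the Betti form. Fiberwise $\eta|_{\cA_{s}}$ and $c\omega|_{\cA_{s}}$ (for a suitable $c>0$ encoding the polarization degree) are cohomologous smooth forms on a compact K\"ahler manifold, hence differ by $dd^{c}$ of a function. Globalizing using the explicit K\"ahler potentials of $\mathsection$\ref{SectionBettiRevisited} on the universal cover $\IC^{g}\times\mathfrak{H}_{g}$ (Lemmas~\ref{LemmaBettiOnUniversalCovering} and~\ref{LemmaBettiFormComplexReal}) should produce a decomposition
\[
\eta=c\omega+dd^{c}\varphi+\pi^{*}\beta\quad\text{on }\an{\cA},
\]
with $\varphi$ smooth and fiberwise bounded and $\beta$ a smooth closed $(1,1)$-form on $\an{S}$. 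Combined with $[N]^{*}\omega=N^{2}\omega$ and $\pi\circ[N]=\pi$ this yields $\omega_{N}=c\omega+N^{-2}dd^{c}\psi_{N}+N^{-2}\pi^{*}\beta$ with $\psi_{N}:=\varphi\circ[N]$. Pick a smooth compactly supported bump $\chi\ge 0$ on $\sman{X}$ with $\chi(P_{0})>0$, where $P_{0}$ is the smooth point of $X$ with $\omega|_{\sman{X}}^{\wedge d}(P_{0})>0$. Expanding $\chi\omega_{N}^{\wedge d}$ multinomially, each term with at least one factor of $\pi^{*}\beta$ is $O(N^{-2})$ by the explicit prefactor; each term with $a\ge 1$ copies of $dd^{c}\psi_{N}$ is bounded by $N^{-2a}\cdot O(N^{2a-2})=O(N^{-2})$, where the $O(N^{2a-2})$ comes from iterated integration by parts against $\chi$ combined with the symmetrization $\psi\,dd^{c}\psi=\tfrac12 dd^{c}(\psi^{2})-d\psi\wedge d^{c}\psi$: each IBP moves one $dd^{c}$ off $\psi_{N}$, the $dd^{c}(\psi_{N}^{2})$-pieces vanish by Stokes against the closed form $dd^{c}\chi\wedge(c\omega)^{d-a}$, while the $d\psi_{N}\wedge d^{c}\psi_{N}$-pieces only contribute $O(N^{2})$ per pair because $\|d\psi_{N}\|_{L^{\infty}(\mathrm{supp}\,\chi)}=O(N)$ (whereas $\|dd^{c}\psi_{N}\|_{L^{\infty}}=O(N^{2})$ naively). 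Hence $\int\chi\omega_{N}^{\wedge d}\to c^{d}\int\chi\omega^{\wedge d}>0$, the positivity from $\omega^{\wedge d}(P_{0})>0$ and continuity, so $\int_{\sman{X}}\omega_{N}^{\wedge d}\ge\kappa_{0}>0$ for all $N\ge N_{0}$. For the finitely many remaining $N$, $(\cF^{\cdot d})>0$ because $\cO(0,1,1)$ restricts to an ample class on $\overline{X_{N}}$ of dimension $d$; a single $\kappa>0$ then suffices for every $N\in\IN$.

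\textbf{Main obstacle.} The principal technical difficulty is establishing the decomposition $\eta=c\omega+dd^{c}\varphi+\pi^{*}\beta$ on the non-compact $\an{\cA}$, where a global $dd^{c}$-lemma is not available and the required fiber-uniform boundedness of $\varphi$ must be extracted by hand from the explicit universal-cover formulae of $\mathsection$\ref{SectionBettiRevisited}; the remainder is a careful Stokes-plus-symmetrization book-keeping needed to convert the superficially $L^{\infty}$-bounded lower-order contributions in $\omega_{N}^{\wedge d}$ into genuinely $O(N^{-2})$ terms.
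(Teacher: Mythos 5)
Your opening reduction is sound and parallels the paper: passing through a resolution (or, as the paper does, simply integrating over $\an{\overline{X_N}}$ and discarding the lower-dimensional set $\an{\overline{X_N}}\setminus\sman{X_N}$) and using $\rho_2|_{\Gamma_N}\circ\rho_1|_{\Gamma_N}^{-1}=[N]$ does give $(\cF^{\cdot d})=\int_{\sman{X}}([N]^*\eta)^{\wedge d}$ with $\eta$ a Fubini--Study representative of $\cO(1,1)$ restricted to $\an{\overline\cA}$ (the paper records the same chain, only as a one-sided inequality after inserting a bump function). The gap is in the second half. Your asymptotic expansion rests on the decomposition $\eta=c\omega+dd^c\varphi+\pi^*\beta$, whose starting point -- that $\eta|_{\cA_s}$ and $c\,\omega|_{\cA_s}$ are cohomologous for a single constant $c$ -- is false in the generality of the proposition: the Betti form restricted to a fiber is the invariant representative of the \emph{principal} polarization class, whereas the embedding comes from an arbitrary symmetric ample $L$ (third bullet of $\mathsection$\ref{SectionSettingUpHtIneq}), and on fibers of N\'eron--Severi rank $>1$ the class $c_1(L_s)$ need not be proportional to the principal polarization. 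Even when the classes do match, you still owe a proof of the global fiber-uniform potential $\varphi$ on the non-compact $\an\cA$ and of the $O(N^{-2})$ bookkeeping for the terms involving $dd^c\psi_N$ and $d\psi_N\wedge d^c\psi_N$; you flag these yourself as the ``main obstacle'', so as written the argument is incomplete precisely at its load-bearing step.

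Moreover, the machinery is unnecessary, because only a lower bound is needed, not an asymptotic. The paper's route: pick $P_0\in\sman{X}$ with $\omega|_{\sman{X}}^{\wedge d}(P_0)>0$, a bump $\theta=\vartheta\circ\pi$ supported in a relatively compact $\cA_\Delta$ with $\theta(P_0)=1$, and a constant $C>0$ with $C\alpha\ge\omega$ on $\cA_\Delta$ (possible since $\alpha$ is strictly positive and $\Delta$ is relatively compact). Pulling back the pointwise inequality $C\theta\alpha-\theta\omega\ge0$ by $[N]$ and using $[N]^*\omega=N^2\omega$, $[N]^*\theta=\theta$, one compares the top wedge powers of the two semi-positive forms $C[N]^*(\theta\alpha)$ and $N^2\theta\omega$ via the binomial expansion and positivity of mixed products of semi-positive forms, getting
\begin{equation*}
C^d\int_{\sman{X}}[N]^*(\theta\alpha)^{\wedge d}\;\ge\;N^{2d}\int_{\sman{X}}(\theta\omega)^{\wedge d}\;>\;0,
\end{equation*}
and then $\int_{\sman{X}}[N]^*(\theta\alpha)^{\wedge d}\le(\cF^{\cdot d})$ by exactly your first-half chain (with $\theta\le1$). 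This uses no $dd^c$-lemma, no Stokes estimates, and no comparison of cohomology classes; if you want to salvage your approach you should replace the exact decomposition by such a pointwise domination of $\omega$ by $\eta$ on a relatively compact set.
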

\begin{proof}
We fix a point  $P_0 \in \sman{X}$ at which
$\omega|_{\sman{X}}^{\wedge d}$ is positive and let $s_0=\pi(P_0),\Delta,\vartheta,\theta,$
and $K$  be as in $\mathsection$\ref{SubsectionSetUpHtIneqBetti}, see 
Remark~\ref{rmk:nondeg}. In particular $\vartheta(P_0) = \theta\circ \pi(P_0) = 1$. 
We extend $\vartheta$ to a smooth function on  $\an{(\IP^m_\IC)}$ by
setting it $0$ outside of the compact set $K\subset \an{S}$. This
extends $\theta = \vartheta\circ\pi$ to all of
$\an{(\IP^n_\IC\times\IP^m_\IC)}$. 
  %%   As stated above, $\int_{\sman{X}} \omega^{\wedge d}$ is not
%% well-defined if
%% $\omega$ does not have compact support. However, $\omega_c$ as in
%% (\ref{def:omegac}) does  have
%% compact support on $\cA$. 

Let $\alpha$ be the pull-back of the Fubini--Study form 
 under the analytification of the Segre morphism
$\IP_{\IC}^n\times\IP_{\IC}^m \rightarrow \IP_{\IC}^{(n+1)(m+1)-1}$.
We replace $\alpha$ by its restriction to $\an{\overline\cA}$. 
Thus $\alpha$
represents the Chern class of
$\cO(1,1) \in \mathrm{Pic}(\IP^n_{\IC}\times\IP_{\IC}^m)$ restricted
 to $\an{\overline\cA}$, using common notation.

%% Recall that  $\alpha$ represents the Chern class of an ample line
%% bundle on ${\cA}$. So it is
Note that $\alpha$ is strictly positive on all of
$\an{\cA}$.  Since $\Delta$ is relatively compact we can find a constant $C>0$
with
\begin{equation}\label{EqConditionOnCapitalC}
 C \alpha|_{\cA_{\Delta}} - \omega|_{\cA_{\Delta}} \ge 0.
\end{equation}
As the smooth and non-negative function $\theta = \vartheta\circ\pi$ on $\an{\cA}$ has support in
$\pi^{-1}(K)\subset \pi^{-1}(\Delta)=\cA_{\Delta}$ we have
\begin{equation*}
  C \theta\alpha - \theta\omega \ge 0.
\end{equation*}

We pull this $(1,1)$-form back under the holomorphic map
$[N]\colon \an{\cA}\rightarrow\an{\cA}$ and get
\begin{equation}
\label{eq:pbsemipositive}
C  [N]^* (\theta\alpha) - N^2\theta\omega =  C [N]^*(\theta\alpha) - [N]^*(\theta\omega)\ge 0
\end{equation}
where we used $[N]^*\omega = N^2\omega$ and $[N]^* \theta = \theta$; 
the former is a property of the Betti form, see
Proposition~\ref{PropBettiForm}(ii) and
the latter holds as $\theta$ is the pull-back from the base of $\vartheta$.

We define
\begin{equation*}
  \beta = C  [N]^*(\theta\alpha) - N^2\theta\omega,
\end{equation*}
which is a  $(1,1)$-form on $\an{\cA}$.
It is semi-positive by (\ref{eq:pbsemipositive}). 
The support of $\theta$ is
contained in $\pi^{-1}(K)$, which we have identified as compact at the
end of $\mathsection$\ref{SubsectionSetUpHtIneqBetti}.
So
$C[N]^*(\theta\alpha)$ and $N^2\theta\omega$ have compact support on $\an{\cA}$. % Thus we are allowed to integrate.

%\begin{lemma}
We claim that  $\int_{\sman{X}} (C[N]^*(\theta\alpha))^{\wedge d} \ge
\int_{\sman{X}} (N^2 \theta\omega)^{\wedge d}$.

First observe that both integrals are well-defined as both
$[N]^*(\theta\alpha)$ and $N^2\theta\omega$ have compact support on
$\an{\cA}$; this follows from work of Lelong~\cite{Lelong} which we
use freely below. A textbook proof can be found in
\cite[Theorem~11.21]{voisin:hodge1} and
\cite[$\mathsection$III.2.B]{Demailly}. 
%\end{lemma}
%\begin{proof}
To prove the inequality let us  write $\beta = \gamma - \delta$ with $\gamma = C[N]^*(\theta
\alpha)$ and $\delta = N^2\theta\omega$.
% all of compact support, [PH]removed, as redundant
Then
\begin{equation}
\label{eq:diffintegral}  
  \int_{\sman{X}} \gamma^{\wedge d} - \int_{\sman{X}}\delta^{\wedge d}
  = 
\int_{\sman{X}} (\delta+\beta)^{\wedge d} - \int_{\sman{X}} \delta^{\wedge
    d}
= \sum_{i=0}^{d-1} {d \choose i} \int_{\sman{X}} \delta^{\wedge
  i}\wedge \beta^{\wedge (d-i)}
\end{equation}
as the exterior product is commutative on even degree forms. 
We know that $\beta \ge 0$ on $\an{\cA}$ and it is also crucial that
$\delta \ge 0$ on $\an{\cA}$, the latter
follows from $\omega \ge 0$, property (i) of
Proposition~\ref{PropBettiForm},
and from $\theta\ge 0$.
Then $\delta^{\wedge i}\wedge \beta^{\wedge (d-i)}$ is semi-positive
on $\an{\cA}$; see \cite[Proposition~III.1.11]{Demailly}\footnote{As our convention  is somewhat different from Demailly's, let us explain how to apply  \cite[Proposition~III.1.11]{Demailly}. Our definition of semi-positive $(1,1)$-form coincides with that of \textit{positive $(1,1)$-form} of \cite[Chapter~III]{Demailly} by Corollary~1.7 of \textit{loc.cit.}, and thus are precisely the strongly positive $(1,1)$-forms of \cite[Chapter~III]{Demailly} by Corollary~1.9 of \textit{loc.cit.} Therefore we can apply the cited proposition.}. Thus the right-hand side of (\ref{eq:diffintegral}) is
 non-negative, see \cite[Theorem~III.2.7]{Demailly}, and our claim is settled.
%\end{proof}

The claim implies
\begin{equation}
\label{eq:N2dgrowth}
 C^d \int_{\sman{X}}  [N]^*(\theta\alpha)^{\wedge d} \ge
\kappa' N^{2d} \quad\text{where}\quad \kappa' = \int_{\sman{X}} (\theta\omega)^{\wedge d}.
\end{equation}
We have $\kappa' > 0$. Indeed, $(\theta\omega)^{\wedge d}$ is semi-positive on $\an{\cA}$ because $\omega \ge 0$ (Proposition~\ref{PropBettiForm}(i)) and $\theta \ge 0$ (by construction). But $\omega|_{\sman{X}}^{\wedge d}$ is positive at $P_0 \in \sman{X}$ by choice of $P_0$ and $\theta\circ \pi(P_0) = 1$ by choice of $\theta$. So $(\theta\omega)|_{\sman{X}}^{\wedge d}$ is positive at $P_0 \in \sman{X}$. Thus $\kappa' > 0$.

Next we want to relate the integral on the left in (\ref{eq:N2dgrowth}) with an
intersection number. First we recall that $[N]$ is given in terms of
the graph construction, \textit{cf.} (\ref{eq:relatetoN}). So we may
rewrite
\begin{equation}
\label{eq:chainbegins}
  \int_{\sman{X}} [N]^*(\theta\alpha)^{\wedge d}  =
  \int_{\sman{X}} (\rho_2|_{\Gamma_N} \circ
  \rho_1|_{\Gamma_N}^{-1})^*(\theta\alpha)^{\wedge d}=
  \int_{\sman{X}} (\rho_1|_{\Gamma_N}^{-1})^*\rho_2|_{\Gamma_N}^*(\theta\alpha)^{\wedge d},
\end{equation}
here $\Gamma_N,\rho_1,$ and $\rho_2$ are as defined in $\mathsection$\ref{ss:graph}. 

Because $\rho_1|_{\an{\Gamma}_N} \colon \an{\Gamma}_N \rightarrow \an{\cA}$ is biholomorphic we
can change coordinates and integrate over $X_N$, which is a complex
analytic subset of the graph $\Gamma_N$, itself a complex manifold.
More precisely, we have
\begin{equation}
\label{eq:chain2} 
  \int_{\sman{X}} (\rho_1|_{\Gamma_N}^{-1})^*\rho_2|_{\Gamma_N}^*(\theta\alpha)^{\wedge d}
=\int_{\sman{X}_N}  \rho_2|_{\Gamma_N}^*(\theta\alpha)^{\wedge d}.
\end{equation}

Recall that $\alpha$ is the restriction to $\an{\overline{\cA}}$ 
of a  $(1,1)$-form on  
$\an{(\IP^n_{\IC}\times\IP^m_{\IC})}$.
Moreover,
$\rho_2$ is also defined on all of $\IP^n_{\IC} \times\IP^n_{\IC}\times \IP^m_{\IC}$. So ${\rho_2}|_{\Gamma_N}^*(\theta\alpha)$ is the restriction to
$\Gamma_N$ of a $(1,1)$-form defined on $\an{(\IP^n_{\IC} \times\IP^n_{\IC}\times \IP^m_{\IC})}$. 
Observe that $\sman{X}_N\subset \an{\overline{X_N}}$ and the difference has
dimension strictly less than $d=\dim X_N$.
This justifies% the first inequality in 
\begin{equation}
\label{eq:chain3}   
  \int_{\sman{X}_N} \rho_2|_{\Gamma_N}^*(\theta\alpha)^{\wedge d}
  =\int_{\an{\overline{X_N}}} \rho_2^*(\theta\alpha)^{\wedge d}
%  \le \int_{\an{\overline{X_N}}} \rho_2^*\alpha^{\wedge d}
\end{equation}
where we take $\an{\overline{X_N}}$ as a complex analytic subset of
the analytification of $\IP_{\IC}^n\times\IP_{\IC}^n\times \IP_{\IC}^m$ and $\rho_2^*(\theta\alpha)$ as a
$(1,1)$-form on this ambient space.
Now $\theta$ takes values in $[0,1]$ and so
\begin{equation}
\label{eq:chain4}
\int_{\an{\overline{X_N}}} \rho_2^*(\theta\alpha)^{\wedge d}\le
%  \int_{\an{\overline{X_N}}} \theta^d(\rho_2^*\alpha)^{\wedge d}\le
  \int_{\an{\overline{X_N}}} (\rho_2^*\alpha)^{\wedge d}.
\end{equation}

The pull-back $\rho_2^*\alpha$ represents
$\rho_2^* \cO(1,1) \in
\mathrm{Pic}(\IP_{\IC}^n\times\IP_{\IC}^n\times\IP_{\IC}^m)$ in the
Picard group and
has compact support as $\an{(\IP_{\IC}^n\times\IP_{\IC}^n\times \IP_{\IC}^m)}$ is
compact.
But integration coincides with the intersection pairing in the compact
case; see \cite[Theorem~11.21]{voisin:hodge1}. In particular, we have
\begin{equation}
\label{eq:chain5} 
\int_{\an{\overline{X_N}}} (\rho_2^*\alpha)^{\wedge d} = (\rho_2^*
\cO(1,1)^{\cdot d} [\overline {X_N}]) %= \left(\cO(1,1,0)^{\cdot d}[\overline {X_N}]\right)
\end{equation}
where the intersection takes place in
$\IP_{\IC}^n\times\IP_{\IC}^n\times\IP_{\IC}^m$. We recall \eqref{def:cF}
and apply  the projection formula to obtain 
\begin{equation}
\label{eq:chain6}  
  (\rho_2^*
\cO(1,1)^{\cdot d}  [\overline {X_N}]) = (\cO(0,1,1)^{\cdot d}  [\overline {X_N}]) =  (\cF^{\cdot d}).
\end{equation}
%where we use $(\cF^{\cdot d})$ to denote  the self-intersection number on $\overline{X_N}$. 

The (in)equalities  (\ref{eq:chainbegins}),
(\ref{eq:chain2}), (\ref{eq:chain3}),
(\ref{eq:chain4}), (\ref{eq:chain5}), and (\ref{eq:chain6}) yield
\begin{equation*}
  \int_{\sman{X}} [N]^*(\theta\alpha)^{\wedge d} \le (\cF^{\cdot d}). 
\end{equation*}
We recall the  lower bound (\ref{eq:N2dgrowth}) % and the definition
% (\ref{def:cF})
to  obtain
$(\cF^{\cdot d}) \ge (\kappa'/C^d) N^{2d}$ where $C$ comes from
\eqref{EqConditionOnCapitalC} and $\kappa' > 0$ comes from
\eqref{eq:N2dgrowth}.
The proposition follows with $\kappa = \kappa'/C^d$. 
\end{proof}

\subsection{Bounding an intersection number from above}

We keep the notation from the last subsection with $k=\IQbar$. 
So  $X$ is as above Proposition~\ref{PropAuxHtIneq} with $\dim X=d$
For each $N \in \IN$, let $\overline{X_N} \subseteq \IP_k^n \times
\IP_k^n \times \IP_k^m$ be the graph construction as in $\mathsection$\ref{ss:graph}. In
particular, $\dim \overline{X_N} = d$.  Here we need $\cF =
\cO(0,1,1)|_{\overline{X_N}}$ as defined in (\ref{def:cF}) and also
 $\cM = \cO(0,0,1)|_{\overline{X_N}}$ as defined in \eqref{def:cM}.

\begin{proposition}
  \label{prop:intersectionub}
Assume $d \ge 1$.  There exists a constant $c>0$ depending on the data introduced above
  with the following property. Say $N\ge 1$ is a power of $2$, then
  \begin{equation*}
    (\cM\cdot \cF^{\cdot(d-1)}) \le c N^{2(d-1)}.
  \end{equation*}
% where  $\cM =\rho_0|_{\overline{X_N}}^*\cO(1)$.
\end{proposition}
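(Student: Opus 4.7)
The plan is to apply the projection formula to the birational morphism $\rho_1|_{\overline{X_N}}\colon \overline{X_N}\to \overline X$, express the push-forward of $\cF^{d-1}$ in terms of $[N]^*\cL|_X$, and use the symmetry of $\cL$ on the fibers of $\pi$ to make the $N$-dependence explicit.

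The key identity: since $L = \cL|_A$ on the generic fiber $A$ is symmetric and ample, the theorem of the cube yields $[N]^*L \cong L^{\otimes N^2}$; by rigidity of line bundles on abelian schemes this lifts to
\[
[N]^*(\cL|_\cA) \cong (\cL|_\cA)^{\otimes N^2}\otimes \pi^*\cK_N
\]
in $\mathrm{Pic}(\cA)$ for some line bundle $\cK_N$ on $S$. Iterating $[MN]^* = [M]^*\circ[N]^*$ yields the cocycle relation $\cK_{MN}\cong \cK_M^{\otimes N^2}\otimes \cK_N$. For $N = 2^n$ a power of $2$, this gives the explicit formula $\cK_N \cong \cK_2^{\otimes (N^2-1)/3}$, so $\cK_N$ is the $(N^2-1)/3$-th tensor power of the fixed line bundle $\cK_2$. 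This is where the hypothesis that $N$ is a power of $2$ enters.

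Since $\cM = \rho_1^*(\cM|_{\overline X})$ (both equal $\pi_{\overline{X_N}}^*\cO(1)|_{\overline S}$) and $\rho_1|_{\overline{X_N}}$ is birational, the projection formula gives
\[
(\cM\cdot \cF^{d-1}\cdot [\overline{X_N}]) = (\cM|_{\overline X}\cdot \rho_{1*}(\cF^{d-1})\cdot [\overline X]).
\]
On the open set $X_N$, the identity $\rho_2\circ \rho_1^{-1}|_{X_N} = [N]|_X$ identifies $\cF = \rho_2^*\cL$ with $[N]^*\cL|_X$. Combined with the key identity, the restriction of $\rho_{1*}(\cF^{d-1})$ to $X$ becomes $((\cL|_X)^{\otimes N^2}\otimes \pi^*\cK_N|_X)^{d-1}$, which expands as a polynomial in $N^2$ of degree $d-1$ whose coefficients are fixed intersection classes on $X$. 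The leading contribution comes from the pure $(\cL|_X)^{\otimes N^2(d-1)}$ term and is $N^{2(d-1)}$ times a constant independent of $N$.

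The main obstacle is handling the boundary. The identification of $\rho_{1*}(\cF^{d-1})$ as $N^{2(d-1)}$ times a fixed class holds a priori only on the open set $X \subseteq \overline X$; on $\overline X$ one must add a correction cycle supported on $\overline X\setminus X$, which could in principle grow unboundedly with $N$. The explicit formula $\cK_N \cong \cK_2^{\otimes(N^2-1)/3}$ afforded by the power-of-$2$ hypothesis is what controls this boundary correction to order $O(N^{2(d-1)})$ uniformly in $N$, yielding the desired bound $(\cM\cdot \cF^{d-1}) \le c N^{2(d-1)}$ with $c$ independent of $N$.
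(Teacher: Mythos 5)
There is a genuine gap, and it sits exactly at the point you yourself flag as ``the main obstacle.'' The relation $[N]^*\cL \cong \cL^{\otimes N^2}\otimes\pi^*\cK_N$ and the cocycle computation for $\cK_N$ are statements on the abelian scheme $\cA\rightarrow S$, i.e.\ away from $\overline S\setminus S$; in fact, pulling back along the zero section shows that $\cK_N$ is the $-(N^2-1)$-st power of a fixed line bundle on $S$ for \emph{every} $N$, so the power-of-$2$ hypothesis does not actually enter at that step at all. The intersection number $(\cM\cdot\cF^{\cdot(d-1)})$, however, is computed on the projective closure $\overline{X_N}\subset\IP^n\times\IP^n\times\IP^m$, and nothing in your argument controls the correction cycle supported over $\overline S\setminus S$: your final sentence simply asserts that the formula for $\cK_N$ bounds this correction by $O(N^{2(d-1)})$, but no mechanism is given, and none can come from a line-bundle identity over $S$ alone. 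The extension of $[N]^*\cL$ across the boundary of $\overline{\cA}$ (equivalently, the behaviour of the closure $\overline{X_N}$ over $\overline S\setminus S$) is precisely the hard part, and as written the proof does not go through.

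For comparison, the paper controls the boundary by an explicit projective computation. Using projective normality of the generic fiber, $[2]$ is represented by forms of bidegree $(4,D')$ on $\cA\subset\IP^n\times\IP^m$, and iterating them gives forms representing $[2^l]$ of bidegree $(4^l,\tfrac{4^l-1}{3}D')$; this iteration is where the power-of-$2$ hypothesis is genuinely used, since it keeps the degree in the base coordinates linear in $N^2$. The graph $\overline{X_N}$ is then exhibited as a proper component of the intersection of $\overline X\times\IP^n$ with hypersurfaces of tridegree $(4^l,1,D'_l)$, and Fulton's positivity for such intersections, combined with nefness of $\cO(0,0,1)$ and $\cO(0,1,1)$, allows one to discard the unwanted components and bound $(\cM\cdot\cF^{\cdot(d-1)})$ by an explicit multiprojective intersection number of size $O(N^{2(d-1)})$. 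To salvage your projection-formula route you would need an analogous statement over $\overline S$: for instance, a bound showing that the divisorial discrepancy on $\overline{X_N}$ between $\rho_2^*\cO(1,1)$ and $\rho_1^*\cO(1,1)^{\otimes N^2}$ twisted by a pullback from $\overline S$ grows at most like $N^2$ times a fixed effective class pulled back from $\overline S$ --- which is essentially what the paper's bidegree computation and positivity argument provide.
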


Let us make some preliminary remarks before the proof. A similar upper
bound for the intersection number was derived by the third-named
author in \cite{habegger:imrn,Hab:Special} using Philippon's
version~\cite{Philippon} of B\'ezout's Theorem for multiprojective
space. The approach here is similar but does not
 refer to Philippon's result. Rather, we rely on the following well-known
positivity property of the intersection theory of multiprojective
space: any effective Weil divisor on a multiprojective space is
nef. This approach was motivated by
K\"uhne's~\cite{kuehne:semiabelianbhc} work on semiabelian varieties.
% We refer to Appendix~\ref{app:positivity} for
%  the intersection theory we need.

\begin{proof}[Proof of Proposition \ref{prop:intersectionub}] Recall that $[2] \colon
A \rightarrow A$  is the  multiplication-by-$2$ morphism on $A$. For the symmetric and ample line bundle
$L$ on $A$, we have $[2]^*L \cong L^{\otimes 4}$.
Recall that $A$ is projectively normal in $\IP_{k(S)}^n$. 
By a result of Serre, \cite[Corollaire~2, Appendix~II]{Ast6970}, 
the morphism $[2]$ is represented 
 by homogeneous
polynomials $f_0,\ldots,f_n$ in the $n+1$ projective coordinates of $\IP^n$ of degree $4$, with  coefficients in $k(S)$ and with no common
zeros in $A$. 

Recall that the family $\cA$ is embedded in
$\IP_k^n\times S \subset\IP_k^n\times\IP_k^m$. We can spread out the
$f_0,\ldots,f_n$. More precisely, there exist a Zariski closed, proper
subset $Z\subsetneq \cA$ and polynomials $f_0,\ldots,f_n \in
k[\mathbf{X},\mathbf{S}]$ that are bihomogeneous of degree $(4,D')$ in
 the $(n+1)$-tuple of projective coordinates $\mathbf{X}$ of
$\IP_k^n$
and the $(m+1)$-tuple of projective coordinates $\mathbf{S}$ of
$\IP_k^m$,
 with the following properties:
\begin{enumerate}
 \item [(i)] the polynomials $f_0,\ldots,f_n$ have no common zeros on
 $(\cA\setminus Z)(k)$, and 
 \item[(ii)]  if   $(P,s)\in(\cA\setminus Z)(k)$, then $[2](P,s) = \bigl([f_0(P,s):\cdots : f_n(P,s)],s\bigr)$. 
\end{enumerate}
Moreover, as $f_0,\ldots,f_n$ have no common zero on the generic
fiber, we may assume that $\pi(Z)$ is Zariski closed and proper in
$S$. So we may assume that $Z = \pi^{-1}(\pi(Z))\subsetneq \cA$ and in particular,
$[2]$ maps $\cA\setminus Z$ to itself.

The  $4$ in the bidegree $(4,D')$ comes from $2^2=4$. The degree
$D'$ with respect to the base coordinates $\mathbf{S}$ is more mysterious. However,
by successively iterating we will get it under control. 

%% \begin{remark}
%% This degree may be a source of ineffectivity! But it is probably
%% computable for $\mathcal{A}_g$.   
%% \end{remark}

For each integer $l\ge 1$ we require polynomials
$f^{(l)}_0,\ldots,f^{(l)}_n$ to describe multiplication-by-$2^l$,
\textit{cf.} \cite[$\mathsection$9]{GaoHab}. In order to obtain information on the
degree with respect to $\mathbf{S}$ we construct them by iterating
the $f^{(1)}_0=f_0,\ldots,f^{(1)}_n=f_n$.
For all $i\in \{0,\ldots,n\}$  we set
\begin{equation*}
%  f^{(l+1)}_i((S_j),(X_j)) =
%  f\left((S_j)_j,\left(f^{(l)}_0((S_j)_j,(X_j)_j),\ldots,f^{(l)}_n((S_j)_j,(X_j)_j)\right)\right)
  f^{(l+1)}_i(\mathbf{X},\mathbf{S}) = f_i\left(\bigl(f^{(l)}_0(\mathbf{X},\mathbf{S}),\ldots,f^{(l)}_n(\mathbf{X},\mathbf{S})\bigr),\mathbf{S}\right)
\end{equation*}
 for all $i$; it is bihomogeneous in
$\mathbf{X}$ and $\mathbf{S}$. 
So for all $l\ge 1$ 
\begin{enumerate}
 \item [(i)] the polynomials $f^{(l)}_0,\ldots,f^{(l)}_n$ have no common zeros on
 $(\cA\setminus Z)(k)$, and 
 \item[(ii)]  if   $(P,s)\in(\cA\setminus Z)(k)$, then $[2^l](P,s) = \bigl([f^{(l)}_0(P,s):\cdots : f^{(l)}_n(P,s)],s\bigr)$. 
\end{enumerate}

If for all $i$ the polynomials $f^{(l)}_i$ are bihomogeneous of degree
$(D_{l},D'_{l})$, then all $f^{(l+1)}_i$ are bihomogeneous of degree
$(4D_{l},D'+4D'_{l})$. Recall that
$(D_1,D'_1)=(4,D')$, thus the recurrence
relations
$$ D_{l+1}=4D_l  \quad\text{and}\quad D'_{l+1} = D' + 4D'_{l} $$
imply
\begin{equation}
  \label{eq:DlprimeDlbound}
  D_l = 4^l \quad\text{and}\quad   D'_l = \frac{4^l-1}{3}D'\le 4^lD'
\end{equation}
for all $l\ge 1$.
Up-to the constant linear factor $D'$ the bidegrees both
grow like $4^l$.

We proceed as follows to cut out the graph $\overline{X_{N}}$ where
$N=2^l$. 
We start out with $\overline X\subset\IP_k^n\times\IP_k^m$. 
As $X$ dominates $S$ but $Z$ does not, there is an $i$ such that 
$f_i^{(l)}$  does not
vanish identically on $X$, without loss of generality we assume $i=0$.

Then as $i$ varies over $\{1,\ldots,n\}$ we obtain $n$ trihomogeneous
polynomials 
\begin{equation*}
g_i :=  Y_i f_0^{(l)}(\mathbf{X},\mathbf{S}) - Y_0   f_i^{(l)}(\mathbf{X},\mathbf{S})
\end{equation*}
where $Y_0,\ldots,Y_n$ are the projective coordinates on the middle factor of $\IP_k^n\times\IP_k^n\times\IP_k^m$. 
The tridegree of these polynomials is $(D_l,1,D'_l)$. %\Pcomment{There
  %was a mistake here, $(D'_l,1,D_l)$ instead of $(D_l,1,D'_l)$.}
Their zero locus on $\overline X\times\IP_k^n$ has the graph
$\overline{X_N}$ as an irreducible component;
by permuting coordinates we consider $\overline X\times\IP_k^n$ as a
subvariety of $\IP_k^n\times\IP_k^n\times\IP_k^m$.
We will see below that this is a proper component of the said
intersection. 
However, there may be further irreducible components in this
intersection,  some could even have dimension greater than $\dim
\overline{X_N}$.

%Observe that $\dim\overline{X_N} =\dim\overline X = d$. 
This issue is clarified by the positivity result \cite[Corollary
12.2.(a)]{Fulton}. We apply it to the ambient
variety 
$\IP_k^n\times\IP_k^n\times\IP_k^m$, which becomes $X$ in Fulton's
notation; observe that
 the tangent bundle of a product of projective spaces
is generated by its global sections, \textit{cf.}~\cite[Examples 12.2.1.(a) and
(c)]{Fulton}. For $i\in\{1,\ldots,n\}$, the $V_i$ in Fulton's notation
is the zero set of $g_i$, and $V_{n+1}$ is 
$\overline{X} \times \IP_k^n$. So $r=n+1$ and $V_1,\ldots,V_{n+1}$ are
equidimensional. 
Observe that
\begin{alignat*}1
  \sum_{i=1}^{r} \dim V_i  - (r-1) \dim
  \IP_k^n\times\IP_k^n\times\IP_k^m
  &= (2n+m-1)(r-1) + \dim\overline{X}\times\IP_k^n
  - (r-1)(2n+m)
  \\
  &= \dim \overline{X} = \dim \overline{X_N},
\end{alignat*}
% Observe that $\dim \overline{X_N} = \dim X
% = \dim (\overline{X} \times \IP_k^n) - n$,
so, and as announced above, $\overline{X_N}$ is a proper component in the intersection of
$V_1,\ldots,V_n,$ and $\overline{X} \times \IP_k^n$.
By Fulton's \cite[Corollary
12.2.(a)]{Fulton} the  cycle class
attached to the intersection of $\overline X\times\IP_k^n$ with the zero
locus of $g_1,\ldots,g_n$
is represented by a positive cycle on
$\IP_k^n\times\IP_k^n\times\IP_k^m$, one of
whose components is $\overline{X_N}$. As $\cO(0,0,1)$ and $\cO(0,1,1)$
are numerically effective we conclude 
\begin{alignat}1
  \label{eq:inupperboundconclusion1}
  (\cO(0,0,1)  \cO(0,1,1)^{\cdot (d-1)}  [\overline{X_N}])
  &\le
  \left(\cO(0,0,1)  \cO(0,1,1)^{\cdot (d-1)}
  \cO(D_l,1,D'_l)^{\cdot n}  [\overline{X} \times\IP_k^n]\right).
%\\
 % &\le 4^{l(d-1)}{D'}^{d-1}{n+d\choose n}(\cO(1,1)^{\cdot d}[\overline{X}])
\end{alignat}

The cycle $[\overline X\times\IP_k^n]$ is linearly equivalent to 
$\sum_{i+p=n+m-d} a_{ip}H_1^{\cdot i}  H_2^{\cdot p}$, with $H_{1}$ and $H_2$
hyperplane pullbacks of the factors $\IP_k^n\times \IP_k^m\supset
\overline X$, respectively, and with
$a_{ip}$ non-negative integers that depend only on $\overline X$.
Thus the left-hand side of (\ref{eq:inupperboundconclusion1}) is at
most %\Pcomment{There was a mistake here, it read $\cO(D'_l,1,D_l)$
%  instead of $\cO(D_l,1,D'_l)$.}
\begin{alignat*}1
\sum_{i+p = n+m-d} a_{ip}
\left(  \cO(0,0,1)\cO(0,1,1)^{\cdot (d-1)}
\cO(D_l,1,D'_l)^{\cdot n}\cO(1,0,0)^{\cdot i}\cO(0,0,1)^{\cdot p}\right).
\end{alignat*}
We can expand the sum using linearly of intersection
numbers to find that it equals
\begin{alignat*}1
  \sum_{\substack{i+p=n+m-d\\ j'+p'=d-1\\i''+j''+p'' = n}} a_{ip} {d-1 \choose j',p'}
  {n\choose i'',j'',p''}{D_l}^{i''} {D'_l}^{p''} \left(\cO(1,0,0)^{\cdot
    (i+i'')} \cO(0,1,0)^{\cdot (j'+j'')}\cO(0,0,1)^{\cdot(1+p+p'+p'')}\right)
\end{alignat*}
Only terms with $i+i''\le n$ and $j'+j''\le n$ and $1+p+p'+p''\le m$
contribute to the sum.
On the other hand, any term in the sum satisfies
$i+i'' + j'+j'' + 1+p+p'+p'' =2n+m$. 
So we can assume $i+i''=n$ and $j'+j''=n$ and $1+p+p'+p''=m$ in the
sum
which thus simplifies to
\begin{alignat*}1
  \sum_{\substack{i+p=n+m-d, i+i''=n \\ j'+p'=d-1,j'+j''=n \\ i''+j''+p''=n,p+p'+p''=m-1}} a_{ip} {d-1\choose
    j',p'} {n\choose i'',j'',p''} {D_l}^{i''} {D'_l}^{p''}. 
\end{alignat*}
Note $i''+p''  = n-j'' =j'  = d-1-p'\le d-1$. 
We recall (\ref{eq:DlprimeDlbound}) and conclude that the 
 left-hand side of (\ref{eq:inupperboundconclusion1}) is at most
\begin{equation}
  \label{eq:intersectionnumberub2}
  (4^lD')^{d-1}
  \sum_{\substack{i+p=n+m-d\\ j'+p'=d-1\\i''+j''+p'' = n}}
 a_{ip}
 {d-1\choose
    j',p'} {n\choose i'',j'',p''} 
\le
 (4^l D')^{d-1}2^{d-1}3^n \sum_{\substack{i+p=n+m-d}}
 a_{ip}.
\end{equation}
%where $c>0$ depends only on $X$. 

% $\cO(0,1,1) = \cO(0,1,0)\otimes \cO(0,0,1),
% \cO(D'_l,1,D_l) = \cO(1,0,0)^{\otimes D'_l} \otimes\cO(0,1,0)\otimes
% \cO(0,0,1)^{D_l}$.  Any occurrence $\cO(1,0,0)^{\cdot i}$ or
% $\cO(0,1,0)^{\cdot i}$ with $i>n$ or $\cO(0,0,1)^{\cdot i}$ with $i>m$
% does not contribute to the intersection number. 

We recall $N=2^l$ and use the projection formula with the estimates
above to find 
\begin{equation*}
  (\cO(0,0,1)|_{\overline{X_N}} 
  \cO(0,1,1)|_{\overline{X_N}}^{\cdot(d-1)}) \le  c N^{2(d-1)}
\end{equation*}
where $c>0$ depends only on $X$. 
Recall our definition $\cF = \cO(0,1,1)|_{\overline{X_N}}$ and $\cM =
\cO(0,0,1)|_{\overline{X_N}}$. So we get 
$  (\cM\cdot \cF^{\cdot(d-1)}) \le c N^{2(d-1)}$, as desired. 
%Thus we are done by taking $c = {D'}^{d-1}{n+d\choose n}(\cO(1,1)^{\cdot d}[\overline{X}])$.
%
% We recall $N=2^l$ and use the projection formula and find 
% \begin{equation*}
%   (\cO(0,0,1)|_{\overline{X_N}} \cdot 
%   \cO(0,1,1)|_{\overline{X_N}}^{\cdot(d-1)}) \le  N^{2(d-1)}
%       {D'}^{d-1}
%       {n+d\choose n}(\cO(1,1)^{\cdot d}[\overline{X}])
% \end{equation*}
%
% Recall our definition $\cF = \cO(0,1,1)|_{\overline{X_N}}$ and $\cM =
% \cO(0,0,1)|_{\overline{X_N}}$. So we get 
% \begin{equation}
% \label{eq:intersectionup}
%   (\cM\cdot \cF^{\cdot(d-1)}) \le N^{2(d-1)} {D'}^{d-1}{n+d\choose n}(\cO(1,1)^{\cdot d}[\overline{X}]).
% \end{equation}
% Thus we are done by taking $c = {D'}^{d-1}{n+d\choose n}(\cO(1,1)^{\cdot d}[\overline{X}])$.
\end{proof}

\subsection{Proof of Proposition~\ref{PropAuxHtIneq}}\label{SubsectionProofOfAUxHtIneq}
Now let us prove Proposition~\ref{PropAuxHtIneq} by comparing the intersection number inequalities in
Propositions~\ref{prop:intersectionlb} and \ref{prop:intersectionub}.

Let $X$ be of dimension $d$ as in Proposition~\ref{PropAuxHtIneq}. The
case $d = 0$ is trivial. So we assume $d \ge 1$. We may assume $N=2^l$
with $l \in \IN$. Let $\overline{X_N} \subseteq \IP_k^n \times \IP_k^n \times \IP_k^m$ be as in $\mathsection$\ref{ss:graph}. In particular $\dim \overline{X_N} = d$.

Let $\kappa > 0$ be as in Proposition~\ref{prop:intersectionlb}. Then
$(\cF^{\cdot d}) \ge \kappa N^{2d}$. Let $c > 0$ be as in
Proposition~\ref{prop:intersectionub}. Then $(\cM \cdot \cF^{\cdot
(d-1)}) \le c N^{2(d-1)}$.
We have indicated how to obtain  $\kappa$ and $c$  at the end of the proof of each one of the corresponding propositions.

Fix a rational number $c_1$ such that 
\begin{equation}
  \label{EqConstantInHtIneq}
  0<  c_1  cd < \kappa.
\end{equation}
Let $q$ be a multiple of the denominator of $c_1$.
Using the bounds above and linearity of intersection numbers we get
\begin{equation*}
d (\cM^{\otimes qc_1 N^2}\cdot(\cF^{\otimes q})^{\cdot(d-1)}) = d q^d
c_1 N^2 (\cM \cdot
\cF^{\cdot (d-1)}) \le d q^d c_1 N^2 c N^{2(d-1)} < \kappa q^d  N^{2d} \le
((\cF^{\otimes q})^{\cdot d}).
\end{equation*}
Then $\cF^{\otimes q} \otimes \cM^{\otimes -qc_1N^2}$ is a big line
bundle on $\overline{X_N}$ by a theorem of Siu \cite[Theorem
2.2.15]{PosAlgGeom}.  After possibly replacing $q$ by a multiple the line bundle
 $\cF^{\otimes q} \otimes \cM^{\otimes -qc_1N^2}$ admits a
non-zero global section. Say $h_{\overline{X_N},\cF}$ and
$h_{\overline{X_N},\cM}$ are representives of heights on
$\overline{X_N}$ attached by the Height Machine to
$\cF$ and $\cM$, respectively. After canceling $q$ we conclude that
$h_{\overline{X_N},\cF}- c_1 N^2 h_{\overline{X_N},\cM}$ is bounded
from below on a Zariski open and dense subset of $\overline{X_N}$. 
The  image  of this  subset under the projection $\rho_1$
contains  a Zariski open and dense subset $U_N$ of $X$.
It follows from the end of  $\mathsection$\ref{ss:graph}  that
%% $X(\IQbar)\ni P\mapsto
%% q h([N](P))$ represents the height provided by the Height Machine for
%% $(\overline{X_N},\cF^{\otimes q})$ and $P\mapsto qc_1 N^2 h(\pi(P))$ represents
%% the height attached to $(\overline{X_N},\cM^{\otimes qc_1 N^2})$.
%% Thus there exist a Zariski open dense subset
%% %$U'_l$ of $\overline{X_N}$ defined over $\IQbar$ and a constant
%% $U_N$ of $\overline{X_N}$ defined over $\IQbar$ and a constant
there exists $c_2(N)$ such that
\begin{equation*}%\label{EqAuxHtIneqAux}
%% h_{\overline{X_N},\cF}(P_N) \ge 
%% c_1N^2 h_{\overline{X_N},\cM}(P_N) - c_2(l)\quad \text{ for all }P_N \in U'_l(\IQbar)
h([N](P)) \ge 
c_1N^2 h(\pi(P)) - c_2(N)\quad \text{ for all }P \in U_N(\IQbar). \makeatletter\displaymath@qed
\end{equation*}
%for representatives of heights provided by the Height Machine. 

%% Recall the projection $\rho_1 \colon \IP_k^n \times \IP_k^n \times
%% \IP_k^m \rightarrow \IP_k^n \times \IP_k^m$ defined by $(P,Q,s)
%% \mapsto (P,s)$, which induces an isomorphism $\rho_1|_X \colon X
%% \rightarrow X_N$. 
%% Let $U_l$ be a  Zariski open dense subset of $X$ with $U_l\subset
%% \rho_1(U'_l)$. 
%% %Let $U_l := \rho_1|_X^{-1}(U'_l \cap X_N)$. 
%% Then for any $P \in U_l(\IQbar)$, we have
%% \[
%% h([N]P) \ge c_1 N^2 h(\pi(P)) - c_2(l)
%% \]
%% by \eqref{EqAuxHtIneqAux} in view of \eqref{EqHeightTransfer}!!!!. This is precisely the desired height inequality as $N = 2^l$.
% (so that we can denote by abuse of notation $U_l = U_N$ and $c_2(l) = c_2(N)$).

%%% Local Variables:
%%% TeX-master: "main"
%%% End:

%% Section 5
\section{Proof of the height inequality Theorem~\ref{ThmHtInequality}}\label{SectionHtIneq}
%% Section 5: N\'{e}ron--Tate height and height on the base

We keep the notation of $\mathsection$\ref{SectionSettingUpHtIneq}. In
particular, $S$ is a regular, irreducible, quasi-projective variety over $\IQbar$ and
$\pi\colon\cA\rightarrow S$ is an abelian scheme of relative dimension
$g \ge 1$. Moreover, we have immersions as in
$\mathsection$\ref{SectionSettingUpHtIneq} and we
assume {\tt (Hyp)} from page~\pageref{def:hyp}.
We use the heights introduced in $\mathsection$\ref{SubsectionHeight}. 
Let $X$ be an irreducible closed subvariety of $\cA$ 
defined over $\IQbar$. We assume that $X$ dominates $S$ and is
non-degenerate as defined in Definition~\ref{DefinitionNonDegenerate}

The upshot of {\tt (Hyp)} is that we obtain from Proposition~\ref{PropBettiForm} the Betti form $\omega$ on $\an{\cA}$. Moreover, part (i) and (iii) of Proposition~\ref{PropBettiForm} implies that, for $d = \dim X$, 
\begin{equation}\label{EqBettiFormHtIneq}
\omega|_{X^{\mathrm{sm},\mathrm{an}}}^{\wedge d} > 0 \quad \text{ at some smooth point of }X^{\mathrm{an}}.
\end{equation}

Our assumption \eqref{EqBettiFormHtIneq} allows us to apply
Proposition~\ref{PropAuxHtIneq} to
$X$. 
 There exists a constant $c_1>0$ as in \eqref{EqConstantInHtIneq} such
 that the following holds. Let $N\in\IN$ be a power of $2$, there
 exists a Zariski open dense subset $U_N\subset X$ and a constant
 $c_2(N)\ge 0$ such that
\begin{equation}\label{EquationHeight1}
h([N]P) \ge c_1 N^2 h(\pi(P)) - c_2(N)
\end{equation}
for all $P\in U_N(\IQbar)$; we stress that $U_N$ and 
 $c_2\ge 0$ may depend  on $N$ in addition to $X,\cA,$ and the various
 immersions such as $\cA \subseteq \IP_\IQbar^n \times \IP_\IQbar^m$.

By the  Theorem of Silverman-Tate, see \cite[Theorem~A]{Silverman} and
Theorem~\ref{thm:silvermantate}, there exist a  constant
$c_0\ge 0$ %depending only on $\mathcal{A}/S$ and 
 such that
\begin{equation}
\label{EquationHeight2}
|\hat{h}_{\mathcal{A}}(P)-h(P)| \le
c_0\max\{1,h(\pi(P))\}\le c_0\bigl(1+h(\pi(P))\bigr)
% c_3 h_{\overline S,\mathcal{M}}(\pi(P))+c_4
\end{equation}
for all $P\in \mathcal{A}(\IQbar)$.

Next we kill Zimmer constants as in
Masser's~\cite[Appendix~C]{ZannierBook}.
For any $P\in U_N(\IQbar)$, we have
\begin{align*}
\hat{h}_{\cA}([N](P)) &\ge h([N](P))
-c_0\bigl(1+h(\pi([N](P)))\bigr) 
&&\text{(by (\ref{EquationHeight2}))}\\
&= h([N](P))
-c_0\bigl(1+h(\pi(P))\bigr)
&&\text{(as $\pi([N](P))=\pi(P)$)}
\\
&\ge c_1 N^2 h(\pi(P)) -c_2(N) -c_0\bigl(1+h(\pi(P))\bigr)
&&\text{(by (\ref{EquationHeight1})).}
\end{align*}
We use $\hat{h}_{\mathcal{A}}([N]P)=N^2 \hat{h}_{\mathcal{A}}(P)$, 
divide by $N^2$, and rearrange to get
\begin{equation*}%\label{EquationFinal}
\hat{h}_{\cA}(P) \ge \left(c_1 - \frac{c_0}{N^2} \right) h(\pi(P)) - \frac{c_2(N)+ c_0}{N^2}
\end{equation*}
for all $N\in\IN$ that are powers of $2$  and all $P \in U_N(\IQbar)$. 

Recall that $c_0$ and $c_1$ are independent of $N$. We  fix $N\in\IN$ to be the least
power of $2$ such that $N^2\ge 2c_0/c_1$. As $h(\pi(P))$ is
non-negative we get  % that \eqref{EquationFinal} holds for $N'$, then
\[
\hat{h}_{\cA}(P) \ge \frac{c_1}{2} h(\pi(P)) - \frac{c_2(N)+ c_0}{N^2}
\]
for all
$P \in U_N(\IQbar)$. Since $N$ is now fixed, the Zariski open dense
subset $U_N$ of $X$ is also fixed.
The theorem follows after adjusting $c_1$ and $c_2$. 
\qed
%\end{proof}
\begin{remark}
In the proof of Theorem~\ref{ThmHtInequality} we can keep track of the
process to compute the constant $c_1 > 0$. Use the notation in
$\mathsection$\ref{SectionSettingUpHtIneq}. In particular $\omega$ is
the Betti form on $\cA$, we have an immersion $\cA \subseteq \IP_\IC^n
\times \IP_\IC^m$, $\alpha$ is a $(1,1)$-form on $\IP_\IC^n \times
\IP_\IC^m$ representing the Chern class of $\cO(1,1)$, $\Delta \subseteq S^{\mathrm{an}}$ is open and relative compact, and $\theta \colon \cA^{\mathrm{an}} \rightarrow [0,1]$ (which factors through $S^{\mathrm{an}}$) is a smooth function with compact support contained in $\cA_{\Delta} := \pi^{-1}(\Delta)$. The function $\theta$ should furthermore satisfy $\theta(P_0) = 1$ for some $P_0 \in X^{\mathrm{sm}}(\IC)$ such that $(\omega|_{X^{\mathrm{sm}}})^{\wedge d}$ is positive at $P_0$, where $d = \dim X$.

Assume $d \ge 1$. The proof of Theorem~\ref{ThmHtInequality} tells us
that one half of any rational number satisfying the inequality
\eqref{EqConstantInHtIneq} can be taken as $c_1$. So the constant $c_1 > 0$ can be taken to be any rational
number
in  $(0,\kappa/(2c d))$, such that: %where  $\kappa$ and $c$ are:
\begin{itemize}
\item $\kappa = \kappa'/C^d$, where $\kappa' = \int_{\sman{X}}(
  \theta\omega )^{\wedge d}$, as in (\ref{eq:N2dgrowth}), and $C$ satisfies $C
  \alpha|_{\cA_{\Delta}} - \omega|_{\cA_{\Delta}} \ge 0$, as in (\ref{EqConditionOnCapitalC}),
\item $c$ is a  constant depending on a certain degree of  $X$ and coming from
  (\ref{eq:intersectionnumberub2}). % $c' = D^{\prime d-1}\binom{n+d}{n} (\cO(1,1)^{\cdot d}[\overline{X}])$, where $D'$ is such that the bihomogeneous polynomials defining $[2] \colon \cA \rightarrow \cA$ have bi-degree $(D',4)$.
\end{itemize}
\end{remark}

%%% Local Variables:
%%% TeX-master: "main"
%%% End:

%% Section 6
\section{Preparation for counting points}\label{SectionSettingUp}
%% Section 6: Preparation for counting points
\subsection{The universal family and non-degeneracy}
\label{subsec:univfamily}
In this section, we fix the basic setup to prove 
Proposition~\ref{PropAlgPtFar}, described as the alternative on page~\pageref{eq:alternative}, and
our main results.

Fix an integer $g \ge 2$.
Recall from $\mathsection$\ref{subsec:hgtineqnondeg} that
$\mathbb{M}_g$ 
denotes the fine moduli space of smooth curves of genus $g$,
with level-$\ell$-structure where $\ell\ge \bestlevel$ is fixed, \textit{cf.}
 \cite[Chapter~XVI, Theorem~2.11 (or above
Proposition~2.8)]{ACG:Curve},
 \cite[(5.14)]{DM:irreducibility}, or \cite[Theorem
 1.8]{OortSteenbrink}. It is known that 
$\mathbb{M}_g$ is a regular, quasi-projective variety of dimension
$3g-3$. We regard it over $\IQbar$; it is irreducible according
to our convention introduced below Theorem~\ref{ThmHtInequality}. 
%It is irreducible by \cite[Theorem 5.15]{DM:irreducibility}.
There exists a universal curve $\mathfrak{C}_g$ over
$\mathbb{M}_g$,  it is smooth and proper over
$\mathbb{M}_g$ and its fibers are smooth curves of genus
$g$. Moreover, $\mathfrak{C}_g\rightarrow \mathbb{M}_g$ is
projective, \textit{cf.} \cite[Corollary to Theorem
1.2]{DM:irreducibility} or  \cite[Remark 2, \S 9.3]{NeronModels}.

Denote by $\mathrm{Jac}(\mathfrak{C}_g)$ the relative Jacobian of
$\mathfrak{C}_g \rightarrow \mathbb{M}_g$. It is an abelian scheme
coming with a natural principal polarization and equipped with level-$\ell$-structure, see
%see \cite[Proposition 4, \S 9.4]{NeronModels} or
\cite[Proposition 6.9]{MFK:GIT94}.

%admitting a section $\epsilon \colon \mathbb{M}_g \rightarrow \mathfrak{C}_g$ which is defined over $\IQbar$. \marginpar{\tiny{ZG: Global section may not exist. Not a problem but we need to embed $\mathfrak{C}_g$ piece-by-piece (w.r.t. the base) in $\mathfrak{A}_g$. Since only finitely many pieces, OK.}}  Then $\epsilon$ gives rise to an Abel-Jacobi embedding $\mathfrak{C}_g \rightarrow \mathrm{Jac}(\mathfrak{C}_g)$, which is fiberwise the classical Abel-Jacobi embedding. Note that $\mathrm{Jac}(\mathfrak{C}_g) \rightarrow \mathbb{M}_g$ is the a principally polarized abelian scheme. From now on by abuse of notation, we denote by $\mathfrak{C}_g$ its image under the Abel-Jacobi embedding. Hence $\mathfrak{C}_g$ is a subvariety of $\mathrm{Jac}(\mathfrak{C}_g)$.%, which \textit{a priori} depends on $\epsilon$.

Recall from $\mathsection$\ref{subsec:hgtineqnondeg} that  $\mathbb{A}_g$ denotes the fine moduli space of principally polarized
abelian varieties of dimension $g$, with level-$\ell$-structure. Moreover,
$\mathbb{A}_g$ is  regular and quasi-projective; see
\cite[Theorem~7.9 and below]{MFK:GIT94} or \cite[Theorem 1.9]{OortSteenbrink}.
We regard it as defined over $\IQbar$;
% \Pcomment{Perhaps we should add
%   that we need to choose an $\ell$-th root of unity in $\IQbar$ once
%   and for all.}
it is irreducible according to
our convention. Let $\pi \colon \mathfrak{A}_g
\rightarrow \mathbb{A}_g$ be the universal abelian variety; it is an
abelian scheme. Note that $\pi$ is projective; we refer to
Remark~\ref{rmk:projembedding} for this and other details.

As $\mathbb{A}_g$ is a fine moduli space we have the following Cartesian diagram
\begin{equation}
\label{EqUnivJac}
\begin{aligned}
\xymatrix{
\mathrm{Jac}(\mathfrak{C}_g) \ar[r] \ar[d]  \pullbackcorner & \mathfrak{A}_g \ar[d]^{\pi} \\
\mathbb{M}_g \ar[r]_{\tau} & \mathbb{A}_g
}
\end{aligned}
\end{equation}
the bottom arrow is the Torelli morphism. As we have level structure,
the Torelli morphism need not be injective on $\IC$-points, but it is
finite-to-one on such points, \textit{cf.} \cite[Lemma 1.11]{OortSteenbrink}. 
%% \Pcomment{A warning on Torelli. The
%% classical theorem implies injectivity on the coarse moduli space. I
%% think its generically $2-$to$-1$ with level structure $\ge 2$ and if
%% $g\ge 3$.} 
%Assume from now on that $\ell$ is even. 

% \Pinlinecomment{Danger: 
  % $\cL_s$ does not induce a principal polarization on the fibers
  % $\mathfrak{A}_{g,s}$. So $\cL$ does not induce the polarization
  % mentioned in \texttt{(HYP)}.}

%It is known that $[-1]^*\cL = \cL$; see \cite[Proposition~10.8 and 10.9]{PinkThesis}. 

We also fix an ample line bundle $\cM$ on $\overline{\mathbb{A}_g}$,
where  $\overline{\mathbb{A}_g}$ is a, possibly non-regular,
projective  variety containing $\mathbb{A}_g$ as a Zariski open and
dense subset. The Height Machine
provides an equivalence class of height functions of which we fix a
representative $h_{\overline{\mathbb{A}_g},\cM} \colon
\overline{\mathbb{A}_g}(\IQbar)\rightarrow \IR$.

%% [PH] Omitted the below, probably not necessary to use Torelli here.
%% In what follows, we will see $\mathrm{Jac}(\mathfrak{C}_g)$ as a subvariety of $\mathfrak{A}_g$, and will not distinguish $\mathbb{M}_g$ and its image in $\mathbb{A}_g$ under the morphism on the bottom. For example when we talk about subvarieties of $\mathbb{M}_g$, we see them simultaneously as subvarieties of $\mathbb{M}_g$ and of $\mathbb{A}_g$. Note that $\dim \mathbb{M}_g = 3g-3$ for each $g \ge 2$.

%Denote as usual by $\mathbb{M}_g$ the open Torelli locus, namely the image of $\mathbb{M}_g$ under the bottom morphism. Then $\dim \mathbb{M}_g = 3g-3$ for each $g \ge 2$.

Next we fix a projective embedding of $\mathfrak{A}_g$ over
$\mathbb{A}_g$.
There is a relatively ample line bundle
$\cL$ on $\mathfrak{A}_g/\mathbb{A}_g$ with $[-1]^*\cL = \cL$; see
\cite[Th\'{e}or\`{e}me~XI~1.4]{LNM119}. 
After replacing $\cL$
by $\cL^{\otimes N}$, with $N\ge 4$ large enough, we can assume that
$\cL$ is very ample relative over $\mathbb{A}_g$ and $[-1]^*\cL=\cL$.
By \cite[Proposition~4.4.10(ii) and Proposition~4.1.4]{EGAII}, we
then have a closed immersion $\mathfrak{A}_g \rightarrow
\IP^n_{\IQbar} \times \mathbb{A}_g$ over $\mathbb{A}_g$
arising from $\cL \otimes
\pi^*(\cM|_{\mathbb{A}_g}^{\otimes p})$
for some integer $p\ge 1$, note that $\cM|_{\mathbb{A}_g}$ is ample.
%for some very ample line bundle $\cM$ on $\mathbb{A}_g$. 
%In this way we obtain an immersion for each fiber of $\mathfrak{A}_g$ and a global immersion into some projective bundle. We would rather have a global immersion into a product of $\IP^n_\IQbar$ with a variety. There is a finite collection of  Zariski open affine subsets $V_1,\ldots,V_t$ of $\mathbb{A}_g$ such that $\pi_*\cL|_{V_i}$ is a free sheaf of modules for all $1\le i\le t$. So $\mathbb{P}(\pi_*\cL|_{V_i})\cong \IP_\IQbar^n\times V_i$ by flat base change. Moreover, for each $i$ we get, by restriction,  a closed immersion $\mathfrak{A}_{V_i}=\pi^{-1}(V_i)\colon \mathfrak{A}_{V_i}\rightarrow \IP^n_\IQbar \times V_i$.
For each $s \in \mathbb{A}_g(\IQbar)$, the fiber
$\mathfrak{A}_{g,s}=\pi^{-1}(s)$ is realized as a
projective subvariety of $\IP^n_\IQbar$ and the induced
closed immersion $\mathfrak{A}_{g,s}\rightarrow\IP^n_\IQbar$ comes
from the restriction $\cL|_{\mathfrak{A}_{g,s}}$ which is ample.
Flatness of $\mathfrak{A}_g\rightarrow\mathbb{A}_g$
implies that
$\dim H^0(\mathfrak{A}_{g,s},\cL_{\mathfrak{A}_{g.s}})$ is independent of $s$.
So $\mathfrak{A}_{g,s}$ is projectively
normal inside $\IP^n_\IQbar$, a property that will play a role later on. 

Recall that $\cL$ is symmetric and very ample on each fiber of
$\mathfrak{A}_{g}$. 
By Tate's Limit Argument we obtain the fiberwise N\'eron--Tate height,
\textit{cf.} (\ref{eq:fiberwiseNT}), 
% $\mathfrak{A}_g(\IQbar)\rightarrow [0,\infty)$. These height
%   functions match on the overlap and we obtain the fiberwise
%   N\'eron--Tate height
\begin{equation}
 \label{eq:NTfinemoduli}
  \hat h \colon \mathfrak{A}_{g}(\IQbar)\rightarrow [0,\infty).
\end{equation}

%% Instead of twisting by a power of $\pi^*(\cM|_{\mathbb{A}_g})$ we
%% could have instead embedded $\mathfrak{A}_g\rightarrow \mathbb{A}_g$
%% over members of a finite open affine cover of $\mathbb{A}_g$. This
%% would leads to the same global N\'eron--Tate height 
%% as $\pi^*(\cM|_{\mathbb{A}_g})$ restricted to a single
%% $\mathfrak{A}_{g,s}$ is trivial. 

Let $M\ge 1$ be an integer. We write $\mathfrak{A}_g^{[M]}$ for the
$M$-fold fibered power
$\mathfrak{A}_g\times_{\mathbb{A}_g} \cdots \times_{\mathbb{A}_g}\mathfrak{A}_g$
over $\mathbb{A}_g$.
Then $\mathfrak{A}_g^{[M]}\rightarrow \mathbb{A}_g$ is an abelian
scheme.
% Similarly, we define $\mathfrak{C}_g^{[M]}$ to be the $M$-fold fibered
% power $\mathfrak{C}_g\times_{\mathbb{M}_g}\cdots
% \times_{\mathbb{M}_g}\mathfrak{C}_g$.

By taking the product we obtain closed
immersions $\mathfrak{A}_g^{[M]}\rightarrow (\IP_\IQbar^n)^{M}\times
\mathbb{A}_g$. The fiber of $\mathfrak{A}_g^{[M]}\rightarrow
\mathbb{A}_g$ above $s\in \mathbb{A}_g(\IQbar)$ is the $M$-fold power of
$\mathfrak{A}_{g,s}$. The associated fiberwise N\'eron--Tate height
$\hat h \colon \mathfrak{A}_g^{[M]}(\IQbar)\rightarrow [0,\infty)$ is
the sum of the N\'eron--Tate heights, as in (\ref{eq:NTfinemoduli}),
of the $M$ coordinates.%\Pcomment{Replaced $m$ by $M$ twice (typo).}

Let us now define the Faltings--Zhang morphism. 
In our setting the relative Picard scheme
$\mathrm{Pic}(\mathfrak{C}_g/\mathbb{M}_g)$ exists as a group scheme
over $\mathbb{M}_g$. It is a union over all $p\in\IZ$
of open and closed subschemes
$\mathrm{Pic}^p(\mathfrak{C}_g/\mathbb{M}_g)$, where $p$ indicates the
degree of a line bundle.
By definition we have
$\mathrm{Jac}(\mathfrak{C}_g) =
\mathrm{Pic}^0(\mathfrak{C}_g/\mathbb{M}_g)$.
We cannot expect to
have a section of $\mathfrak{C}_g\rightarrow\mathbb{M}_g$,
so we cannot expect to find an immersion of $\mathfrak{C}_g$
into $\mathrm{Jac}(\mathfrak{C}_g/\mathbb{M}_g)$. 
As constructed in the proof of
\cite[Proposition~6.9]{MFK:GIT94} we
do have a morphism $\mathfrak{C}_g \rightarrow
\mathrm{Pic}^1(\mathfrak{C}_g/\mathbb{M}_g)$ over $\mathbb{M}_g$.
Let $\mathfrak{C}_g^{[M]}$ and
$\mathrm{Pic}^p(\mathfrak{C}_g/\mathbb{M}_g)^{[M]}$ denote the
respective $M$-th fibered powers over $\mathbb{M}_g$. 
The difference morphism coming from the group scheme law
$\mathrm{Pic}(\mathfrak{C}_g/\mathbb{M}_g)\times_{\mathbb{M}_g}
\mathrm{Pic}(\mathfrak{C}_g/\mathbb{M}_g)\rightarrow
\mathrm{Pic}(\mathfrak{C}_g/\mathbb{M}_g)$ restricts to a morphism
$\mathrm{Pic}^1(\mathfrak{C}_g/\mathbb{M}_g)\times_{\mathbb{M}_g}
\mathrm{Pic}^1(\mathfrak{C}_g/\mathbb{M}_g)
\rightarrow
\mathrm{Jac}(\mathfrak{C}_g/\mathbb{M}_g)$ of schemes
over $\mathbb{M}_g$.
We take the appropriate  product morphism over $\mathbb{M}_g$ to get a
morphism 
\begin{equation}
  \label{eq:prefaltingszhang}
  \mathfrak{C}_g^{[M+1]} \rightarrow
 \mathrm{Jac}(\mathfrak{C}_g/\mathbb{M}_g)^{[M]}
\end{equation}
over $\mathbb{M}_g$. The choice of product is modeled after
(\ref{eq:faltingszhangintro}). More precisely, consider the situation
above a $k$-point of $\mathbb{M}_g$, where $k$ is an algebraically
closed field. The fiber of $\mathfrak{C}_g \rightarrow \mathbb{M}_g$ above this point is a smooth curve $C$
defined over $k$ of genus $g$. For $P_0,\ldots,P_M\in C(k)$ the
morphism (\ref{eq:prefaltingszhang}) maps $(P_0,P_1,\ldots,P_M)
\mapsto (P_1-P_0,P_2-P_0,\ldots,P_M-P_0)$ where the difference takes
place in the Jacobian of $C$. %\Gcomment{Slightly rephrased this.}

Recall (\ref{EqUnivJac}). We take the $M$-fold product and compose
with (\ref{eq:prefaltingszhang}) to obtain a commutative diagram of
morphisms of schemes %\Ginlinecomment{Removed the notation $\mathscr{D}_M$ in the diagram.}
\begin{equation}
  \label{DefFaltingsZhang}
  \begin{aligned}
    \xymatrix{
      \mathfrak{C}_g^{[M+1]}\ar[r] \ar[d] &
      \mathfrak{A}_g^{[M]}
      \ar[d] \\  
      \mathbb{M}_g \ar[r]_{\tau} & \mathbb{A}_g.
    }
  \end{aligned}
\end{equation}

If $S\rightarrow \mathbb{M}_g$ is a morphism of schemes then we define
$\mathfrak{C}_S = \mathfrak{C}_g \times_{\mathbb{M}_g} S$ and
$\mathfrak{C}_S^{[M]} = \mathfrak{C}_g^{[M]}\times_{\mathbb{M}_g} S$.
If $S$ is irreducible, then so is $\mathfrak{C}_S^{[M]}$ by induction
on $M$ and a topological argument using that
$\mathfrak{C}_S\rightarrow S$ is smooth and hence open.
Taking the fibered product with $S$  and composing with (\ref{DefFaltingsZhang})
yields a commutative diagram of morphisms of schemes
\begin{equation*}
%  \label{DefFaltingsZhang2}
  \begin{aligned}
    \xymatrix{
      \mathfrak{C}_S^{[M+1]}\ar[r] \ar[d] &
      \mathfrak{A}_g^{[M]}
      \ar[d] \\  
      S\ar[r]_{\tau\circ(S\rightarrow\mathbb{M}_g)} & \mathbb{A}_g.
    }
  \end{aligned}
\end{equation*}
% we call it Faltings--Zhang morphism and also denote it by
% $\mathscr{D}_M$.
% \Pcomment{Here's a second variant of Faltings--Zhang defined on a
%   different source.}
% There is no reason to expect $\mathscr{D}_M$ to be proper. Here is an
% alternative construction that is proper.
% We continue with
% (\ref{DefFaltingsZhang2}) and use the
By the
universal property
of the fibered product we get a morphism of schemes 
\begin{equation}
  \label{DefFaltingsZhang3}
  \begin{aligned}
    \xymatrix{
      \mathfrak{C}_S^{[M+1]}\ar[r]^-{\mathscr{D}_{M}}
\ar[dr]  &
      \mathfrak{A}_g^{[M]}      \times_{\mathbb{A}_g} S
      \ar[d] \\  
      & S
    }
  \end{aligned}
\end{equation}
over $S$. We call $\mathscr{D}_{M}$ the \textit{Faltings--Zhang morphism} (over
$S$).
Then $\mathscr{D}_{M}$  is proper since the diagonal arrow in
(\ref{DefFaltingsZhang3}) is proper.

% Finally, let us consider
% the morphism
% $$\mathfrak{A}_g^{[M+1]} \rightarrow \mathfrak{A}_g^{[M]}$$
% defined fiberwise over $\mathbb{A}_g$ by
% \begin{equation}
% \label{eq:faltingszhang0}
% (P_0,P_1,\ldots,P_M) \mapsto (P_1-P_0,\ldots,P_M-P_0).
% \end{equation}
%  %This morphism is over $\IA_g$ and it is proper. 
% In fact, we will need it as a morphism on $\mathfrak{C}_g^{[M+1]}$. 
% Note that  there is no canonical embedding
% of $\mathfrak{C}_g$ in $\mathfrak{A}_g$ as we may have no section to
% construct the Abel--Jacobi map. 
% However,
% we do have a morphism
% $\mathfrak{C}_g \rightarrow
% \mathrm{Pic}^1(\mathfrak{C}_g/\mathbb{M}_g)$ to the line bundles of
% degree $1$;
% see the proof of \cite[Proposition~6.9]{MFK:GIT94}. 
% The difference of two sections of
% $\mathrm{Pic}^1(\mathfrak{C}_g/\mathbb{M}_g)$
% lies in $\mathrm{Pic}^0(\mathfrak{C}_g/\mathbb{M}_g) = \mathrm{Jac}(\mathfrak{C}_g)$.
% For each integer $M\ge 2$  the map
% (\ref{eq:faltingszhang0}) yields a well-defined morphism
% \begin{equation}
% \label{DefFaltingsZhang}
% \mathscr{D}_M\colon \mathfrak{C}_g^{[M+1]}\rightarrow \mathfrak{A}_g^{[M]}
% \end{equation}
% called the  \textit{$M$-th Faltings--Zhang} morphism.
% By abuse of notation we also write $\mathscr{D}_M$ for the 
% morphism $\mathfrak{C}_S^{[M+1]}\rightarrow \mathfrak{A}_g^{[M]}
% \times_{\mathbb{A}_g} S$
% induced by the base change by
% $S\rightarrow \mathbb{M}_g\xrightarrow{\tau} \mathbb{A}_g$.

Let for the moment $S\rightarrow \mathbb{M}_g$ be the identity.
If $s \in \mathbb{M}_g(\IQbar)$, then $\mathfrak{C}_s$ is the curve
parametrized by $s$, and $\mathfrak{A}_{g,\tau(s)}$ is its Jacobian.
To embed $\mathfrak{C}_s$ into $\mathfrak{A}_{g,\tau(s)}$ we must work with a
base point $P\in
\mathfrak{C}_s(\IQbar)$. Then 
$\mathfrak{C}_s-P = \mathscr{D}_1(\{P\} \times \mathfrak{C}_s)$
is an irreducible curve inside $\mathfrak{A}_g$ lying above $\tau(s)$.
Hence it provides a closed immersion   $\mathfrak{C}_s-P \subset
\IP_\IQbar^n$.%\Pcomment{We have to adapt this paragraph if we work
%  with the second FZ morphism.}\Gcomment{I think that this paragraph is OK because $\mathfrak{A}_g^{[M]} \times_{\mathbb{A}_g}\{s\}$ is just one abelian variety, and hence equals the fiber $\mathfrak{A}_{g,\tau(s)}^{[M]}$.}

%Say $i\in \{1,\ldots,t\}$ such that $\tau(s) \in V_i(\IQbar)$. Then the construction above provides a closed immersion   $\mathfrak{C}_s-P \subset \IP_\IQbar^n$.
% If we are given $i$ such that $s\in S_i(\IQbar)$, then
% we may consider $\mathfrak{C}_s-Q$ as an irreducible curve in $\IP^n_\IQbar$.

Let $\deg X$ denote the degree of an irreducible closed subvariety $X$ of
$\IP^n_\IQbar$ and let $h(X)$ denote its height, \textit{cf.} \cite{BGS}.

%\Pinlinecomment{TODO: Get degree bound for $P_s$ replaced by any $P$.}
\begin{lemma}
  \label{lem:uniformdegreebound}
  There exists a constant $c$ such that the following two properties
  hold for all $s\in \mathbb{M}_g(\IQbar)$.
  \begin{enumerate}
  \item [(i)] We have
    $\deg(\mathfrak{C}_s-P) \le c$
    for all $P\in
    \mathfrak{A}_{g,\tau(s)}(\IQbar)$.
  \item[(ii)]
    There exists $P_s\in \mathfrak{C}_s(\IQbar)$ such that
    $h(\mathfrak{C}_s-P_s) \le  c \max\{1,
    h_{\overline{\mathbb{A}_g},\cM}(\tau(s))\}$.    
  \end{enumerate}
  % such that if  $s\in \mathbb{M}_g(\IQbar)$, then 
  % there exists $P_s\in \mathfrak{C}_s(\IQbar)$
  % such that\Pcomment{TODO: we should now argument with $s\in
  %   S(\IQbar)$ and not $s\in \mathbb{M}_g(\IQbar)$.} \Gcomment{Maybe we only need to work with $s \in \mathbb{M}_g(\IQbar)$ as from Section 7 $S$ is always contained in $\mathbb{M}_g$. However to ``prove'' Thm 6.2, we need to first of all take a finite covering $S'$ of $S$, and thus it's better to state Thm 6.2 under the setup $S\rightarrow \mathbb{M}_g$.}
  % \begin{equation*}
  %   \deg(\mathfrak{C}_s-P_s)\le c\quad\text{and}\quad 
  %   h(\mathfrak{C}_s-P_s) \le  c \max\{1,
  %   h_{\overline{\mathbb{A}_g},\cM}(\tau(s))\}.
  % \end{equation*}
\end{lemma}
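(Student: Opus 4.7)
The plan is to treat (i) as a degree-constancy argument in a flat family, and (ii) via the Chow form together with the Height Machine, reducing to the production of a controlled-height point in each fiber of $\mathfrak{C}_g\to\mathbb{M}_g$.

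For (i), I will first reduce to $P=P_0\in\mathfrak{C}_s(\IQbar)$. The curve $\mathfrak{C}_s-P$ is the translate of $\mathfrak{C}_s-P_0$ by $P_0-P$ under the group law on $\mathfrak{A}_{g,\tau(s)}$, and translations preserve numerical equivalence classes of cycles on an abelian variety; hence the degree $\deg(\mathfrak{C}_s-P)=\bigl((\mathfrak{C}_s-P)\cdot c_1(\cL|_{\mathfrak{A}_{g,\tau(s)}})\bigr)$ is independent of $P$. Now consider the flat proper first projection $\mathrm{pr}_1\colon \mathfrak{C}_g^{[2]}\to\mathfrak{C}_g$ and the composition $f\colon \mathfrak{C}_g^{[2]}\xrightarrow{\mathscr{D}_1}\mathfrak{A}_g\hookrightarrow \IP^n_\IQbar\times\mathbb{A}_g\to\IP^n_\IQbar$. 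The fiber of $\mathrm{pr}_1$ over $P_0\in\mathfrak{C}_s$ is isomorphic to $\mathfrak{C}_s$ and maps via $f$ surjectively onto $\mathfrak{C}_s-P_0$. Flatness of $\mathrm{pr}_1$ together with irreducibility of $\mathfrak{C}_g$ forces the Hilbert polynomial of the image with respect to $f^*\cO(1)$ to be constant, yielding a uniform degree bound~$d$.

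For (ii), with $d$ as above, I would form the Chow variety $\mathrm{Chow}_{1,d}(\IP^n_\IQbar)$, which is projective and carries a canonical embedding via the Chow form. The flat family from (i) defines a morphism of schemes $\phi\colon \mathfrak{C}_g\to\mathrm{Chow}_{1,d}(\IP^n_\IQbar)$ sending $P_0$ to the class of $\mathfrak{C}_s-P_0$, where $s$ is the image of $P_0$ under the structure morphism $\mathfrak{C}_g\to\mathbb{M}_g$. By the definition of the Chow embedding, the Weil height of $\phi(P_0)$ equals $h(\mathfrak{C}_s-P_0)$ up to bounded terms. Applying the Height Machine to $\phi$ then gives $h(\mathfrak{C}_s-P_0)\le c_1\, h_{\overline{\mathfrak{C}_g}}(P_0)+c_2$ for any chosen projective compactification $\overline{\mathfrak{C}_g}$ with an ample-line-bundle height $h_{\overline{\mathfrak{C}_g}}$.

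It then remains to produce, for each $s\in\mathbb{M}_g(\IQbar)$, a point $P_s\in\mathfrak{C}_s(\IQbar)$ with $h_{\overline{\mathfrak{C}_g}}(P_s)\le c_3\max\{1,h_{\overline{\mathbb{A}_g},\cM}(\tau(s))\}$. I would do this by choosing a closed immersion $\mathfrak{C}_g\hookrightarrow \IP^N_\IQbar\times\mathbb{M}_g$ over $\mathbb{M}_g$ (available since $\mathfrak{C}_g\to\mathbb{M}_g$ is projective) and intersecting with a generic hyperplane $H\subset\IP^N_\IQbar$ to obtain a closed subscheme $\mathcal{D}\subset\mathfrak{C}_g$ that is finite of bounded degree over a dense open of $\mathbb{M}_g$. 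Finiteness of $\mathcal{D}\to\mathbb{M}_g$ and the Height Machine give $h_{\overline{\mathfrak{C}_g}}(P_s)=O(h_{\overline{\mathbb{M}_g}}(s))$ for any $P_s\in\mathcal{D}_s$, and quasi-finiteness of the Torelli morphism $\tau$ yields $h_{\overline{\mathbb{M}_g}}(s)=O(h_{\overline{\mathbb{A}_g},\cM}(\tau(s)))$. The main obstacle is the last step: I need to ensure such a $P_s$ exists for every single $s$, which may require combining finitely many hyperplane sections to cover $\mathbb{M}_g(\IQbar)$ (or invoking Zhang's essential-minimum inequality directly on the fiber $\mathfrak{C}_s\subset\IP^N_\IQbar$, whose own height is bounded by $h_{\overline{\mathbb{M}_g}}(s)$ via the same Chow-form argument), and I must handle the Torelli comparison carefully, since $\tau$ need not extend to a finite morphism on compactifications.
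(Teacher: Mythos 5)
Your treatment of (i) is essentially correct and takes a different, somewhat cleaner route than the paper: the paper obtains the degree bound via a quasi-section of $\mathfrak{C}_g\rightarrow\mathbb{M}_g$, constructibility of the resulting image in $\mathfrak{A}_g$, and B\'ezout after a Segre embedding, whereas you get it from the flat proper family $\mathrm{pr}_1\colon\mathfrak{C}_g^{[2]}\rightarrow\mathfrak{C}_g$. Phrase the flatness step as: the degree of $f^*\cO(1)$ on the fibers of $\mathrm{pr}_1$ is locally constant (hence constant, the base being irreducible), and the degree of the image curve $\mathfrak{C}_s-P_0$ is at most this number since $f$ restricted to a fiber is a closed immersion; flatness controls the fibers, not directly ``the Hilbert polynomial of the image''. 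The reduction to $P=P_0$ by translation invariance of the degree is the same as in the paper.

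Part (ii) has a genuine gap exactly where you flag it, and it is the crux rather than a technicality. Your chain of estimates bounds $h(\mathfrak{C}_s-P_0)$ linearly in a height of $P_0$ on a compactification of $\mathfrak{C}_g$, hence ultimately in a height of $s$ on $\overline{\mathbb{M}_g}$; to reach the statement of the lemma you must then bound that height of $s$ by $\max\{1,h_{\overline{\mathbb{A}_g},\cM}(\tau(s))\}$. This is the \emph{reverse} of the standard functoriality inequality: the easy direction (the one the paper uses, via Silverman's height comparison lemma) bounds the height of the image by the height of the source, while you need the height of $s$ controlled by the height of $\tau(s)$ along the Torelli morphism, which is only quasi-finite and need not extend to a finite morphism of compactifications, so the ample-pullback argument is unavailable as stated. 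Such a reverse inequality is in fact true (factor $\tau$ through a finite morphism by Zariski's Main Theorem, or stratify and use that the coordinates of $s$ satisfy equations, with nonvanishing leading coefficient on each stratum, whose coefficients are values of regular functions at $\tau(s)$, plus Noetherian induction), but it must be proved, not assumed; the same Noetherian-induction patch is needed to produce $P_s$ for \emph{every} $s$, not only for $s$ in a dense open where your hyperplane section is finite. The paper sidesteps all of this: using a surjective quasi-finite \'etale quasi-section of $\mathfrak{C}_g\rightarrow\mathbb{M}_g$, it realizes some $\mathfrak{C}_s-P_s$ as a component of $\overline{X_i}\cap(\IP^n_\IQbar\times\{\tau(s)\})$ for one of finitely many constructible pieces $X_i\subset\mathfrak{A}_g$ of bounded height and degree, and the Arithmetic B\'ezout theorem then bounds $h(\mathfrak{C}_s-P_s)$ directly and linearly in the Weil height of $\tau(s)$, so only the harmless comparison between that Weil height and $h_{\overline{\mathbb{A}_g},\cM}$ on $\mathbb{A}_g$ itself is needed. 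If you wish to keep your Chow-form framework, the clean fix is to set up the family over $\mathbb{A}_g$ (or a quasi-section thereof) so that all heights are measured at $\tau(s)$, or else to supply the quasi-finite reverse height lemma as a separate statement.
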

\begin{proof}
%  !!!TODO: make sure this lemma now works in the new notation.
  We need a quasi-section of
  $\mathfrak{C}_g\rightarrow\mathbb{M}_g$ as provided by \cite[Corollaire~17.16.3(ii)]{EGAIV}. 
  So there is an affine scheme $S$
  and a morphism $S\rightarrow \mathfrak{C}_g$ that factors through a
  surjective, quasi-finite, \'etale morphism $ S\rightarrow
  \mathbb{M}_g$. We consider the product
  $\mathfrak{C}_g\times_{\mathbb{M}_g} S\rightarrow
  \mathfrak{C}_g^{[2]}$ composed with the Faltings--Zhang morphism
  $\mathscr{D}_1\colon
  \mathfrak{C}_g^{[2]}\rightarrow\mathfrak{A}_g\times_{\IA_g}
  \mathbb{M}_g$ over $\mathbb{M}_g$ and then the projection of
  $\mathfrak{A}_g$. This is a morphism of schemes
  $\mathfrak{C}_g\times_{\mathbb{M}_g} S\rightarrow
  \mathfrak{A}_g$. Its 
  image is a constructible subset of $\mathfrak{A}_g$. 
  So it is a union of finitely many 
  irreducible Zariski locally closed subsets $\{X_i\}_i$ of
  $\mathfrak{A}_g$. We have
  the following property. %\Pinlinecomment{I slightly changed the
 %   argument here.}

  Given a point $s\in \mathbb{M}_g(\IQbar)$, there is an $i$ such that
  the fiber of $\pi|_{X_i}\colon X_i \rightarrow\mathbb{A}_g$ above
  $\tau(s)$ is a finite union of irreducible curves, up-to finitely
  many points one of these curves is
  $\mathfrak{C}_s-P_s$ with $P_s\in \mathfrak{C}_s(\IQbar)$.

  We  have a closed immersion $\mathfrak{A}_g\rightarrow \IP_\IQbar^n
  \times \mathbb{A}_g$.
  Moreover, a sufficiently  large positive power of $\cM$ induces a
  closed immersion of $\overline{\mathbb{A}_g}\rightarrow\IP_\IQbar^m$
   for some $m$.
  Thus, we consider $\mathbb{A}_g$ as a 
  Zariski locally closed subset
  $\IP_\IQbar^m$. We identify each 
  $X_i$ with its image in $\IP^n_\IQbar\times
  \IP^m_\IQbar$, an irreducible  Zariski locally closed set.  Then
  $\mathfrak{C}_s-P_s \subset\IP^n_\IQbar$ arises as an irreducible
  component of the intersection of some Zariski
  closure $\overline{X_i}$
   with $\IP^n_\IQbar\times
  \{\tau(s)\}$.

  We use the Segre embedding
  $\IP_\IQbar^n\times\IP_\IQbar^m\rightarrow \IP_\IQbar^{(n+1)(m+1)-1}$
  to embed our situation into projective space. By B\'ezout's Theorem
  \cite[Example~8.4.6]{Fulton},
  $\deg(\mathfrak{C}_s-P_s)$ is bounded from above uniformly in $s$.
  Translating a curve inside $\mathfrak{A}_{g,\tau(s)}$ by a point of
  $\mathfrak{A}_{g,\tau(s)}(\IQbar)$ does not
  change its degree. So if $P\in \mathfrak{A}_{g,\tau(s)}$, then
  $\deg(\mathfrak{C}_s-P)=\deg(\mathfrak{C}_s-P_s)$.  
  This yields (i).

  Part (ii) follows as (i) but this time we use
  the Arithmetic B\'ezout Theorem, still executing the intersection after
  applying the Segre embedding. Indeed, recall that
  $\mathfrak{C}_s-P_s$ as an irreducible
  of the intersection of some $\overline{X_i}$ with
  $\IP_\IQbar^n\times\{\tau(s)\}$. 
  The height and degree of $\overline{X_i}$ are
  bounded from above independently of $s$; the same holds for the
  degree of $\IP_\IQbar^n \times\{\tau(s)\}$. The height of
  $\IP_\IQbar^n \times\{\tau(s)\}$ is bounded from above linearly in terms of
  $h(\tau(s))$. Finally, we can apply \cite[Th\'eor\`eme
  3]{HauteursAlt3}.
  Finally, note that  by the Height
  Machine the absolute logarithmic Weil height $h( \tau(s))$,
  where $\tau(s)$ is understood as an element of
  $\IP^m_{\IQbar}(\IQbar)$, is bounded from
  above linearly in terms of $h_{\overline{\mathbb{A}_g},\cM}(\tau(s))$. 
\end{proof}

\subsection{Non-degeneracy of $\mathscr{D}_M(\mathfrak{C}_S^{[M+1]})$
  for large $M$}  
  In this subsection all varieties  are defined over the field $\IC$. 
We keep the notation of the previous subsection and let
$S$ be an irreducible variety with a quasi-finite %\Pcomment{Changed
%  ``finite'' to ``quasi-finite''. Is this OK?}
   morphism
$S\rightarrow\mathbb{M}_g$. %In this subsection, as in \cite{GaoBettiRank} we take $\IC$ as the base field, \textit{i.e.}, all varieties and morphisms are (viewed as) defined over $\IC$. 
Note that
$\mathscr{D}_M(\mathfrak{C}_S^{[M+1]})$ is Zariski closed
in $\mathfrak{A}_g^{[M]}\times_{\mathbb{A}_g} S$ 
because $\mathscr{D}_M$ is proper. We endow this image with the
reduced induced scheme structure.

The following non-degeneracy theorem proved by the second-named author
is crucial to prove our main result.
It confirms that
Theorem~\ref{ThmHtInequality} can be applied to
$\mathscr{D}_M(\mathfrak{C}_S^{[M+1]})$ for $M \ge 3g-2$.
We obtain a height inequality on a Zariski open dense subset. 
%\Pcomment{I added the comment on the reduced induced structure, as
%  proposed by referee .4. FZ is proper as we're using the second
%  definition.}

\begin{theorem}[\!\!{\cite[Theorem~1.2']{GaoBettiRank}}]\label{ThmNonDegForBd}
Let $S$ be an irreducible variety with a (not necessarily dominant) quasi-finite morphism $S \rightarrow \mathbb{M}_g$. Assume $g \ge 2$ and $M
\ge 3g-2$. Then $\mathscr{D}_M( \mathfrak{C}_S^{[M+1]} )$, which is a closed 
irreducible subvariety of $\mathfrak{A}_g^{[M]}\times_{\mathbb{A}_g} S$, is 
non-degenerate in the sense of
Definition~\ref{DefinitionNonDegenerate}.
\end{theorem}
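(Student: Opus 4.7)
The plan is to establish this by invoking the general degeneracy criterion of \cite[Theorem~1.1]{GaoBettiRank}, and then verifying its hypotheses for Faltings--Zhang images. By Proposition~\ref{PropBettiForm}(iii), non-degeneracy of $X := \mathscr{D}_M(\mathfrak{C}_S^{[M+1]})$ is equivalent to $\omega|_{\sman{X}}^{\wedge d}\not\equiv 0$, where $d=\dim X$. I would first check that $\mathscr{D}_M$ is generically finite onto its image: if $P_i-P_0=Q_i-Q_0$ in the Jacobian of a very general curve $C$ of genus $g$, then for $D:=P_0-Q_0$ non-zero the intersection $C\cap(C+D)$ is finite, forcing all $P_i=Q_i$. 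Combined with the quasi-finiteness of $S\to\mathbb{M}_g$ this gives $\dim X=\dim S+M$; after pulling back we may assume $S=\mathbb{M}_g$, so $d=M+3g-2$.

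Second, I would invoke the criterion of \cite[Theorem~1.1]{GaoBettiRank} which characterizes degenerate subvarieties of $\mathfrak{A}_g^{[M]}$ in purely geometric terms: such an $X$ is degenerate if and only if it is covered by a family of proper weakly special subvarieties, each of which fibrewise over $\mathbb{A}_g$ is a translate of a proper abelian subvariety of $\mathfrak{A}_{g,s}^M$ by a constant section. The proof of this equivalence, carried out in \cite{GaoBettiRank}, rests on the mixed Ax--Schanuel theorem \cite[Theorems~1.1 and 1.4]{GaoMixedAS} together with the formalism of weakly optimal subvarieties from \cite{HabeggerPilaENS}; Step~1 there reduces degeneracy to an unlikely intersection problem, while Step~2 solves it by producing a finite exceptional set.

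Finally, suppose for contradiction that $X$ is degenerate. Since $X$ is irreducible and contains the zero section (take all $P_i$ equal), the criterion forces the generic fibre $\mathscr{D}_M(C^{M+1})\subseteq\mathrm{Jac}(C)^M$ over a very general $C$ to lie inside a proper abelian subvariety $B$. For such $C$ one has $\mathrm{End}(\mathrm{Jac}(C))=\mathbb{Z}$, so $B$ is cut out by a non-trivial integer surjection $\phi\colon\mathrm{Jac}(C)^M\to\mathrm{Jac}(C)^k$. Fixing all but one of the $P_j$ and varying the remaining $P_i$ along $C$ forces each row of $\phi$ to vanish on the image of the Abel--Jacobi embedding $C\hookrightarrow\mathrm{Jac}(C)$, which is impossible since this image generates $\mathrm{Jac}(C)$ as an algebraic group. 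The main obstacle is the criterion of the second step; its proof is the principal content of \cite{GaoBettiRank}, and the sharp numerical bound $M\ge 3g-2$ (rather than the crude dimension estimate $M\ge(3g-2)/(g-1)$ that follows from comparing $\dim X$ with the real dimension $2Mg$ of the Betti torus) is extracted there by a careful combinatorial analysis of how sub-Shimura data of $\mathfrak{A}_g^{[M]}$ can intersect the Torelli locus.
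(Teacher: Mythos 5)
There is a genuine gap. The decisive step of your argument — ``the criterion forces the generic fibre $\mathscr{D}_M(C^{M+1})\subseteq\mathrm{Jac}(C)^M$ over a very general $C$ to lie inside a proper abelian subvariety'' — mischaracterizes the degeneracy criterion of \cite{GaoBettiRank}. If degeneracy were equivalent to the fibres lying in a proper abelian subvariety (up to translation), your endomorphism argument would prove non-degeneracy of $\mathscr{D}_M(\mathfrak{C}_S^{[M+1]})$ for \emph{every} $M\ge 1$, since $C-C$ already generates $\mathrm{Jac}(C)$. That is false: the rank of $\mathrm{d}b_\Delta$ is at most $2Mg$, so non-degeneracy forces $\dim X=M+3g-2\le Mg$, i.e.\ the variety is degenerate for small $M$ even though its fibres generate. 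The actual criterion allows degeneracy to come from subvarieties $Y\subseteq X$ that are too large relative to their image modulo a (possibly non-constant) abelian subscheme — an unlikely-intersection condition mixing base and fibre directions — and ruling these out for the Faltings--Zhang image is exactly the content of \cite[Theorem~1.2$'$/1.3]{GaoBettiRank}. Consequently your account of where $M\ge 3g-2$ comes from is also off: it is not extracted by a ``combinatorial analysis of sub-Shimura data,'' but is simply the hypothesis $M\ge\dim j_\epsilon(\mathfrak{C}_S)=\dim S+1\le\dim\mathbb{M}_g+1=3g-2$ of \cite[Theorem~1.3]{GaoBettiRank}, applied to the Abel--Jacobi image of the universal curve. (Your dimension count $\dim X=\dim S+M$ is also off by one; generic finiteness of $\mathscr{D}_M$ on $\mathfrak{C}_S^{[M+1]}$ gives $\dim X=\dim S+M+1$, consistent with your later $d=M+3g-2$.)

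The paper's proof is a careful reduction to \cite[Theorem~1.3]{GaoBettiRank}: first pass to a finite \'etale cover $S'\to S$ so that $\mathfrak{C}_{S'}\to S'$ admits a section $\epsilon$, verify that the modular map is quasi-finite (via quasi-finiteness of Torelli) and that $j_\epsilon(\mathfrak{C}_{S'})$ satisfies the hypotheses of that theorem (positive relative dimension, fibres generating the Jacobian, $g\ge2$), then descend non-degeneracy along the cover using the uniqueness of the Betti map up to $\mathrm{GL}_{2g}(\IZ)$. Your proposal omits the section/descent issue entirely and replaces the applicable black box by one (the general criterion of \cite[Theorem~1.1]{GaoBettiRank}) from which you do not correctly deduce the statement; to repair it you would have to redo the deduction of Theorem~1.2$'$ from Theorem~1.1, which is precisely the nontrivial work done in \cite{GaoBettiRank}.
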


The fibered product in the theorem involves
$S\rightarrow\mathbb{M}_g\xrightarrow{\tau} \mathbb{A}_g$.

More precisely, the meaning of the conclusion of the theorem is as follows. For the abelian scheme $\pi \colon \cA = \mathfrak{A}_g^{[M]} \times_{\mathbb{A}_g}S \rightarrow S$ and for the irreducible subvariety $X:=\mathscr{D}_M( \mathfrak{C}_S^{[M+1]} )$ of $\cA$, there exists a open non-empty subset $\Delta$ of $\an{S}$, with the Betti map $b_{\Delta} \colon \cA_{\Delta} = \pi^{-1}(\Delta) \rightarrow \mathbb{T}^{2g}$, such that 
\begin{equation}\label{EqNonDegXDMC}
\mathrm{rank}_{\IR} (\mathrm{d}b_{\Delta}|_{\sman{X}})_x = 2\dim X\quad \text{for some }x \in \sman{X} \cap \cA_{\Delta},
\end{equation}
when $g \ge 2$ and $M \ge 3g-2$.

\begin{proof}
This theorem, essentially \cite[Theorem~1.2']{GaoBettiRank}, is a consequence of Theorem~1.3 of \textit{loc.cit.} Because of its importance to the current paper, we hereby give more details of the deduction.

We start by showing the result for the case where $\mathfrak{C}_S
\rightarrow S$ admits a section $\epsilon$. %\Pcomment{I think we show  something stronger here, namely that 6.2 holds under the weake  assumption that $S\rightarrow\mathbb{M}_g$ is quasi-finite (but with  the additional assumption on the section). Is this correct? And, if yes, does Thm 6.2 hold with $S\rightarrow \mathbb{M}_g$ quasi-finite?, If so, we should state this.}
In this case $\epsilon$ induces an Abel--Jacobi embedding $j_{\epsilon} \colon \mathfrak{C}_S \rightarrow \mathrm{Jac}(\mathfrak{C}_S/S)$, which is a closed immersion of $S$-schemes. The modular map is the Cartesian diagram
\[
\xymatrix{
\mathrm{Jac}(\mathfrak{C}_S/S) \ar[r]^-{\iota} \ar[d] \pullbackcorner & \mathfrak{A}_g \ar[d] \\
S \ar[r] & \mathbb{A}_g
}
\]
with the bottom morphism being the composite of the given $S \rightarrow \mathbb{M}_g$ with the Torelli map $\tau \colon \mathbb{M}_g \rightarrow \mathbb{A}_g$. The Torelli map $\tau$ is quasi-finite; see \cite[Lemma 1.11]{OortSteenbrink}. Thus the bottom morphism is quasi-finite. Hence $\iota$ is quasi-finite.

We wish to apply \cite[Theorem~1.3]{GaoBettiRank} to the subvariety $j_{\epsilon}(\mathfrak{C}_S)$ of the abelian scheme $\mathrm{Jac}(\mathfrak{C}_S/S) \rightarrow S$. We need to verify the hypotheses. First of all $\iota|_{j_{\epsilon}(\mathfrak{C}_S)}$ is generically finite because $\iota$ is quasi-finite. Hypothesis (a) is satisfied since $\dim j_{\epsilon}(\mathfrak{C}_S) = \dim S + 1 > \dim S$. For hypotheses (b) and (c), note that for any $s\in S(\IC)$, the fiber $j_{\epsilon}(\mathfrak{C}_S)_s$ is the Abel--Jacobi embedding of $\mathfrak{C}_s$ in its Jacobian via the point $\epsilon(s)$. Thus hypothesis (b) is satisfied because each curve generates its Jacobian, and hypothesis (c) holds true since $g \ge 2$.

Thus we can apply \cite[Theorem~1.3.(ii)]{GaoBettiRank} and obtain that $\mathscr{D}_M(\mathfrak{C}_S^{[M+1]})$ is non-degenerate\footnote{Observe that $\mathscr{D}_M(\mathfrak{C}_S^{[M+1]}) = \mathscr{D}_M^{\cA}(j_{\epsilon}(\mathfrak{C}_S^{[M+1]}))$, with $\mathscr{D}_M^{\cA}$ be as in \cite[Theorem~1.3.(ii)]{GaoBettiRank} with $\cA = \mathfrak{A}_g\times_{\mathbb{A}_g}S \cong \mathrm{Jac}(\mathfrak{C}_S/S)$. See below \eqref{eq:prefaltingszhang}.} if $M \ge j_{\epsilon}(\mathfrak{C}_S) = \dim S + 1$. But $\dim S \le \dim \mathbb{M}_g = 3g-3$. Hence  $\mathscr{D}_M(\mathfrak{C}_S^{[M+1]})$ is non-degenerate if $M \ge 3g-2$.

%As $M \ge 3g-2 = \dim \mathbb{M}_g +1 \ge \dim S+1$, \cite[Theorem~1.2']{GaoBettiRank} implies that the theorem holds true if $\mathfrak{C}_S \rightarrow S$ admits a section.

For an arbitrary $S$, the generic fiber of $\mathfrak{C}_S \rightarrow
S$ has a rational point over some finite extension of $K(S)$, the
function field of $S$. Thus there exists a quasi-finite \'{e}tale
dominant (not necessarily surjective) morphism $\rho \colon S'
\rightarrow S$, with $S'$ irreducible, such that $\mathfrak{C}_{S'} = \mathfrak{C}_S \times_S
S' \rightarrow S'$ admits a section. Thus $X' :=
\mathscr{D}_M(\mathfrak{C}_{S'}^{[M+1]})$, as a subvariety of
$\cA':=\mathfrak{A}_g^{[M]}\times_{\mathbb{A}_g} S'$,  is
non-degenerate by the previous case. So there exists a connected, open non-empty subset $\Delta'$ of $\an{S'}$, with the Betti map $b_{\Delta'} \colon \cA'_{\Delta'} \rightarrow \mathbb{T}^{2g}$, such that for some $x' \in \sman{X'} \cap \cA'_{\Delta'}$ we have
\[
\mathrm{rank}_{\IR} (\mathrm{d}b_{\Delta'}|_{\sman{X'}})_{x'} = 2\dim X'.
\]
We may furthermore shrink $\Delta'$ so that $\rho|_{\Delta'}$ is a diffeomorphism. In particular $\Delta:=\rho(\Delta')$ is open in $\an{S}$.

Denote by $\rho'_{\cA} \colon \cA' = \cA\times_S S' \rightarrow \cA$
the projection to the first factor. Then
$\rho'_{\cA}|_{\cA'_{\Delta'}}$ is a diffeomorphism. Both $\cA
\rightarrow S$ and $\cA' \rightarrow S'$ carry
level-$\ell$-structures. By construction and uniqueness properties of
the Betti map, we may assume that $b_{\Delta} \colon \cA_{\Delta} \rightarrow \mathbb{T}^{2g}$ equals $b_{\Delta'} \circ  (\rho'_{\cA}|_{\cA'_{\Delta'}})^{-1}$. Thus
\[
\mathrm{rank}_{\IR} (\mathrm{d}b_{\Delta}|_{\sman{X}})_{x} = 2\dim X'
\]
with $x = \rho_{\cA}(x')$. So \eqref{EqNonDegXDMC} holds true because $\dim X' = \dim X$. Hence we are done.
\end{proof}

\subsection{Technical lemmas}

The following lemma will be useful in the proofs of the desired
bounds. Let for the moment $k$ be an algebraically closed field and
$M\ge 1,n\ge 1$ integers. If $Z$ is a Zariski closed subset of $(\IP_k^n)^M$
we let $\deg Z$ denote the sum of the degrees of all irreducible
components of $Z$ with respect to $\cO(1,\ldots,1)$. 

%\marginpar{\tiny{Noga Alon. Should be a rather elementary result.}}
\begin{lemma}\label{LemmaNAlon} 
  Let $C \subset \IP_k^n$ be an
  irreducible curve defined over $k$ and let $Z \subseteq (\IP_k^n)^M$
  be a Zariski closed subset of $(\IP_k^n)^M$ such that $C^M = C \times
  \cdots \times C \not\subseteq Z$. Then there exists a number $B$,
  depending only on $M, \deg C,$ and $\deg Z$, satisfying the following
  property. If $\Sigma \subset C(k)$ has cardinality $\ge B$, then
  $\Sigma^M = \Sigma \times \cdots \times \Sigma \not\subseteq Z(k)$.
\end{lemma}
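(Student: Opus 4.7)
The plan is to induct on $M$. The base case $M=1$ is immediate from B\'ezout's theorem: since $C \not\subseteq Z$, the intersection $Z \cap C$ is a proper Zariski closed subset of the irreducible curve $C$, hence finite, and B\'ezout's inequality gives $|Z \cap C| \le (\deg Z)(\deg C)$, so $B = (\deg Z)(\deg C)+1$ works.

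For the inductive step $M \ge 2$, I will split $C(k)$ into a ``bad'' locus and a ``good'' locus via projection onto the first factor. For each $P \in \IP_k^n(k)$ introduce the slice
\[
Z(P) := \{(Q_2, \ldots, Q_M) \in (\IP_k^n)^{M-1} : (P, Q_2, \ldots, Q_M) \in Z\},
\]
a Zariski closed subset of $(\IP_k^n)^{M-1}$, and set $W := \{P \in C(k) : C^{M-1} \subseteq Z(P)\}$. Since $C^M \not\subseteq Z$, there exists $(a_1, \ldots, a_M) \in C^M(k) \setminus Z(k)$; then $T := \{P \in \IP_k^n : (P, a_2, \ldots, a_M) \in Z\}$ is a proper closed subset of $\IP_k^n$ (as $a_1 \notin T$), and $W \subseteq T \cap C$ is therefore finite, with cardinality bounded (via B\'ezout after the Segre embedding) by a function of $\deg Z$ and $\deg C$ alone.

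For $P \in C(k) \setminus W$ we have $C^{M-1} \not\subseteq Z(P)$, so the inductive hypothesis applies to $Z(P)$. The key uniformity required is a bound $\deg Z(P) \le D$ depending only on $\deg Z$ (and not on $P$); this follows from another application of B\'ezout in the Segre picture, since $\{P\} \times (\IP_k^n)^{M-1}$ corresponds to a linear section of the Segre image. By induction, there is a constant $B_2$ depending only on $M-1$, $\deg C$, and $D$ such that whenever $|\Sigma| \ge B_2$ and $P \in C(k) \setminus W$ we have $\Sigma^{M-1} \not\subseteq Z(P)(k)$.

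Setting $B := \max(|W|+1,\, B_2)$, any $\Sigma$ with $|\Sigma| \ge B$ contains some $P \in \Sigma \setminus W$ (by the cardinality bound on $W$), and then some $(Q_2, \ldots, Q_M) \in \Sigma^{M-1} \setminus Z(P)(k)$, giving $(P, Q_2, \ldots, Q_M) \in \Sigma^M \setminus Z(k)$ and completing the induction. The main technical obstacle I anticipate is the degree bookkeeping, specifically verifying the uniform bound $\deg Z(P) \le D(\deg Z)$ under specialization and carrying the B\'ezout-type estimates through the Segre embedding so that they depend only on the multi-degree data $\deg Z$, $\deg C$, and $M$; this is routine but needs to be done carefully to ensure the final $B$ genuinely depends only on the three stated quantities.
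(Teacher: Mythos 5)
Your proof is correct and follows essentially the same route as the paper's: induction on $M$, projection to the first factor, B\'ezout via the Segre embedding for the uniform degree control of the slices, and a finite exceptional set of first coordinates below which the inductive hypothesis cannot be applied. The only minor difference is how that exceptional set is bounded: you trap $W$ inside $T\cap C$ using a witness point $(a_1,\ldots,a_M)\in C^M\setminus Z$, whereas the paper decomposes $Z\cap C^M$ into the components whose image under the first projection is positive-dimensional (handled fiberwise by induction) and those whose image is a point (contributing finitely many bad $P$); both mechanisms give a bound depending only on $M$, $\deg C$, and $\deg Z$.
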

\begin{proof}%\Pcomment{I reworded part of this  proof, we don't need flatness or that $C$ is smooth.}
  Let us prove this lemma by induction on $M$. The case $M = 1$ follows
  easily from B\'{e}zout's Theorem.

  Assume the lemma is proved for $1, \ldots, M-1$. Let $q \colon
  (\IP_k^n)^M \rightarrow \IP_k^n$ be the projection to the first
  factor.

  The number of irreducible components of $Z \cap C^M$ and their degrees
  are bounded from above in terms of $M,\deg C,$ and $\deg Z$ by
  B\'ezout's Theorem applied to the Segre embedding.
  Let $Z'$ be the union of all  irreducible
  components $Y$ of $Z \cap C^M$ with $\dim q(Y)\ge 1$, let $Z''$ be the
  union of all other irreducible components.
  % Observe that $q(Y)\subset C$ and $\dim Y \le M-1$ by hypothesis.

  Note that $q(Z')\subset C$. For all $P \in C(k)$ the fiber
  $q|_{Z'}^{-1}(P)=Z'\cap (\{P\}\times (\IP_k^n)^{M-1})$ has
  dimension at most
  $\dim Z'-1\le M-2$. So the projection of $q|_{Z'}^{-1}(P)$ to the final
  factors $(\IP_k^n)^{M-1}$ does not contain $C^{M-1}$. By B\'ezout's
  Theorem the degree of this projection is bounded in terms of $M,\deg
  C,$ and $\deg Z$. We apply the induction hypothesis to the projection
  of $q|_Y^{-1}(P)$ to $(\IP_k^n)^{M-1}$ and obtain a number $B'$,
  depending only on $M,\deg C,$ and $\deg Z$ satisfying the following
  property. If $\Sigma \subseteq C(k)$ has cardinality $\ge B'$, then
  $\{P\} \times \Sigma^{M-1} \not\subseteq Z'(k)$ for all $P \in C(k)$.
  % In particular, $\Sigma^M\not \subset Y(k)$, which concludes this case.

  Now $\dim q(Z'') = 0$, so $q(Z'')$ is a finite set of cardinality
  at most $B''$, the number of irreducible components of
  $Z\cap C^M$. 

  The lemma follows with $B=\max\{B',B''+1\}$.  
\end{proof}

In the next lemma we use the Faltings--Zhang morphism in a single
abelian variety $A$, \textit{i.e.}, $\mathscr{D}_M \colon
A^{M+1}\rightarrow A^M$ defined by $(P_0,\ldots,P_M)\mapsto
(P_1-P_0,\ldots,P_M-P_0)$. 

\begin{lemma}\label{LemmaPacketsAlternative3}
  Let $A$ be an abelian variety defined over $\overline{\IQ}$ and
  suppose $C$ is a smooth curve of genus $g\ge
  2$ contained in $A$. If $Z$ is an irreducible Zariski
  closed and proper subset of $\mathscr{D}_M(C^{M+1})$, 
  then
%in $A^M$, then
  \begin{equation*}
    \#\{ P \in C(\overline{\IQ}) : (C-P)^M \subset Z
%    \subsetneq \mathscr{D}_M(C^{M+1})
    \} \le 84(g-1).
  \end{equation*}
\end{lemma}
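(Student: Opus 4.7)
The key observation is that $\mathscr{D}_M(C^{M+1})$ is irreducible of dimension exactly $M+1$, so any proper closed irreducible $Z$ has $\dim Z \le M$, while each fiber $(C-P)^M$ of the first-coordinate projection $C^{M+1}\to C$ is already irreducible of dimension $M$. Therefore the inclusion $(C-P)^M\subseteq Z$ must force the \emph{equality} $(C-P)^M=Z$ for every eligible $P$, and I then count the number of $P$ that yield the same $Z$ using the stabilizer of $C$ in $A$ together with Hurwitz's theorem.

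For the dimension count, write
\[
\mathscr{D}_M(C^{M+1}) \;=\; \bigcup_{P\in C(\overline{\IQ})} (C-P)^M;
\]
the upper bound $\dim\le M+1$ is immediate. Two fibers $(C-P)^M$ and $(C-P')^M$ coincide if and only if $C+(P'-P)=C$, i.e.\ $P'-P\in\mathrm{Stab}(C):=\{a\in A:C+a=C\}$. Now $\mathrm{Stab}(C)$ is finite: otherwise its identity component would be a positive-dimensional abelian subvariety $B\subseteq A$ whose translation action on $A$ preserves $C$, forcing $C$ to be a coset of $B$ and hence an elliptic curve, which contradicts $g\ge 2$. Consequently the parametrization $P\mapsto (C-P)^M$ is finite-to-one over a non-empty open subset of $C$, and $\dim\mathscr{D}_M(C^{M+1})=M+1$.

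Now let $Z\subsetneq\mathscr{D}_M(C^{M+1})$ be an irreducible closed subvariety, so $\dim Z\le M$. If $(C-P)^M\subseteq Z$, then as $(C-P)^M$ is irreducible of dimension $M$ contained in an irreducible variety of dimension $\le M$, we must have $(C-P)^M=Z$ (and in particular $\dim Z=M$). The set of $P$ with $(C-P)^M\subseteq Z$ is therefore either empty or a single coset of $\mathrm{Stab}(C)$ in $A$, intersected with $C(\overline{\IQ})$, and its cardinality is at most $|\mathrm{Stab}(C)|$.

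Finally, the translation action embeds $\mathrm{Stab}(C)$ into $\mathrm{Aut}(C)$: each $a\in\mathrm{Stab}(C)$ acts as an automorphism of $C$ by translation on $A$, and this automorphism is the identity only when $a=0$ (any fixed point gives $a=0$). The classical Hurwitz bound $|\mathrm{Aut}(C)|\le 84(g-1)$ for $g\ge 2$ then yields $|\mathrm{Stab}(C)|\le 84(g-1)$, completing the proof. The only step with any real content is the finiteness of $\mathrm{Stab}(C)$, and this is automatic from $g\ge 2$ by the argument above.
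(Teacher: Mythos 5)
Your proof is correct and follows essentially the same route as the paper's: the dimension count forces $(C-P)^M=Z$ for every eligible $P$, the first-coordinate projection then shows any two such $P$ differ by an element of the stabilizer of $C$ in $A$, and Hurwitz's bound on $\mathrm{Aut}(C)$ finishes. The only differences are cosmetic — you compute $\dim\mathscr{D}_M(C^{M+1})=M+1$ exactly where the paper only needs the trivial upper bound, and you spell out the finiteness of $\mathrm{Stab}(C)$ and its embedding into $\mathrm{Aut}(C)$, which the paper leaves implicit.
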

\begin{proof}
For simplicity denote by $\Xi = \{ P \in C(\overline{\IQ}) : (C-P)^M
\subset Z \}$. % \subsetneq \mathscr{D}_M(C^{M+1}) \}$.
Fix $P_0\in \Xi$. It suffices to prove that there are only $84(g-1)$
possibilities for $P_1 - P_0$ when $P_1$ runs over $\Xi$.

Say $P_1 \in \Xi$ and let $i\in \{0,1\}$. 
Note that
$Z\subsetneq \mathscr{D}_M(C^{M+1})$ and so
 $\dim Z < \dim \mathscr{D}_M(C^{M+1})\le M+1$, 
as $\mathscr{D}_M(C^{M+1})$ irreducible. 
Note that $(C-P_i)^M\subset Z$, and so both have dimension $M$.
As $Z$ is irreducible we find 
 $(C - P_i)^M=Z$ for $i=0$ and $i=1$.

Applying the first projection  $A^M\rightarrow A$ yields $C-P_1=C-P_0$. In other words, $P_1-P_0$
stabilizes $C$. By Hurwitz's Theorem \cite{Hurwitz1892}, a smooth curve over $\IQbar$ of
genus $g\ge 2$ has at most $84(g-1)$ automorphisms.
Hence we are done.
\end{proof}

%%% Local Variables:
%%% TeX-master: "main"
%%% End:

%% Section 7
\section{N\'{e}ron--Tate distance between points on curves}\label{SectionDistanceCurve}
%% Section 7: N\'{e}ron-Tate distance between points on curves

The goal of this section is to prove Proposition~\ref{PropAlgPtFar}, below.
Namely we will show that $\IQbar$-points on smooth curves are
rather ``sparse'', in the sense that the N\'{e}ron--Tate distance
between two $\IQbar$-points on a smooth curve $C$ is in
general large compared with the Weil height of $C$. %% Now we
%% shall prove the following equivalently form of
%% Proposition~\ref{PropNTDistance}.
%\Gcomment{I think it's better not to assume $S$ to be open, as we use induction on $\dim S$ to prove the proposition. Maybe good to relax the assumption on $S$ in Section 3.}

We use the notation from $\mathsection$\ref{subsec:univfamily}. Recall
that we have fixed a projective compactification
$\overline{\mathbb{A}_g}$ of $\mathbb{A}_g$ over $\IQbar$, an ample
line bundle $\cM$, and a height function
$h_{\overline{\mathbb{A}_g},\cM} \colon
\mathbb{A}_g(\IQbar)\rightarrow \IR$ attached to this pair.
We also fixed a closed immersion $\mathfrak{A}_g$ into
$\IP^n_\IQbar\times \mathbb{A}_g$ over $\mathbb{A}_g$ and
let $\tau\colon \mathbb{M}_g\rightarrow\mathbb{A}_g$ denote the
Torelli morphism. 
If $s\in \mathbb{M}_g(\IQbar)$ then $\mathfrak{C}_s$
is a smooth curve of genus $g$ defined over $\IQbar$.
Moreover, if $P,Q\in \mathfrak{C}_s(\IQbar)$, then $P-Q$ is a
well-defined element of $\mathfrak{A}_g(\IQbar)$ and so is its
N\'eron--Tate height  $\hat h(P-Q)$, see (\ref{eq:NTfinemoduli}).

\begin{proposition}\label{PropAlgPtFar}
  Let $S$ be an irreducible closed subvariety of $\mathbb{M}_g$ defined over $\IQbar$. There
  exist positive constants $c_1, c_2, c_3, c_4$ depending on the choices
  made above and on $S$ with the following property. Let $s\in
  S(\IQbar)$ with $h_{\overline{\mathbb{A}_g},\cM}(\tau(s))
  \ge c_1$. There exists a subset $\Xi_s \subset \mathfrak{C}_s(\IQbar)$
  with $\#\Xi_s\le c_2$ such that any $P \in \mathfrak{C}_s(\IQbar)$
  satisfies the following alternative.
  \begin{enumerate}
  \item[(i)] Either $P \in \Xi_s$;
  \item[(ii)] or $\#\{Q \in \mathfrak{C}_s(\IQbar) : \hat{h}(Q-P) \le h_{\overline{\mathbb{A}_g},\cM}(\tau(s))/c_3\} < c_4$.
  \end{enumerate}
\end{proposition}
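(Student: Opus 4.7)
The plan is to apply the height inequality Theorem~\ref{ThmHtInequality} to the non-degenerate subvariety $X:=\mathscr{D}_M(\mathfrak{C}_S^{[M+1]})$ of $\cA:=\mathfrak{A}_g^{[M]}\times_{\mathbb{A}_g} S\to S$, where $M:=3g-2$ so that Theorem~\ref{ThmNonDegForBd} certifies non-degeneracy. I argue by Noetherian induction on $\dim S$: the base case $\dim S=0$ is trivial since one can take $c_1$ larger than $h_{\overline{\mathbb{A}_g},\cM}(\tau(s))$ for each of the finitely many points $s\in S(\IQbar)$. In the inductive step I replace $S$ by its regular locus (the singular part, being a proper closed subscheme, is absorbed by the inductive hypothesis) and choose a projective compactification $\overline S\subseteq\IP^m$ so that, after a routine functorial comparison of heights on a common projective model, Theorem~\ref{ThmHtInequality} furnishes a Zariski open dense $U\subseteq X$ together with constants $\alpha>0$, $\beta\ge 0$ such that
\[
\hat h_\cA(P)\ge \alpha\,h_{\overline{\mathbb{A}_g},\cM}(\tau(\pi_S(P)))-\beta \qquad\text{for all }P\in U(\IQbar).
\]

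Next I analyze $Z:=X\setminus U$. Since $U$ is dense in the irreducible $X$, $\dim Z<\dim X=\dim S+M$; each irreducible component of $Z$ either has image in a proper closed subset of $S$ or dominates $S$ with generic fibre of dimension at most $M-1$. Upper semi-continuity of fibre dimension produces a proper closed $T\subsetneq S$ with $\dim Z_s<M$ for every $s\in(S\setminus T)(\IQbar)$; the irreducible variety $(\mathfrak{C}_s-P)^M$ has dimension exactly $M$ and therefore cannot lie in $Z_s$. Lemma~\ref{lem:uniformdegreebound}(i) bounds $\deg(\mathfrak{C}_s-P)$ uniformly in $s$, and a projective compactification of $Z$ together with B\'ezout's theorem bounds $\deg Z_s$ uniformly; hence Lemma~\ref{LemmaNAlon} yields a constant $B$, independent of $s$ and $P$, such that whenever $\Sigma\subseteq\mathfrak{C}_s(\IQbar)$ has $\#\Sigma\ge B$ one can find $Q_1,\ldots,Q_M\in\Sigma$ with $(Q_1-P,\ldots,Q_M-P)\in U_s$.

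Fixing $s\in(S\setminus T)(\IQbar)$ with $h_{\overline{\mathbb{A}_g},\cM}(\tau(s))\ge c_1$ and $P\in\mathfrak{C}_s(\IQbar)$, set $\Sigma:=\{Q\in\mathfrak{C}_s(\IQbar):\hat h(Q-P)\le h_{\overline{\mathbb{A}_g},\cM}(\tau(s))/c_3\}$. If $\#\Sigma\ge B$, the previous paragraph combined with the height inequality gives
\[
\alpha\,h_{\overline{\mathbb{A}_g},\cM}(\tau(s))-\beta\le\sum_{i=1}^M\hat h(Q_i-P)\le\frac{M}{c_3}\,h_{\overline{\mathbb{A}_g},\cM}(\tau(s)),
\]
a contradiction once $c_3>2M/\alpha$ and $c_1>2\beta/\alpha$. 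Thus $\#\Sigma<B$, so alternative (ii) holds with $c_4:=B$ and $\Xi_s:=\emptyset$. For $s\in T(\IQbar)$ I invoke the inductive hypothesis on each irreducible component of $T$ (each of strictly smaller dimension) and take the pointwise maximum of the finitely many quadruples $(c_1,c_2,c_3,c_4)$ together with those above. The main obstacle I foresee is the dimensional bookkeeping that delivers $\dim Z_s<M$ on $S\setminus T$: this is exactly what makes $(\mathfrak{C}_s-P)^M\not\subseteq Z_s$ automatic and lets Lemma~\ref{LemmaNAlon} convert the height inequality into the desired dichotomy.
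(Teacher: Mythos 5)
Your overall strategy matches the paper's: non-degeneracy of $X=\mathscr{D}_M(\mathfrak{C}_S^{[M+1]})$ for $M=3g-2$ via Theorem~\ref{ThmNonDegForBd}, the height inequality of Theorem~\ref{ThmHtInequality} on a dense open $U\subseteq X$, Lemma~\ref{LemmaNAlon} to produce a tuple landing in $U$, and Noetherian induction on $\dim S$ to absorb the bad locus in the base. However, there is a genuine gap in the dimensional bookkeeping, and it is exactly the step you flag as the crux.

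You assert $\dim X=\dim S+M$, whence $\dim Z<\dim S+M$ for $Z=X\setminus U$ and, after removing a proper closed $T\subsetneq S$, $\dim Z_s<M$, so that $(\mathfrak{C}_s-P)^M\not\subseteq Z_s$ is automatic and $\Xi_s=\emptyset$ works. But the fiber $X_s=\mathscr{D}_M(\mathfrak{C}_s^{[M+1]})$ has dimension $M+1$, not $M$: for a curve of genus $\ge 2$ the Faltings--Zhang map is generically finite on $\mathfrak{C}_s^{M+1}$ (two tuples have the same image only if they differ by a translation stabilizing the curve). Hence $\dim X=\dim S+M+1$, a component of $Z$ dominating $S$ can have generic fiber dimension $M$, and $Z_s$ may well contain $M$-dimensional subvarieties of the form $(\mathfrak{C}_s-P)^M$ for certain $P$; nothing in the construction of $U$ rules this out. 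Consequently the hypothesis $C^M\not\subseteq Z$ of Lemma~\ref{LemmaNAlon} is not automatic, and $\Xi_s=\emptyset$ cannot be justified. This is precisely why the statement has alternative (i) at all: the paper defines $\Xi_s$ as the set of $P$ with $(\mathfrak{C}_s-P)^M$ contained in some irreducible component of $W_s=(X\setminus U)_s$, and bounds its cardinality by Lemma~\ref{LemmaPacketsAlternative3} --- two such base points $P_0,P_1$ for the same component force $\mathfrak{C}_s-P_1=\mathfrak{C}_s-P_0$, so $P_1-P_0$ stabilizes the curve, and Hurwitz's theorem gives at most $84(g-1)$ possibilities per component, with the number of components of $W_s$ bounded uniformly in the family. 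You need this ingredient (or a substitute for it); the rest of your argument --- the degree bounds from Lemma~\ref{lem:uniformdegreebound} and B\'ezout, the application of Lemma~\ref{LemmaNAlon} once $(\mathfrak{C}_s-P)^M\not\subseteq W_s$ is known, and the final contradiction from the height inequality --- is sound and agrees with the paper.
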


%% Let us explain why Proposition~\ref{PropNTDistance} holds, assuming
%% the truth of Proposition~\ref{PropAlgPtFar}. By introducing level
%% structure, we can identify $C$, as in
%% Proposition~\ref{PropNTDistance}, with the fiber of $\mathbb{M}_g$
%% above some $s \in\mathbb{M}_{g}(\IQbar)$. By forgetting the level
%% structure we obtain a morphism
%% $\mathbb{A}_{g}\rightarrow \mathbb{A}_{g,1}$ taking $\tau(s)$ to
%% $[\jac(C)]$. To recover Proposition~\ref{PropNTDistance} it suffices
%% to prove a height lower bound $h(\iota([\jac(C)])) \le c
%% h_{\overline{\mathbb{A}_g},\cM}(\tau(s)) + c'$, where $c>0,c'>0$ are
%% fixed. Both heights arise using immersions of $\mathbb{A}_{g,1}$ and
%% $\mathbb{A}_{g}$ into some projective space. The desired height
%% inequality follows from basic height properties that are essentially
%% consequences of the triangle inequality.

%by the Height Machine.%!!!TODO: ADAPT 1.4. 
%% \Pcomment{TODO: say something about the different heights in \ref{PropAlgPtFar} and \ref{PropNTDistance}}. \Gcomment{In the current version, it's enough to say this, right?}

\begin{proof}
  We fix an immersion of $\mathbb{M}_g$ into
  some projective space and let $\overline{\mathbb{M}_g}$ denote its
  Zariski closure. 
% Fix a compactification $\overline{\mathbb{M}_g}$ of $\mathbb{M}_g$
% and an ample line bundle $\mathcal{N}$ on $\overline{\mathbb{M}_g}$.
% The Height Machine provides a height function
% $h_{\overline{\mathbb{M}_g},\mathcal{N}} \colon
% \overline{\mathbb{M}_g}(\IQbar) \rightarrow \IR$.  
By a standard triangle inequality estimate, there exist constants
$c''>0$ and $c'''\ge 0$ such that
\begin{equation}\label{EqTriangleInequality}
h(s) \ge  c'' h_{\overline{\mathbb{A}_g},\cM}(\tau(s)) - c'''
\end{equation}
for all $s \in \mathbb{M}_g(\IQbar)$; see
\cite[Lemma~4]{Silverman:heightest11}, the left-hand side is the Weil
height and represents a height function coming from an ample line
bundle on $\overline{\mathbb{M}_g}$.
If $h_{\overline{\mathbb{A}_g},\cM}(\tau(s))\ge c_1$ with $c_1\ge
2 c'''/c''$
then $h(s) \ge c''
h_{\overline{\mathbb{A}_g},\cM}(\tau(s))/2$. We find that it suffices to
prove the alternative with $h_{\overline{\mathbb{A}_g},\cM}(\tau(s))$
replaced by $h(s)$ in (ii) and adjusting $c_3$. 
Our proof is by induction on $\dim S$.

If $\dim S = 0$, then the proposition follows by enlarging $c_1$.

If $\dim S\ge 1$, we fix $M = 3g-2$.
% Let $S^{\mathrm{sm}}$ denote the regular locus of $S$.
Applying
Theorem~\ref{ThmNonDegForBd} to the immersion
 $S^{\mathrm{sm}} \hookrightarrow \mathbb{M}_g$,
%$S \hookrightarrow \mathbb{M}_g$,
we conclude that the closed irreducible subvariety
 $X:=\mathscr{D}_M(\mathfrak{C}_{S^{\mathrm{sm}}}^{[M+1]})$
%$X=\mathscr{D}_M(\mathfrak{C}_{S}^{[M+1]})$
of the abelian scheme
% $\mathfrak{A}_g^{[M]}\times_{\mathbb{A}_g} S^{\mathrm{sm}} \rightarrow
% S^{\mathrm{sm}}$
$\cA=\mathfrak{A}_g^{[M]}\times_{\mathbb{A}_g} S^{\mathrm{sm}} \rightarrow
S^{\mathrm{sm}}$
is non-degenerate. Hence we can apply Theorem~\ref{ThmHtInequality} to
%$\cA= \mathfrak{A}_g^{[M]}\times_{\mathbb{A}_g} S^{\mathrm{sm}}$
$\cA$ %= \mathfrak{A}_g^{[M]}\times_{\mathbb{A}_g} S$
and 
% $X=\mathscr{D}_M(\mathfrak{C}_{S^{\mathrm{sm}}}^{[M+1]})$
$X$
(and the compactification $\overline{S}$ is the Zariski closure of $S$ in $\overline{\mathbb{M}_g}$). 
So, combined with \eqref{EqTriangleInequality}, there exist constants
$c > 0$ and $c'$ as well as a Zariski open dense subset $U$ of
%$X=\mathscr{D}_M(\mathfrak{C}_{S^{\mathrm{sm}}}^{[M+1]})$,
$X$,
satisfying the following property.
%For all $s \in S^{\mathrm{sm}}(\IQbar)$ and
For all $s \in S(\IQbar)$ and
all $P, Q_1,\ldots,Q_M \in \mathfrak{C}_s(\IQbar)$, we have
\begin{equation}\label{EqHtInequalityBdRatPt} 
%c h_{\overline{\mathbb{A}_g},\cM}(\tau(s)) \le
c h(s)\le 
\hat{h}(Q_1-P) + \cdots + \hat{h}(Q_M - P) + c' \quad
\text{if}\quad (Q_1-P,\ldots, Q_M-P) \in U(\IQbar).
\end{equation}

Observe that $\pi(X)=S^{\mathrm{sm}}$, where
%Observe that $\pi(X)=S$, where
%$\pi\colon \mathfrak{A}_{g}^{[M]}\times_{\mathbb{A}_g} S\rightarrow
%S$
$\pi\colon \mathcal{A} \rightarrow S^{\mathrm{sm}}$ 
is the structure
morphism.
Therefore, $S \setminus \pi(U)$ is not Zariski dense in
$S$. %\Pcomment{I don't see why $\pi(U)$ is open, but it doesn't matter.} 
Let $S_1,\ldots, S_r$ be the irreducible components of the
Zariski closure of $S\setminus \pi(U)$ in $S$. Then $\dim S_j \le \dim
S - 1$ for all $j$.

By the induction hypothesis, this proposition holds for all $S_j$. Thus it
remains to prove the conclusion of this proposition for curves above
\begin{equation}
\label{eq:soutsideSi}
s \in S(\IQbar) \setminus \bigcup_{j=1}^r S_j(\IQbar) \subset \pi(U(\IQbar)).
\end{equation}
 First we construct $\Xi_s$ and then
we will show that we are in one of the two alternatives. 

It is convenient to fix a base point $P_s\in
\mathfrak{C}_s(\IQbar)$ and consider $(\mathfrak{C}_s - P_s)^M$
as a subvariety of $\cA_s = \pi^{-1}(s)$. 
% as a curve
%inside $(\mathfrak{A}_g)_{\tau(s)}$ via the Abel--Jacobi map.

Let us set $W = X\setminus U$, it is a Zariski closed and proper
subset of $X$.  By (\ref{eq:soutsideSi}) we find  $W_s\subsetneq X_s = \mathscr{D}_M(\mathfrak{C}_s^{[M+1]})$. 

Let $Z$ be an irreducible component of
$W_s$. 
%Let $j_s \colon \mathfrak{C}_s \rightarrow (\mathfrak{A}_g)_s$ be any Abel--Jacobi embedding.
Consider the set
%\Gcomment{Reworded this paragraph (and the next one) to apply the moduli-space free Lemma on packages.}
\[
\Xi_Z := \{ P \in \mathfrak{C}_s(\IQbar) : ( \mathfrak{C}_s- P)^M
\subseteq Z \}. % \subsetneq \mathscr{D}_m(\mathfrak{C}_s^{m+1}) \}.
\]
%which is well-defined and independent of $P_s$.
Apply Lemma~\ref{LemmaPacketsAlternative3} to $A =
(\mathfrak{A}_g)_{\tau(s)}, C=\mathfrak{C}_s-P_s\subset A$, and $Z$. As $Z\subsetneq
\mathscr{D}_M(\mathfrak{C}_s^{[M+1]})$  we have $\#\Xi_Z \le 84(g-1)$.

Let $\Xi_s = \bigcup_Z \Xi_Z$ where $Z$ runs over all irreducible
components of $W_s$. The number of irreducible components  is
bounded from above in an algebraic family.
So the number of irreducible components of $W_s$
is bounded from above by a number that  is independent of $s$; but it may depend on $W$.  We take
$c_2$ to be such a number multiplied with $84(g-1)$. Thus
$\#\Xi_s \le c_2$ if (\ref{eq:soutsideSi}) and with $c_2$ independent
of $s$ and $P$.

Say $P\in \mathfrak{C}_s(\IQbar)$ and $P\not\in \Xi_s$.
So we are not in case (i) of the proposition.
Then $(\mathfrak{C}_s-P)^M
\not\subseteq W_s$. We want to apply Lemma~\ref{LemmaNAlon} to
$\mathfrak{C}_s-P$ and $W_s$.

%Recall  that $V_1,\ldots,V_t$ are Zariski open affine subsets that cover $\mathbb{A}_g$.
% The image of $s\in \mathbb{M}_g(\IQbar)$ under the Torelli morphism is  
% $\tau(s)\in\mathbb{A}_g(\IQbar)$. 
Recall that the abelian scheme $\mathfrak{A}_{g}$ is embedded in
$\IP_{\IQbar}^n\times \mathbb{A}_g$  over
$\mathbb{A}_g$, \textit{cf.}
$\mathsection$\ref{subsec:univfamily}.
So $\cA$ is embedded in $(\IP_{\IQbar}^n)^M\times S^{\mathrm{sm}}$ over $S^{\mathrm{sm}}$. 
We may identify $\mathfrak{C}_s-P$ with
 a smooth curve in $\IP_{\IQbar}^n$. 
The degree of $\mathfrak{C}_s-P$ as a subvariety of $\IP_\IQbar^n$ 
is bounded independently of $s$ by
Lemma~\ref{lem:uniformdegreebound}(i); applying the Torelli
morphism $\tau$ does not affect the degree. 
% The degree of $\mathfrak{C}_s-P$ as a subvariety of 
% $\IP_\IQbar^n$ equals the degree of $\mathfrak{C}_s$ and hence is bounded from above independently 
% of $s$.
%\Gcomment{Extra comment: We cannot apply
 % Lemma~\ref{lem:uniformdegreebound}, which claims the independency
 % for a particular $P_s$. But this sentence is easy to prove (B\'ezout
 % and/or flatness).} \Pcomment{Yes, we don't need
 % Lemma~\ref{lem:uniformdegreebound} and just need that the degree is
 % uniformly bounded from above in an algebraic family.}
%\Pinlinecomment{New.}
Moreover, $W_s$ is Zariski closed in
$X_s\subset \cA_{s}$. Still holding $s$ fixed we may take $W_s$ as a Zariski
closed subset of $(\IP_{\IQbar}^n)^M$. Being the fiber above $s$ of a
subvariety of $(\IP_{\IQbar}^n)^M\times S^{\mathrm{sm}}$, we find that the degree
of $W_s$ is bounded from above independently of $s$. 
From
Lemma~\ref{LemmaNAlon} we
thus obtain a number $c_4$, depending only  on these bounds and with the
following property. Any subset $\Sigma \subset
\mathfrak{C}_s(\IQbar)$ with cardinality $\ge c_4$ satisfies $(\Sigma-P)^M
\not\subset W_s$. 
It is crucial that $c_4$ is independent of $s$.

We work with $\Sigma = \{Q \in \mathfrak{C}_s(\IQbar) : \hat{h}(Q-P)
\le  h(s)/c_3 \}$ with $c_3 = 2M/c$. 
If $\#\Sigma < c_4$, then we are in alternative (ii) of the proposition. 

Finally, let us assume $\#\Sigma \ge c_4$. % we will show $h_{\overline{\mathbb{A}_g},\cM}(\tau(s))
% < c_1$ with $c_1=3c'/c$. Indeed, 
The discussion above implies
that there exist $Q_1,\ldots,Q_M \in
\Sigma$ such that $(Q_1 - P, \ldots, Q_M-P) \not\in
W_s(\IQbar)$, \textit{i.e.}, $(Q_1 - P, \ldots, Q_M-P)\in U(\IQbar)$.
Thus we can apply \eqref{EqHtInequalityBdRatPt} and obtain
\begin{equation*}
%  \label{eq:hScMbound}
  h(s) \le \frac{1}{c }\left( M\frac{c h(s)}{2M}
    +c' \right) = \frac{1}{2} h(s) + \frac{c'}{c}.
\end{equation*}
Hence
$h(s)\le  2c'/c$. Now (\ref{EqTriangleInequality}) implies
$h_{\overline{\mathbb{A}_g},\cM}(\tau(s)) < c_1$ if $c_1 >
(2c'/c+c''')/c''$.
So this case cannot occur if
$h_{\overline{\mathbb{A}_g},\cM}(\tau(s)) \ge c_1$ and $c_1$ is
sufficiently large. 
\end{proof}

%%% Local Variables:
%%% TeX-master: "main"
%%% End:

%% Section 8
\section{Proof of Theorems~\ref{ThmBdRatIntro}, \ref{ThmBdFinRank}, and \ref{thm:BdTorIntroNF}}\label{SectionRatPt}
% Section 8: Proof of Theorem~\ref{ThmBdFinRank}

The goal of this section is to prove the theorems and the corollary in
the introduction. To this end let $g\ge 2$; we retain the notation of
$\mathsection$\ref{subsec:univfamily}. In particular,
$\pi\colon\mathfrak{A}_g\rightarrow \mathbb{A}_g$ is the universal
family of principally polarized abelian varieties of dimension $g$
with level-$\ell$-structure where $\ell\ge \bestlevel$ and
$\tau\colon \mathbb{M}_g\rightarrow \mathbb{A}_g$ is the Torelli morphism.

\begin{proposition}
  \label{prop:premazur}        
  The exist constants $c_1\ge 0,c_2\ge 1$ depending on the choices made
  above with the following property. Let $s\in \mathbb{M}_g(\IQbar)$
  with $h_{\overline{\mathbb{A}_g},\cM}(\tau(s)) \ge c_1$. Suppose $\Gamma$ is
  a finite rank subgroup of $\mathfrak{A}_{g,\tau(s)}(\IQbar)$ with rank
  $\rho\ge 0$. If $P_0 \in \mathfrak{C}_s(\IQbar)$, then
  $$
    \# (\mathfrak{C}_s(\IQbar)-P_0)\cap \Gamma \le
    c_2^{1+\rho}.
  $$
\end{proposition}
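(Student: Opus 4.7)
The plan is to follow the Vojta--Faltings scheme outlined in $\mathsection$\ref{subsec:ntdist}: split
$$\Sigma := (\mathfrak{C}_s(\IQbar) - P_0) \cap \Gamma$$
into \emph{large} and \emph{small} points at the threshold $B := c_0 \max\{1, h_{\overline{\mathbb{A}_g},\cM}(\tau(s))\}$, where $c_0 > 0$ is chosen large enough that the quantitative forms of the Vojta and Mumford inequalities (as in~\cite{Remond:Decompte, DPvarabII}) apply to any pair $P, Q \in \Sigma$ with $\hat h(P), \hat h(Q) > B$. The classical angle-packing argument in the $\rho$-dimensional Euclidean space $(\Gamma \otimes \mathbb{R}, \hat h^{1/2})$ then yields
$$\#\{P \in \Sigma : \hat h(P) > B\} \le c^{1+\rho}$$
for some $c = c(g)$, as in~\cite[Theorem~6.1]{Vojta:siegelcompact} and more sharply in~\cite{DPvarabII, Remond:Decompte}.

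For the small points $\Sigma_{\mathrm{small}} := \{P \in \Sigma : \hat h(P) \le B\}$, I would apply Proposition~\ref{PropAlgPtFar} with $S = \mathbb{M}_g$. After enlarging $c_1$ if necessary, this produces constants $c_2', c_3', c_4' > 0$ and a set $\Xi_s \subset \mathfrak{C}_s(\IQbar)$ with $\#\Xi_s \le c_2'$ such that every $Q \in \mathfrak{C}_s(\IQbar) \setminus \Xi_s$ satisfies
$$\#\bigl\{R \in \mathfrak{C}_s(\IQbar) : \hat h(R-Q) \le h_{\overline{\mathbb{A}_g},\cM}(\tau(s))/c_3'\bigr\} < c_4'.$$
Under the hypothesis $h_{\overline{\mathbb{A}_g},\cM}(\tau(s)) \ge c_1 \ge 1$, the two squared-distance thresholds $B$ and $h_{\overline{\mathbb{A}_g},\cM}(\tau(s))/c_3'$ are comparable with ratio at most $K := c_0 c_3'$. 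A standard volume/packing argument then shows that the closed ball of radius $\sqrt{B}$ in $(\Gamma \otimes \mathbb{R}, \hat h^{1/2})$ is covered by at most $N^\rho$ balls of radius $\tfrac{1}{2}\sqrt{h_{\overline{\mathbb{A}_g},\cM}(\tau(s))/c_3'}$, with $N = N(K)$ absolute. Invoking the dichotomy of Proposition~\ref{PropAlgPtFar} in each such small ball at any chosen representative $Q - P_0$ of $\Sigma_{\mathrm{small}}$ with $Q \notin \Xi_s$ bounds the number of elements of $\Sigma_{\mathrm{small}} \setminus (\Xi_s - P_0)$ in that ball by $c_4'$, whence $\#\Sigma_{\mathrm{small}} \le c_2' + c_4' N^\rho$.

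Combining the two estimates gives
$$\#\Sigma \le c_2' + c_4' N^\rho + c^{1+\rho} \le c_2^{1+\rho}$$
after taking $c_2$ sufficiently large. The substantive content is entirely carried by Proposition~\ref{PropAlgPtFar}; the remaining ingredients are the classical angle-packing for large points and a Euclidean covering for small points. The main obstacle is merely bookkeeping: $c_0$ must be chosen large enough for the explicit Vojta and Mumford inequalities to apply above the threshold $B$, and $c_1$ large enough both to activate Proposition~\ref{PropAlgPtFar} and to guarantee that $B$ and $h_{\overline{\mathbb{A}_g},\cM}(\tau(s))/c_3'$ remain within a universal ratio, so that the covering number $N$ depends only on the initial choices.
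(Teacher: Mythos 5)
Your outline follows the same architecture as the paper's proof: large points via the explicit Vojta and Mumford inequalities of R\'emond, small points via Proposition~\ref{PropAlgPtFar} combined with a ball-covering in $(\Gamma\otimes\IR,\hat h^{1/2})$, with the two radii in a universal ratio so the covering number depends only on $g$. The small-point half of your argument is essentially the paper's, and is fine.

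The gap is in the large-point half, and it is concentrated in what you dismiss as ``bookkeeping.'' To apply Lemma~\ref{lem:vojtamumford} (R\'emond's quantitative Vojta--Mumford) with a threshold of the shape $c_0\max\{1,h_{\overline{\mathbb{A}_g},\cM}(\tau(s))\}$, you must bound $h(C)$, $c_{\mathrm{NT}}$, and $h_1$ linearly in $\max\{1,h_{\overline{\mathbb{A}_g},\cM}(\tau(s))\}$ for the embedded curve $C=\mathfrak{C}_s-P_0\subset\mathfrak{A}_{g,\tau(s)}$. The bound on the projective height $h(C)$ is only available for \emph{one} choice of base point: Lemma~\ref{lem:uniformdegreebound}(ii) produces a specific $P_s\in\mathfrak{C}_s(\IQbar)$ with $h(\mathfrak{C}_s-P_s)\le c\max\{1,h_{\overline{\mathbb{A}_g},\cM}(\tau(s))\}$, and for an arbitrary $P_0$ the height of the translate $\mathfrak{C}_s-P_0$ is not uniformly controlled (it grows with the height of $P_0$). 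The paper resolves this by first proving the statement for $P_0=P_s$ and then treating general $P_0$ by replacing $\Gamma$ with the group generated by $\Gamma$ and $P_0-P_s$, whose rank is at most $\rho+1$; the loss is absorbed since $c_2^{2+\rho}\le (c_2^2)^{1+\rho}$. Your proposal needs this reduction. Separately, the uniform bounds $c_{\mathrm{NT}}\le c\max\{1,h_{\overline{\mathbb{A}_g},\cM}(\tau(s))\}$ and $h_1\le c\max\{1,h_{\overline{\mathbb{A}_g},\cM}(\tau(s))\}$ are genuine inputs, not routine: the first is the Silverman--Tate theorem for the family $\mathfrak{A}_g\rightarrow\mathbb{A}_g$ (Appendix~\ref{app:silvermantate}), and the second requires constructing the addition/subtraction polynomials $P_{ij}$ of controlled height, which the paper does via a Zariski-density-of-torsion argument and Cramer's rule. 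Without these your constant $c_0$ cannot be taken independent of $s$.
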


The proof combines Vojta's approach to the Mordell Conjecture with the
results obtained in $\mathsection$\ref{SectionDistanceCurve}. We
will use R\'emond's quantitative
version~\cite{Remond:Decompte,remond:vojtasup} of Vojta's method. A
similar approach was used in the authors's earlier work~\cite{DGH1p}
which also contains a review of Vojta's method in $\mathsection$2. Let
us recall the fundamental facts before proving Proposition~\ref{prop:premazur}.

Suppose we are given an abelian variety $A$ of dimension $g$ that is
defined over $\IQbar$ and is presented with a symmetric and very
ample line bundle $L$. We assume also that we have a closed immersion
of $A$ into some projective space $\IP_\IQbar^n$ determined by a basis
of the global sections of $L$. We assume that $A$ becomes a
projectively normal subvariety of $\IP_\IQbar^n$. This is the case if
$L$ is an at least fourth power of a symmetric and ample line bundle. 

Suppose $C$ is an irreducible  curve in $A$. Then let $\deg
C$ denote  the degree of $C$ considered as subvariety of
$A\subseteq \IP_\IQbar^n$, \textit{i.e.}, $\deg C = (C.L)$. Moreover, let $h(C)$ 
denote the height of $C$. 

On the ambient projective space  we have the 
 Weil height $h\colon
\IP_\IQbar^n(\IQbar)\rightarrow [0,\infty)$. Tate's Limit Argument,
compare \eqref{eq:fiberwiseNT},
applied to $h$ yields the N\'eron--Tate height $\hat h_L\colon A(\IQbar)\rightarrow
[0,\infty)$. It vanishes precisely on the points of finite order.
Moreover, it follows  from Tate's construction  that 
there exists a constant $c_{\mathrm{NT}}\ge 0$, which depends on $A$, such that
\begin{equation}
  \label{def:cNT}
  |\hat h_{L}(P) - h(P)|\le c_{\mathrm{NT}}
\end{equation}
for all $P\in A(\IQbar)$.

Finally, we need a measure for the heights of homogeneous polynomials that
define the addition and substraction on $A$, as required in R\'emond's
~\cite{remond:vojtasup}. Consider the $n+1$ global sections of $\cO(1)$
corresponding to the projective coordinates of $\mathbb{P}_\IQbar^n$.
They restrict to  global sections $\xi_0,\ldots,\xi_n$ of
$L$ on $A$. Let $f\colon A\times A\rightarrow A\times A$ denote the
morphism induced by $(P,Q)\mapsto (P+Q,P-Q)$, and let $p_1,p_2\colon
A\times A\rightarrow A$ be the first and section projection,
respectively. For all $i,j\in \{0,\ldots,n\}$ there are $P_{ij} \in
\IQbar[\mathbf{X},\mathbf{X}']$ with 
\begin{equation}
  \label{eq:Pijforh1}
  f^*(p_1^*\xi_i\otimes
  p_2^*\xi_j)=P_{ij}((p_1^*\xi_0,\ldots,p_1^*\xi_n),(p_2^*\xi_0,\ldots,p_2^*\xi_n))
\end{equation}
and where $P_{ij}$ is bihomogeneous of bidegree $(2,2)$ in $\mathbf{X}
= (X_0,\ldots,X_n)$ and $\mathbf{X}' = (X'_0,\ldots,X'_n)$; see 
 \cite[Proposition~5.2]{remond:vojtasup} with $a=b=1$
% $\mathsection$5.1 and $\mathsection$5.4~\cite{remond:vojtasup} 
for the existence of the $P_{ij}$. Here we require that
$\xi_0,\ldots,\xi_n$ constitute a basis of $H^0(A,L)$. Let $h_1$
denote the Weil height of the point in projective space whose
coordinates are all coefficients of all $P_{ij}$.

We point out a minor omission in \cite[$\mathsection$2]{DGH1p}: $h_1$
there must also involve both addition and subtraction on $A$, and not just
the addition.

The lemma below is \cite[Corollary~2.3]{DGH1p} which is itself a
standard application of R\'emond's explicit formulation of the Vojta
and Mumford inequalities. We thus obtain a bound that is exponential in the
rank of the subgroup $\Gamma$ for points of sufficiently large
N\'eron--Tate height. 

\begin{lemma}
  \label{lem:vojtamumford}
  Let $C$ be an irreducible curve in $A$.
There exists a constant $c=c(n,\deg C)\ge 1$ depending
only on $n$ and $\deg C$ with the following property. Suppose $\Gamma$
is a subgroup of $A(\IQbar)$ of finite rank $\rho\ge 0$. If $C$ is not
the translate of an algebraic subgroup of $A$, then
\begin{equation*}
\# \left \{P \in C(\IQbar)\cap \Gamma : \hat h_{L}(P) >
c\max\{1,h(C),c_{\mathrm{NT}},h_1\} \right\} \le c^\rho.
\end{equation*}
\end{lemma}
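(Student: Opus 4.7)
The plan is to execute the explicit Vojta--Mumford strategy, as quantified by R\'emond~\cite{Remond:Decompte,remond:vojtasup}, and organized into a bound of the shape $c^\rho$ by partitioning $\Gamma\cap C(\IQbar)$ into ``packets''. The point is that the threshold $c\max\{1,h(C),c_{\mathrm{NT}},h_1\}$ is calibrated precisely so that both R\'emond's quantitative Vojta inequality and his quantitative Mumford inequality (both applied to pairs of points of $C$) become available simultaneously for every pair of points exceeding the threshold.

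First I would reduce to the case where $\Gamma$ is finitely generated by replacing $\Gamma$ with the subgroup generated by the points we are counting, which has rank at most $\rho$. Next, I would consider the real vector space $V := \Gamma \otimes_\IZ \IR \cong \IR^\rho$ endowed with the inner product coming from $\hat h_L$; this is an inner product on $V$ modulo torsion, since $\hat h_L$ is a positive definite quadratic form on $\Gamma/\mathrm{tor}$. I would cover the unit sphere of $V$ by $\le c_0^\rho$ spherical caps of fixed angular aperture $\theta_0$, for a sufficiently small $\theta_0 = \theta_0(n,\deg C)$ to be chosen later. This yields a decomposition of $\Gamma\setminus\{0\}$ into $\le c_0^\rho$ cones $K_1,\dots,K_N$.

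Within a single cone $K_i$, I would list the points of $C(\IQbar)\cap\Gamma\cap K_i$ exceeding the threshold in order of increasing $\hat h_L$, say $P_1,P_2,\dots$ with $\hat h_L(P_1)\le\hat h_L(P_2)\le\cdots$. R\'emond's explicit Vojta inequality forces $\hat h_L(P_{j+1})\le r_1 \hat h_L(P_j)$ for a constant $r_1=r_1(n,\deg C)>1$ whenever $P_j,P_{j+1}$ lie in a common cone of sufficiently small aperture and both exceed the threshold; otherwise the inequality would force $C$ to be a translate of an abelian subvariety, contradicting our hypothesis. In the opposite direction, R\'emond's explicit Mumford inequality forces $\hat h_L(P_{j+1})\ge r_2\hat h_L(P_j)$ for a constant $r_2=r_2(n,\deg C)>1$, again once the heights exceed the calibrated threshold. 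The consistency $r_2^{k-1}\le r_1$ forces $k\le 1+\log r_1/\log r_2$, bounding the number of points of $C(\IQbar)\cap\Gamma\cap K_i$ above the threshold by a constant $c_5 = c_5(n,\deg C)$.

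Summing over the $\le c_0^\rho$ cones gives a bound $c_5 c_0^\rho$ for the total count, and absorbing $c_5$ into the base yields the required bound $c^\rho$ with $c=c(n,\deg C)$. The main obstacle is to extract the dependence solely on $n$ and $\deg C$: in R\'emond's estimates the constants depend, a priori, on several numerical invariants of the embedded abelian variety, but the dependence is channelled through $\deg C$, $\dim A \le n$, $h(C)$, $c_{\mathrm{NT}}$, and $h_1$, and the last three appear only through the threshold above which we count. Choosing the aperture $\theta_0$, and thus $c_0$, small enough in terms of $r_1$ to make the Vojta step work, together with the fact that $\dim A$ is bounded by $n$ via $A\hookrightarrow\IP^n_\IQbar$, completes the argument. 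This is exactly the content of~\cite[Corollary~2.3]{DGH1p}, to which one may simply appeal.
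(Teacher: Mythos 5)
Your proposal is correct and takes essentially the same route as the paper: the paper gives no independent proof of this lemma but simply cites \cite[Corollary~2.3]{DGH1p}, describing it as a standard application of R\'emond's explicit Vojta and Mumford inequalities --- precisely the cone-decomposition argument you sketch and the citation with which you conclude. (One cosmetic point: the Vojta inequality should be applied to the extreme pair $(P_1,P_k)$ in a cone rather than to consecutive points, which is evidently what you intend given that you derive $r_2^{k-1}\le r_1$.)
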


\begin{proof}[Proof of Proposition \ref{prop:premazur}]
  As in $\mathsection$\ref{subsec:univfamily} %we have finitely many
%  Zariski open affine subsets
%  $V_1,\ldots,V_t$ that cover $\mathbb{A}_g$. Moreover, 
  we have a
  closed immersion 
  $\mathfrak{A}_g \rightarrow\IP^n_\IQbar\times \mathbb{A}_g$ over $\mathbb{A}_g$.
  Let $s\in \mathbb{M}_g(\IQbar)$  with
  \begin{equation}
    \label{eq:hslbc1}
    h_{\overline{\mathbb{A}_g},\cM}(\tau(s)) \ge \max\{1,c_1\},
  \end{equation}
  where $c_1$
  comes from Proposition \ref{PropAlgPtFar} applied to $S = \mathbb{M}_g$.

 % Fix $i\in \{1,\ldots,t\}$ such that $\tau(s)\in V_i(\IQbar)$. 
  We now bound two quantities attached to the abelian variety
  $A=\mathfrak{A}_{g,\tau(s)}$ taken with its closed immersion into
  $\IP_\IQbar^n$. Observe that this closed immersion satisfies the
  condition imposed at the beginning of this section with $L=\cL|_A$
  where $\cL$ is as in $\mathsection$\ref{subsec:univfamily}.
  These quantities may depend on $s$. Below, $c>0$ denotes a constant
  that depends on the fixed data such as $g,n,$ and the ambient
  objects such as $\mathfrak{A}_g$ but not on  $s$.
  We will increase $c$ freely and without notice.

  \medskip
  \textbf{Bounding $c_{\mathrm{NT}}$.} For this we require the
  Silverman--Tate Theorem, Theorem \ref{thm:silvermantate}, applied to
  $\pi \colon \mathfrak{A}_g \rightarrow \mathbb{A}_g$. Recall that $h$ is the Weil 
  height on $\IP^n_{\IQbar}(\IQbar)$.
  % we may use it in lieu of
  % $h_{\overline{\cA},\overline{\cL}}$ in Theorem \ref{thm:silvermantate}.
  For all $P\in
  A(\IQbar)$ we have $|h(P)-\hat h(P)|\le c
  \max\{1,h_{\overline{\mathbb{A}_g},\cM}(\tau(s))\}$;
  note that we can bound $h_{\overline S}(\pi(P))$ from above linearly
  in terms of $h_{\overline{\mathbb{A}_g},\cM}(\tau(s))$ by the Height Machine.
  So we may take 
  \begin{equation}
    \label{eq:choosecNT}
    c_{\mathrm{NT}} = c\max\{1,h_{\overline{\mathbb{A}_g},\cM}(\tau(s))\}.    
  \end{equation}

  \medskip
  \textbf{Bounding $h_1$.}\label{page:boundh1} % Let $\xi_0,\ldots,\xi_n$ denote
  % global sections of $\cO(1)$ corresponding to 
  % projective coordinates of $\IP^n_{\IQbar}$.
  Recall that $f\colon A^2\rightarrow A^2$ sends $(P,Q)$ to
  $(P+Q,P-Q)$. We know that $P_{ij}$ as above exist. Here we will
  construct such a family with controlled height. To this end we
  consider points $P=[\zeta_0:\cdots:\zeta_n],Q=[\eta_0:\cdots:\eta_n]
  \in A(\IQbar)$. Then $f(P,Q) = ([\nu^+_0:\cdots
  :\nu^+_n],[\nu^-_0:\cdots:\nu^-_n])$. Recall that
  $A=\mathfrak{A}_{g,\tau(s)}$ is presented as a projectively normal
  subvariety of $\IP^n_\IQbar$ by the construction in
  $\mathsection$\ref{subsec:univfamily}. By \eqref{eq:Pijforh1}  there
  is for each $i,j\in \{0,\ldots,n\}$
  a bihomogeneous polynomial $P_{ij}$ of bidegree $(2,2)$ that is
  independent of $P$ and $Q$, with
  \begin{equation}
    \label{eq:inhomlinearsystemforh1}
    \nu^+_i \nu^-_j = \lambda P_{ij}((\zeta_0,\ldots,\zeta_n),(\eta_0,\ldots,\eta_n))
  \end{equation}
  for some non-zero $\lambda\in\IQbar$ that may depend on $(P,Q)$. 
  We  eliminate $\lambda$ and consider  
  \begin{equation}
    \label{eq:linearsystemforh1}
    \nu^+_i \nu^-_jP_{i'j'}((\zeta_0,\ldots,\zeta_n),(\eta_0,\ldots,\eta_n)) - \nu^+_{i'} \nu^-_{j'}P_{ij}((\zeta_0,\ldots,\zeta_n),(\eta_0,\ldots,\eta_n))=0
  \end{equation}
  as a system of homogeneous linear equations 
  parametrized by $(i,j),(i',j')\in \{0,\ldots,n\}^2$, the unknowns are
  the coefficients of the $P_{ij}$. As each $P_{ij}$ is bihomogeneous
  of bidegree $(2,2)$, the number of unknowns is
  $N=(n+1)^2 {n+2 \choose 2}^2$ which is independent of $s$. 

  Each pair of points $(P,Q)\in A(\IQbar)^2$ yields one system of
  linear equations.
  We know that there is a non-trivial solution $(P_{ij})_{ij}$ that solves for all
  $(P,Q)$ simultaneously and such that some $P_{ij}$ does not vanish
  identically on $A\times A$. Our goal is to find such a common solution of
  controlled height.

  First, observe that a common solution  for when
  $(P,Q)$ runs over all torsion points of $A^2(\IQbar)$ is a common
  solution for all pairs $(P,Q)$. Indeed, this follows as torsion
  points of $A^2(\IQbar)$ lie Zariski dense in $A^2$. 
  Second, observe that the full system has finite rank $M< N$ so it
  suffices to consider only finite many torsion points $(P,Q)$.

  Our task is thus to find a  common solution to
  (\ref{eq:linearsystemforh1}) for all $(i,j),(i',j')$, where some
  $P_{ij}$ does not vanish identically on $A\times A$, and where 
  $[\zeta_0:\cdots:\zeta_n]$,$[\eta_0:\cdots:\eta_n]$, and
  $[\nu^{\pm}_0:\cdots :\nu^{\pm}_n]$ are certain torsion points on
  $A(\IQbar)$. We may assume that some $\zeta_i$ is $1$ and similarly
  for  $\eta_i$ and
  $\nu^{\pm}_i$. So all coordinates are in $\IQbar$. Moreover, the
  height  of each torsion point is at most $c_{\mathrm{NT}}$ by (\ref{def:cNT}).
  The resulting system of linear equations is represented by an
  $M\times N$ matrix with algebraic coefficients. By elementary
  properties of the height, each coefficient in the system has affine
  Weil height $c \cdot c_{\mathrm{NT}}$, for $c$ large enough. It is tempting, but
  unnecessary, to invoke Siegel's Lemma to find a non-trivial solution.
  As $M,N$ are bounded in terms of $n$, 
  Cramer's Rule establishes the existence of a basis of 
  non-zero solution such that the Weil height of the coefficient
  vector is at most $c \cdot c_{\mathrm{NT}}$.
  Among this basis there
  is one solution where one $P_{ij}$ does not vanish identically on
  $A\times A$. By \eqref{eq:choosecNT} we find
  \begin{equation}
    \label{eq:h1bound}
    h_1 \le     c\max\{1,h_{\overline{\mathbb{A}_g},\cM}(\tau(s))\}
  \end{equation}
  for the projective height of the tuple $(P_{ij})_{ij}$. 
  Thus \eqref{eq:linearsystemforh1} holds and so we get
  \eqref{eq:inhomlinearsystemforh1} on all of $A^2(\IQbar)$, at least with  $\lambda$ a rational
  function on $A\times A$ that is not identically zero. But $\lambda$ cannot
  vanish anywhere on $A\times A$, as otherwise the left-hand side of
  \eqref{eq:inhomlinearsystemforh1} would vanish at some point of
  $A\times A$ for all $(i,j)$. Hence $\lambda$ is a non-zero constant. Replacing
  $P_{ij}$ by $\lambda P_{ij}$ does not change the projective height;
  we get \eqref{eq:Pijforh1} with the desired bound for $h_1$. 

  \medskip
  \textbf{Bounding height and degree of a curve.} 
  By Lemma \ref{lem:uniformdegreebound} we have
  \begin{equation}
    \label{eq:degreehgtbound}
    \deg(\mathfrak{C}_s-P_s)\le c\quad\text{and}\quad 
    h(\mathfrak{C}_s-P_s)\le
    c \max\{1,h_{\overline{\mathbb{A}_g},\cM}(\tau(s))\}
  \end{equation}
  for some
  $P_s\in \mathfrak{C}_s(\IQbar)$.

  \bigskip

  We now follow the argumentation in~\cite{DGH1p}. Let $\Gamma$ be a
  subgroup of $\mathfrak{A}_{g,\tau(s)}(\IQbar)$ for finite rank $\rho$.
  We first prove the proposition in the case $P_0=P_s$. We apply Lemma
  \ref{lem:vojtamumford}  to the curve $C =
  \mathfrak{C}_s-P_s\subset \mathfrak{A}_{g,\tau(s)}=A$ and use the bounds
  (\ref{eq:choosecNT}), \eqref{eq:h1bound}, and 
  (\ref{eq:degreehgtbound}).
  Note that $C$ is a smooth curve of genus
  $g\ge 2$. So it cannot be the translate of an algebraic subgroup of
  $A$.
  It follows that the number of points $P\in
  \mathfrak{C}_s(\IQbar)$ with $P-P_s\in \Gamma$ and $\hat h(P-P_s) >
  R^2$ where 
  \begin{equation}
    \label{def:bigR}
    R= (c\max\{1,h_{\overline{\mathbb{A}_g},\cM}(\tau(s))\})^{1/2}    
  \end{equation}
  is at most  $c^{\rho}\le c^{1+\rho}$.

  The burden of this paper is  to find a bound of the same quality 
  for the number of pairwise distinct points $P_1,P_2,P_3,\ldots$  in
  $\mathfrak{C}_s(\IQbar)$ with $\hat h(P_i-P_s) \le R^2$. This is where
  Proposition~\ref{PropAlgPtFar} enters. Recall our assumption
  (\ref{eq:hslbc1}) on $s$.
  Let $c_2'$ be the constant $c_2$ from Proposition~\ref{PropAlgPtFar}; it is independent of $s$. As
  $\# \Xi_s\le c_2'$ we may assume $P_i\not\in \Xi_s$ for all $i$. 
  So we may assume that each $P_i$ is in the second alternative of
  Proposition~\ref{PropAlgPtFar}.

  As in \cite{DGH1p} we use the Euclidean norm $|\cdot|$ defined by $\hat h^{1/2}$ on
  the $\rho$-dimensional  $\IR$-vector space $\Gamma\otimes\IR$. 
  Let $r\in (0,R]$. By an elementary ball packing argument, any subset of
  $\Gamma\otimes\IR$ contained in a closed ball of radius $R$ is
  covered by at most $(1+2R/r)^{\rho}$ closed balls of radius $r$ centered at
  elements of the given subset; see \cite[Lemme~6.1]{Remond:Decompte}.
  We apply this geometric argument to $R$ as in (\ref{def:bigR}) and
  to  $r$, the positive square-root of 
  $h_{\overline{\mathbb{A}_g},\cM}(\tau(s))/c_3 =
  \max\{1,h_{\overline{\mathbb{A}_g},\cM}(\tau(s))\}/c_3$.
  The contribution of the height
  $h_{\overline{\mathbb{A}_g},\cM}(\tau(s))$ cancels in the quotient
  $R/r$. We find $R/r\le c$. So the
  number of balls in the covering is at most $c^{1+\rho}$. 

  By Proposition~\ref{PropAlgPtFar}(ii) the number of the $P_i$'s that
  map to a single closed ball of radius $r$ is
  at most $c_4$. Thus after increasing $c$ we find that $\#\{P_i \in \mathfrak{C}_s(\IQbar)\cap \Gamma : \hat{h}(P_i-P_s) \le R^2\} \le c_4 c^{1+\rho}$, %the
%  number of $P_i$ as above is at most $c_4 c^{1+\rho}$, 
as desired. 
  This completes the proof of the proposition in the case $P_0=P_s$
  for sufficiently large $c_2$.

  The case of a general base point follows easily as our estimates
  depend only on the rank $\rho$ of $\Gamma$. Indeed,
  let $P_0\in \mathfrak{C}_s(\IQbar)$ be an arbitrary point and let
  $\Gamma'$ be the subgroup of $\mathfrak{A}_{g,\tau(s)}(\IQbar)$ generated by
  $\Gamma$ and $P_0-P_s$. Its rank is at most $\rho +1$.

  Now if $Q\in \mathfrak{C}_s(\IQbar)-P_0$ lies in $\Gamma$, then
  $Q+P_0-P_s\in \mathfrak{C}_s(\IQbar)-P_s$ lies in $\Gamma'$. The
  number of such $Q$ is at most $c_2^{2+\rho}$ by
  what we already proved. 
  The proposition follows as $c_2^{2+\rho}\le (c_2^2)^{1+\rho}$ and
  since we may replace $c_2$ by $c_2^2$. 
\end{proof}

\begin{proof}[Proof of Theorem~\ref{ThmBdRatIntro}] 
  It is possible to deduce Theorem~\ref{ThmBdRatIntro} from
  Theorem~\ref{ThmBdFinRank}, which we prove below. However in view of
  the importance of Theorem~\ref{ThmBdRatIntro}, we hereby give it a
  complete proof.

  This proof works for any level $\ell\ge\bestlevel$, but we may fix
  $\ell=\bestlevel$ for definiteness. Let
  $\mathbb{A}_g,\overline{\mathbb{A}_g},\cM,$
  and $h_{\overline{\mathbb{A}_g},\cM}$ be as in $\mathsection$\ref{subsec:univfamily}.

  Our curve $C$ corresponds to an $F$-rational point $s_F$ of $\mathbb{M}_{g,1}$, the coarse
  moduli space of smooth genus $g$ curves without level structure.

  The fine moduli space $\mathbb{M}_g$ of smooth genus $g$ curves
  with level-$\ell$-structure is a finite cover of $\mathbb{M}_{g,1}$.
  For this proof it is convenient to recall that 
  $\mathbb{M}_g$ is defined over
  the cyclotomic field generated by a third root of unity;
  recall the convention that we fixed a third root of unity and that
  $\mathbb{M}_g$ is geometrically irreducible.
  % with $\zeta$ a root of unity of order $\ell$. 
  %After replacing  $F$ by a finite extension whose degree is bounded solely in terms of $g$ we may assume that $\mathbb{M}_g$ is defined over $F$. 
%  Moreover, there exists a finite extension $F'/F$
%  with $F'\subset \IQbar$ and $s\in \mathbb{M}_g(F')$
  Say $s\in\mathbb{M}_g(\IQbar)$ maps to $s_F$. Then
  $F'=F(s)$ is a number field and  
  % Thus there exist a number field $F'\subset\IQbar$ containing $F$
  % % finite extension $F'/F$, with $F'\subset\IQbar$, and  
  % and $s \in \mathbb{M}_g(F')$ 
  % that maps to $s_F$. Moreover, we may assume that
  $[F':F]$ is bounded above
  only in terms of $g$ and $\ell$. We may identify $C_{F'} = C\otimes_F F'$ with $\mathfrak{C}_s$, the 
  fiber of
  $\mathfrak{C}_g\rightarrow\mathbb{M}_g$ above $s$.
  %Thus $C(K)\subset C(L) = C_L(L) = \mathfrak{C}_s(L)$.

  Constructing the
  Jacobian commutes with finite field extension. We thus view 
  $\Gamma = \mathrm{Jac}(C)(F)$ as a subgroup of $\mathrm{Jac}(C)(\IQbar)
  = \mathrm{Jac}(C_{F'})(\IQbar)$.

  To prove the theorem we may assume $C(F)\not=\emptyset$. So fix
  $P_0\in C(F)$. We consider the Abel--Jacobi embedding
  $C-P_0\subset \mathrm{Jac}(C)$ defined over $F$. Then $\# C(F)\le \#
  (C_{F'}(\IQbar)-P_0)\cap\Gamma = \#
  (\mathfrak{C}_s(\IQbar)-P_0)\cap \Gamma$. If
  $h_{\overline{\mathbb{A}_g},\cM}(\tau(s))\ge c_1$, the theorem
  follows from Proposition~\ref{prop:premazur}. 
  Note that in this case, the constant $c$ in \eqref{eq:CKexpinrank} is independent of $d$.

  So we may assume that the height 
  of $\tau(s)$ is less than $c_1$. 
  As $[F':\IQ]\le [F':F][F:\IQ]\le [F':F]d$,
  Northcott's Theorem implies
 that $\tau(s)$ comes   from a finite set in $\mathbb{A}_{g}(\IQbar)$
 that depends only on $g,d,$ and $\ell$. The same holds for $s$ and
 thus $F'=F(s)$
 since the Torelli morphism $\tau$ is finite-to-$1$ and thus   has
 fibers of bounded cardinality. % We may also assume that $F'$ comes
 % from a finite set. 
 This means that the remaining $C$ are twists   in
 finitely many $F'$-isomorphism classes. But then it suffices to
 apply R\'emond's estimate \cite[page 643]{DPvarabII} to a single
 $C_{F'}$ and    use $\#C(F) \le \# (C_{F'}(\IQbar)-P_0)\cap\Gamma$ to
 conclude the theorem. % as we have fixed $\ell = 3$.
% \Pcomment{The commment that $c$ is polynomial in
 %  $d$ is now in remark 8.3. Why not keep it here in the proof and 
% add justifications as desired by
 %  the referee?} \Ginlinecomment{My impression is that we have to repeat the last paragraph of the proof of Thm~1.2, in addition to %the current argument of Remark~8.3, for this. Plus, it would be nice to explain why Thm~1.2 implies Thm~1.1.}

% \Pinlinecomment{I put the reference to Silverman here because it doesn't really fit that   well after the proof any.}
Silverman's older result~\cite[Theorem 1]{Silverman:twists} also
 handles uniformity among twists.
\end{proof}

%\Pinlinecomment{New arguments start here.}
Let us explain how to obtain some extra uniformity in the second case
of the proof. More precisely, we show that the constant $c(g,d)$ in
\eqref{eq:CKexpinrank} grows polynomially in $d$. We retain the above
proof's notation.% \Gcomment{Slightly rephrased to point out here that
%$c(g,d)$ depends polynomially on $d$.}\Pcomment{OK, but I think we can omit  ``when
%$h_{\overline{\mathbb{A}_g},\cM}(\tau(s)) < c_1$''}

Denote by $\rho \colon \mathbb{A}_g = \mathbb{A}_{g,\ell} \rightarrow
\mathbb{A}_{g,1}$ the natural morphism to the coarse moduli space which forgets the level structure.
We recall that
$\overline{\mathbb{A}_g}$ %and $\overline{\mathbb{A}_{g,1}}$
is presented as a closed subvariety
of
projective space induced by a basis of global sections of a positive
powers of ample line bundle $\cM$. Let
$\iota\colon\mathbb{A}_{g,1}\rightarrow\IP_{\mathbb{Q}}^m$ be an
immersion, as before Theorem~\ref{ThmBdFinRank}.
By 
\cite[Lemma~4]{Silverman:heightest11}, there exist $c' > 0 $ and $c''
\ge 0$ depending on the immersions such that
$h(\iota(\rho(t))) \le c'
h_{\overline{\mathbb{A}_g},\cM}(t) + c''$ for all $t\in
\mathbb{A}_g(\IQbar)$.
So $h(\iota(\rho(\tau(s)))) \le c'c_1+c''$
is bounded uniformly in the second case.

By fundamental work of Faltings~\cite[$\mathsection 3$ including
 the proof of Lemma~3]{Faltings:ES}, see also
\cite[the remarks below Proposition~V.4.4 and Proposition~V.4.5]{FaltingsChai}, the \textit{stable Faltings height} of
$\mathfrak{A}_{g,\tau(s)}$ is bounded from above in terms of
$c'c_1+c''$ and $g$ only.
The height $h_{\mathrm{DP}}(\mathfrak{A}_{g,\tau(s)})$ used by David and
Philippon is bounded similarly by work of Bost and David,
see~\cite[Corollaire~6.9]{DPvarabII} and \cite{Pazuki:12}. 

% \Pcomment{Reference for this would be nice.} \Ginlinecomment{I'm afraid that we have to repeat the last paragraph of the proof of Thm~1.2 for this.}
% that the
% stable Faltings height of $\mathfrak{A}_{g,\tau(s)}$ is bounded from
% above in terms of $c_1$ and $g$. The height
% $h(\mathfrak{A}_{g,\tau(s)})$ used in David and Philippon's paper is
% thus also bounded in terms of $c_1$, \textit{cf.}
% Corollaire 6.9~\cite{DPvarabII}.
In R\'emond's bound  \cite[page 643]{DPvarabII}
for $\#(C_{F'}(\IQbar)-P_0)\cap \Gamma$,
the base in the exponential depends polynomially on 
$D\max\{1,h_{\mathrm{DP}}(\mathfrak{A}_{g,\tau(s)})\}$,
where $D$ is the degree over
$\IQ$ of a suitable field of definition of $\mathfrak{A}_{g,\tau(s)}$.
As this abelian variety can be defined over $F'$ we may assume $D\le
[F':F]d$ is bounded linearly in $d$.
Recall that $\deg (C_{F'}-P_0)$ is bounded from above uniformly. 
% Keeping the above proof's notation.
So R\'emond's bound implies that  $c(g,d)$
in (\ref{eq:CKexpinrank}) can be chosen to grow at most
polynomially in $d$.

The definition of $h_{\mathrm{DP}}(\mathfrak{A}_{g,\tau(s)})$
involves  theta functions and a different kind of level structure.
Using standard results on heights and by
going down and up in the level structure it is likely that one can
bound $h_{\mathrm{DP}}(\mathfrak{A}_{g,\tau(s)})$ from above directly in terms of
$h_{\overline{\mathbb{A}_g},\cM}(\tau(s))$. For this one would need to
work with a different level $\ell$ in the proof of
Theorem~\ref{ThmBdRatIntro}.

\begin{proof}[Proof of Theorem~\ref{ThmBdFinRank}] 
  We keep the same notation as in the proof of
  Theorem~\ref{ThmBdRatIntro}. So $\ell = \bestlevel$ and
  $\mathbb{A}_g,\overline{\mathbb{A}_g},\cM,$ and
  $h_{\overline{\mathbb{A}_g},\cM}$
  are as in $\mathsection$\ref{subsec:univfamily}.

  Let $C$ be a smooth curve of genus $g\ge 2$ defined over $\IQbar$,
  and let $\Gamma$ be a finite rank subgroup of
  $\mathrm{Jac}(C)(\IQbar)$. Let $P_0 \in C(\IQbar)$.
  
  The curve $C$
  corresponds to a $\IQbar$-point $s_{\mathrm{c}}$ of
  $\mathbb{M}_{g,1}$.% \st{the coarse
%    moduli space of smooth genus $g$ curves without level structure.}
 % \Pinlinecomment{Propose to omit as it repeats what we said just a
  %  few lines further up.}

  The fine moduli space $\mathbb{M}_g$ of smooth genus $g$ curves
  with level-$\ell$-structure is a finite covering of $\mathbb{M}_{g,1}$. So there exists an $s \in \mathbb{M}_g(\IQbar)$ 
  that maps to $s_{\mathrm{c}}$. Thus $C$ is isomorphic, over $\IQbar$, to the fiber $\mathfrak{C}_s$ of the universal family 
  $\mathfrak{C}_g\rightarrow\mathbb{M}_g$. We thus view $\Gamma$ as a
  finite rank subgroup of $\mathrm{Jac}(\mathfrak{C}_s)(\IQbar)$, and
  $P_0 \in \mathfrak{C}_s(\IQbar)$. 
  
  Consider the Abel--Jacobi embedding $C-P_0 \subseteq
  \mathrm{Jac}(C)$. Then $\# (C(\IQbar)-P_0) \cap \Gamma = \#
  (\mathfrak{C}_s(\IQbar) - P_0)\cap \Gamma$. If $h_{\overline{\mathbb{A}_g},\cM}(\tau(s)) \ge c_1$, then $\# (C(\IQbar)-P_0) \cap \Gamma \le c_2^{1+\rho}$ 
  by Proposition~\ref{prop:premazur}. % by taking $\delta = c_1$ and $c = c_2$.

 % \Pcomment{I simplified this a bit because the same argument appears    after the proof of Theorem \ref{ThmBdRatIntro}.}
  Thus it suffices to find a constant $c'_1\ge 0$ that is independent
  of $C$ and such % = c'_1(g,\tau) \ge 0$ such
  that $h(\iota([\mathrm{Jac}(C)])) \ge c_1'$ implies
  $h_{\overline{\mathbb{A}_g},\cM}(\tau(s)) \ge c_1$. 
  
  As after the proof of Theorem~\ref{ThmBdRatIntro}
  denote by $\rho \colon \mathbb{A}_g = \mathbb{A}_{g,\ell}
  \rightarrow \mathbb{A}_{g,1}$ the natural morphism.
  % We assume to have presented immersions of
  % $\overline{\mathbb{A}_g}$ and $\mathbb{A}_{g,1}$
  % into projective spaces, the former induced by a basis of global
  % sections of  a positive power of
  % $\cM$.  
  As in the proof of Theorem~\ref{ThmBdFinRank} we use
  $h(\iota(\rho(t)))\le c'h_{\overline{\IA_{g}},\cM}(t)  + c''$ for all $t\in
  \mathbb{A}_g(\IQbar)$.
  The theorem follows since $\rho(\tau(s)) = [\mathrm{Jac}(C)]$.
  % and as $h(\tau(s))$ is from above (and below)
  % bounded linearly in terms
  % of $h_{\overline{\mathbb{A}_g},\cM}(\tau(s))$.
  % We can write $c' = c'(g,\tau)$ and $c''=c''(g,\tau)$. 
  %
%it suffices to take $c'_1 = (c_1+c'')/c'$.
  %we have $h(\rho(\tau(s))) \le \delta'$ for some $\delta' = \delta'(g)$. But $\rho(\tau(s)) = [\jac{C}]$. Hence $h_{\mathrm{Fal}}(\jac{C}) \le \delta$ for some $\delta = \delta(g)$ by Faltings; see \cite[]{}. Thus the theorem for this case follows immediately from R\'emond's estimate \cite[page 643]{DPvarabII}.    Note that in this case, the constant $c$ in \eqref{eq:CKexpinrank} is polynomial in $d$.
\end{proof}
\begin{remark}
It is possible to prove Theorem~\ref{ThmBdRatIntro} (without the
dependency claims on $c(g,d)$) using
Theorem~\ref{ThmBdFinRank}. Let $C$ be a smooth curve of genus $g \ge
2$ defined over a number field $F \subseteq \IQbar$. Then by taking $\Gamma =
\mathrm{Jac}(C)(F)$ in Theorem~\ref{ThmBdFinRank}, we can conclude
Theorem~\ref{ThmBdRatIntro} if $h(\iota([\mathrm{Jac}(C)])) \ge c_1$. 
%When
%$h(\iota([\mathrm{Jac}(C)])) \le c_1$, then
%$h_{\mathrm{Fal}}(\mathrm{Jac}(C)) \le \delta$ for some $\delta =
%\delta(g,c_1)$ by Faltings \cite[]{}.\Pcomment{Added dependency in
%  $c_1$.} Thus R\'emond's estimate \cite[page 643]{DPvarabII} yields
%the desired bound $c^{1+\mathrm{rank}\mathrm{Jac}(C)}$, and the
%constant $c$ is polynomial in $d$.
%\Pinlinecomment{Polynomial dependency in $d$ is now proved just after
%  the proof of thm 1.1.}\Pinlinecomment{I propose to replace the two
%  final sentences in this
%  remark  by something along the lines of "
  The case $h(\iota([\mathrm{Jac}(C)])) < c_1$ can be 
  handled as in the
  proof of Theorem~\ref{ThmBdRatIntro}, and one can furthermore obtain 
  extra uniformity
  for $c_2$ in Theorem~\ref{ThmBdFinRank} by applying R\'emond's bound
  \cite[page~643]{DPvarabII} as after the proof of Theorem~\ref{ThmBdRatIntro}.%". There's no need to explain the same
%  thing twice.}
\end{remark}

% \begin{proof}[Proof of Corollary~\ref{CorBdAlgTorIntro}]
%%% Requires no proof, as proof is explained in introduction
% \end{proof}

\begin{proof}[Proof of Theorem~\ref{thm:BdTorIntroNF}]
Let $C$ be a smooth curve of genus $g\ge 2$  defined over a number field $F\subset\IQbar$.
  
Apply Theorem~\ref{ThmBdFinRank} to $C_{\IQbar}$, $P_0 \in C(\IQbar)$
and $\Gamma = \jac(C)({\IQbar})_{\mathrm{tors}}$, whose rank is $0$. Then we obtain $c_1 \ge 0$ and $c_2 \ge 1$ such that
  \[
  \#(C(\IQbar) - P_0) \cap \jac(C)({\IQbar})_{\mathrm{tors}} \le c_2
  \]
  if $h(\iota([\mathrm{Jac}(C_\IQbar)])) \ge c_1$.
  
By the Northcott property and Torelli's Theorem, there are up-to
$\IQbar$-isomophism only finitely many $C_{\IQbar}$'s  defined over a number field $F$ with $[F:\IQ] \le d$ such that  $h(\iota([\mathrm{Jac}(C_\IQbar)])) < c_1$. By applying Raynaud's result on the Manin--Mumford Conjecture to each one of these finitely many curves separately, we obtain Theorem~\ref{thm:BdTorIntroNF}.
\end{proof}

%%% Local Variables:
%%% TeX-master: "main"
%%% End:

%% Section ???
%\section{Algebraic torsion points on curves}\label{SectionAlgTor}
%\input{AlgTor.tex}

\appendix
\renewcommand{\thesection}{\Alph{section}}
\setcounter{section}{0}

% %% Appendix A
% \section{Overview on some aspects of Complex Geometry}
% \input{IntOverCompSp.tex}

% %%Appendix B -- TODO: remove later on 
% \section{Positivity in Intersection Theory}\label{app:positivity}
% \input{PositivityIntNb.tex}

%% Appendix C
\section{The Silverman--Tate Theorem revisited}\label{app:silvermantate}
% Appendix A:
%\subsection{Introduction}
%\subsection{Model of an Abelian Scheme}

Our goal in this appendix is to present a treatment of the
Silverman--Tate Theorem, \cite[Theorem~A]{Silverman}, using the language
of Cartier divisors. Using Cartier divisors as opposed to Weil
divisors  allows us to relax the flatness 
hypotheses imposed on $\pi$ in the notation of \cite[$\mathsection$3]{Silverman}. Apart from this minor tweak
we closely follow the original argument presented by Silverman. 

Suppose $S$ is a {regular},
{irreducible}, {quasi-projective} variety over $\IQbar$.
Let $\pi \colon \cA\rightarrow S$ be an abelian scheme.
We write $\eta$ for the generic point of $S$ and $\cA_\eta$ for the
generic fiber of $\pi$. Then $\cA_\eta$ is an abelian variety defined
over $\IQbar(\eta)$. 

Suppose we are presented with a closed immersion
$\cA\rightarrow \IP_{\IQbar}^n\times S$ over $S$ and
 with a projective variety $\overline S$
containing $S$ as a Zariski open and dense subset.
We will assume that $\overline S$ is embedded into $\IP_\IQbar^m$.
We do not assume that $\overline S$ is regular. % Let $\cM$ be a fixed
% ample line bundle on $\overline S$.

We identify $\cA$ with a subvariety of $\IP_\IQbar^n\times S$.
Moreover, let $\overline{\cA}$ denote the Zariski closure of 
$\cA$ in $\IP_S^n=\IP_{\IQbar}^n\times\overline S \subset
\IP_{\IQbar}^n\times\IP_\IQbar^m$. %% We consider $S$
%% as Zariski open in $\overline S$ and $\cA$ as Zariski open in
%% $\overline \cA$.

We set $\overline\cL = \cO(1,1)|_{\overline{\cA}}$ and  $\cL
= \overline\cL|_{\cA}$. We  will  assume in addition that 
$[-1]^*\cL_\eta \cong \cL_\eta$ 
where  $\cL_\eta$ is the restriction of $\cL$ to $\cA_\eta$. 
This implies
$[2]^*\cL_\eta \cong \cL_\eta^{\otimes 4}$.

\newcommand{\hcA}{h}

Given these immersions, we have several height functions.
For
$(P,s)\in \overline{\cA}(\IQbar)\subset \IP^n_\IQbar(\IQbar)\times \IP^m_\IQbar(\IQbar)$ we define
$\hcA(P,s) = h(P)+h(s)$ using the Weil height.
Moreover, for $s\in \overline S(\IQbar)\subset \IP^m_\IQbar(\IQbar)$
we define $h_{\overline S}(s) = h(s)$. 
Finally, 
for all  $P\in \cA(\IQbar)$ we denote by
  \begin{equation*}
    \hat h_{\cA}(P) = \lim_{N\rightarrow \infty}
    \frac{\hcA([N](P))}{N^2}
  \end{equation*}
the N\'eron--Tate height with respect to $\cL$;  it is
well-known that the limit converges, \textit{cf.} the reference around
(\ref{eq:fiberwiseNT}). 
  
We will prove the following variant of the Silverman--Tate Theorem.

\begin{theorem}
  \label{thm:silvermantate}
  There exists a constant $c>0$  such that for all 
  $P \in \cA(\IQbar)$ we have 
  \begin{equation*}
    \bigl| \hat h_{\cA}(P) -
    \hcA(P)\bigr| \le c  \max\{1,h_{\overline
      S}(\pi(P))\}. 
  \end{equation*}
\end{theorem}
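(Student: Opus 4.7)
The plan is to establish the quasi-quadratic transformation formula
\begin{equation*}
  \bigl|\hcA([2]P)-4\hcA(P)\bigr| \le c\max\{1,h_{\overline S}(\pi(P))\}\quad\text{for all}\quad P\in\cA(\IQbar),
\end{equation*}
from which the theorem follows by Tate's standard telescoping. Since $\pi([2^n]P)=\pi(P)$ for every $n$, we have
\begin{equation*}
  \hat h_\cA(P) - \hcA(P) = \sum_{n=0}^\infty 4^{-(n+1)}\bigl(\hcA([2^{n+1}]P)-4\hcA([2^n]P)\bigr),
\end{equation*}
and the transformation formula forces absolute convergence with the required bound.

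To prove the transformation formula the principal nuisance is that $[2]\colon\cA\to\cA$ need not extend to a morphism of the (possibly non-regular) projective closure $\overline\cA$. I circumvent this via the graph construction of $\mathsection$\ref{ss:graph} applied to $X=\cA$ and $N=2$: let $\overline{X_2}\subseteq\IP^n_\IQbar\times\IP^n_\IQbar\times\overline S$ be the Zariski closure of $X_2=\{(P,[2]P):P\in\cA\}$, a projective variety. As explained in the last paragraph of $\mathsection$\ref{ss:graph}, the Weil-height maps $P_2\mapsto h([2]P)$ and $P_2\mapsto h(\pi(P))$ on $X_2(\IQbar)$ represent the Height-Machine heights of the line bundles $\cF=\cO(0,1,1)|_{\overline{X_2}}$ and $\cM=\cO(0,0,1)|_{\overline{X_2}}$, respectively. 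The same reasoning shows that $\cG:=\cO(1,0,1)|_{\overline{X_2}}=\rho_1^*\cO(1,1)|_{\overline{X_2}}$ has $P_2\mapsto h(P)$ as a Height-Machine representative. So the transformation formula amounts to bounding the height attached to $\cF\otimes\cG^{\otimes-4}$ on $\overline{X_2}$ by $c\max\{1,h_{\overline{X_2},\cM}\}$ along $X_2(\IQbar)$.

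On the open subset $X_2\cong\cA$ (identified via $\rho_1$), the line bundle $\cF\otimes\cG^{\otimes-4}$ restricts to $[2]^*\cL\otimes\cL^{\otimes-4}$. The hypothesis $[-1]^*\cL_\eta\cong\cL_\eta$ yields $[2]^*\cL_\eta\cong\cL_\eta^{\otimes 4}$ on the generic fiber, so this restriction is trivial on $\cA_\eta$; \cite[Corollaire~21.4.13]{EGAIV} (cited in Remark~\ref{RemarkHtcompatible}) then gives $[2]^*\cL\otimes\cL^{\otimes-4}\cong\pi^*\cK$ for some $\cK\in\mathrm{Pic}(S)$. Functoriality of the Height Machine along $\pi$ now yields $h_{\overline{X_2},\cF\otimes\cG^{\otimes-4}}(P_2)=h_{S,\cK}(\pi(P))+O(1)$ on $X_2(\IQbar)$, so the whole proof reduces to the base estimate
\begin{equation*}
  |h_{S,\cK}(s)|\le c\max\{1,h_{\overline S}(s)\}\quad\text{on}\quad S(\IQbar).
\end{equation*}

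This last step is the main obstacle and precisely the Cartier-divisor improvement alluded to by the authors. If $\overline S$ were regular it would be immediate: extend $\cK$ to a line bundle on $\overline S$ and invoke the standard domination of any line-bundle height by an ample one. In general, I pass to a resolution of singularities $\widetilde S\to\overline S$ (available in characteristic zero), extend $\cK$ to the regular projective $\widetilde S$, apply the ample-height domination there, and propagate the bound back to $\overline S$ using that the pullback of an ample line bundle on $\overline S$ under the birational morphism $\widetilde S\to\overline S$ is nef and big on $\widetilde S$, together with Height-Machine functoriality for this morphism. Combining with the previous paragraphs yields the transformation formula and hence the theorem.
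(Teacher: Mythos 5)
Your overall architecture (duplication formula plus Tate telescoping, graph construction to extend $[2]$, triviality of $[2]^*\cL\otimes\cL^{\otimes-4}$ on the generic fiber, hence $\cong\pi^*\cK$ by EGA IV 21.4.13) matches the paper's. But there is a genuine gap at the pivotal step: you assert that ``functoriality of the Height Machine along $\pi$'' converts the isomorphism $(\cF\otimes\cG^{\otimes-4})|_{X_2}\cong\pi^*\cK$ into the identity $h_{\overline{X_2},\cF\otimes\cG^{\otimes-4}}=h_{S,\cK}\circ\pi+O(1)$ on $X_2(\IQbar)$. The Height Machine only gives such $O(1)$ identifications for line bundles on \emph{complete} varieties and isomorphisms that hold globally. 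Here the isomorphism holds only on the dense open $X_2\subset\overline{X_2}$; the two bundles (after extending $\cK$ to a compactification of the base) differ on $\overline{X_2}$ by a divisor supported on the boundary $\overline{X_2}\setminus X_2$, and the height attached to such a boundary divisor is in general \emph{unbounded} on the points of the open part (e.g.\ $\cO(H)$ for a boundary hyperplane has the full Weil height as its height function). Controlling this boundary contribution is the entire content of the Silverman--Tate theorem, not a formality.

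The missing argument — which is what the paper's proof actually does — is: after desingularizing the base and the total space, write the discrepancy as a Cartier divisor $D$ whose cycle is supported on $\widetilde\cA\setminus\cA$; observe that the components $Z_i$ of this cycle project into $\overline{S}'\setminus S$; choose effective Cartier divisors $E_i$ on the regular compactified base whose supports contain the images $\widetilde\pi(Z_i)$; and show that $\pm D+\widetilde\pi^*\sum_i|n_i|E_i$ is effective for both signs. Effectivity bounds the corresponding heights from below off the supports, and since the $E_i$ are pulled back from the base their heights are $O(\max\{1,h_{\overline S}(\pi(P))\})$; a Noetherian induction then removes the points lying over $\mathrm{Supp}(E_i)$. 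Your final paragraph instead spends its effort on extending $\cK$ across a resolution of $\overline S$ and dominating by an ample height on the base — that part is comparatively routine and is not where the difficulty sits. As written, the proposal does not prove the transformation formula.
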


The constant $c$ depends on $\cA$ and on the various immersions but not on $P$. 
The proof is distributed over  the  next subsections.

\subsection{Extending multiplication-by-$2$}
We keep the notation from the  previous subsection.
We have  constructed a (very naive) projective model $\overline\cA$ of $\cA$.
Note that $\overline\cA$ and $\overline S$ may fail to be regular.
Moreover, the natural morphism
$\overline\cA\rightarrow \overline S$, which we also denote by $\pi$,  may fail to be smooth or even
flat.

Multiplication-by-$2$ is a morphism $[2]\colon \cA\rightarrow \cA$
that extends to a rational map
$\overline{\cA}\dashrightarrow\overline{\cA}$.
We consider the graph of $[2]$ on $\cA$ as a subvariety of
$\cA\times_S\cA$.
Let $\overline{\cA}'$ be the Zariski closure of this graph inside
$\overline{\cA}\times_{\overline S}\overline{\cA}$. 
 Write $\rho\colon
\overline{\cA}'\rightarrow\overline{\cA}$ for the restriction of the projection onto the
first factor and  $[2]$ for the restriction onto the second factor.
We may identify $\cA$ with a Zariski open subset of
$\overline{\cA}'$. Under this identification, $\rho$ restricts to the
identity on $\cA$ and $[2]$ restricts to multiplication-by-$2$ on
$\cA$. 
%% There is a blowup
%% $\overline{\cA}' \rightarrow \overline{\cA}$ with center outside $\cA$
%% such that $[2]$ extends to a morphism $[2] \colon \overline{\cA}'
%% \rightarrow \overline{\cA}$.

% By composing we get a morphism $[2] \colon
% \widetilde{\cA} \rightarrow \overline {\cA}'$ whose restriction to $\cA$ is
% multiplication-by-$2$.
The following diagram  commutes
\begin{equation*}
  \xymatrix{
%&    &  \widetilde{\cA} \ar[d]_{\beta} & \\
 & \overline{\cA} \ar@{}[d]|-*[@]{\supset}      &\ar[l]_\rho \overline{\cA}' \ar@{}[d]|-*[@]{\supset}   \ar[r]^{[2]} & \overline{\cA}\ar@{}[d]|-*[@]{\supset} \\
& \cA \ar@{=}[r]  & {\cA}      \ar[r]^{[2]} &{\cA} 
  }  
\end{equation*}
where the first and third inclusions are equal and the middle one
comes from the identification involved in the graph construction.

\subsection{Proof of the Silverman--Tate Theorem}

% Let $K=\IQbar$  and suppose $S$ is a regular, 
% irreducible, quasi-projective variety over $K$. Let $\pi\colon \cA\rightarrow S$
% be an abelian scheme.
We keep the notation from the  previous subsection.

\begin{proposition}
  There exists a constant $c_1>0$ such that 
  \begin{equation}
    \label{eq:duplicationbound}
\left|   \hcA([2](P)) - 4
   \hcA(P)\right|  \le c_1  \max\{1,h_{\overline
   S}(\pi(P))\}
  \end{equation}
  holds for all $P\in \cA(\IQbar)$. 
\end{proposition}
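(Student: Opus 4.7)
The plan is a Height-Machine argument on the graph variety $\overline{\cA}'$ that exploits the triviality of a well-chosen line bundle on the generic fiber. For a point $P \in \cA(\IQbar)$, I introduce $P' = (P, [2]P) \in \overline{\cA}'(\IQbar)$, the unique lift along the isomorphism $\rho^{-1}\colon \cA \xrightarrow{\sim} \rho^{-1}(\cA) \subset \overline{\cA}'$. By the very definition of $\hcA$ as the sum of projective Weil heights pulled back from $\IP^n \times \overline{S}$, the function $P' \mapsto \hcA(\rho(P'))$ is a concrete representative of the Weil-Height-Machine height attached to $\rho^*\overline{\cL}$, and $P' \mapsto \hcA([2](\rho(P')))$ is such a representative for $[2]^*\overline{\cL}$ on $\overline{\cA}'$, where $[2]\colon \overline{\cA}' \to \overline{\cA}$ is the regular morphism arising from the graph construction. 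Hence $\hcA([2]P) - 4\hcA(P)$, evaluated at $P'$, agrees up to an absolute $O(1)$ with a height function associated to
\[
  \mathcal{Q} \;:=\; [2]^*\overline{\cL} \otimes \rho^*\overline{\cL}^{\otimes -4}.
\]
The assumption $[-1]^*\cL_\eta \cong \cL_\eta$, together with the theorem of the cube on the abelian variety $\cA_\eta$, forces $[2]^*\cL_\eta \cong \cL_\eta^{\otimes 4}$, so the restriction of $\mathcal{Q}$ to the generic fiber of $\pi\colon \overline{\cA}' \to \overline{S}$ is trivial.

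To exploit this triviality, I would produce finitely many nonzero rational sections $\sigma_1, \ldots, \sigma_r$ of $\mathcal{Q}$ whose Cartier divisors $\mathrm{div}(\sigma_i) = E_i^+ - E_i^-$ are all vertical --- each of $E_i^\pm$ is an effective Cartier divisor on $\overline{\cA}'$ whose support is contained in $\pi^{-1}(T_i)$ for some proper Zariski closed $T_i \subsetneq \overline{S}$ --- and such that for every $P' \in \cA(\IQbar)$ there exists $i$ for which $\sigma_i$ is regular and non-vanishing at $P'$. The existence of such a family follows from quasi-compactness of a trivializing open cover of $\mathcal{Q}|_{\cA}$: pick any trivialization on a nonempty open $U_\eta \subset \cA_\eta$, spread it out over $\overline{S}$, and iterate on the finitely many bad fibers. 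Next, for each $i$ I would find an integer $k$ and effective Cartier divisors $D_i^\pm$ on $\overline{S}$ with $E_i^\pm \le k\, \pi^* D_i^\pm$ as Cartier divisors on $\overline{\cA}'$; this reduces to the inclusion of supports $\mathrm{Supp}(E_i^\pm) \subset \pi^{-1}(\mathrm{Supp}(D_i^\pm))$ together with a comparison of multiplicities along prime divisors, and is precisely the step that proceeds smoothly in the Cartier framework without any flatness or regularity hypothesis on $\pi$.

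Granting the above setup, for each $P \in \cA(\IQbar)$ I choose an index $i$ for which $\sigma_i$ is regular and non-vanishing at $P'$. The Height Machine applied to the isomorphism $\mathcal{Q} \cong \cO_{\overline{\cA}'}(\mathrm{div}(\sigma_i))$ and to the divisor inequalities $E_i^\pm \le k\, \pi^* D_i^\pm$ yields
\[
  |h_{\mathcal{Q}}(P')| \;\le\; h_{E_i^+}(P') + h_{E_i^-}(P') + O(1) \;\le\; k\bigl(h_{D_i^+}(\pi(P)) + h_{D_i^-}(\pi(P))\bigr) + O(1),
\]
and a Height-Machine comparison of $D_i^\pm$ against a positive power of $\cO_{\overline{S}}(1)$ bounds the right-hand side by $c_1 \max\{1,h_{\overline{S}}(\pi(P))\}$, giving (A.1). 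The main obstacle is the finite trivializing cover $\{\sigma_i\}$ of the second paragraph: while a standard step over regular or flat bases, executing it under our weaker hypotheses requires the careful Cartier-divisor formulation and is exactly the point at which the present treatment diverges from Silverman's original argument. Once (A.1) is established, Theorem~\ref{thm:silvermantate} follows at once from Tate's telescoping $\hat{h}_{\cA}(P) - \hcA(P) = \sum_{j \ge 0} 4^{-(j+1)} \bigl(\hcA([2^{j+1}]P) - 4\hcA([2^j]P)\bigr)$, using $\pi \circ [2^j] = \pi$ to absorb every summand into $c_1\, 4^{-(j+1)} \max\{1, h_{\overline{S}}(\pi(P))\}$.
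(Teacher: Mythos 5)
Your overall skeleton coincides with the paper's: the graph model $\overline{\cA}'$, the line bundle $\cF'=[2]^*\overline{\cL}\otimes\rho^*\overline{\cL}^{\otimes(-4)}$ (your $\mathcal{Q}$), its triviality on the generic fiber, domination of the relevant divisor by pullbacks of effective divisors from the compactified base, the Height Machine, a finite covering argument by noetherian induction to reach every point of $\cA(\IQbar)$, and the same telescoping deduction of Theorem~\ref{thm:silvermantate}. Your variant of working directly with rational sections of $\cF'$ having vertical divisors, instead of introducing the line bundle on $S$ with $\pi^*\cM\cong\cF'|_{\cA}$ and extending it over a resolved base, is plausible over the interior: $\cA$ itself is regular (smooth over the regular $S$), vertical divisors there are pullbacks from $S$, and moving the representative divisor of the corresponding class on the quasi-projective $S$ does produce the finite covering family you want.

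The genuine gap is the step you claim "proceeds smoothly in the Cartier framework without any flatness or regularity hypothesis." The compactification $\overline{\cA}'$ is in general neither regular nor normal. On such a variety, (i) the divisor of your rational section need not decompose as a difference $E_i^{+}-E_i^{-}$ of \emph{effective Cartier} divisors with vertical support (the positive and negative parts are a priori only Weil cycles, and the local principal decompositions do not glue), and (ii) even granting such a decomposition, the inequality $E_i^{\pm}\le k\,\pi^*D_i^{\pm}$ cannot be deduced from an inclusion of supports plus a comparison of multiplicities along prime divisors: that is a statement about the associated cycles, and on a non-normal variety effectivity of the cycle of $k\,\pi^*D_i^{\pm}-E_i^{\pm}$ does not imply that this Cartier divisor is effective, which is what the Height Machine needs (a nonzero global section of the associated line bundle, so that the corresponding height is bounded below off its support). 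This is precisely why the paper resolves singularities twice, of the compactified base (to extend the line bundle $\cM$ across the boundary) and of the total space (so that one can pass between Weil and Cartier divisors and convert cycle-effectivity into Cartier-effectivity), before pulling back the hypersurfaces $E_i$ and applying the Height Machine; the Cartier formalism removes the need for flatness of $\pi$ when pulling back divisors from the base, but it does not remove the need for normality of the total space in the effectivity step. Your argument could be repaired by pulling everything back to the normalization (or a resolution) of $\overline{\cA}'$ and pushing the height comparison back down, but as written the key inequality \eqref{eq:duplicationbound} is not justified.
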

\begin{proof}
  We define
  \begin{equation}
  \label{def:cF2}
  \cF' =  [2]^* \overline{\cL} \otimes
  \rho^*\overline{\cL}^{\otimes(-4)} \in \mathrm{Pic}(\overline{\cA}'). 
\end{equation}
  
Recall that we have identified $\cA$ with a Zariski open subset of
$\overline{\cA}'$. 
The  restriction of $[2]^*\overline{\cL}$ to the generic fiber
$\cA_\eta\subset\cA\subset\overline{\cA}'$
coincides with $[2]^*\cL_\eta$ and the restriction of $\rho^*\overline{\cL}$  to $\cA_\eta$
is identified with $\cL_\eta$. Using our assumption
 $[2]^*\cL_\eta \cong
\cL_\eta^{\otimes 4}$ on the generic fiber $\cA_\eta$ we see that  $\cF'$ is trivial on $\cA_\eta$.

By \cite[Corollaire~21.4.13 (pp.~361 of EGA IV-4, in Errata et Addenda, liste~3)]{EGAIV} %% (Errata et appenda)
applied to
$\cA\rightarrow S$ there exists a line
bundle $\cM$ on $S$ such that $\pi|_{\cA}^*\cM \cong \cF'|_{\cA}$.

Let us first desingularize the compactified base $\overline S$ by applying
Hironaka's Theorem. Thus there is a proper, birational morphism
$b\colon \overline S'\rightarrow\overline S$ that is an isomorphism
above $S$
 such that  $\overline S'$
is regular. We consider $S$ as Zariski open in $\overline S'$.  Note that $b$ is even
projective and $\overline S'$ is integral. So $\overline S'$ is an
irreducible, regular, projective variety. 

Now consider the base change $\overline
\cA'\times_{\overline S}\overline S'$. This new scheme
may  fail to be irreducible or even
reduced. However, recall that $b$ is an isomorphism above the regular $S\subset
\overline S$. So $(\overline
\cA'\times_{\overline S}\overline S')_S =
%\overline{\cA}'\times_{\overline{S}'}S=
\overline{\cA}'\times_{\overline S}S$  is isomorphic to $\cA$ and thus
integral. We may consider $\cA$ as an open subscheme of
$\overline
\cA'\times_{\overline S}\overline S'$. It must be contained in an irreducible component of
$\overline
\cA'\times_{\overline S}\overline S'$. We endow this irreducible component with the
reduced induced structure and obtain an integral, closed subscheme
$\overline{\overline{\cA}}\subset \overline
\cA'\times_{\overline S}\overline S'$. We get a commutative diagram
\begin{equation*}
  \xymatrix{
\cA\ar@{}[r]|-*[@]{\subset}\ar[d]_\pi &
 \overline{\overline{\cA}} \ar[r]^f \ar[d]_{\overline\pi} & \overline{\cA}' \ar[d]_{\pi\circ\rho}&\\
S \ar@{}[r]|-*[@]{\subset} &\overline{S}' \ar[r] &      \overline{S} &
  }  
\end{equation*}
The horizontal  morphisms compose to the identity on the domain. % We write $f\colon \overline{\overline\cA}\rightarrow \overline \cA'$ for
% the natural morphism. 
% Its image in
% $\overline{S}'$ contains the open subset $\beta_1^{-1}(S)$. As $\overline S'$
% is irreducible, we find by properness that
% $\overline\pi\colon\overline{\overline{\cA}}\rightarrow\overline S'$ is surjective, it is
% also projective. 

We consider $S$ as a Zariski open subset of $\overline S'$. 
As $\overline S'$ is regular, we can extend $\cM$ to a line bundle on
the regular $\overline S'$, \textit{cf.} \cite[Corollary~11.41]{GoertzWedhorn}. % G\"ortz--Wedhorn.
The pull-back $f^*\cF'\otimes\overline\pi^*\cM^{\otimes(-1)}$ is
trivial on $\cA\subset\overline{\overline{\cA}}$. 

By Hironaka's Theorem there is a proper, birational morphism $\beta\colon\widetilde
{\cA}\rightarrow\overline{\overline\cA}$ that is an isomorphism above $\cA$
(which is regular) 
such that $\widetilde{\cA}$ is regular. We may identify $\cA$ with a Zariski open subset of
$\widetilde \cA$.
%But it may not be trivial on $\widetilde \cA$. 

Now we pull everything back to the regular  $\widetilde\cA$.
More precisely, we set $\cF = \beta^*f^*\cF'$.
% We consider $\cA$ as a Zariski open subset of $\widetilde\cA$.
Then
$\cF\otimes\beta^*\overline\pi^*\cM^{\otimes(-1)}$ is trivial when restricted to
$\cA$. 

To a Cartier divisor $D$ we attach its line bundle $\cO(D)$.
As $\widetilde \cA$ is integral we may fix a Cartier divisor $D$ on
$\widetilde \cA$ with $\cO(D) \cong \cF\otimes \beta^*\overline\pi^*
\cM^{\otimes(-1)}$. Let $\mathrm{cyc}(D)$ denote the Weil divisor of $\widetilde\cA$
attached to $D$. The linear equivalence class of $\mathrm{cyc}(D)$
restricted to $\cA$ is trivial. By \cite[Proposition~11.40]{GoertzWedhorn}
%% \footnote{This proposition is false as stated, one
%%   needs \textit{e.g.} in addition that $X=\widetilde{\cA}$ is reduced. Recall
%%   that the  blowup of a reduced scheme is reduced.}
$\mathrm{cyc}(D)$ is linearly equivalent to a Weil divisor $\sum_{i=1}^r n_i
Z_i$ with  $Z_i \subset\widetilde{\cA}\ssm \cA$ irreducible and of
codimension $1$ in $\widetilde{\cA}$. 

We let $\widetilde\pi$ denote the composition
$\widetilde\cA\rightarrow \overline{\overline\cA}\rightarrow \overline{S}'$. 
Let us consider $\widetilde\pi(Z_i) = Y_i$. As $\widetilde\pi$ is proper, each $Y_i$ is
an irreducible closed subvariety of $\overline S'$. Moreover, $Y_i
\subset \widetilde\pi(\widetilde{\cA}\ssm \cA) \subset \overline S'\ssm S$. So $Y_i$ has
dimension at most $\dim \overline S' - 1$. But $Y_i$ could have
codimension at least $2$ and thus fail to be the support of a Weil divisor.
On the regular $\overline S'$ a 
Cartier divisor is the same thing as a Weil divisor;
see \cite[Theorem~11.38(2)]{GoertzWedhorn}.
For each $i$ we fix a Cartier divisor  $E_i$ of $\overline S'$
such that   $\mathrm{cyc}(E_i)$ equals a prime Weil divisor supported on an irreducible
subvariety containing $Y_i$.
 Since $\mathrm{cyc}(E_i)$ is effective, we find that $E_i$ is effective, see \cite[Theorem~11.38(1)]{GoertzWedhorn}
 and its proof. 
An effective Cartier divisor and its image under the cycle map
$\mathrm{cyc}(\cdot)$ have equal support. So the subscheme of $\overline S'$ attached to  $E_i$
contains $Y_i$. 

The pull-back $\widetilde\pi^*E_i$ is well-defined as a Cartier
divisor, we do not require that $\pi$ is flat, \textit{cf.}
\cite[Proposition~11.48(b)]{GoertzWedhorn}. 
By \cite[Corollary~11.49]{GoertzWedhorn} the inverse image
$\widetilde\pi^{-1}(E_i)$, taken as  a subscheme of $\widetilde{\cA}$
is the subscheme attached to  $\widetilde\pi^*E_i$ and $\widetilde\pi^*E_i$ is
effective. 

Note that $\widetilde\pi^{-1}(E_i)\supset \widetilde\pi^{-1}(Y_i)\supset Z_i$. 
The support satisfies $\mathrm{Supp}(\widetilde\pi^* E_i)\supset Z_i$.
Moreover, as $\widetilde\pi^*E_i$ is effective,  $\mathrm{cyc}(\widetilde\pi^* E_i)$ is effective and
 $\mathrm{Supp} (\mathrm{cyc}(\widetilde\pi^* E_i)) =
\mathrm{Supp}(\widetilde\pi^*E_i)$.
%% The latter requires Exercise 11.16 in GW, no regularity
%% condition on \widetilde\cA is needed.
Thus
%$\mathrm{cyc}(\pi^* E_i) \ge Z_i$. Then
\begin{equation*}
\pm   \sum_{i=1}^r n_i Z_i \le \mathrm{cyc}\left(\widetilde\pi^*\sum_{i=1}^r |n_i|
  E_i\right).
\end{equation*}

Recall that $\mathrm{cyc}(D) = \mathrm{cyc}(\mathrm{div}\phi) +
\sum_{i=1}^r n_i Z_i$ for some rational function $\phi$ on
$\widetilde{\cA}$. Therefore,
%% \begin{equation*}
%%   \pm \mathrm{cyc}(D - \mathrm{div}(f)) \le
%%   \mathrm{cyc}\left(\widetilde\pi^*\sum_{i=1}^r |n_i| E_i\right).
%% \end{equation*} 
% Thus the support of $\mathrm{cyc}(D-\mathrm{div}(f))$ is contained in
% the support of $  \mathrm{cyc}\left(\pi^*\sum_{i=1}^r |n_i|
%   E_i\right)$. 
%% Thus
\begin{equation*}
%  \pm \mathrm{cyc}(D - \mathrm{div}(f)) \le
0\le   \mathrm{cyc}\left(\pm (D-\mathrm{div}\phi) + \widetilde\pi^*\sum_{i=1}^r |n_i| E_i\right).
\end{equation*}

Since $\widetilde{\cA}$ is regular and in particular normal, we find  that 
\begin{equation}
\label{eq:thisiseffective}\pm(D - \mathrm{div}\phi) + \widetilde\pi^*\sum_{i=1}^r |n_i|E_i
\end{equation}
is an effective Cartier divisor for both  signs;
see \cite[Theorem~11.38(1)]{GoertzWedhorn} and its proof.
%% \footnote{I think this is
%%   the only point where we require some regularity property of
%%   $\widetilde\cA$. So perhaps it suffices to normalize $\overline{\overline\cA}$.}
Moreover, its support equals the support of 
$$0\le \mathrm{cyc}\left(\pm (D-\mathrm{div}\phi) + \widetilde\pi^*\sum_{i=1}^r |n_i|
  E_i\right) =
\pm \mathrm{cyc}(D-\mathrm{div}\phi) + \sum_{i=1}^r |n_i|\mathrm{cyc}(\widetilde\pi^*E_i).$$
Thus the support of (\ref{eq:thisiseffective}) lies in
$\bigcup_{i=1}^r \mathrm{Supp}(\widetilde\pi^* E_i)$.

We apply $\cO(\cdot)$ and pass  again  to line bundles.
Let us denote $\mathcal{E} = \mathcal{O}(\sum_{i=1}^r |n_i| E_i)$, a
line bundle on $\overline S'$. The line bundle attached to 
 (\ref{eq:thisiseffective}) is $(\mathcal{F}\otimes\beta^*\overline\pi^*\mathcal{M}^{\otimes(-1)} 
 )^{\otimes(\pm 1)}\otimes\widetilde\pi^*\mathcal{E}$.
 Since (\ref{eq:thisiseffective}) is effective, both
$(\mathcal{F}\otimes\beta^*\overline\pi^*\mathcal{M}^{\otimes(-1)} 
 )^{\otimes(\pm 1)}\otimes \widetilde\pi^*\mathcal{E}$
% $\pi^*(\cE\otimes\cM^{\otimes (\mp 1)})\otimes\cF^{\otimes(\pm 1)}$ 
have a non-zero global
 section.

By the Height Machine this translates to 
 \begin{equation*}
   h_{\widetilde \cA,(\mathcal{F}\otimes\beta^*\overline\pi^*\mathcal{M}^{\otimes(-1)} 
 )^{\otimes(\pm 1)}\otimes \widetilde\pi^*\mathcal{E}}(P)\ge O(1)
 \end{equation*}
 for all $\widetilde P\in \widetilde\cA(\IQbar)$ 
with $\widetilde\pi(\widetilde P)\not\in\bigcup_i \mathrm{Supp}(E_i)$. 
% where $U = \widetilde\cA\ssm \bigcup_i
 % \mathrm{Supp}(\pi^* E_i)$. 
By functoriality properties of  the Height Machine we obtain 
 \begin{equation*}
  |h_{\overline \cA',\cF'}(f(\beta(\widetilde P)))| \le h_{\overline S',\cE}(\widetilde\pi(\widetilde P)) + |h_{\overline
    S', \cM}(\overline{\pi}(\beta(\widetilde P)))|
  +O(1)
 \end{equation*}
 for the same $\widetilde P$. 
 We recall (\ref{def:cF2}) and again use the Height Machine to find
 \begin{equation*}
\left|   \hcA([2](P')) - 4
   \hcA(\rho(P'))\right|   \le h_{\overline
   S',\cE}(\widetilde\pi(\widetilde P)) + |h_{\overline S',
     \cM}(\widetilde\pi(\widetilde P))|
  +O(1)
 \end{equation*}
where $P'=f(\beta(\widetilde P))$. Observe that all points of $\cA(\IQbar)$ are
in the image of $f\circ \beta$. 

We recall that the desingularization morphism $\overline S'\rightarrow\overline S$ is an
isomorphism above $S$ and that we have identified $\cA$ with a Zariski
open subset of $\overline{\cA}'$ and of $\overline{\cA}$.
Under these identifications and if $P'$ corresponds to $P\in
\cA(\IQbar)$, then $[2](P')$ is the duplicate of $P$,
$\rho(P')= P$, and $\widetilde\pi(\widetilde P) = \pi(\rho(P'))=\pi(P)$.  We apply the Height
 Machine a final time and use that $h_{\overline S}$ arises from the
 Weil height restricted to $\overline{S}(\IQbar)$. We
 find 
 \begin{equation*}
%\label{eq:preliminary1}
\left|   \hcA([2](P)) - 4
   \hcA(P)\right|   \le c_1
 \max\{1,h_{\overline S}(\pi(P))\}
 \end{equation*}
 for all $P\in \cA(\IQbar)$   with  $\widetilde\pi(\widetilde
 P)\not\in \bigcup_i \mathrm{Supp}(E_i)$, under the identifications
 above. 

Let $P\in\cA(\IQbar)$. As the $Y_i$ lie in $\widetilde\pi(\widetilde\cA\ssm
\cA)$ we can choose all $E_i$ above to avoid $\pi(P)$.  After doing this finitely often (using
noetherian induction) and replacing the $E_i$ from before and
adjusting $c_1$, we find 
\begin{equation*}
\left|   \hcA([2](P)) - 4
   \hcA(P)\right|  \le c_1 \max\{1,h_{\overline S}(\pi(P))\}
\end{equation*}
for all $P\in \cA(\IQbar)$ where $c_1>0$ is independent of $P$.
\end{proof}

\begin{proof}[Proof of Theorem~\ref{thm:silvermantate}]
  Having (\ref{eq:duplicationbound}) at our disposal the proof follows
  a well-known argument.   
  Indeed, say $l\ge k\ge 0$ are integers.
  Then applying the triangle inequality to the appropriate telescoping
  sum yields
  \begin{alignat*}1
    \left|\frac{\hcA([2^l](P))}{4^l}- \frac{\hcA([2^k](P))}{4^k}\right|
    &\le \sum_{m=k}^{l-1}
        \left|\frac{\hcA([2^{m+1}](P))}{4^{m+1}}-
        \frac{\hcA([2^m](P))}{4^m}\right|\\
        &\le \sum_{m=k}^{l-1}
        4^{-(m+1)}    \left|{\hcA([2^{m+1}](P))}- 4{\hcA([2^m](P))}\right|.
  \end{alignat*}
  We apply (\ref{eq:duplicationbound}) to $[2^m](P)$ and find that the
  sum is bounded by $c_1 x \sum_{m=k}^{l-1}4^{-(m+1)} \le c_1 x 4^{-k}$
  where $x = \max\{1,h_{\overline S}(\pi(P))\}$.
  So $\left(\hcA([2^l](P))/4^l\right)_{l\ge 1}$ is a Cauchy sequence
  with limit
  $\hat h_{\overline\cA}(P)$. 
  Taking $k=0$ and $l\rightarrow \infty$ we obtain from the estimates
  above that
  $|\hat h_{\overline\cA}(P) - \hcA(P)|\le c_1x$, as
  desired.   
\end{proof}
% \bibliographystyle{alpha}
% \bibliography{literature}

% \end{document}

%%% Local Variables:
%%% TeX-master: "main"
%%% End:

%% Appendix D
\section{Full version of Theorem~\ref{ThmHtInequality}}\label{AppHtIneqApp}
%% Section 5: N\'{e}ron-Tate height and height on the base

The goal of this section is to prove the full version of
Theorem~\ref{ThmHtInequality}, \textit{i.e.}, without assuming {\tt
(Hyp)}. Let $S$ be an irreducible quasi-projective variety defined over $\IQbar$ and let $\pi\colon\cA\rightarrow S$ be an abelian scheme of relative dimension $g \ge 1$.

Let $\cL$ be a relative ample line bundle on $\cA \rightarrow S$ with $[-1]^*\cL=\cL$, and
let $\cM$ be an ample line bundle on a compactification $\overline{S}$
of $S$. All data above are assumed to be defined over $\IQbar$. Set
$\hat{h}_{\cA,\cL} \colon \cA(\IQbar) \rightarrow \IR$ to be the
fiberwise N\'{e}ron--Tate height $\hat{h}_{\cA,\cL}(P) =
\hat{h}_{\cA_s,\cL_s}(P)$ with $s = \pi(P)$, and $h_{\overline{S},\cM}
\colon \overline{S}(\IQbar) \rightarrow \IR$ to be a representative of
 the height provided by the Height Machine; \textit{cf.} \cite[Chapter~2 and 9]{BG}.

The main result of this appendix is the following theorem.

\begin{theorem}\label{ThmHtInequalityFull}
Let $X$ be an irreducible subvariety of $\cA$
defined over $\IQbar$.   Suppose $X$ is non-degenerate,
as defined in Definition~\ref{DefinitionNonDegenerateApp}. Then there
exist constants $c_1>0$ and $c_2\ge 0$ and a Zariski open dense subset
$U$ of $X$ with %and a constant $c'$ such that
\begin{equation}\label{EqHtInequalityFull}
\hat{h}_{\cA,\cL}(P) \ge c_1 h_{\overline{S},\cM}( \pi(P) ) - c_2 \quad \text{for all}\quad P \in U(\IQbar).
\end{equation}
\end{theorem}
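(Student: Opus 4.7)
The plan is to reduce Theorem~\ref{ThmHtInequalityFull} to Theorem~\ref{ThmHtInequality} by embedding $\cA\to S$ into an auxiliary abelian scheme that, after a finite \'etale base change of $S$, satisfies \texttt{(Hyp)}. After standard preliminaries (replacing $S$ by the smooth locus of a desingularization of the Zariski closure of $\pi(X)$ so that $X$ dominates $S$, and fixing compatible compactifications and immersions), two ingredients will supply the missing data: Zarhin's trick for the principal polarization, and the finite \'etale cover given by the moduli of symplectic level-$\ell$-structures.

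Concretely, I will set $\cB:=(\cA\times_S \cA^\vee)^{\times 4}\to S$, which by Zarhin's trick carries a canonical principal polarization constructed fiberwise, together with the natural closed immersion $\iota\colon \cA\hookrightarrow \cB$ over $S$ sending $P$ to $((P,0),0,0,0)$. Fixing $\ell\ge\bestlevel$, the $S$-scheme parameterizing symplectic level-$\ell$-structures on $\cB[\ell]$ is finite \'etale; choosing an irreducible component yields a surjective, finite, \'etale morphism $\nu\colon S'\to S$ with $S'$ irreducible, regular, and quasi-projective, such that $\cB_{S'}:=\cB\times_S S'$ satisfies \texttt{(Hyp)}. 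Writing $\cA_{S'}:=\cA\times_S S'$ and letting $X'$ be an irreducible component of $X\times_S S'$, the morphism $X'\to X$ is finite, \'etale and surjective, so $\dim X'=\dim X$.

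The image $Y:=\iota_{S'}(X')\subseteq \cB_{S'}$ will then be claimed to be non-degenerate of dimension $\dim X$. Indeed, the Betti map of $\cB_{S'}\to S'$ splits as a product of eight Betti maps, one for each factor $\cA_{S'}$ or $\cA_{S'}^\vee$; restricted along $\iota_{S'}$, seven of these components are identically zero on $Y$, while the remaining one coincides with the Betti map of $\cA_{S'}$ restricted to $X'$, itself the pullback along the \'etale morphism $\cA_{S'}\to \cA$ of the Betti map of $\cA$ restricted to $X$. Since \'etale morphisms induce isomorphisms on analytic tangent spaces, at corresponding points
\begin{equation*}
\mathrm{rank}_{\IR}\,(\mathrm{d}b_{\Delta'}|_{\sman{Y}})_{y}
=\mathrm{rank}_{\IR}\,(\mathrm{d}b_{\Delta}|_{\sman{X}})_{x},
\end{equation*}
so the non-degeneracy of $X$ transfers to $Y$.

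Finally, I would apply Theorem~\ref{ThmHtInequality} to $Y\subseteq \cB_{S'}$ to obtain, on a Zariski open dense subset $V\subseteq Y$, an inequality $\hat{h}_{\cB_{S'},\cL_{\cB}}(Q)\ge c_1'\, h_{\overline{S'},\cM'}(\pi'(Q))-c_2'$, and transfer it back to $\cA$ by two standard comparisons: (i) the line bundle $\iota^*\cL_{\cB}$ restricts on each fiber of $\cA\to S$ to a symmetric ample line bundle, so its associated fiberwise N\'eron--Tate height is uniformly comparable to $\hat{h}_{\cA,\cL}$ up to a positive multiplicative constant and a Silverman--Tate-type error controlled by $h_{\overline{S},\cM}(\pi(\cdot))$ via Theorem~\ref{thm:silvermantate}; (ii) choosing $\cM'$ to dominate $\nu^*\cM$ on a projective compactification $\overline{S'}\to\overline{S}$, the Height Machine gives $h_{\overline{S'},\cM'}(s')=h_{\overline{S},\cM}(\nu(s'))+O(1)$. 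Pushing the resulting inequality on $X'$ down along the finite surjection $X'\to X$ then yields \eqref{EqHtInequalityFull}. The main technical point to verify carefully will be the preservation of non-degeneracy under $\iota_{S'}$ and \'etale base change; the product and pullback structure of the Betti map reduces this to a direct check, and the remaining height comparisons are routine applications of the Height Machine and the Silverman--Tate Theorem.
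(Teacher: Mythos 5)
Your proposal is correct in outline but takes a genuinely different route from the paper at the crucial step of arranging \texttt{(Hyp)}. The paper reduces to the principally polarized case via Mumford's theorem that every abelian variety is isogenous to a principally polarized one: it spreads that isogeny out over a quasi-finite \'etale cover of $S$ and must then show that non-degeneracy passes through an isogeny $\lambda\colon\cA_0\to\cA$, which it does by analyzing the induced map $\lambda_{\mathbb T}$ on the Betti tori (locally constant kernel, local homeomorphism). You instead embed $\cA$ into $\cB=(\cA\times_S\cA^\vee)^{\times 4}$, principally polarized by the relative Zarhin trick, and only base-change for the level structure. Your non-degeneracy transfer is cleaner --- a Betti map of a fibered product may be taken to be the product of Betti maps of the factors, so the rank on $\iota_{S'}(X')$ equals the rank on $X'$, hence on $X$ after the \'etale base change --- but you pay elsewhere: (a) you need the dual abelian scheme and the relative form of Zarhin's trick, both fine over the regular base obtained after your preliminary reductions but inputs the paper avoids; and (b) whereas the paper's isogeny gives $\hat h_{\cA_0,\lambda^*\cL}=\hat h_{\cA,\cL}\circ\lambda$ exactly by functoriality, you must compare the N\'eron--Tate heights of two different symmetric relatively ample bundles, $\cL$ and $\iota^*\cL_{\cB}$, uniformly in the fiber. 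This is the loosest point of your write-up: no Silverman--Tate-type additive error is needed there, since both fiberwise heights are non-negative quadratic forms; the comparison is purely multiplicative, with the uniform constant obtained by choosing $m$ so that $\cL|_A^{\otimes m}\otimes(\iota^*\cL_{\cB})|_A^{\otimes -1}$ is ample and symmetric on the generic fiber and spreading this out over a dense open subset of $S$ --- which is permitted because the conclusion is only asserted on a dense open subset of $X$. With that step written out, and with the preliminary reductions (closedness of $X$, dominance over $S$, regularity of $S$) done as in the paper, your argument goes through and yields the same statement.
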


Compared to Theorem~\ref{ThmHtInequality}, $\cA \rightarrow S$ is no longer required to satisfy {\tt (Hyp)}. Other minor improvements are that $S$ is not required to be regular and $X$ is not required to be closed.

In Definition~\ref{DefinitionNonDegenerate}, we defined non-degenerate subvarieties using the generic rank of the Betti map if $\cA \rightarrow S$ satisfies {\tt (Hyp)}. For an arbitrary $\cA \rightarrow S$, the definition is similar. But we need to first of all extend our construction of the Betti map, Proposition~\ref{PropBettiMap}, to an arbitrary $\cA \rightarrow S$.

\subsection{Betti map}
In this subsection, we extend the construction of the Betti map, \textit{i.e.}, Proposition~\ref{PropBettiMap}, to an arbitrary $\cA \rightarrow S$ with $S$ regular. %, \textit{i.e.}, without the extra assumption {\tt (Hyp)}.
In this subsection, let $S$ be an  irreducible, regular, quasi-projective variety over $\IC$ and let $\pi \colon \cA \rightarrow S$ be an abelian scheme of relative dimension $g \ge 1$.

\begin{proposition}\label{PropBettiMapApp}
Let $s_0 \in S(\IC)$. Then there exist an open neighborhood $\Delta$ of $s_0$ in $S^{\mathrm{an}}$, and a map $b_{\Delta} \colon \cA_{\Delta} := \pi^{-1}(\Delta) \rightarrow \mathbb T^{2g}$, called the Betti map, with the following properties.
\begin{enumerate}
\item[(i)] For each $s \in \Delta$ the restriction $b_{\Delta}|_{\cA_s(\IC)} \colon \cA_s(\IC) \rightarrow \mathbb T^{2g}$ is a group isomorphism.
\item[(ii)] For each $\xi \in \mathbb T^{2g}$ the preimage $b_{\Delta}^{-1}(\xi)$ is a complex analytic subset of $\cA_{\Delta}$.
\item[(iii)] The product
$(b_{\Delta},\pi) \colon \cA_{\Delta} \rightarrow \mathbb
T^{2g} \times \Delta$ is a real analytic isomorphism.% Here $\widetilde{\pi} \colon \cA_{\widetilde{S}} \rightarrow \widetilde{S}$ is the natural projection.
\end{enumerate}
\end{proposition}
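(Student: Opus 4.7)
The plan is to adapt the construction of Proposition~\ref{PropBettiMap} so that it works directly in the analytic category, without reducing to the universal family over $\mathbb{A}_g$ (which required {\tt (Hyp)} to ensure that moduli space is fine). The key point is that the Betti map is intrinsic: it depends only on the relative Lie algebra bundle and the local system of period lattices, and ignores any polarization or level data on $\cA$.

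First, I would set up the local analytic data near $s_0 \in S(\IC)$. Consider the holomorphic vector bundle $V := \mathrm{Lie}(\cA/S)^{\mathrm{an}}$ of rank $g$ on $\an{S}$, and the $\IZ$-local system $\Lambda := R_1 \pi^{\mathrm{an}}_* \IZ$ whose stalk at $s$ is $H_1(\cA_s, \IZ) \cong \IZ^{2g}$. There is a canonical short exact sequence of analytic sheaves
\[
0 \longrightarrow \Lambda \longrightarrow V \xrightarrow{\ \exp\ } \an{\cA} \longrightarrow 0
\]
in which $\Lambda$ sits inside $V$ as the period lattice, giving fiberwise the usual presentation $\cA_s = \IC^g/\Lambda_s$. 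Shrink to a relatively compact open neighborhood $\Delta$ of $s_0$ in $\an{S}$ that is biholomorphic to a polydisc; this is possible since $S$ is regular. Over such a $\Delta$, the local system $\Lambda|_\Delta$ is constant by simple connectedness, and $V|_\Delta$ is holomorphically trivial, since holomorphic vector bundles on a polydisc are trivial.

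Next, I would use these trivializations to define the Betti map. Fix an $\IZ$-basis of $\Lambda_{s_0}$ and propagate it by parallel transport to sections $\omega_1,\ldots,\omega_{2g}$ of $\Lambda|_\Delta$; the inclusion $\Lambda \hookrightarrow V$ and a chosen trivialization $V|_\Delta \cong \IC^g \times \Delta$ then exhibit the $\omega_i$ as holomorphic maps $\Delta \to \IC^g$ with the property that $\omega_1(s),\ldots,\omega_{2g}(s)$ form an $\IR$-basis of $\IC^g$ for every $s \in \Delta$. For $x \in \cA_\Delta$ lying above $s$, choose a lift $\tilde{x} \in V_s = \IC^g$ under $\exp$ and uniquely write $\tilde{x} = \sum_{i=1}^{2g} b_i \omega_i(s)$ with $b_i \in \IR$. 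Set $b_\Delta(x) := (b_1,\ldots,b_{2g}) \bmod \IZ^{2g} \in \IT^{2g}$; this is well-defined because the ambiguity in $\tilde{x}$ lies in $\Lambda_s = \IZ\omega_1(s) \oplus \cdots \oplus \IZ\omega_{2g}(s)$.

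Finally, I would verify the three properties. Property (i) is immediate: restricted to $\cA_s = \IC^g/\Lambda_s$, the map is precisely the standard group isomorphism onto $\IR^{2g}/\IZ^{2g}$. For property (ii), given $\xi \in \IT^{2g}$ with any real lift $(r_1,\ldots,r_{2g})$, the assignment $\sigma_\xi \colon s \mapsto \exp\bigl(\sum_i r_i \omega_i(s)\bigr)$ is a well-defined holomorphic section of $\pi|_{\cA_\Delta}$ (independent of the chosen lift modulo $\Lambda_s$), and $b_\Delta^{-1}(\xi)$ is the image of $\sigma_\xi$, hence a complex analytic subset of $\cA_\Delta$. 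Property (iii) then follows because the same formula $(\xi,s) \mapsto \sigma_\xi(s)$ provides a real analytic inverse to $(b_\Delta,\pi)$, which is itself visibly real analytic. The only step requiring attention is the simultaneous choice of trivializations on a common $\Delta$, which is routine for a polydisc neighborhood; the construction presents no genuine obstacle.
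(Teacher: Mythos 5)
Your construction is correct, but it takes a genuinely different route from the paper. You build the Betti map intrinsically from the relative exponential sequence $0\to\Lambda\to \mathrm{Lie}(\cA/S)^{\anE}\to\an{\cA}\to 0$: over a simply connected polydisc $\Delta$ you trivialize the period local system by parallel transport, express fiber points in the resulting real basis $\omega_1(s),\ldots,\omega_{2g}(s)$, and read off properties (i)--(iii) directly (the preimage of $\xi$ being the image of the holomorphic section $s\mapsto\exp(\sum_i r_i\omega_i(s))$, and the real-analytic inverse in (iii) being $(\xi,s)\mapsto\sigma_\xi(s)$). This is essentially the ``ad hoc'' construction of \cite{GaoHab} that the paper deliberately bypasses. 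The paper instead first treats the fine moduli space $\mathbb{A}_{g,D,\ell}$ of abelian varieties with polarization type $D$ and level-$\ell$-structure, where the Betti map is defined explicitly on the uniformization $\IC^g\times\mathfrak{H}_g$ via $(a,b,Z)\mapsto(Da+Zb,Z)$; it then pulls back along the classifying map for families carrying level structure, and handles a general $\cA\to S$ by passing to a finite \'etale cover $S'\to S$ (built from irreducible components of $\ker[\ell]$) that acquires level structure, transporting the Betti map back through a local homeomorphism. Your approach is more elementary and needs no polarization or level data at all, which is arguably cleaner for this appendix statement; the paper's detour through the universal family is what later supplies the explicit $(a,b,Z)$-coordinates used to define and compute with the Betti form (Lemmas~\ref{LemmaBettiOnUniversalCovering}--\ref{LemmaBettiFormUnivAb}) and the compatibility of Betti maps under pullback that is exploited in Theorem~\ref{ThmNonDegForBd}, so for the paper's global architecture their construction does extra work that yours does not attempt. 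For the proposition as stated, your argument is complete, modulo only the routine observations that you can shrink $\Delta$ so that both the lattice is constant and $\mathrm{Lie}(\cA/S)^{\anE}$ is trivialized, and that the coordinates with respect to a real-analytically varying real basis depend real-analytically on the point.
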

%\Pinlinecomment{TODO: add a uniqueness statement along the lines of:
%  ``

Just as in the case of
Proposition~\ref{PropBettiMap}, the Betti map
is uniquely determined  by properties (i) and (iii)
up-to the action of $\mathrm{GL}_{2g}(\IZ)$ if $\Delta$ is
connected. 
% In this paper we only need the rank of the Betti map restricted to
% some subvariety.\Pcomment{I propose to strike the sentence ``In this
 %  paper\ldots'', it doesn't  seem to fit, and merge these two paragraphs.}
 Composing with an $\alpha \in \mathrm{GL}_{2g}(\IZ)$ does not change the rank. So by the discussion on the uniqueness above, any map $\cA_{\Delta} \rightarrow \mathbb{T}^{2g}$ satisfying the three properties listed in Proposition~\ref{PropBettiMapApp} will be called \textit{Betti map}.%''  I
%  think this doesn't need a proof. 
%  }
\begin{proof} Our proof of Proposition~\ref{PropBettiMapApp} follows the construction in \cite[$\mathsection$3-$\mathsection$4]{GaoBettiRank}. We divide it into several steps.

By \cite[$\mathsection$2.1]{GenestierNgo}, $\cA \rightarrow S$ carries a polarization of type $D = \mathrm{diag}(d_1,\ldots,d_g)$ for some positive integers $d_1|d_2|\cdots|d_g$.

%Let $D = \mathrm{diag}(d_1,\ldots,d_g)$ for some positive integers $d_1|d_2|\cdots|d_g$.
  \textbf{Case: Moduli space with level structure.} 
Fix $\ell \ge \bestlevel$ with $(\ell,d_g) = 1$. We start by proving Proposition~\ref{PropBettiMapApp} for $S = \mathbb
A_{g,D,\ell}$, the moduli space of abelian varieties of
dimension $g$ polarized of type $D$ with level-$\ell$-structure. It is a fine moduli
space; see \cite[Theorem~2.3.1]{GenestierNgo}. Let $\pi^{\mathrm{univ}}_D \colon \mathfrak{A}_{g,D,\ell} \rightarrow \mathbb A_{g,D,\ell}$ be the universal abelian variety.

The universal covering $\mathfrak{H}_g \rightarrow \mathbb
A_{g,D,\ell}^{\mathrm{an}}$ \cite[Proposition~1.3.2]{GenestierNgo},
where $\mathfrak{H}_g$ is the Siegel upper half space, gives a family of abelian varieties
$\cA_{\mathfrak{H}_g,D} \rightarrow \mathfrak{H}_g$ %, polarized of type $D$, 
fitting into the diagram
\[
\xymatrix{
\cA_{\mathfrak{H}_g,D} := \mathfrak{A}_{g,D,\ell} \times_{\mathbb A_{g,D,\ell}}\mathfrak{H}_g \ar[r] \ar[d] & \mathfrak{A}_{g,D,\ell}^{\mathrm{an}} \ar[d]^{\pi^{\mathrm{univ}}_D} \\
\mathfrak{H}_g \ar[r] & \mathbb A_{g,D,\ell}^{\mathrm{an}}.
}
\]
The family $\cA_{\mathfrak{H}_g,D} \rightarrow \mathfrak{H}_g$ is polarized of type $D$. 
For the universal covering
$u \colon \IC^g \times \mathfrak{H}_g \rightarrow \cA_{\mathfrak{H}_g,D}$
and for each $Z\in \mathfrak{H}_g$, the kernel of
$u|_{\mathbb{C}^g \times \{Z\}}$ is $D\mathbb{Z}^g + Z \mathbb{Z}^g$.
Thus the map
$\IC^g \times \mathfrak{H}_g \rightarrow \IR^g \times \IR^g \times \mathfrak{H}_g \rightarrow \IR^{2g}$,
where the first map is the inverse of $(a,b,Z) \mapsto (Da + Z  b,Z)$
and the second map is the natural projection, descends to a real
analytic map
\[
b^{\mathrm{univ}} \colon \cA_{\mathfrak{H}_g,D} \rightarrow \mathbb T^{2g}.
\]
 Now for each $s_0 \in \mathbb A_{g,D,\ell}(\IC)$, there exists a
 contractible, relatively compact, open neighborhood $\Delta$ of $s_0$ in $\mathbb A_{g,D,\ell}^{\mathrm{an}}$ such that $\mathfrak{A}_{g,D,\ell,\Delta}:=(\pi^{\mathrm{univ}}_D)^{-1}(\Delta)$ can be identified with $\cA_{\mathfrak{H}_g,\Delta'}$ for some open subset $\Delta'$ of $\mathfrak{H}_g$. The composite $b_{\Delta} \colon \mathfrak{A}_{g,D,\ell,\Delta} \cong \cA_{\mathfrak{H}_g,D,\Delta'} \rightarrow \mathbb T^{2g}$ clearly  satisfies the three properties listed in Proposition~\ref{PropBettiMapApp}. Thus $b_{\Delta}$ is the desired Betti map in this case.% Note that for a fixed (small enough) $\Delta$, there are infinitely choices of $\Delta'$; but for $\Delta$ small enough, if $\Delta_1'$ and $\Delta_2'$ are two such choices, then $\Delta_2' = \alpha \cdot \Delta_1'$ for some $\alpha \in \mathrm{Sp}_{2g}(\IZ) \subset \mathrm{SL}_{2g}(\IZ)$. Thus we have proven Proposition~\ref{PropBettiMapApp} for $\mathfrak{A}_g \rightarrow \mathbb A_g$.

\medskip
  \textbf{Case: With level structure.} 
  Assume that $\cA \rightarrow S$ carries level-$\ell$-structure for some $\ell \ge \bestlevel$ with $(\ell,d_g) = 1$. 
As $\IA_{g,D,\ell}$ is a fine moduli space there exists a Cartesian diagram
\begin{equation*}%\label{EqEmbedAbSchIntoUnivAbVar}
\xymatrix{
\cA \ar[r]^-{\iota} \ar[d]_{\pi} \pullbackcorner & \mathfrak{A}_{g,D,\ell} \ar[d] \\
S \ar[r]^-{\iota_S} & \mathbb{A}_{g,D,\ell}.
}
\end{equation*}

Now let $s_0 \in S(\IC)$. Applying Proposition~\ref{PropBettiMapApp}
to the universal abelian variety
$\mathfrak{A}_{g,D,\ell} \rightarrow \mathbb A_{g,D,\ell}$ and
$\iota_S(s_0) \in \mathbb A_{g,D,\ell}(\IC)$, we obtain an 
open neighborhood $\Delta_0$ of $\iota_S(s_0)$ in $\mathbb
A_{g,D,\ell}^{\mathrm{an}}$ and a real analytic map
\[
b_{\Delta_0} \colon \mathfrak{A}_{g,\Delta_0} \rightarrow \mathbb T^{2g}
\]
satisfying the properties listed in Proposition~\ref{PropBettiMapApp}.

Now let $\Delta = \iota_S^{-1}(\Delta_0)$. Then $\Delta$ is an open neighborhood of $s_0$ in $S^{\mathrm{an}}$. Denote by $\cA_{\Delta} = \pi^{-1}(\Delta)$ and define
\[
b_{\Delta} = b_{\Delta_0} \circ \iota \colon \cA_{\Delta} \rightarrow \mathbb T^{2g}.
\]
Then $b_{\Delta}$ satisfies the properties listed in Proposition~\ref{PropBettiMapApp} for $\cA \rightarrow S$. Hence $b_{\Delta}$ is our desired Betti map.

\medskip
  \textbf{Case: General case.}
Let $s_0 \in S(\IC)$ and $\ell\ge\bestlevel$ be a prime with $(\ell,d_g)=1$.

Fix any irreducible component $S_0$ of the kernel $\mathrm{ker}[\ell]$ of $[\ell] \colon
\cA\rightarrow \cA$. It is Zariski open in $\mathrm{ker}[\ell]$ as $S$ is
regular, so we consider it with its natural open subscheme structure.
Then $S_0\rightarrow \mathrm{ker}[\ell]$ is both a closed and open
immersion. So $S_0\rightarrow S$, the composition with
the finite \'etale morphism $\mathrm{ker}[\ell]\rightarrow S$, is finite and \'etale.
The upshot is that the base change of $\cA\rightarrow S$ by $S_0\rightarrow S$
admits an $\ell$-torsion section. After repeating this finitely many
times we obtain a finite and \'etale morphism $\rho \colon S'
\rightarrow S$ where $S'$ is irreducible and such that $\cA':=
\cA\times_S S' \rightarrow S'$ has level-$\ell$-structure.
Note that $S'$ is regular as $S$ is regular and regularity ascends
along \'etale morphisms. Moreover,
 $\cA' \rightarrow S'$ is still polarized of type $D$.

% There exists a finite and \'etale covering $\rho \colon S' \rightarrow
% S$, with $S'$ irreducible, such that $\cA':= \cA\times_S S' \rightarrow S'$ has
% level-$\ell$-structure.
% for some $\ell \ge \bestlevel$ with $(\ell,d_g)
% = 1$.
% \Pcomment{If we demand that $S'$  is irreducible and $\rho$ is
%  finite/\'etale,  I can see how this works without additional
%  regularity conditions on  $S$. Say $S$
%  is normal, then $\mathrm{ker}[\ell]$ is normal as
%  $\mathrm{ker}[\ell]\rightarrow S$ is \'etale. The irreducible
%  components of a normal variety are open, so we can take $S'$ as
%an irreducible component of $\mathrm{ker}[\ell]$. Then $S'\rightarrow
%S$ is finite and \'etale. See Remarques 6.15.11 in EGAIV. Are we
 %really gaining anything by dropping all regularity conditions on
%$S$ in Theorem B.1?}

Let $s_0' \in \rho^{-1}(s_0)$. Applying Proposition~\ref{PropBettiMapApp} to $\cA' \rightarrow S'$ and $s_0' \in S'(\IC)$, we obtain an open neighborhood $\Delta'$ of $s_0'$ in $(S')^{\mathrm{an}}$ and a map $b_{\Delta'} \colon \cA'_{\Delta'} \rightarrow \mathbb{T}^{2g}$ satisfying the properties listed in Proposition~\ref{PropBettiMapApp}.

Let $\Delta = \rho(\Delta')$. Up to shrinking $\Delta'$, we may assume that $\rho|_{\Delta'} \colon \Delta' \rightarrow \Delta$ is a homeomorphism and that $\Delta$ is an open neighborhood of $s_0$ in $S^{\mathrm{an}}$. Thus $\cA'_{\Delta'} \cong \cA_{\Delta}$. Now define
\[
b_{\Delta} \colon \cA_{\Delta} \rightarrow \mathbb{T}^{2g}
\]
to be the composite of the inverse of $\cA'_{\Delta'} \cong \cA_{\Delta}$ and $b_{\Delta'}$. Then $b_{\Delta}$ is our desired Betti map.
\end{proof}

%The proof of Lemma~\ref{LemmaBettiRankZarOpen} applies to this more general Betti map. We have:
Here is an easy property of the generic rank of the Betti map.
\begin{lemma}\label{LemmaBettiRankZarOpenApp}
Let $b_{\Delta} \colon \cA_{\Delta} \rightarrow \mathbb T^{2g}$ be a Betti map as in Proposition~\ref{PropBettiMapApp}. 
Let $X$ be an irreducible subvariety of $\cA$ with $\an{X}\cap\cA_\Delta\not=\emptyset$. Let $U$ be a Zariski open dense subset of $X$. Then
\begin{equation}\label{EqBettiRankSmallerUnderRestriction}
\max_{x \in X^{\mathrm{sm}}(\IC) \cap \cA_{\Delta}} \mathrm{rank}_{\IR} (\mathrm{d}b_{\Delta}|_{X^{\mathrm{sm,an}}})_x = \max_{u \in U^{\mathrm{sm}}(\IC) \cap \cA_{\Delta}} \mathrm{rank}_{\IR} (\mathrm{d}b_{\Delta}|_{U^{\mathrm{sm,an}}})_u.
\end{equation}
%\[
%\max_{\widetilde{x} \in \widetilde{X}} \mathrm{rank}_{\IR} (\mathrm{d}b|_{\widetilde{X}})_{\widetilde{x}} = \max_{\widetilde{u} \in \widetilde{U}} \mathrm{rank}_{\IR} (\mathrm{d}b|_{\widetilde{U}})_{\widetilde{u}}.
%\]
\end{lemma}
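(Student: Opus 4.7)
The inequality $\ge$ (RHS $\le$ LHS) is immediate, since $U^{\mathrm{sm}} \cap \cA_\Delta \subseteq X^{\mathrm{sm}} \cap \cA_\Delta$ (a point smooth in the Zariski open subscheme $U$ of $X$ is also smooth in $X$), and at such a point the differential of $b_\Delta$ restricted to $U^{\mathrm{sm,an}}$ agrees with its restriction to $X^{\mathrm{sm,an}}$ because $U$ and $X$ have the same germ there. So the plan is to focus on the reverse inequality.

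For the $\le$ direction, I would invoke the standard fact that the rank of the differential of a real-analytic (or even $C^1$) map between manifolds is lower semi-continuous. Applied to the real-analytic map $b_\Delta|_{X^{\mathrm{sm,an}}\cap\cA_\Delta}$ on the complex manifold $X^{\mathrm{sm,an}}\cap \cA_\Delta$, this means that if $r$ denotes the maximum on the left-hand side of \eqref{EqBettiRankSmallerUnderRestriction}, then the set
\[
V := \bigl\{ x \in X^{\mathrm{sm}}(\IC) \cap \cA_\Delta : \mathrm{rank}_{\IR}(\mathrm{d}b_\Delta|_{X^{\mathrm{sm,an}}})_x = r \bigr\}
\]
is a non-empty open subset of $X^{\mathrm{sm,an}} \cap \cA_\Delta$ in the analytic topology.

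The last step is to argue that $V$ necessarily meets $U^{\mathrm{sm,an}} \cap \cA_\Delta$. Since $U \subseteq X$ is Zariski open and dense, the complement $X \setminus U$ is a proper Zariski closed subset of $X$, so its intersection with $X^{\mathrm{sm}}$ has complex codimension at least $1$ in $X^{\mathrm{sm}}$, hence real codimension at least $2$ in the complex manifold $X^{\mathrm{sm,an}}$. In particular it has empty interior in $X^{\mathrm{sm,an}}$, so $U^{\mathrm{sm,an}} = U \cap X^{\mathrm{sm,an}}$ is open and dense in $X^{\mathrm{sm,an}}$ in the analytic topology. Therefore $V \cap U^{\mathrm{sm,an}} \cap \cA_\Delta$ is non-empty, providing a point $u$ at which $\mathrm{rank}_{\IR}(\mathrm{d}b_\Delta|_{U^{\mathrm{sm,an}}})_u = r$, as desired. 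No genuine obstacle arises here; the argument is a straightforward combination of lower semi-continuity of rank and analytic density of Zariski open dense subsets of a smooth complex variety.
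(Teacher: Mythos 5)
Your proof is correct and follows essentially the same route as the paper's: the trivial inequality from $U^{\mathrm{sm}}\subseteq X^{\mathrm{sm}}$, then lower semi-continuity of the rank to get an analytically open set where the maximum is attained, then the fact that a non-empty analytic open subset of the irreducible $X$ must meet the Zariski open dense $U^{\mathrm{sm}}$. The paper phrases the last step as ``$V$ is Zariski dense'' while you phrase it as ``$U^{\mathrm{sm,an}}$ is analytically dense,'' but these are the same observation.
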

\begin{proof}
The statement \eqref{EqBettiRankSmallerUnderRestriction} is true on
replacing ``$=$'' by  ``$\ge$'', as
 $\sman{X}\supset \sman{U}$. 
%% It suffices to prove the lemma by replacing $X$ by $X^{\mathrm{sm}}$ and $U$ by $U \cap X^{\mathrm{sm}}$. Hence we may assume $X$ to be smooth. In particular $U^{\mathrm{sm}} \subseteq X$ and hence we have
%% \begin{equation}\label{EqBettiRankSmallerUnderRestriction}
%% \max_{x \in X(\IC) \cap \cA_{\Delta}} \mathrm{rank}_{\IR} (\mathrm{d}b_{\Delta}|_{X^{\mathrm{an}}})_x \ge \max_{u \in U^{\mathrm{sm}}(\IC) \cap \cA_{\Delta}} \mathrm{rank}_{\IR} (\mathrm{d}b_{\Delta}|_{U^{\mathrm{sm,an}}})_u.
%% \end{equation}

For the converse inequality we set
 $\max_{x \in \sman{X} \cap \cA_{\Delta}} \mathrm{rank}_{\IR} (\mathrm{d}b_{\Delta}|_{X^{\mathrm{sm,an}}})_x  = r$
and pick
 $x \in \sman{X} \cap \cA_{\Delta}$ satisfying
$\mathrm{rank}_{\IR} (\mathrm{d}b_{\Delta}|_{X^{\mathrm{sm,an}}})_x =
r$. Then there exists an open neighborhood  $V$ of $x$ in 
$\sman{X}$ such that $\mathrm{rank}_{\IR}
(\mathrm{d}b_{\Delta}|_{X^{\mathrm{sm,an}}})_u = r$ for all $u \in
V$. But $U^{\mathrm{sm}}(\IC) \cap V \not= \emptyset$ since
$U^{\mathrm{sm}}\not=\emptyset$ is Zariski open in $X$ and $V$ is Zariski dense
in $X$. Thus there exists a $u \in U^{\mathrm{sm}}(\IC) \cap V$. Then
we must have $\mathrm{rank}_{\IR}
(\mathrm{d}b_{\Delta}|_{U^{\mathrm{sm,an}}})_u = r$ and the lemma
follows.
%% The lemma then 
%% follows from \eqref{EqBettiRankSmallerUnderRestriction}.
%\[
%\max_{u \in U^{\mathrm{sm}}(\IC) \cap \cA_{\Delta}} \mathrm{rank}_{\IR} (\mathrm{d}b_{\Delta}|_{U^{\mathrm{sm,an}}})_u \le \max_{x \in X(\IC) \cap \cA_{\Delta}} \mathrm{rank}_{\IR} (\mathrm{d}b_{\Delta}|_{X^{\mathrm{sm,an}}})_x.\qedhere
%\]
%Now we are done.
\end{proof}

\subsection{Non-degenerate subvariety and
 Theorem~\ref{ThmHtInequality}}
 We keep the notation as in the beginning of this appendix. 
\begin{definition}\label{DefinitionNonDegenerateApp}
An irreducible subvariety $X$ of $\cA$ is said to be
\textit{non-degenerate} if there exists an open non-empty subset
$\Delta$ of $S^{\mathrm{sm},\mathrm{an}}$,
   with the Betti map
$b_{\Delta} \colon \cA_{\Delta}:=\pi^{-1}(\Delta) \rightarrow
\mathbb{T}^{2g}$ as in Proposition~\ref{PropBettiMapApp}, such that
$X^\mathrm{sm,an}\cap \cA_{\Delta}\not=\emptyset$ and 
\[
\max_{x \in X^{\mathrm{sm}}(\IC) \cap \cA_{\Delta}} \mathrm{rank}_{\IR} (\mathrm{d}b_{\Delta}|_{X^{\mathrm{sm,an}}})_x = 2\dim X.
\]%it satisfies one of the two equivalent conditions in property (iii) of Proposition~\ref{PropBettiForm}.
\end{definition}

%Let $S$ be a regular, irreducible, quasi-projective variety over $\IQbar$ and let $\pi \colon \cA \rightarrow S$ be an abelian scheme. Let $\cL$ be a relative ample line bundle on $\cA \rightarrow S$, and let $\cM$ be an ample line bundle on a compactification $\overline{S}$ of $S$. All data above are assumed to be defined over $\IQbar$. Define $\hat{h}_{\cA,\cL} \colon \cA(\IQbar) \rightarrow \IR$ to be $\hat{h}_{\cA,\cL}(P) = \hat{h}_{\cA_s,\cL_s}(P)$ with $s = \pi(P)$, and $h_{S,\cM} \colon S(\IQbar) \rightarrow \IR$ to a height function.

%\begin{theorem}\label{ThmHtInequalityFull}
%Let $X$ be an irreducible subvariety of $\cA$ defined over $\IQbar$.   Suppose $X$ is non-degenerate, as defined in Definition~\ref{DefinitionNonDegenerateApp}. Then there exist constants $c>0$ and $c'\ge 0$ and a Zariski open dense subset
%$U$ of $X$ with %and a constant $c'$ such that
%\begin{equation}\label{EqHtInequalityFull}
%\hat{h}_{\cA,\cL}(P) \ge c h_{S,\cM}( \pi(P) ) - c' \quad \text{for all}\quad P \in U(\IQbar).
%\end{equation}
%\end{theorem}

Now we are ready to prove Theorem~\ref{ThmHtInequalityFull}.

\begin{proof}[Proof of Theorem~\ref{ThmHtInequalityFull}]
  Let $\ell\ge 3$ be a prime.
  We will  reduce the current theorem to Theorem~\ref{ThmHtInequality}
  by successively assuming, in addition to the hypothesis of Theorem~\ref{ThmHtInequalityFull}, that
\begin{enumerate}
  \item[(i)] $X$ is Zariski closed in $\cA$,
  \item[(ii)] $\pi|_X \colon X\rightarrow S$ is dominant,
  \item[(iii)] $S$ is regular, 
  \item[(iv)] $\cA \rightarrow S$ is $S$-isogenous to an abelian scheme which carries a principal polarization,
  \item[(v)] $\cA \rightarrow S$ carries a principal polarization,
  \item[(vi)] $\cA$ carries a level $\ell$-structure, and
  \item[(vii)]  we have the same hypothesis as Theorem~\ref{ThmHtInequality}.% $\cA\rightarrow S$ furthermore is equipped with a closed
    % immersion $\cA \rightarrow \IP^n_{\IQbar} \times S$ over $S$
    % as in the beginning of $\mathsection$\ref{SectionSettingUpHtIneq}
    % and
    % a closed immersion
    % $\overline S\rightarrow \IP^m_{\IQbar}$.
%
    % \item[(iv)] $X$ is closed.
  % \item[(i)] $S$ is regular, 
  % \item[(ii)] $\cA \rightarrow S$ satisfies {\tt (Hyp)} (carries a principal polarization and has
  %   level-$\ell$-structure for some $\ell \ge \bestlevel$),
  % \item[(iii)] $\cA\rightarrow S$ furthermore is equipped with a closed immersion $\cA \rightarrow \IP^n_{\IQbar} \times S$ as in the beginning of $\mathsection$\ref{SectionSettingUpHtIneq},
  % \item[(iv)] $X$ is closed.
\end{enumerate}
We will proceed the proof with six d\'{e}vissage steps.
In d\'evissage step $n$ we will deduce
the theorem under the hypotheses (i),\ldots,($n-1$) 
from the theorem under the hypotheses (i),\ldots,($n$). 

% In each d\'{e}vissage step, all assumptions from the previous d\'{e}vissage steps are assumed to be satisfied.

\ul{First d\'{e}vissage: reduction to the case where $X$ is Zariski
  closed in $\cA$.}

Let $\overline{X}$ denote the
Zariski closure of $X$ in $\cA$. Then $X$ 
is a Zariski open dense subset of $\overline{X}$ and $\dim X = \dim
\overline{X}$. % Therefore $X$ is non-degenerate if and only if
% $\overline{X}$ is non-degenerate by
% Lemma~\ref{LemmaBettiRankZarOpenApp}.
Therefore, $\overline X$ is non-degenerate if $X$ is non-degenerate.
Now if
\eqref{EqHtInequalityFull} holds true on a Zariski open dense subset
$U$ of $\overline{X}$, then \eqref{EqHtInequalityFull} clearly holds
true on $U \cap X$, which is Zariski open and dense in $X$. Thus it suffices to prove \eqref{EqHtInequalityFull} with $X$ replaced by $\overline{X}$. %So we may assume that (iv) is furthermore satisfied.

\ul{Second d\'{e}vissage: reduction to the case where $\pi|_{X}\colon X\rightarrow 
  S$ is dominant.} 
  
  As $X$ is non-degenerate, there exists a non-empty open subset $\Delta$ of $S^{\mathrm{sm,an}}$, with Betti map $b_{\Delta}$, such that $\mathrm{rank}_{\IR}(\mathrm{d}b_{\Delta}|_{X^{\mathrm{sm,an}}})_x = 2 \dim X$ for some $x \in X^{\mathrm{sm}}(\IC) \cap \cA_{\Delta}$.

Endow the Zariski closed set $S'=\pi(X)$ with the reduced induced
subscheme structure and set $\cA' = \cA \times_S S' =
\pi^{-1}(S')$. Then $X \times_S S'$ identifies with $X$ via the natural projection $\cA' \rightarrow \cA$. Hence there exists a non-empty open subset $\Delta'$
of $(S')^{\mathrm{sm,an}}$ with $\pi(x) \in \Delta'\subset \Delta$ and
\[
 \mathrm{rank}_{\IR} (\mathrm{d}b_{\Delta'}|_{X^{\mathrm{sm,an}}})_x = 2\dim X.
\]
Thus $X$ is a non-degenerate subvariety of $\cA'$. On the other hand,
the conclusion of  Theorem~\ref{ThmHtInequalityFull} does not change
with $\cA \rightarrow S$ replaced by $\cA' \rightarrow S'$, $\cL$
replaced by $\cL|_{\cA'}$ and $\cM$ replaced by $\cM|_{\overline{S'}}$,
 where $\overline{S'}$ is the Zariski closure of $S'$ in $\overline{S}$.
Hence it suffices to prove Theorem~\ref{ThmHtInequalityFull} after
these replacements and thus we may assume that $X$ dominates $S$.

\ul{Third d\'{e}vissage: reduction to the case where $S$ is regular.}

Recall that $S^{\mathrm{sm}}$ is the regular locus of $S$.
Now $\pi|_X\colon X\rightarrow S$ is dominant, so
$X'=X\cap \pi^{-1}(S^{\mathrm{sm}})$ is Zariski open and dense in
$X$. 
Since $X$ is non-degenerate it follows by definition that  $X'$ is
non-degenerate. Moreover, the conclusion of
Theorem~\ref{ThmHtInequalityFull} does not change if we replace $\cA
\rightarrow S$ by $\cA' = \pi^{-1}(S^{\mathrm{sm}})
\rightarrow S^{\mathrm{sm}}$, $\cL$ by $\cL|_{\cA'}$,
and $X$ by $X'$.
Finally, observe that $X'$ is Zariski closed in $\cA'$
and $\pi(X') = \pi(X)\cap S^{\mathrm{reg}}$, so $\pi|_{X'}\colon X'
\rightarrow S^{\mathrm{reg}}$ is dominant. 
%So we may assume that (i) is satisfied.

\ul{Fourth d\'{e}vissage: reduction to the case where $\pi \colon \cA \rightarrow S$ is $S$-isogenous to an abelian scheme which carries a principal polarization.}

%satisfies {\tt (Hyp')}, with}
%\begin{center}
%{\tt (Hyp'):} 
%it has level-$\ell$-structure for some $\ell \ge \bestlevel$, and for its generic fiber $A$ there exists an isogenous $A_0 \rightarrow A$ defined over $\IQbar(S)$ with $A_0$ principally polarized.
%\end{center}

%There exists a quasi-finite \'{e}tale dominant %\Pcomment{Replace finite by quasi-finite?}
%morphism $S' \rightarrow S$ such that
%$\cA':= \cA\times_S S' \rightarrow S'$ has level-$\ell$-structure for
%some $\ell \ge \bestlevel$. 
By \cite[$\mathsection$23,
Corollary~1]{MumfordAbVar}, each abelian variety over an algebraic
closed field is isogenous to a principally polarized one. Applying
this to the geometric generic fiber of $\cA \rightarrow S$, we obtain a
quasi-finite \'{e}tale dominant morphism 
 $\rho \colon S' \rightarrow S$ with $S'$ irreducible and the
 following property: There exists a principally polarized $A'_0$ that
 is isogenous over $\IQbar(S')$ to  the
 generic fiber $A'$ of $\cA':= \cA\times_{S} S' \rightarrow S'$. 
 Up to replacing $S'$ by an open dense subscheme, we may furthermore assume that $A'_0$ extends to an abelian scheme $\cA'_0 \rightarrow S'$. 
 Denote by $\rho_{\cA} \colon \cA' =\cA \times_S S' \rightarrow \cA$ the natural projection; it is a quasi-finite \'{e}tale dominant morphism.

As regularity ascends along \'etale morphisms and as $S$ is regular we conclude that $S'$ is regular. Thus $\cA'_0 \rightarrow S'$ carries a principal polarization by \cite[Th\'{e}or\`{e}me~XI~1.4]{LNM119}, and the isogeny $A'_0 \rightarrow A'$ extends to an $S'$-isogeny $\cA'_0 \rightarrow \cA'$ by \cite[Lemme~XI~1.15]{LNM119}.% Thus  the isogeny from $A'_0$ to the generic fiber of $\cA' \rightarrow S'$ extends to an $S'$-isogeny $\cA'_0 \rightarrow \cA'$, \textit{cf.} \cite[Th\'{e}or\`{e}me~XI~1.4]{LNM119}.% The morphism $\lambda$ is a finite morphism, and hence $\lambda \circ \rho_{\cA} \colon \cA'_0 \rightarrow \cA$ is quasi-finite dominant.

There is  an irreducible component $X'$ of $\rho_{\cA}^{-1}(X)$ with
$\dim X' = \dim X$. Then $X'$ is Zariski closed in $\cA'$, 
the image $\rho_{\cA}(X')$ is Zariski dense in $X$,  and thus $X'$ dominates $S'$ (it even surjects to $S'$
since $\cA'\rightarrow S'$ is proper and $X'$ is closed). We
claim that $X'$, as a subvariety of $\cA'$, is non-degenerate.
Indeed, $\rho_{\cA}(X')$ contains a Zariski open dense subset $U$ of
$X$. Since $X$ is a non-degenerate
subvariety of $\cA$, so is $U$ by
Lemma~\ref{LemmaBettiRankZarOpenApp}. So there exists an open subset
$\Delta$ of $\an{S}$ with the Betti map $b_{\Delta} \colon
\cA_{\Delta} \rightarrow \mathbb{T}^{2g}$ such that
$\mathrm{rank}_{\IR}(\mathrm{d}b_{\Delta}|_{\ansm{U}})_u = 2 \dim U =
2 \dim X$ %\Pinlinecomment{
for all $u$ from a non-empty open subset of $\an{U}$
%}
. Take $\Delta'$ to be a connected component of $\rho^{-1}(\Delta)$ such that $X'
\cap (\pi')^{-1}(\Delta') \not= \emptyset$. Set $\cA'_{\Delta'} =
(\pi')^{-1}(\Delta')$, and replace $\Delta$ by $\rho(\Delta')$.
Note that $\rho|_{\Delta'}\colon \Delta' \cong \Delta$ is then
bianalytic after possibly shrinking $\Delta'$ (and so is $\rho_{\cA}\colon \cA'_{\Delta'} \cong
\cA_{\Delta}$). Now $b_{\Delta} \circ \rho_{\cA}|_{\cA'_{\Delta'}} \colon \cA'_{\Delta'} \rightarrow \mathbb{T}^{2g}$ satisfies the three properties listed in Proposition~\ref{PropBettiMapApp}. So $b_{\Delta} \circ \rho_{\cA}|_{\cA'_{\Delta'}}$ is the Betti map, which we denote for simplicity by $b_{\Delta'}$; see below Proposition~\ref{PropBettiMapApp}. 
For $u' \in  (\rho_{\cA}|_{\cA'_{\Delta'}})^{-1}(u)\cap \an{X'}$
 and for sufficiently general $u$%\Pcomment{Slight
%  elaboration as $\rho|_{X'}$ must have maximal rank at $u$}
, we have 
$\mathrm{rank}_{\IR}(\mathrm{d}b_{\Delta'}|_{\ansm{X'}})_{u'} = 2 \dim X$. So $X'$, as a subvariety of the abelian scheme $\cA'$ over $S'$, is non-degenerate.

Now we have a non-degenerate subvariety $X'$ of the abelian scheme $\pi' \colon \cA' \rightarrow S'$. The line bundle $\rho_{\cA}^*\cL$ on $\cA'$ is relatively ample. Suppose that $\cM'$ is an ample line bundle on some compactification $\overline{S'}$ of $S'$.

Assume that Theorem~\ref{ThmHtInequalityFull} holds for $\pi' \colon \cA' \rightarrow S'$, $\rho_{\cA}^*\cL$, $\cM'$, and $X'$. Thus there exist constants $c_1'>0$, $c_2' \ge 0$ and a Zariski open non-empty subset $U'$ of $X'$ with
\[
\hat{h}_{\cA',\rho_{\cA}^*\cL}(P') \ge c_1' h_{\overline{S'},\cM'}(\pi'(P')) - c_2' \quad \text{for all} \quad P' \in U'(\IQbar).
\]
Denote by $P = \rho_{\cA}(P')$. 
By the Height Machine we have $\hat{h}_{\cA',\rho_{\cA}^*\cL}(P') =
\hat{h}_{\cA,\cL}(P)$.%\Pinlinecomment{We have to be more careful when
%  applying the height machine. Our reference
%  Bombieri-Gubler doesn't define the  HM when the variety in
%question is only quasi-projective. Projectiveness ist crucial for the
%functoriality properties.} \Gcomment{The $\hat{h}$ is fiberwise
%defined in our convention. As fiberwise the variety is projective, I
%think it's ok to apply the Height Machine
%directly.}\Pinlinecomment{This argument may work for $\hat h$, but not
%for $h_{S'',\cM''}$ for which  we freely apply HM arguments below. I
%think it's easy to fix this, will look into it later.} \Ginlinecomment{I don't think we apply HM to $h_{S'',\cM''}$ below. I changed the notation on height function $h_{S,\cM}$ to $h_{\overline{S},\cM}$ to emphasize that $\cM$ is an ample line bundle on $\overline{S}$ and not just on $S$.}

By  \cite[Lemma~4]{Silverman:heightest11} applied to $\rho \colon S' \rightarrow S$ and the line bundles $\cM'$ and $\cM$, there exist $c' = c' (\rho,\cM',\cM) > 0$ and $c'' = c''(\rho,\cM',\cM) \ge 0$ such that $h_{\overline{S'},\cM'}(\pi'(P')) \ge c' h_{\overline{S},\cM}(\rho(\pi'(P'))) - c'' = c' h_{\overline{S},\cM}(\pi(P)) - c''$ for all $P' \in \cA'(\IQbar)$. Hence the height inequality above implies
\[
\hat{h}_{\cA,\cL}(P) \ge c_1'c' h_{\overline{S},\cM}(\pi(P)) - (c_1'c'' + c_2')\quad \text{for all}\quad P \in \rho_{\cA}(U')(\IQbar).
\]
Now that $\rho_{\cA}(U')$ contains a Zariski open non-empty (hence dense) subset $U$ of $X$ by Chevalley's Theorem. 
Thus Theorem~\ref{ThmHtInequalityFull} also holds true for $\pi \colon \cA \rightarrow S$, $\cL$, $\cM$, and $X$.

In summary, we have shown that it suffices to prove Theorem~\ref{ThmHtInequalityFull} for $\pi' \colon \cA' \rightarrow S'$, $\rho_{\cA}^*\cL$, $\cM'$ and $X'$. Thus we are reduced to the case where the generic fiber of $\cA \rightarrow S$ is isogenous to a principally polarized abelian variety.%carries a principal polarization.

\ul{Fifth d\'{e}vissage: reduction to the case where $\pi \colon \cA \rightarrow S$ carries a principal polarization.}

From the previous d\'{e}vissage, there exists a principally polarized abelian scheme $\pi_0 \colon \cA_0 \rightarrow S$ with an $S$-isogeny $\lambda \colon \cA_0 \rightarrow \cA$. Note that $\lambda$ is a finite \'{e}tale morphism. The line bundle $\lambda^*\cL$ on $\cA_0$ is relatively ample. By the Height Machine we have $\hat{h}_{\cA_0,\lambda^*\cL}(P') = \hat{h}_{\cA,\cL}(\lambda(P'))$ for all $P' \in \cA_0(\IQbar)$.

There is  an irreducible component $X_0$ of $\lambda^{-1}(X)$ with
$\dim X_0 = \dim X$. Then $X_0$ is Zariski closed in $\cA_0$ and thus $X_0$ dominates $S$ (it even surjects to $S$
since $X = \lambda(X_0)$). We
claim that $X_0$, as a subvariety of $\cA_0$, is non-degenerate. Assume this. Then it suffices to prove the height inequality \eqref{EqHtInequalityFull} with $\cA \rightarrow S$ replaced by $\cA_0 \rightarrow S$, $X$ replaced by $X_0$, and $\cL$ replaced by $\lambda^*\cL$. 

It remains to prove that $X_0$ is a non-degenerate subvariety of $\cA_0$. To do this, we need some preparation on Betti maps. Let $\Delta$ be an open subset of $S^{\mathrm{an}}$ with the Betti map $b_{\Delta} \colon \cA_{\Delta} \rightarrow \mathbb{T}^{2g}$. Set $\cA_{0,\Delta} = \pi_0^{-1}(\Delta)$, and denote by $\lambda_{\Delta}$ the restriction of $\lambda \colon \cA_0 \rightarrow \cA$ to $\cA_{0,\Delta}$. Up to shrinking $\Delta$ we have a Betti map $b_{0,\Delta} \colon \cA_{0,\Delta} \rightarrow \mathbb{T}^{2g}$. By property (iii) of Proposition~\ref{PropBettiMapApp}, we have two real analytic isomorphisms $(b_{0,\Delta},\pi_0) \colon \cA_{0,\Delta} \cong \mathbb{T}^{2g} \times \Delta$ and $(b_{\Delta},\pi) \colon \cA_{\Delta} \cong \mathbb{T}^{2g} \times \Delta$. Thus there exists a real analytic map $\lambda' \colon \mathbb{T}^{2g} \times \Delta \rightarrow \mathbb{T}^{2g} \times \Delta$ such that the following diagram commutes
\[
\xymatrix{
\cA_{0,\Delta} \ar[r]^-{(b_{0,\Delta},\pi_0)}_-{\sim} \ar[d]_-{\lambda_{\Delta}} & \mathbb{T}^{2g} \times \Delta \ar[d]^-{\lambda'} \\
\cA_{\Delta} \ar[r]^-{(b_{\Delta},\pi)}_-{\sim} & \mathbb{T}^{2g} \times \Delta.
}
\]
As $\lambda$ is a finite map, $(\lambda')^{-1}(r)$ is a finite set for each $r \in \mathbb{T}^{2g} \times\Delta$. As $\lambda$ is an $S$-morphism, for each $s \in \Delta$ we have $\lambda'({\mathbb{T}^{2g} \times \{s\}}) \subseteq \mathbb{T}^{2g} \times \{s\}$.

By property (i) of Proposition~\ref{PropBettiMapApp}, for each $s \in \Delta$ the restriction $\lambda'|_{\mathbb{T}^{2g} \times \{s\}}$ is a group homomorphism $\mathbb{T}^{2g}  \rightarrow  \mathbb{T}^{2g}$. Thus $\ker(\lambda'|_{\mathbb{T}^{2g} \times \{s\}})$ is a finite, hence discrete, subgroup of $\mathbb{T}^{2g}$. In particular, $\ker(\lambda'|_{\mathbb{T}^{2g} \times \{s\}})$ is locally constant. Up to shrinking $\Delta$, we may assume $\ker(\lambda'|_{\mathbb{T}^{2g} \times \{s\}}) = H$ for each $s \in \Delta$. Set $\lambda_{\mathbb{T}} \colon \mathbb{T}^{2g} \rightarrow \mathbb{T}^{2g}$ the quotient by the finite subgroup $H$. 
Then the diagram above induces a commutative diagram
\begin{equation}
\xymatrix{
\cA_{0,\Delta} \ar[r]^-{b_{0,\Delta}} \ar[d]_-{\lambda_{\Delta}} & \mathbb{T}^{2g} \ar[d]^-{\lambda_{\mathbb{T}}} \\
\cA_{\Delta} \ar[r]^-{b_{\Delta}} & \mathbb{T}^{2g}.
}
\end{equation}
Note that $\lambda_{\mathbb{T}}$ is a local homeomorphism.

Now we turn to proving that $X_0$ is non-degenerate in $\cA_0$. Indeed, as $X$ is a non-degenerate subvariety of $\cA$, there exists an open subset $\Delta$ of $S^{\mathrm{an}}$ with the Betti map $b_{\Delta} \colon \cA_{\Delta} \rightarrow \mathbb{T}^{2g}$ such that $\mathrm{rank}_{\IR}(\mathrm{d}b_{\Delta}|_{X^{\mathrm{an,sm}}})_x = 2 \dim X$ for all $x$ from a non-empty open subset of $X^{\mathrm{an}}$. For $x_0 \in  \lambda^{-1}(x)\cap X_0^{\mathrm{an}}$
 and for sufficiently general $x$%\Pcomment{Slight
%  elaboration as $\rho|_{X'}$ must have maximal rank at $u$}
, we have 
$\mathrm{rank}_{\IR}(\mathrm{d}(\lambda_{\mathbb{T}}\circ b_{0,\Delta})|_{X_0^{\mathrm{an,sm}}})_{x_0} = 2 \dim X = 2 \dim X_0$. But $\lambda_{\mathbb{T}}$ is a local homeomorphism, so $\mathrm{rank}_{\IR}(\mathrm{d} b_{0,\Delta}|_{X_0^{\mathrm{an,sm}}})_{x_0} = 2 \dim X_0$. Thus $X_0$ is non-degenerate.

% $A_0$ that is isogenous over $\IQbar(S)$ to the generic fiber of $\cA \rightarrow S$. As $S$ is regular, we can apply \cite[Th\'{e}or\`{e}me~XI~1.4]{}

\ul{Sixth d\'{e}vissage: reduction to the case where $\cA/S$
  carries level $\ell$ structure.}

As in the treatment of the general case in the proof of
Proposition~\ref{PropBettiMapApp}
there exists a finite and \'etale morphism $S'\rightarrow S$ where
$S'$ is regular and irreducible
 such that $\cA' := \cA\times_S S'$ carries level $\ell$-structure.
%By {\tt (Hyp')}, there exists a Zariski open dense subset $S_0$ of $S$ such that: There exists an abelian scheme $\cA_0 \rightarrow S_0$, carrying a principal polarization, with an $S_0$-isogeny $\lambda \colon \cA_0 \rightarrow \cA_{S_0} := \pi^{-1}(S_0)$. 
%Up to replacing $S_0$ by its smooth locus, we may furthermore assume
%that $S_0$ is regular. Moreover, $\cA_0 \rightarrow S_0$ has
%level-$\ell$-structure for some $\ell \ge \bestlevel$ because
%$\cA_{S_0} \rightarrow S_0$ does.\Pcomment{I tried to verify this claim
%  by pulling back an $\ell$-torsion section using the
%  $S_0$-isogeny $\lambda$. But the kernel of the isogeny could
%  conceivably contain this section, e.g. if $\ell$ divides the  degree
%  of the isogeny. In this case it
%  seems that the level structure is spoiled. Can we do the isogeny
%  first and then introduce level structure.}

%Now that $S$ is regular and $\cA \rightarrow S$ carries a principal polarization. 
%!!!!
%Then $S'$ is regular for the same reason as in the fourth d\'evissage.
Denote by $\rho_{\cA} \colon \cA' \rightarrow \cA$ the natural
projection. By a similar argument as the fourth d\'{e}vissage step, it
suffices to prove the height inequality \eqref{EqHtInequalityFull} with
$\cA \rightarrow S$ replaced by $\cA' \rightarrow S'$, $X$ replaced by
an irreducible component of $\rho_{\cA}^{-1}(X)$ with $\dim X'=\dim
X$, and $\cL$ replaced by $\rho_{\cA}^*\cL$, and $\cM$ replaced by
an ample line bundle on some compactification of $S'$.
% (notice that $\hat
  % h_{\cA',\rho_{\cA}^*\cL} = \hat
  % h_{\cA,\cL}\circ \rho_{\cA}$ by the Height Machine).
  As in the fourth d\'evissage  $X'$ dominates $S'$. Finally,
  $\cA'/S'$ still carries a principal polarization.
  %So we may assume that (ii) is furthermore satisfied. 
%\Pinlinecomment{I guess we also have to bound $\hat  h_{\cA_0,\lambda^*\cL}$ from above in terms of $\hat   h_{\cA,\cL}\circ\lambda$, but the two are equal by functoriality   of the height. Maybe we can add a short comment to this extent.} 

\ul{Seventh d\'{e}vissage: reduction to  Theorem {\ref{ThmHtInequality}}.}

It remains to prove the height inequality (\ref{EqHtInequalityFull})
with the extra hypotheses (i) - (v) listed above using
Theorem~\ref{ThmHtInequality}. In this theorem we assumed in addition that 
the
fiberwise  N\'eron--Tate height on $\cA(\IQbar)$
is induced by  a closed immersion $\cA \rightarrow \IP^n_{\IQbar}
\times S$ satisfying the second and third bullet  at the beginning of
$\mathsection$\ref{SectionSettingUpHtIneq} and
that the height on $\overline S(\IQbar)$ is the restriction of the absolute
logarthmic Weil height coming from a closed immersion
$\overline S\rightarrow\IP^m_{\IQbar}$. 
% % \Pinlinecomment{There may be some
% %  overlap with Remark \ref{rmk:projembedding}}
% First of all note that the fourth bullet point already holds true.
% Indeed, \texttt{(Hyp)} were handled in the fourth and fifth d\'evissages.

%\Pcomment{I slightly reworded the next paragraph.}
A basis of the global sections of the line bundle $\cM^{\otimes p}$,
for some $p$ large enough, gives rise to a closed immersion $\overline S \subseteq
\IP_{\IQbar}^m$. This gives the first bullet point at the beginning of
$\mathsection$\ref{SectionSettingUpHtIneq}. Note that the Weil
height $h$ on $\IP_\IQbar^m(\IQbar)$ restricted to $\overline
S(\IQbar)$ via this immersion differs from 
$p h_{\overline S,\cM}$ by a bounded function.
% on $S$ given by this immersion $S \subseteq \IP^m$, we have
% $h_{\overline S,\cM} = (1/p)h$ up to a bounded function on $S(\IQbar)$.

For the line bundle $\cL$ on $\cA$, which is ample relative over $S$, we have that $\cL^{\otimes 4}$ is relatively very ample on $\cA/S$. Thus by  \cite[Proposition~4.4.10.(ii) and Proposition~4.1.4]{EGAII}, there is a closed immersion $\cA \rightarrow \IP^n_S = \IP^n \times S$ given by global sections of $\cL^{\otimes 4} \otimes \pi^*\cM^{\otimes q}$ for some large $q$. When restricted to the generic fiber $A$ of $\cA \rightarrow S$, we get a closed immersion $A \rightarrow \IP^n_{k(S)}$ which arises from a basis of the global sections of $L^{\otimes 4}$, where $L$ is the restriction of $\cL$ over the generic fiber $A$. Moreover $L$ is ample since $\cL$ is relatively ample, and $L$ is symmetric since $[-1]^*\cL = \cL$. Thus we also have the second and third bullet points at the beginning of $\mathsection$\ref{SectionSettingUpHtIneq}.
%Let $\eta$ be the generic fiber of $\cA \rightarrow S$. The desired immersion can be obtained by applying Remark~\ref{rmk:projembedding} to $L_0 = \cL_{\eta}$; we point out that the $\cL$, $\cM$ in Remark~\ref{rmk:projembedding} are different from our $\cL$, $\cM$ here. 

Note that the height function $\hat{h}_{\cA}$ defined in \eqref{eq:fiberwiseNT} is then
\[
\hat{h}_{\cA} \colon \cA(\IQbar) \rightarrow [0,\infty), \quad P \rightarrow \hat{h}_{\cA_s,\cL_s^{\otimes 4}}(P)
\]
where $s = \pi(P)$. So $\hat{h}_{\cA,\cL} = (1/4)\hat{h}_{\cA}$.

The full
hypothesis of Theorem~\ref{ThmHtInequality}  is now satisfied for $\cA$
and $X$,
\textit{e.g.}, \texttt{(Hyp)} is just (iv) and (v).
We get constants $c_1> 0$ and $c_2$ and a Zariski open dense subset $U$ of $X$ such that
\[
    \hat{h}_{\cA}(P) \ge c_1 h( \pi(P) ) - c_2 \quad \text{for all}\quad P \in U(\IQbar).
\]
Thus \eqref{EqHtInequalityFull} holds true with $c_1$ replaced by $(c_1p)/4$ and $c_2$ replaced by $c_2/4 + O_S(1)$, where $O_S(1)$ is a bounded function on $S(\IQbar)$. So we are done.
\end{proof}

%%% Local Variables:
%%% TeX-master: "main"
%%% End:

%\section{Rational points on the generic fiber}
%\input{RatPtGenFiber.tex}

%\section{Stabilizer of curves}\label{sec:appendixA}
%\input{Appendix.tex}

\bibliographystyle{alpha}
\bibliography{literature}

%\printindex %%% Generates the index. Remove for final version.

\vfill

%\begin{center}
%  \today
%\end{center}

\end{document}